\renewcommand\thmt@listnumwidth{4.3em}
\pgfplotsset{compat=1.14}
\definecolor{Xred}{RGB}{228,26,28}
\definecolor{Xblue}{RGB}{14,102,174}
\definecolor{Xgreen}{RGB}{255,127,0}
\definecolor{Xorange}{RGB}{77,175,74}
\definecolor{Xviolet}{RGB}{152,78,163}
\author{Andreas Buhr\\\footnotesize{geboren in München}}
\title{Towards Automatic and Reliable\\
Localized Model Order Reduction}
\date{M\"unster\\$-2019-$}
\newcommand{\draftmarker}{}
\newcommand{\norm}[1]{{\left\lVert{#1}\right\rVert}}
\newcommand{\abs}[1]{{\left|{#1}\right|}}
\DeclareMathOperator{\spn}{span}
\newcommand\Cpp{C\nolinebreak[4]\hspace{-.05em}\raisebox{.4ex}{\relsize{-3}{\textbf{++}}}\xspace}
\providecommand{\eprint}[2][]{\url{#2}}
\newtheorem{theorem}{Theorem}[section]
\newtheorem{proposition}[theorem]{Proposition}
\newtheorem{lemma}[theorem]{Lemma}
\newtheorem{definition}[theorem]{Definition}
\newtheorem{corollary}[theorem]{Corollary}
\newtheorem{remark}[theorem]{Remark}
\newcommand{\nnull}{\setminus \{0\}}
\newcommand{\exa}{\textit{Thermal Block Example}}
\newcommand{\exb}{\textit{Thermal Channels Example}}
\newcommand{\exbv}{\textit{Thermal Channels Variant}}
\newcommand{\exc}{\textit{2D Maxwell Example}}
\newcommand{\exd}{\textit{Olimex A64 Example}}
\newcommand{\exrangea}{\textit{Rangefinder Example 1}}
\newcommand{\exrangeb}{\textit{Rangefinder Example 2}}
\newcommand{\oldsplit}{\textit{traditional offline/online splitting}}
\newcommand{\newsplit}{\textit{improved offline/online splitting}}
\newcommand{\etaspace}{{\gls{fspace}_\eta}}
\newcommand{\hdim}{{N}}
\newcommand{\rbdim}{{\widetilde{\hdim}}}
\newcommand{\etadim}{{\hdim_\eta}}
\newcommand{\etabase}{\psi^\eta}
\newcommand{\coef}[1]{{\mathsf{#1}}}
\newcommand{\wt}{\widetilde}
\newcommand{\dx}{\ \mathrm{d}x}
\newcommand{\df}{\ \mathrm{d}}
\newcommand{\diam}{\operatorname{diam}}
\newcommand{\extension}[1]{\gls{extensionspace}\left(#1\right)}
\newcommand{\training}[1]{\gls{trainingspace}\left(#1\right)}
\newcommand{\coupling}[1]{\gls{couplingspace}\left(#1\right)}
\newcommand{\rcoupling}[1]{\widetilde C(#1)}
\newcommand{\dualpair}[2]{{#1 \left( #2 \right) }}
\newcommand{\defaultparameter}{\overline{{\gls{parameter}}}}
\newcommand{\rbasis}{{\widetilde{\mathcal{B}}}}
\newcommand{\jhat}{\hat{\jmath}}
\newcommand{\argmax}{\operatornamewithlimits{arg\,max}}
\newcommand{\supp}{\operatorname{supp}}
\newcommand{\spanset}{\operatorname{span}}
\newcommand{\encloseimage}[1]{
\setlength{\fboxrule}{0pt}%
\fbox{\raisebox{-.45\height}{
\setlength{\fboxrule}{1pt}%
#1
}}}
\newcommand{\argmin}{\operatorname{argmin}}
\newcommand{\cest}{{\gls{cest}(\gls{number_testvectors}, \gls{epstestfail})}}
\newcommand{\ceff}{{\gls{ceff}(\gls{number_testvectors}, \gls{epstestfail})}}
\newcommand{\cestshort}{{\gls{cest} } }
\newcommand{\ceffshort}{{\gls{ceff} } }
\newcommand{\algotol}{\gls{algotol}}
\newcommand{\localenrichment}{\widehat u_n}
\newcommand{\localenrichedsolution}[1]{\widehat u_{n,#1}}
\newcommand{\futurebox}[1]{
\fboxsep1em
\fboxrule.2pt
\colorbox{futurecolor}{
\begin{minipage}{\textwidth - 2.5em}
#1
\end{minipage}
}
}
\newcommand{\myline}{\noindent\makebox[\linewidth]{\rule{\textwidth}{0.4pt}}}
\newcommand{\textbfit}[1]{\textbf{\textit{#1}}}
\definecolor{gelb}{RGB}{255,165,0}
\definecolor{oran}{RGB}{255,80,9}
\definecolor{lila}{RGB}{108,0,228}
\definecolor{gruen}{RGB}{0,184,0}
\definecolor{ei}{RGB}{251,255,238}
\definecolor{dunkelgruen}{RGB}{0,120,0}
\definecolor{warm_red}{RGB}{213,16,16}
\definecolor{grau}{RGB}{155,155,140}
\definecolor{futurecolor}{RGB}{230,255,230}
\newlength{\mylength}
\begin{document}
\frontmatter
\includepdf{cover/cover}
\begin{titlepage}
  \maketitle
\end{titlepage}

\par\vspace*{\fill}
\hrule
~\\[\baselineskip]
\textbf{Dekan:} Prof.\ Dr.\ Xiaoyi Jiang \\
\textbf{Erstgutachter:} Prof.\ Dr.\ Mario Ohlberger \\
\textbf{Zweitgutachter:} Prof.\ Dr.\ Christian Engwer \\
\textbf{Externer Gutachter:} Prof.\ Dr.\ Robert Scheichl, Heidelberg \\
~\\
\textbf{Tag der m\"undlichen Pr\"ufung: 11. Juli 2019}\\
\textbf{Tag der Promotion: 11. Juli 2019}

\clearpage
\pagestyle{justline}
\chapter*{Abstract}
{\Large English Abstract}\vspace{10pt}
\\
Finite element based simulation of phenomena governed
by partial differential equations is a standard
tool in many engineering workflows today.
However, the simulation of complex geometries
is computationally expensive.
Many engineering workflows require
multiple simulations with small, non parametric
changes in between.
The use of localized model order reduction
for subsequent simulations of geometries with localized
changes is very promising.
It produces lots of computational tasks with little
dependencies and thus parallelizes well.
Furthermore, the possibility to reuse
intermediary results in the subsequent simulations
can lead to large computational savings.
In this thesis, we investigate different
aspects of localized model order reduction
and propose various improvements.
A simulation methodology named ArbiLoMod, 
comprising a localized training, 
a localized
a posteriori error estimator
and an enrichment procedure is proposed.
A new localized a posteriori error estimator
with computable constants is presented and analyzed.
A new training algorithm which is based on a transfer
operator is derived. It can be shown
to converge nearly as fast as
the singular value decay
of this operator.
The transfer operator's spectrum is observed
to decay fast in electromagnetic simulations
in printed circuit boards.
New online enrichment algorithms 
are proposed.
All results are supported by numerical experiments, for
which the source code for reproduction is provided.
\clearpage

\noindent
{\Large German Abstract}\vspace{10pt}
\\
Finite Elemente basierte Simulationen von 
Phänomenen, die durch partielle Differentialgleichungen
beschrieben werden können, sind heutzutage
fester Bestandteil von vielen
Arbeitsabläufen im Ingenieurswesen.
Allerdings sind Simulationen von komplexen Geometrien
sehr rechenintensiv.
In vielen Arbeitsabläufen sind wiederholte
Simulationen mit lokalisierten, nichtparametrischen
Änderungen dazwischen notwendig.
Lokalisierte Modellordnungsreduktion 
verspricht ein großes Verbesserungspotential
für Sequenzen von Simulationen
mit geringfügigen Änderungen zwischen den Simulationen.
Bei der Anwendung von lokalisierte Modellordnungsreduktion
entsteht eine große Anzahl von Berechnungsabschnitten
mit nur wenigen Datenabhängigkeiten, was
Parallelisierung einfach macht.
Weiterhin existiert die Möglichkeit,
Zwischenergebnisse in den folgenden 
Simulationen wiederzuverwenden, 
was zu Einsparungen von Rechenzeit führen kann.
In dieser Dissertationsschrift untersuchen
wir verschiedene Aspekte von lokalisierter Modellordnungsreduktion
und schlagen einige Verbesserungen vor.
Eine Simulationsmethodik, genannt ArbiLoMod,
die ein lokalisiertes Training, 
eine a posteriori Fehlerabschätzung
sowie ein online enrichment enthält,
wird vorgeschlagen.
Ein neuer lokalisierter a posteriori Fehlerschätzer
mit berechenbaren Konstanten wird gezeigt und analysiert.
Ein neuer Trainingsalgorithmus 
wird präsentiert, der auf einem
Transferoperator basiert
und beweisbar nahezu
so gut konvergiert wie
der Singulärwertabfall dieses Operators.
In elektromagnetischen Simulationen von
Leiterplatten wird ein schneller Abfall
des Spektrums des Transferoperators beobachtet.
Neue Algorithmen zum online enrichment
werden vorgeschlagen.
Alle Ergebnisse werden mit numerischen Experimenten
bestätigt, deren Quellcode zwecks Reproduktion
zur Verfügung gestellt wird.

\chapter*{Acknowledgments}
I would like to thank my supervisors, Mario Ohlberger and Christian Engwer, and Stefan Reitzinger 
from CST AG
for the opportunity to pursue this research project. 
The idea to have interactive simulations of physical phenomena in complex geometries has fascinated
me for a long time. 
Having had the opportunity to concentrate on research heading in this direction
for an extended period of time
is a privilege which I appreciate greatly.
I would like to thank 
Mario Ohlberger, Christian Engwer,
Stephan Rave, and Kathrin Smetana
for productive collaborations on the central research questions,
which lead to significant progress toward this goal.
Finally I would like to thank all colleagues from my work groups
for a stimulating and enjoyable working environment
and 
Julia Brunken,
Annika Bürger,
Dennis Eickhorn,
René Fritze,
Tim Keil, and
Tobias Leibner
for proofreading parts of this thesis.
\vfill

\noindent
This thesis was supported by CST AG: \\
\\
CST Computer Simulation Technology AG (as of August 2017: GmbH)\\
Bad Nauheimer Stra\ss e 19 \\
64289 Darmstadt \\
Germany

\clearpage
\pagestyle{mytoc}
\tableofcontents
\clearpage
\pagestyle{justline}
\listoftheorems
\listoffigures
\listoftables
\listofalgorithmes
\cleardoublepage

\mainmatter
\clearpage
\pagestyle{main}
\chapter{Introduction}
\section{Target Application}
Nowadays, finite element based simulation of phenomena which are
governed by partial differential equations, such as
heat conduction, elasticity, fluid
flow, or electromagnetics is an essential part of 
many engineering workflows. Commercial tools are available from
different vendors.
However, many tools have a simple simulation pipeline which goes from
geometry to meshing, to matrix assembly, to the solution of
a matrix equation, to postprocessing and finally visualization
of results.
Often, all intermediary results are discarded and the complete pipeline is
reprocessed on any non parametric change of the model under consideration,
even if the change is localized.

The research presented in this thesis was motivated by the desire
to improve on this. 
Often, engineers improve a structure manually in an iterative approach,
where in each iteration they change the structure slightly and resimulate.
A particular example is the design of printed circuit boards (PCBs).
The design of PCBs has all of the above mentioned properties:
PCBs are nowadays very complex, there is no
scale separation, improvements are often obtained by
local changes of the electronic components and conductive tracks,
and when solved in frequency domain, it is a parameterized problem
with the frequency as a parameter.
The idea to use localized model reduction for PCBs
was already published by the authors in \cite{Buhr2009}.
A possible change is depicted in 
\cref{fig:boards}.
The same applies to integrated circuit (IC) packages, which are in structure similar to PCBs.
\begin{figure}
\centering
\includegraphics[width=.9\textwidth,viewport=0 220 1476 470, clip=true]{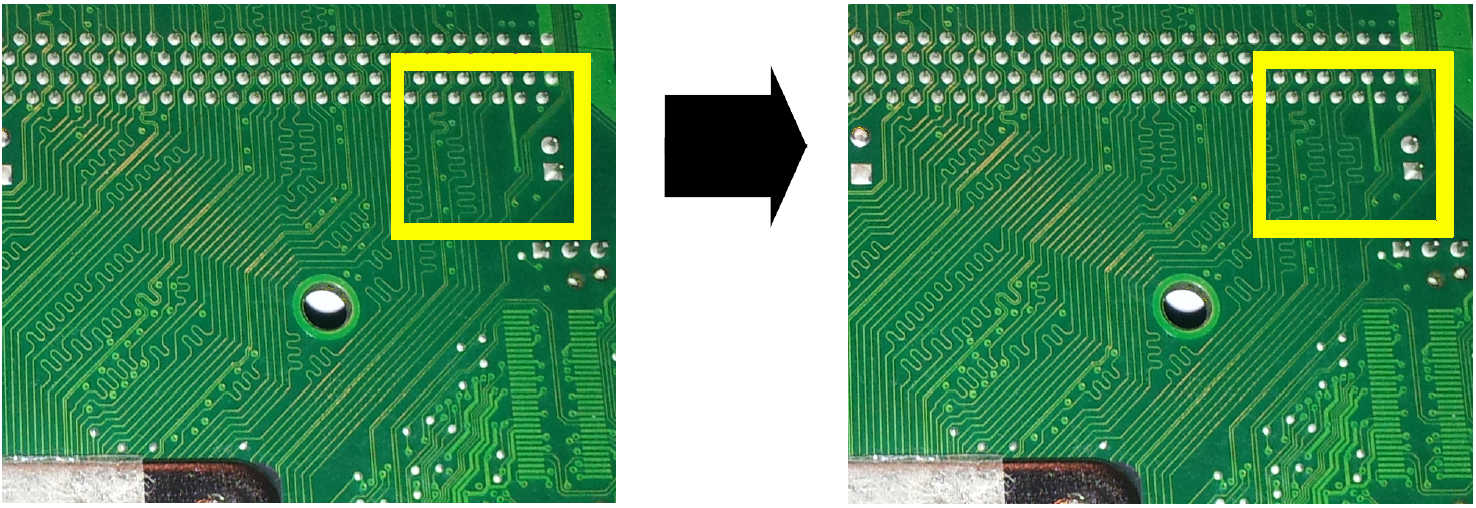}
\caption[DDR memory channel on a printed circuit board subject to local modification.]
{DDR memory channel on a printed circuit board subject to local modification of conductive tracks (own photo, manipulated with GIMP).}
\label{fig:boards}
\end{figure}
When the changes are localized, one can take advantage of this.
To this end, we designed a processing pipeline which
keeps everything localized as long as possible.
With a localized meshing procedure, the mesh has to be regenerated
only in an environment of the change.
With a localized matrix assembly, matrices have to be reassembled only in an environment
of the change.
When only a small part of the geometry is changed, this bears the potential
for large speedups.

The next step of the pipeline is the solution of
a matrix equation, which cannot easily be localized, since everything is coupled.
Localizing this part is the central goal of this thesis.
To this end, we designed a simulation methodology named \gls{arbilomod}.
It localizes
this step by first creating localized, reduced models and then 
coupling these local models to solve the global problem. This way,
only the reduced models in an environment of the change have to be
regenerated. The following solution of the global problem is fast, 
as it is based on the reduced models.
While this localized pipeline was designed to improve simulation times after
localized changes, it has in addition the advantage
that the computational workload it creates parallelizes well. It fits
to the current development of computers to become more and more parallel.
While only a few localized meshes, matrices and reduced models
are created upon a localized change to the model, 
thousands of tasks have to be processed in the initial run.
These tasks have little data dependencies and are thus well suited 
for cloud environments.

\subsection*{Vision}
Today, the use of finite element based simulation tools needs
skilled engineers and is a time consuming task where simulations
often take hours, days or even weeks.
We believe that in the future, by advances in computing technology
and the development of more efficient algorithms, simulation
tools will provide results almost instantaneously.
The goal is that results are 'just there' and the engineer
using the simulation software can forget about the fact that there is a 
sophisticated simulation happening.

\section{Changing Computing Landscape}
The finite element method was developed in the second
half of the previous century.
As the clock frequencies of PCs as well as the amount 
of available RAM have risen, the analysis of 
increasingly bigger models became feasible.
For a long time, sequential  performance of CPUs was
rising according to Moore's law.
The speed doubled approximately every 18 month
until around the year 2005, when high end CPUs
reached over 3 GHz. Since then, at the time
of writing for 14 years, CPUs became more parallel by
broader SIMD units and an increasing number of cores
per socket, but the clock rates stayed around
3 GHz.
Applications need to be adapted to these parallel architectures
to exploit the available computing power.
There are efforts to use this kind
of parallelism  in today's FEM codes,
see for example the EXA-DUNE project \cite{exadune}
within the
``German Priority Programme 1648 -- Software for Exascale Computing''
by the \gls{dfg}.

But now we witness the rise of cloud computing,
where everyone has potential access to thousands of
computers, albeit accessed through a rather slow network connection.
To successfully exploit the potential of cloud computing
for finite element analysis in an interactive setting, 
new algorithms with greatly reduced communication demands are required.
For example, if a user wants to start 1000 compute tasks within a second
and has a 20 MB/s internet connection, each task can be given only 20kB
of input data.
If the task runs on a recent multicore machine, it can have $\mathcal{O}(10^{11})$
floating point operations available in one second.
This discrepancy of $\mathcal{O}(10^{11})$ floating point operations
to $\mathcal{O}(10^5)$ bytes of input data
is the challenge in using highly parallel cloud computing
for interactive \gls{fea} applications.
The algorithms we develop in this thesis are targeted at this
setting.

\section{Research Questions}
\label{sec:research questions}
As already mentioned,
localized model order reduction has the 
potential to vastly decrease simulation times in a finite element based
simulation software for sequences of geometries
with localized changes, as it is common
in many engineering workflows.
It has attracted a lot of attention from different research communities
recently.
The main questions in research on localized model
order reduction are the following.
\begin{enumerate}
\item
\textbf{\textit{How to decompose the global problem?}}\\
Each localized model order reduction method
has to decompose the global problem
in local parts and to couple the local parts.
The question how to do this decomposition is central, as it
impacts all other questions.
While in general, a decomposition into local problems
of very different kind is possible, 
we concentrate on the decomposition of a global solution space
into localized subspaces in this thesis,
as this gives a consistent mathematical framework.
\item
\textbf{\textit{How to generate local reduced models?}}\\
Having decomposed the global problem into
local coupled parts, the second questions
is how to generate reduced
approximations of these. This is usually the question
getting the most attention.
The models are often constructed to fulfill
some form of local convergence criterion or error bound.
In order to obtain the projected benefits, especially
the reuse of reduced models after a localized geometry change,
it is necessary to use only local information in the construction of local
reduced models.
This step is often called ``training''.
\item
\textbf{\textit{How does the local error propagate to the global problem?}}\\
Having constructed reduced approximations of all parts of
a localizing decomposition, one can compute an
approximate solution of the global problem using these.
Quantifying a priori
how the errors of the local reduced models
influence
the error of the global approximate solution
is another topic in research on localized model order reduction.
\item
\textbf{\textit{How to estimate the global error a posteriori?}}\\
Having computed a global approximate solution,
one wants to quantify its error. This is
the topic of a posteriori error estimation. It is
especially important in situations where an a priori estimate
is not available.
But even when an a priori estimate is available,
an a posteriori estimate might give a much sharper bound.
Most existing a posteriori error estimators for reduced
models involve global computations and are
thus unsuitable for the localized setting.
\item
\textbf{\textit{How can the local reduced models be improved, once we solved the global reduced problem?}}\\
Having solved the global reduced problem,
the a posteriori error estimator might indicate an
insufficient quality of the obtained
approximation.
In order to improve, one wants to improve
the local, reduced problems. 
This is the question of ``online enrichment''.
\end{enumerate}
Besides these five research questions,
there are additional implementation related
questions which come into focus when
the knowledge is transferred to the application
domain and the method is implemented in an
engineering context, among those:
\begin{enumerate}
\setcounter{enumi}{5}
\item
\textbf{\textit{How to mesh the geometry localized?}}\\
When using \gls{fem} based methods, as we do throughout
this thesis, a mesh is required. On order to
realize the advantages of localized model order reduction, also
the meshing has to be localized.
Meshing is a very complex task in practice. The additional
constraints of localization may lead to additional
difficulties.
\item
\textbf{\textit{How to implement the method parallelized?}}\\
The intended method should react quickly
to geometry changes and should be heavily parallelized.
The resulting workload requires many computing resources,
but only in short bursts. To be cost
effective, the computational resources must be shared with other users.
Implementing a suitable runtime system is a challenging task.
\end{enumerate}

\section{Existing Research}
Our motivating application is the handling
of localized, unforeseen changes 
in the model under consideration.
Despite its importance, this question
gained surprisingly little attention
from the research community.
Similar to our motivation 
is localized model order reduction for
crack propagation, where the location of
the crack is not known in advance
\cite{Kerfriden2013}.
While only few researchers share our
motivation of arbitrary localized changes,
the field of localized model order
reduction has gained a large amount
of attention recently.
Localized model order reduction is a research
topic in between of three traditional
research fields. 
It can be seen as an extension to the
\gls{rb} method, using basis functions
with localized support only instead of
basis functions with global support, as
it is common in \gls{rb} methods.
It can also be seen as a generalization
of numerical multiscale methods, relaxing
the requirement of scale separation
and using an adaptive number of basis functions
for each coarse grid element.
As a third option, it can be seen as
a part of domain decomposition research,
as the construction of a coarse space
good enough so that the coarse
space solution is of sufficient quality.
In the following, we review related
developments in these three fields.
\subsection{Localization of Reduced Basis Methods}
\label{sec:intro rb localization}
The \gls{rb} method \cite{noor1980reduced,Fink1983,Patera2007} is by now an established model order
reduction method for parameterized problems in real-time or many-query contexts.
Introductory texts were published recently \cite{Benner2017b,Hesthaven2016,Quarteroni2016}.
Multiple approaches combining the \gls{rb}
method with some form of localization were published.

In 2002, Maday and R\o nquist
published the \gls{rbe}
method,
introduced in \cite{Maday2002}
and applied to the thermal fin problem and the Stokes problem \cite{Maday2004,Lovgren2006},
combining the reduced basis approach with a domain decomposition,
coupling local basis vectors by polynomial Lagrange multipliers on
domain boundaries in a mortar type.
Later, the \gls{rbe} was applied to the two dimensional Maxwell's
problem in \cite{Chen2011}, using
\gls{dg} coupling instead of mortar type coupling.
This was named \gls{srbe}.
This approach is unsuitable for our setting.
While using polynomials at the boundaries might work
in most cases, it could fail for complex interfaces
or if singularities in the solution are close to the interfaces.
In addition, to construct the reduced spaces,
in \cite{Chen2011} it is proposed to build up a global
\gls{rb} space first and then decompose these function into local parts.
This is incompatible with our goal of only local computations
and reuse after changes.

The \gls{rbhm} which is built upon the \gls{rbe}
was introduced in \cite{Iapichino2012}.
It combines the local \gls{rb} approximations
with a global coarse \gls{fe} space,
but still using polynomial Lagrange
multipliers at domain boundaries.

The
\gls{rdf},
whose preliminary version was introduced in
\cite{Iapichino2012b} %
and which was introduced in
\cite{Iapichino2016} %
uses parameterized interface spaces
of Langrange- or Fourier functions 
for basis generation
and
it includes \gls{fe} basis functions
along interfaces in the global reduced space.
It thereby has more accurate solutions,
but
this again is unsuitable for our
setting, as we aim at relatively
large interfaces, where model
reduction at the interfaces is crucial
and including all \gls{fe} basis
functions at the interface would be
prohibitively expensive.

The \gls{dgrbe}
was introduced in
\cite{Antonietti2016}
and applied to the Stokes problem in
\cite{Pacciarini2016a}.
It extends on the \gls{rdf},
but it removes the \gls{fe}
functions at the interfaces and instead,
like the \gls{srbe},
couples local bases on subdomains using
a \gls{dg} coupling.
Local basis functions are computed
by prescribing parameterized Neumann data
on the subdomain boundaries.
For the Neumann data, either using all
\gls{fe} functions at the interface
or using Legendre functions is proposed.
Both is unsuitable for our setting. Using
all \gls{fe} functions leads to too many
possible boundary conditions. 
Legendre functions might not approximate
the true interface data in complex
geometries or if singularities are
close to the domain boundaries.

Also combining \gls{rb} with domain decomposition
but different in motivation is the
\gls{rbhdd} \cite{Maier2014,Martini2014,martini2017certified},
aiming at the coupling of different
physical formulations on the domains.

Similar to ours in motivation is the
\gls{scrbe}
method \cite{PhuongHuynh2012}, which also aims at systems
composed of components, where the geometry of the components can
be mapped to reference geometries.
While the connection between the components is simply achieved by
polynomial Lagrange multipliers in the RBE, significant research
has been conducted on choosing the right coupling spaces in
the context of the \gls{scrbe}. Choosing the right space at the interfaces
is called ``Port Reduction'' and was included in the name, leading to the
\gls{prscrbe}
method
\cite{Eftang2013,Eftang2014} which employs a so-called ``pairwise training''.
A variation of this idea is used in our method.
Recently, also an algorithm to obtain optimal interface spaces
for the \gls{prscrbe} was proposed \cite{Smetana2016} and a framework
for a posteriori error estimation was introduced \cite{Smetana2015}.
While the \gls{prscrbe} performs excellent in using the potentials of cloud
environments, its goals are different from ours.
While being able
to handle various changes to the simulated system,
it does not aim at arbitrary modifications. And it does not try
to hide the localization from the user, but rather exposes it to let
the user decide how the domain decomposition should be done.
One further drawback is that the \gls{scrbe} is only
specified for non intersecting domain interfaces and
thus not applicable
to a domain decomposition of 3D space but only
to systems which naturally decompose into components like
bridges or cranes.

Very close to our motivation is the \gls{lrbms}.
It
is a localized model order reduction method building
upon a non overlapping domain decomposition and
a \gls{dg} coupling at the domain boundaries.
Reduced spaces are constructed for each domain.
The \gls{lrbms} was developed in the groups of Mario Ohlberger 
at the University of Münster and Bernard Haasdonk at the
University of Stuttgart during the last decade.
The first publication mentioning localized spaces and the DG coupling
appeared in 2011 \cite{KAULMANN2011},
the first publication mentioning the \gls{lrbms}
appeared 2012 \cite{Albrecht2012}.
This method interpolates between a standard \gls{dg} method
when the domain decomposition is chosen to be equal to the fine mesh
and a classic \gls{rb} method when the domain decomposition
consists of only one element containing the full domain.
It has been proposed to generate local
basis functions by solving the global,
unreduced problem and restricting
the global solution to the local domains.
This strategy is uncommon in the realm
of localized model order reduction
and thus a specialty of the \gls{lrbms}.
Improvement of the local reduced spaces by
online enrichment was introduced in \cite{Albrecht2013},
solving local problems on localized oversampling domains
in an 
\textit{solve $\rightarrow$ estimate $\rightarrow$  mark $\rightarrow$ refine}
loop.
Rigorous a posteriori error estimators have been
developed in \cite{Ohlberger2014} and \cite{Ohlberger2015},
estimating not only the reduction error with
respect to a reference solution, but also
the full error with respect to the analytic solution.
Recently, the \gls{lrbms} was adapted to parabolic problems in 
\cite{Ohlberger2017}.
In \cite{Ohlberger2017a} and \cite{Ohlberger2018}, 
usage of the \gls{lrbms} in the context of PDE constrained 
optimization was investigated,
performing online enrichment of the localized reduced approximation spaces inside of
an optimizer loop.
Unlike other localized model order reduction methods, no local training procedures have been
investigated so far in the context of the
\gls{lrbms}.
The implementation of the \gls{lrbms} led to a
comprehensive software library extending the \gls{dune} \cite{Bastian2008a,Bastian2008}
framework, namely the \texttt{dune-xt} modules
\cite{Milk2017}
and \texttt{dune-gdt}\footnote{\url{https://github.com/dune-community/dune-gdt}}.
The main difference between the \gls{lrbms} and the \gls{arbilomod}
is that \gls{arbilomod} is based on a conforming discretization,
allowing for reuse of existing conforming
\gls{fe} software libraries to build an \gls{arbilomod}
implementation on top.
Furthermore, during the development of
\gls{arbilomod}, there was a focus on localized training
procedures (presented in \cref{sec:codim_n_training,chap:training}).
While these training procedures could also be
applied in the context of \gls{lrbms},
no research has been conducted in this area to date.

\subsection{Generalization of Numerical Multiscale Methods}
Numerical multiscale methods are designed for applications in which
the problem has a fine structure which is too fine
to be discretized directly, as for example
in ground water flow simulations.
In most cases, these methods require
a scale separation.
For problems with scale separation, there are established methods
to reduce the model such as
\gls{msfem}, \gls{vmm}, \gls{hmm}, or \gls{lod}.

\gls{vmm} was introduced in \cite{Hughes1995,Hughes1998a}.
An adaptive variant was introduced in \cite{Larson2005,Larson2007}
which was later extended in \cite{Larson2009,LARSON2009b}.
\gls{hmm} was introduced in \cite{weinan2003multiscale,Weinan2003,weinan2005}.
An a posteriori error estimator was derived in \cite{Ohlberger2005}.
Fully discrete a priori estimates are presented in \cite{Abdulle2005}.
\gls{lod} was introduced in \cite{Malqvist2014,Henning2014a}.
Efficient implementation was recently analyzed in 
\cite{Engwer2019}.

All the established methods which employ a
coarse grid and use a fixed number of 
coarse basis functions per coarse grid cell
are unsuitable for our setting,
as the existence of many channels through
a coarse grid cell might require
a larger number of ansatz functions in some cells.

Closest to our
approach are the developments in the
\gls{gmsfem}, which originated
in developments in the \gls{msfem}.
\gls{msfem} was introduced in
\cite{Hou1997}.
An introductory book to the \gls{msfem}
was published in \cite{Efendiev2009}.
Very similar in spirit is the \gls{rfb}
\cite{Sangalli2003} where it is proposed to enrich
a standard \gls{fe} space by 
intra-element bubble functions removing
the inside residual, essentially constructing
as space very similar to the \gls{msfem} space.
An adaptive \gls{msfem}
was published in \cite{Henning2014}.
In \cite{efendiev2011},
the \gls{msfem} space is enriched with
eigenfunctions 
multiplied with a partition of unity,
thus an adaptive number of basis functions
per coarse cell is used.
A postprocessing step is used to differentiate
between local basis functions for inclusions, which
can be eliminated, and local basis functions for channels.
This development lead to the \gls{gmsfem},
introduced in \cite{Efendiev2013}.
Plenty of interesting ideas were
published in the context of the \gls{gmsfem}.
\gls{gmsfem} uses the idea of
the \gls{msfem} and
constructs reduced spaces which are spanned
by ansatz functions on local patches, using only local
information. It allows for non-fixed
number of ansatz functions on each local patch.
\gls{gmsfem} uses local eigenproblems and a partition of unity
for basis generation.
Adaptive enrichment for the \gls{gmsfem} is presented by Chung et al.\ in
\cite{Chung2018,Chung2014b} and
online-adaptive enrichment in \cite{Chung2015a}.
Very recently, the \gls{esmsfem} has been proposed in \cite{Fu2018},
using dominant eigenvectors of the local \gls{dtn}
operator multiplied with a partition of unity as basis functions.

\subsection{Domain Decomposition Methods}
The generation of localized approximation spaces 
is closely related to the problem of constructing
coarse spaces in domain decomposition methods.
Especially the adaptive construction of multiscale coarse spaces
for problems with heterogeneous coefficients with
high contrast is interesting in this regard.
Similar local problems and similar constants are used.
However, most research on coarse spaces in domain decomposition
methods targets at estimates for the condition of the
preconditioned operator matrix while
generation of local bases in localized model order reduction
targets at the approximation properties of
local spaces.

\gls{msfem} coarse spaces are used in \cite{Aarnes2002}
for a non overlapping domain decomposition method.
In \cite{Graham2007,Graham2007a}, 
coarse spaces are constructed where robustness of the
\gls{dd} iteration in case of large coefficient
variations inside of domains can be shown.
These approaches still have a fixed number of
coarse space functions per domain.
In \cite{Galvis2010}, a large gap in the spectrum
of local eigenvalue problems is used
to steer the adaptive number of coarse grid basis functions
per domain.
In \cite{Dolean2010,Dolean2012}, coarse spaces are constructed adaptively
using an eigenvalue problem based on the \gls{dtn} operator in each subdomain.
Robustness with regard to large coefficient variations
which are not confined in a domain (i.e.~channels)
is shown in \cite{Spillane2013},
where coarse spaces are constructed from
an eigenvalue problem including
the partition of unity in the overlap region.
Also in the context of \gls{fetidp}, there
were similar developments.
\cite{Mandel2007,Pechstein2008,Klawonn2016}.
Recently, ``optimal'' coarse spaces have been proposed
leading to convergence in one iteration,
see for instance \cite{Gander2017}.
An interesting connection between 
multiscale methods and domain decomposition methods has been established in
\cite{Kornhuber2018,Kornhuber2017,Kornhuber2016a}.
Coarse spaces from \gls{acms} are proposed for two-level overlapping
Schwarz in \cite{Heinlein2018}.

\subsection{Other Related Developments}
CMS \gls{cms}
component mode synthesis (CMS) introduced in {\color{white}a} \cite{Hurty1965,Bampton1968}: uses an approximation based on the eigenmodes of local constrained eigenvalue problems (approximation in the interior of the domain)

To realize a fast simulation response also for large component-based structures it is vital to reduce  the number of \gls{dof}s on the ports, too. Within the CMS approach this is realized by utilizing an eigenmode expansion \cite{Bourquin1992,Hetmaniuk2010,Jakobsson2011,Hetmaniuk2014}.

\newpage
\section{Collaborations and Previous Publications}
The research presented in this thesis was conducted 
in collaboration with different partners.
Most of the results presented in this dissertation have
been published by the authors and collaborators in peer reviewed journals or
conference proceedings.
In the following, all collaborations and previous publications
are given.
\begin{enumerate}
\item
The numerically stable formulation of the standard a posteriori
error estimator for the reduced basis method which is presented in
\cref{sec:improved splitting}
was developed together with Christian Engwer, Mario Ohlberger, and Stephan Rave.
The results have been published in the article%
\vspace{5pt}
\\
\bibentry{Buhr2014}
\cite{Buhr2014}.
\item
The simulation method \gls{arbilomod}
in \cref{chap:arbilomod},
was devised together with
Christian Engwer, Mario Ohlberger and Stephan Rave.
The first publication presenting \gls{arbilomod}
was the article 
\vspace{5pt}
\\
\bibentry{Buhr2017}
\cite{Buhr2017}.
\item
\gls{arbilomod} was tested on the time harmonic Maxwell's equations
in collaboration with Christian Engwer, Mario Ohlberger and Stephan Rave,
the results
are given in \cref{sec:arbilomod experiments for maxwell}
and were published in the article 
\vspace{5pt}
\\
\bibentry{Buhr2017c}
\cite{Buhr2017c}.
\item
The randomized range finder algorithm
presented in \cref{sec:randomized_range_finder} was developed together with Kathrin Smetana.
Results were published in the article
\vspace{5pt}
\\
\bibentry{Buhr2018}
\cite{Buhr2018}.
\item
The convergence analysis for the online enrichment
given in \cref{sec:enrichment convergence} was done by the author
alone. It was published in the article 
\vspace{5pt}
\\
\bibentry{Buhr2017e}
\cite{Buhr2017e}.
\item
The localized a posteriori error estimator
presented in \cref{sec:a_posteriori} and
the randomized training procedure shown in \cref{sec:randomized_range_finder}
were published along with a common framework
for localized model order reduction
in the book chapter
\vspace{5pt}
\\
\bibentry{morhandbook}
\cite{morhandbook}.
\end{enumerate}
\clearpage

\section{Overview over this Document}
In the following \cref{chap:preliminaries}
we will introduce the abstract, variational
problem, detail how stationary
heat conduction and the time harmonic 
Maxwell's equation lead to a 
problem in variational form and
introduce the numerical examples used.
In \cref{chap:arbilomod}
we present the method \gls{arbilomod} and
show how the parts integrate.
Thereafter, 
three chapters diving deeper into
selected topics follow.
In \cref{chap:a posteriori}
we detail variants
of a localized a posteriori error estimator.
In \cref{chap:training}
we apply the theory of randomized
numerical linear algebra to the problem
of localized training, yielding
a priori and a posteriori error bounds
for randomized training algorithms.
In \cref{chap:enrichment}
some first a priori results on convergence 
of online enrichment are shown
and a new enrichment algorithm is introduced.
\cref{chap:conclusion}
concludes this thesis.

\chapter{Preliminaries}
\label{chap:preliminaries}
In this chapter, we present the abstract,
variational
problem definition 
consisting of a bilinear or sesquilinear form and a linear 
or antilinear form
on an abstract Hilbert space.
We will build upon this formulation throughout this thesis.
We introduce the Galerkin projected, reduced problem.
Based on the coercivity or the inf-sup stability
of the bilinear (sesquilinear) form we will review
the most important results, namely
uniqueness, stability and best approximation.
We introduce the parameterized problem
and its affine decomposition.
Furthermore, we introduce an abstract notation
for localized model order reduction.
We detail how the problems of
stationary heat conduction
and time harmonic electromagnetism
in two or three dimensions
lead to a problem in this
variational form.
Finally, we introduce the four examples
which are used for numerical experiments
in this thesis.

\section{Variational Problem}
Let \gls{fspace} be a Hilbert space over the
field \gls{K} where \gls{K} is either \gls{R} or \gls{C}.
Let further \gls{a} be a bilinear or sesquilinear form, 
which is coercive or inf-sup stable and bounded.
Let \gls{f} be a linear or antilinear, bounded form on \gls{fspace}.
We then define the variational problem as follows.
\begin{definition}[Variational problem]
\label{def:variational problem}
Find $\gls{sol}$ in $\gls{fspace}$, such that
\begin{equation}
\gls{a}(\gls{sol}, v) = \gls{f}(v) \qquad \forall v \in \gls{fspace} .
\label{eq:variational problem}
\end{equation}
\end{definition}
We denote the stability constant by \gls{contconst}.
It is defined as
\begin{equation}
\gls{contconst} := \sup_{\varphi_1 \in \gls{fspace} \nnull, \varphi_2 \in \gls{fspace} \nnull}
\frac{
\abs{\gls{a}(\varphi_1, \varphi_2) }
}{
\norm{\varphi_1} \norm{\varphi_2}
} .
\end{equation}
The coercivity constant \gls{coercconst} and the inf-sup constant \gls{infsupconst} are defined as
\begin{eqnarray}
\gls{coercconst} &:=& \inf_{\varphi \in \gls{fspace} \nnull} 
\frac{\abs{\gls{a}(\varphi,\varphi)}}{\norm{\varphi}^2}\\
\gls{infsupconst} &:=& \inf_{\varphi_1 \in \gls{fspace} \nnull}{\sup_{\varphi_2 \in \gls{fspace} \nnull} 
\frac{\abs{\gls{a}(\varphi_1, \varphi_2)}}{\norm{\varphi_1} \norm{\varphi_2}}} .
\end{eqnarray}
In the inf-sup stable case, we further assume that
\begin{equation}
\inf_{\varphi_2 \in \gls{fspace} \nnull} \sup_{\varphi_1 \in \gls{fspace} \nnull} \frac{\abs{\gls{a}(\varphi_1, \varphi_2)}}{\norm{\varphi_1} \norm{\varphi_2}}
> 0
.
\end{equation}
In the coercive case ($\gls{coercconst} > 0$), the variational problem in \cref{def:variational problem}
has a unique solution due to the Lax-Milgram lemma \cite[Theorem 2.1]{laxmilgram}
and it holds
\begin{equation}
\norm{\gls{sol}} \leq \frac{1}{\gls{coercconst}} \norm{\gls{f}}_{\gls{fspace}'}.
\end{equation}
In the inf-sup stable case ($\gls{infsupconst} > 0$), the variational problem in \cref{def:variational problem}
has a unique solution \cite[Théorème 3.1]{Necas1962}
and it holds
\begin{equation}
\norm{\gls{sol}} \leq \frac{1}{\gls{infsupconst}} \norm{\gls{f}}_{\gls{fspace}'}.
\end{equation}
\begin{remark}[Antilinear inner product]
We denote the inner product between two vectors $a$ and $b$ in \gls{fspace} by $(a,b)$.
In complex vector spaces, we define the inner product $(a,b)$ to be linear
in the second argument and antilinear in the first
argument, i.e.~for two vectors $a$ and $b$ and a scalar $\lambda \in \gls{C}$
we have
\begin{equation}
(\lambda a, b) = \bar \lambda (a,b) \quad \text{ and } \quad (a, \lambda b) = \lambda (a,b) .
\end{equation}
We use this definition because it is prevalent in
numerical codes, cf.~\cref{chap:complex handling in pymor}.
\end{remark}
\section{Reduced Problem}
\label{sec:reduced problem}
At the core of projection based model order reduction
is the (Petrov-)Galerkin projection of the variational problem
in \cref{def:variational problem}.
We consider only Galerkin projection in this thesis.
To define the projected variational
problem, let \gls{rfspace} be a subspace
of \gls{fspace}. We define
\begin{definition}[Reduced variational problem]
\label{def:reduced variational problem}
Find $\gls{rsol}$ in $\gls{rfspace}$, such that
\begin{equation}
\gls{a}(\gls{rsol}, v) = \gls{f}(v) \qquad \forall v \in \gls{rfspace} .
\label{eq:reduced variational problem}
\end{equation}
\end{definition}
Accordingly, we define the reduced stability constant \gls{rcontconst},
the reduced coercivity constant \gls{rcoercconst}, and the reduced
inf-sup constant \gls{rinfsupconst} by
\begin{eqnarray}
\gls{rcontconst} &:=& \sup_{\varphi_1 \in \gls{rfspace} \nnull, \varphi_2 \in \gls{rfspace} \nnull}
\frac{
\abs{\gls{a}(\varphi_1, \varphi_2) }
}{
\norm{\varphi_1} \norm{\varphi_2}
}\\
\gls{rcoercconst} &:=& \inf_{\varphi \in \gls{rfspace} \nnull} 
\frac{\abs{\gls{a}(\varphi,\varphi)}}{\norm{\varphi}^2}\\
\gls{rinfsupconst} &:=& \inf_{\varphi_1 \in \gls{rfspace} \nnull} \sup_{\varphi_2 \in \gls{rfspace} \nnull} \frac{\abs{\gls{a}(\varphi_1, \varphi_2)}}{\norm{\varphi_1} \norm{\varphi_2}}.
\end{eqnarray}
\subsection{Properties of Reduced Variational Problem in Coercive Case}
From these definitions follows that $\gls{rcontconst} \leq \gls{contconst}$ and 
$\gls{rcoercconst} \geq \gls{coercconst}$ and thus 
in the coercive case it holds
\begin{equation}
\norm{\gls{rsol}}
\leq \frac{1}{\gls{rcoercconst}} \norm{\gls{f}}_{\gls{fspace}'}
\leq \frac{1}{\gls{coercconst}} \norm{\gls{f}}_{\gls{fspace}'}
.
\end{equation}
In the coercive case, the reduced variational problem inherits its stability from the full variational problem.
Furthermore, it holds the following best approximation theorem.
\begin{theorem}[Best approximation for coercive problems (Céa's Lemma)]
\label{thm:coercive bestapproximation}
Let \gls{a} be coercive and symmetric.
The error of the solution $\gls{rsol}$ of the 
reduced variational problem in \cref{def:reduced variational problem}
with respect to the full variational problem in \cref{def:variational problem}
is bounded by the constant $\sqrt{\tfrac{\gls{contconst}}{\gls{coercconst}}}$ times the smallest error possible in \gls{rfspace}.
It holds 
\begin{equation}
\norm{\gls{sol} - \gls{rsol}} \leq \sqrt{\frac{\gls{contconst}}{\gls{coercconst}}} \inf_{\widetilde v \in \gls{rfspace}} \norm{\gls{sol} - \widetilde v}.
\end{equation}
\end{theorem}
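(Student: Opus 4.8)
The plan is to use the symmetry of $\gls{a}$ to recast the claim as a statement about orthogonal projection in the energy inner product, and then to transfer the resulting optimality between that inner product and the norm of $\gls{fspace}$. The factor $\sqrt{\gls{contconst}/\gls{coercconst}}$, rather than $\gls{contconst}/\gls{coercconst}$, is precisely what the symmetry buys us, so it is essential that the argument runs through the energy norm.

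First I would establish Galerkin orthogonality. Since $\gls{rfspace} \subseteq \gls{fspace}$, every $v \in \gls{rfspace}$ is an admissible test function in \cref{eq:variational problem} as well. Subtracting \cref{eq:reduced variational problem} from \cref{eq:variational problem}, both tested against such a $v$, yields
\begin{equation}
\gls{a}(\gls{sol} - \gls{rsol}, v) = 0 \qquad \forall v \in \gls{rfspace}.
\end{equation}
Because $\gls{a}$ is symmetric and coercive, $\gls{a}(\cdot,\cdot)$ is an inner product on $\gls{fspace}$ with associated energy norm $\norm{w}_{\gls{a}} := \sqrt{\gls{a}(w,w)}$, coercivity guaranteeing $\gls{a}(w,w) > 0$ for $w \neq 0$. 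Galerkin orthogonality then says exactly that $\gls{sol} - \gls{rsol}$ is $\gls{a}$-orthogonal to $\gls{rfspace}$, i.e.\ that $\gls{rsol}$ is the $\gls{a}$-orthogonal projection of $\gls{sol}$ onto $\gls{rfspace}$. By the Pythagorean identity in the energy inner product, this projection is the best approximation in the energy norm, so $\norm{\gls{sol} - \gls{rsol}}_{\gls{a}} \leq \norm{\gls{sol} - \widetilde v}_{\gls{a}}$ for every $\widetilde v \in \gls{rfspace}$.

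Finally I would transfer this optimality back to the norm of $\gls{fspace}$. Coercivity gives $\gls{coercconst}\norm{w}^2 \leq \norm{w}_{\gls{a}}^2$ and boundedness gives $\norm{w}_{\gls{a}}^2 \leq \gls{contconst}\norm{w}^2$ for all $w \in \gls{fspace}$. Chaining these around the energy-norm optimality yields, for any $\widetilde v \in \gls{rfspace}$,
\begin{equation}
\gls{coercconst}\norm{\gls{sol} - \gls{rsol}}^2 \leq \norm{\gls{sol} - \gls{rsol}}_{\gls{a}}^2 \leq \norm{\gls{sol} - \widetilde v}_{\gls{a}}^2 \leq \gls{contconst}\norm{\gls{sol} - \widetilde v}^2 .
\end{equation}
Taking square roots and then the infimum over $\widetilde v \in \gls{rfspace}$ produces the asserted factor $\sqrt{\gls{contconst}/\gls{coercconst}}$.

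The main obstacle I anticipate is verifying that $\norm{\cdot}_{\gls{a}}$ is a genuine norm, and in particular handling the complex case cleanly: the coercivity constant $\gls{coercconst}$ is defined using $\abs{\gls{a}(\varphi,\varphi)}$, so I would need to confirm that for a symmetric (Hermitian) coercive form $\gls{a}(w,w)$ is in fact real and positive, so that $\sqrt{\gls{a}(w,w)}$ is well defined and the projection and Pythagoras arguments in the energy inner product are legitimate. Once this positivity is secured, the remaining steps are routine applications of the coercivity and boundedness bounds.
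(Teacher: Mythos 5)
Your proof is correct, but note that it is not "the paper's proof" in any meaningful sense: the paper disposes of this theorem by citing \cite[Proposition 3.1]{cea1964approximation} and gives no argument at all. What you have written is the standard argument underlying that citation, and it is the right one: Galerkin orthogonality makes $\gls{rsol}$ the $\gls{a}$-orthogonal projection of $\gls{sol}$ onto $\gls{rfspace}$, the Pythagorean identity gives optimality in the energy norm, and the two-sided equivalence $\gls{coercconst}\norm{w}^2 \leq \gls{a}(w,w) \leq \gls{contconst}\norm{w}^2$ transfers that optimality back to the $\gls{fspace}$-norm with the square root split across the two inequalities --- which is exactly what produces $\sqrt{\gls{contconst}/\gls{coercconst}}$ rather than the constant $\gls{contconst}/\gls{infsupconst}$ appearing in the non-symmetric estimate of \cref{thm:bestapproxinfsup}. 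The obstacle you flag about the complex case is legitimate, since the paper defines $\gls{coercconst}$ through $\abs{\gls{a}(\varphi,\varphi)}$, but it resolves cleanly: for a Hermitian form $\gls{a}(w,w)$ is real, it is a continuous function of $w$ that never vanishes on the unit sphere by coercivity, and since the unit sphere of a Hilbert space of dimension at least two is connected, $\gls{a}(w,w)$ has a fixed sign there; replacing $\gls{a}$ by $-\gls{a}$ if necessary, it is positive, so the energy inner product, the projection, and the Pythagorean identity are all legitimate. With that observation made explicit, your argument is complete and supplies the proof the paper omits.
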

\begin{proof}
\cite[Proposition 3.1]{cea1964approximation}
\end{proof}

\subsection{Properties of Reduced Variational Problem in inf-sup Stable Case}
\label{sec:reduction_inf_sup}
While the reduced problem inherits the coercivity constant from the full
problem in the coercive case, the inf-sup stable case is more involved.
The stability still holds, i.e.
\begin{equation}
\norm{\gls{rsol}} \leq \frac{1}{\gls{rinfsupconst}} \norm{\gls{f}}_{\gls{fspace}'}
\end{equation}
and there is still a best approximation estimate:
\begin{theorem}[Best approximation for inf-sup stable problems]
\label{thm:bestapproxinfsup}
Let \gls{a} be inf-sup stable.
The error of the solution $\gls{rsol}$ of the 
reduced variational problem in \cref{def:reduced variational problem}
with respect to the full variational problem in \cref{def:variational problem}
is bounded by the constant $\tfrac{\gls{contconst}}{\gls{rinfsupconst}}$ times the smallest error possible in \gls{rfspace}.
It holds
\begin{equation}
\norm{\gls{sol} - \gls{rsol}}
\leq
\frac{\gls{contconst}}{\gls{rinfsupconst}}
\inf_{\widetilde v \in \gls{rfspace}}
\norm{\gls{sol} - \widetilde v}
.
\end{equation}
\end{theorem}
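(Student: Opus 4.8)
The plan is to derive the estimate from \emph{Galerkin orthogonality} together with a sharp bound on the norm of the associated Galerkin projection, following the argument of Xu and Zikatanov; the triangle inequality alone would only produce the weaker constant $1 + \gls{contconst}/\gls{rinfsupconst}$, so the sharper factor requires a genuinely Hilbertian ingredient.

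First I would record Galerkin orthogonality. Since $\gls{rfspace} \subset \gls{fspace}$, every $v \in \gls{rfspace}$ is an admissible test function in both \cref{def:variational problem} and \cref{def:reduced variational problem}; subtracting the two identities gives
\begin{equation}
\gls{a}(\gls{sol} - \gls{rsol}, v) = 0 \qquad \forall v \in \gls{rfspace}.
\end{equation}
Next I would introduce the Galerkin projection $P \colon \gls{fspace} \to \gls{rfspace}$ sending $\gls{sol}$ to $\gls{rsol}$. The assumption $\gls{rinfsupconst} > 0$ makes $\gls{rsol}$ unique, so $P$ is a well-defined linear operator, and it fixes $\gls{rfspace}$ pointwise (if $\gls{sol} \in \gls{rfspace}$ then $\gls{rsol} = \gls{sol}$), hence $P$ is a projection onto $\gls{rfspace}$. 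Using the reduced inf-sup condition on $Pw \in \gls{rfspace}$, then Galerkin orthogonality, then boundedness of \gls{a}, I would bound its operator norm: for any $w \in \gls{fspace}$,
\begin{equation}
\gls{rinfsupconst}\,\norm{Pw} \leq \sup_{v \in \gls{rfspace} \nnull} \frac{\abs{\gls{a}(Pw, v)}}{\norm{v}} = \sup_{v \in \gls{rfspace} \nnull} \frac{\abs{\gls{a}(w, v)}}{\norm{v}} \leq \gls{contconst}\,\norm{w},
\end{equation}
so that $\norm{P} \leq \gls{contconst}/\gls{rinfsupconst}$.

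The decisive step, and the main obstacle, is to pass from $\norm{P}$ to the quasi-optimality constant without losing a factor. Here I would invoke the identity $\norm{P} = \norm{I - P}$, valid in a Hilbert space for any bounded projection $P$ that is neither $0$ nor the identity (the degenerate cases $\gls{rfspace} = \{0\}$ and $\gls{sol} \in \gls{rfspace}$ yield the bound trivially). Granting it, for every $\widetilde v \in \gls{rfspace}$ we have $(I - P)\widetilde v = 0$, whence
\begin{equation}
\norm{\gls{sol} - \gls{rsol}} = \norm{(I - P)\gls{sol}} = \norm{(I - P)(\gls{sol} - \widetilde v)} \leq \norm{I - P}\,\norm{\gls{sol} - \widetilde v} = \norm{P}\,\norm{\gls{sol} - \widetilde v}.
\end{equation}

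Combining this with $\norm{P} \leq \gls{contconst}/\gls{rinfsupconst}$ and taking the infimum over $\widetilde v \in \gls{rfspace}$ gives the claim. The projection identity is the one nontrivial ingredient: in a Hilbert space the norms of $P$ and $I-P$ coincide because both equal the reciprocal sine of the angle between the complementary subspaces $\range P$ and $\ker P = \range(I-P)$. This is Kato's lemma, which I would take from the literature rather than reprove here.
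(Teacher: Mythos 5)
Your proof is correct and is essentially the same argument the paper relies on: the paper's ``proof'' is a bare citation to \cite{Xu2003}, and what you have written is precisely the Xu--Zikatanov argument (Galerkin orthogonality, the bound $\norm{P}\leq \gls{contconst}/\gls{rinfsupconst}$ on the Galerkin projection via the reduced inf-sup condition, and Kato's identity $\norm{P}=\norm{I-P}$ to avoid the weaker constant $1+\gls{contconst}/\gls{rinfsupconst}$). One trivial remark: the degenerate cases excluded by Kato's lemma are $P=0$ and $P=I$ as operators, i.e.\ $\gls{rfspace}=\{0\}$ and $\gls{rfspace}=\gls{fspace}$, rather than $\gls{sol}\in\gls{rfspace}$; but in all of these the error vanishes or the bound is immediate, so nothing in your argument breaks.
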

\begin{proof}
\cite{Xu2003}
\end{proof}

However,
the reduced inf-sup constant \gls{rinfsupconst}
might be larger or smaller than the inf-sup constant
in the full space \gls{infsupconst}.
This means that the reduced variational problem might not 
be inf-sup stable at all or have a very small inf-sup constant,
even if the full problem has good constants.
\section{Parameterized Problem}
\label{sec:parameterized problem}
While the methods developed in this thesis work well for
non parametric problems, they work especially well
for parametric problems, inheriting
the favorable properties of the reduced basis method,
as already described in \cref{sec:intro rb localization} above.
For the parametric case,
let $\gls{parameterspace} \subset \gls{R}^{\gls{numparameter}}$
be the parameter space and $\gls{parameter}$ in \gls{parameterspace}
be the parameter.
The entities
\gls{a},
\gls{f},
\gls{sol},
\gls{rsol},
\gls{coercconst},
\gls{rcoercconst},
\gls{infsupconst},
\gls{rinfsupconst},
\gls{contconst}, and
\gls{rcontconst}
are replaced by parameter dependent counterparts
$\gls{a}_{\gls{parameter}}$,
$\gls{f}_{\gls{parameter}}$,
$\gls{sol}_{\gls{parameter}}$,
$\gls{rsol}_{\gls{parameter}}$,
$\gls{coercconst}_{\gls{parameter}}$,
$\gls{rcoercconst}_{\gls{parameter}}$,
$\gls{infsupconst}_{\gls{parameter}}$,
$\gls{rinfsupconst}_{\gls{parameter}}$,
$\gls{contconst}_{\gls{parameter}}$, and
$\gls{rcontconst}_{\gls{parameter}}$
.
The spaces \gls{fspace} and \gls{rfspace}
are not parameter specific.
Moreover, we assume that $\gls{a}_{\gls{parameter}}$ and $\gls{f}_{\gls{parameter}}$ exhibit
an affine parameter dependence, i.e.~there exist parameter independent
bilinear forms $\gls{a}^q : \gls{fspace} \times \gls{fspace} \rightarrow \gls{K}$ $(1 \leq q \leq
Q_a)$, continuous (anti-)linear functionals $\gls{f}^q \in \gls{fspace}'$ $(1 \leq q \leq Q_f)$
 and
coefficient functionals $\theta_a^q : \gls{parameterspace} \rightarrow \gls{K}$ and $\theta_f^q : \gls{parameterspace} \rightarrow \gls{K}$ 
such that
\begin{equation}
\label{eq:affineaf}
a_{\gls{parameter}}(\varphi_1,\varphi_2) = \sum_{q=1}^{Q_a} \theta_a^q({\gls{parameter}}) a^q(\varphi_1, \varphi_2)
\qquad\mathrm{and} \qquad
f_{\gls{parameter}}(\varphi) = \sum_{q=1}^{Q_f} \theta_f^q({\gls{parameter}}) f^q(\varphi).
\end{equation}
We further assume that there is a lower bound for the coercivity constant
$\gls{coercconst}_\mathrm{LB} \in \gls{R}$ for which holds
\begin{equation}
0 < \gls{coercconst}_\mathrm{LB} \leq \gls{coercconst}_{\gls{parameter}} \qquad \forall \gls{parameter} \in \gls{parameterspace}
\end{equation}
and an upper bound for the continuity constant $\gls{contconst}_\mathrm{UB} \in \gls{R}$ for which holds
\begin{equation}
\gls{contconst}_\mathrm{UB} \geq \gls{contconst}_{\gls{parameter}} \qquad \forall \gls{parameter} \in \gls{parameterspace}
.
\end{equation}

\section{Localized Model Order Reduction}
\label{sec:localized model order reduction}
In this section, we introduce an abstract
notation for localized model order reduction.
First, we define an abstract localizing space decomposition
and subsequently detail how we define
the model order reduction based on this decomposition.
\subsection{Localizing Space Decomposition}
\label{sec:localizing space decomposition}
Localization of the reduced basis method is based on a space decomposition
of a global space \gls{fspace}.
We define an abstract localizing space decomposition.
\begin{definition}[Localizing Space Decomposition]
\label{def:localizing space decomposition}
A localizing space decomposition consists of
\begin{enumerate}
\item
an open, bounded and connected domain $\gls{domain} \subset \gls{R}^{\gls{dimension}}$
with smooth boundary,
\item
a Hilbert space \gls{fspace} of functions on \gls{domain} which is
a continuous or discrete function space,
\item
\gls{numspaces} subdomains $\gls{subdomain}$ which form an 
overlapping or non overlapping domain decomposition of the global 
domain $\gls{domain}$,
\item
\gls{numspaces} local spaces $\gls{lfspace}$ which are
subspaces of the global ansatz space $\gls{fspace}$ and satisfy
\begin{equation}
\sum_i \gls{lfspace} = \gls{fspace} 
\end{equation}
and
\begin{equation}
\supp(\varphi) \subseteq \gls{subdomain} \qquad \forall \varphi \in \gls{lfspace},
\end{equation}
\item
\gls{numspaces} linear mappings
$\gls{lfspacemap} : \gls{fspace} \rightarrow \gls{lfspace}$
for which it holds
\begin{equation}
\sum_i \gls{lfspacemap}(\varphi) = \varphi \qquad \forall \varphi \in \gls{fspace} . 
\end{equation}
\end{enumerate}
\end{definition}

\subsection{Localized Model Order Reduction}
\label{sec:lmor}
Based on the abstract space decomposition introduced above, we can
define an abstract scheme for localized model order
reduction.
The core task in localized model order reduction is
the construction of reduced subspaces $\gls{rlfspace} \subset \gls{lfspace}$
for all localized spaces. Each local reduced subspace $\gls{rlfspace}$
should approximate the local part of the solution $\gls{lfspacemap}(\gls{sol}_{\gls{parameter}})$.
The global reduced space is then defined as
\begin{equation}
\gls{rfspace} := \sum_i \gls{rlfspace}
\end{equation}
which is used as the reduced space
in the Problem in \cref{def:reduced variational problem}.
\subsection{Examples for Localizing Space Decomposition}
\label{sec:introduction space decompositions}
We present two examples for localizing space decompositions.
The space decomposition used mostly throughout this thesis,
the ``wirebasket space decomposition'',
is introduced in \cref{sec:wirebasket space decomposition}.
\subsubsection{Restriction Decomposition}
\label{sec:restriction decomposition}
Let \gls{fspacedg} be a discrete \gls{dg} space on a fine mesh
on the domain $\gls{domain}$.
Let $\gls{subdomainnol}$, $i \in \{1, \dots, \gls{numdomainsnol}\}$
be a non overlapping domain decomposition which is resolved by the fine mesh:
\begin{equation}
\overline{\gls{domain}} = \bigcup_{i=1}^{\gls{numdomainsnol}}\overline{\gls{subdomainnol}}  \qquad \gls{subdomainnol} \cap \omega_j^\mathrm{nol} = \emptyset\ \mathrm{for}\ i\ne j
.
\label{eq:dd}
\end{equation}
In this setting, we can
define the local spaces as
\begin{equation}
\gls{lfspaceres} := \left\{ \varphi|_{\gls{subdomainnol}} \, \Big| \, \varphi \in \gls{fspacedg} \right\}
\end{equation}
and define the linear mappings $\gls{lfspacemapres}$ as the restriction of $\varphi \in \gls{fspacedg}$
to $\gls{subdomainnol}$:
\begin{equation}
\gls{lfspacemapres}(\varphi) := \varphi|_{\gls{subdomainnol}}.
\end{equation}
We call 
$\left\{
\gls{domain}, \gls{fspacedg},
\{\gls{subdomainnol}\}_{i=1}^{\gls{numdomainsnol}},
\{\gls{lfspaceres}\}_{i=1}^{\gls{numdomainsnol}},
\{\gls{lfspacemapres}\}_{i=1}^{\gls{numdomainsnol}}
\right\}$
the "restriction decomposition".
This is the space decomposition used in the \gls{lrbms} \cite{Ohlberger2015}.
It has the advantages that it is easy to understand,
the mappings to the local subspaces are projections,
and
functions which are in two different local subspaces
can not be linear dependent.
Its main disadvantage is that  the function space has to be a \gls{dg}
space.
A restriction decomposition can not be implemented on top of
existing \gls{fem} codes using conforming function spaces.

\subsubsection{Partition of Unity Decomposition}
\label{sec:poudecomposition}
Another option is to base the space decomposition on a
continuous partition of unity on an overlapping domain decomposition
$\gls{subdomainol}$.
Let $\gls{subdomainol}$, $i \in \{1, \dots, \gls{numdomainsol}\}$
be an overlapping domain decomposition which is resolved by the fine mesh.
Let further $\gls{pouf}$ be a partition of unity functions for which it holds
\begin{equation}
\gls{pouf} \geq 0,
\qquad
\sum_i \gls{pouf} \equiv 1,
\qquad
\mathrm{supp}(\gls{pouf}) \subseteq \gls{subdomainol},
\qquad
\norm{\nabla \gls{pouf}}_{L^\infty} < C
\end{equation}
and let \gls{fspaceh} be a global finite element space.
With a linear interpolation operator $\gls{feinterpolation} : \gls{fspace} \rightarrow \gls{fspaceh}$,
the local spaces are defined as
\begin{equation}
\gls{lfspacepou} := \left\{\gls{feinterpolation}(\gls{pouf} \varphi)\ \ \Big| \ \ \varphi \in \gls{fspaceh} \right\}
\end{equation}
and the linear mappings are defined as
\begin{equation}
\gls{lfspacemappou}(\varphi) := \gls{feinterpolation}(\gls{pouf} \varphi) \qquad \forall \varphi \in \gls{fspaceh} .
\end{equation}
We call 
$\left\{
\gls{domain}, \gls{fspaceh},
\{\gls{subdomainol}\}_{i=1}^{\gls{numdomainsol}},
\{\gls{lfspacepou}\}_{i=1}^{\gls{numdomainsol}},
\{\gls{lfspacemappou}\}_{i=1}^{\gls{numdomainsol}}
\right\}$
the "partition of unity decomposition".
This space decomposition is quite popular. 
It or its continuous counterpart is used for example
in the partition of unity finite element method \cite{Melenk1996}
and in the \gls{gfem} \cite{Strouboulis2001}.
It is easy to understand and easy to implement.
Its main disadvantage is the fact that functions
in different local subspaces sharing support
can be linear dependent.
This can lead to singular system matrices.
Note that the restriction decomposition can be seen as a
partition of unity decomposition with a discontinuous
partition of unity.

\section{Heat Conduction}
In this section, we define the problem 
of stationary heat conduction
and derive the corresponding bilinear and 
linear form of the underlying physical
equations.
While we aim at the time harmonic Maxwell's equation
in this thesis, it is often beneficial
to test numerical methods on the simpler case
of the stationary heat equation first.
The stationary heat equation leads to a coercive
bilinear form
and thus avoids stability
problems in the model order reduction,
which are discussed above in 
\cref{sec:reduction_inf_sup}.
\subsection{Full Heat Conduction Equation}
The full heat equation is a parabolic \gls{pde}
describing the diffusion of heat over time.
The unknown is the temperature $u(x,t)$,
a scalar field dependent on space and time.
With the space dependent heat conductivity $\gls{heat_conductivity}(x)$
and a space and time dependent heating
term $\gls{heatsource}(x,t)$, the full heat equation reads
\begin{equation}
\frac{\partial}{\partial t} \gls{sol}(x,t) 
=
\mathrm{div} \Big( \gls{heat_conductivity}(x) \nabla \gls{sol}(x,t) \Big) + \gls{heatsource}(x,t)
.
\label{eq:full heat equation}
\end{equation}
The gradient of the temperature $\nabla \gls{sol}(x,t)$ leads
to a flux of thermal energy of $\gls{heat_conductivity}(x) \nabla \gls{sol}(x,t)$.
The divergence of this flux of thermal energy results
in a temperature change over time.
This equation is assumed to hold pointwise. It does
not define a solution \gls{sol} without
specifying a domain, boundary, and initial conditions.
\subsection{Stationary Heat Conduction}
Systems governed by the full heat equation (\cref{eq:full heat equation})
evolve towards a steady state if the heat term $\gls{heatsource}(x,t)$ does not depend
on time (and thus is stationary).
This steady state is characterized by a vanishing time
derivative. We can compute the steady state solution using
the stationary heat equation, which we derive
from the full heat equation (\cref{eq:full heat equation})
by setting the time derivative to zero and
replacing the space and time dependent fields by only space 
dependent fields $\gls{sol}(x)$ and $\gls{heatsource}(x)$.
We obtain
\begin{equation}
- \mathrm{div} \Big( \gls{heat_conductivity}(x) \nabla \gls{sol}(x) \Big) = \gls{heatsource}(x)
.
\label{eq:stationary heat equation}
\end{equation}
\subsection{Weak Formulation of Heat Conduction}
For the \gls{fe} approximation, one considers 
the stationary heat equations in weak form. 
We consider the problem on a domain \gls{domain}
which we assume to be Lipschitz.
We multiply by a sufficiently smooth test function $v$
and integrate over the calculation domain $\gls{domain}$.
We obtain
\begin{equation}
- \int_{\gls{domain}} \mathrm{div} \Big( \gls{heat_conductivity}(x) \nabla \gls{sol}(x) \Big) v(x) \dx = \int_{\gls{domain}} \gls{heatsource}(x) v(x) \dx
.
\label{eq:first weak heat equation}
\end{equation}
Using the identity
\begin{equation}
\mathrm{div} \Big( \gls{heat_conductivity}(x) \nabla \gls{sol}(x) v(x) \Big) = \mathrm{div} \Big( \gls{heat_conductivity}(x) \nabla \gls{sol}(x) \Big) v(x) + \gls{heat_conductivity}(x) \nabla \gls{sol}(x) \cdot \nabla v(x)
\end{equation}
and Stokes' law
\begin{equation}
\int_{\gls{domain}} \mathrm{div} \Big( \gls{heat_conductivity}(x) \nabla \gls{sol}(x) v(x) \Big) 
\dx = 
\int_{\partial \gls{domain}}
\gls{heat_conductivity}(x) \Big( \nabla \gls{sol}(x) \cdot n \Big) 
v(x) \dx
\end{equation}
where $n$ is the outer unit normal, we can rewrite \cref{eq:first weak heat equation}
as
\begin{equation}
\int_{\gls{domain}} \gls{heat_conductivity}(x) \nabla \gls{sol}(x) \cdot \nabla v(x) \dx
-\int_{\partial \gls{domain}}
\gls{heat_conductivity}(x) \Big( \nabla \gls{sol}(x) \cdot n \Big) 
v(x) \dx
= \int_{\gls{domain}} \gls{heatsource}(x) v(x) \dx
.
\end{equation}
In this form, we only require $H^1$ regularity
and can formulate the problem in the
variational form
with the bilinear form and linear form
\begin{align}
\label{eq:weak heat equation}
a(u,v) &:= 
\int_{\gls{domain}} \gls{heat_conductivity}(x) \nabla \gls{sol}(x) \cdot \nabla v(x) \dx
-\int_{\partial \gls{domain}}
\gls{heat_conductivity}(x) \Big( \nabla \gls{sol}(x) \cdot n \Big) 
v(x) \dx \\
\gls{f}(v) &:= \int_{\gls{domain}} \gls{heatsource}(x) v(x) \dx
. \nonumber
\end{align}
For a parameterized heat conductivity $\gls{heat_conductivity}_{\gls{parameter}}(x)$, the bilinear form inherits the affine parameter
dependence if the heat conductivity itself
has an affine parameter dependence of the form
\begin{equation}
\label{eq:affineafheat}
\gls{heat_conductivity}_{\gls{parameter}}(x)
= \sum_{q=1}^{Q_{\gls{heat_conductivity}}} \theta_{\gls{heat_conductivity}}^q({\gls{parameter}}) {\gls{heat_conductivity}}^q(x)
.
\end{equation}
On the boundary \gls{boundary}
we impose either Dirichlet boundary conditions
\begin{equation}
u(x) = 0\qquad \text{on } \ \gls{boundary}_D
\end{equation}
or Neumann boundary conditions
\begin{equation}
\nabla \gls{sol}(x) \cdot n = 0\qquad \text{on } \gls{boundary}_N
\end{equation}
where $\gls{boundary}_D \cup \gls{boundary}_N = \gls{boundary} := \partial \gls{domain}$
and we assume $\gls{boundary}_D$ has a measure greater zero,
so Friedrich's inequality holds.

We consider the space
\begin{equation}
\label{eq:fspace heat}
V := \Big\{ \varphi \in H^1(\gls{domain}) \quad \Big| \quad \varphi|_{\gls{boundary}_D} = 0 \Big\}
\end{equation}
where the restriction to the boundary is to be understood in the sense of traces.

The problem defined in \cref{def:variational problem}
with the bilinear and linear form
defined in \cref{eq:weak heat equation}
poses a well defined problem in the space $V$ defined above
if $\gls{heat_conductivity}$ is bound from below and from above, i.e.
\begin{equation}
0 < \gls{heat_conductivity}_\mathrm{min} \leq \gls{heat_conductivity}(x) \leq \gls{heat_conductivity}_\mathrm{max} \qquad \forall x \in \gls{domain}
.
\end{equation}
In the usual $H^1(\gls{domain})$ norm it holds $\gls{contconst} \leq \gls{heat_conductivity}_\mathrm{max}$
and $\gls{coercconst} \geq \frac{\gls{heat_conductivity}_\mathrm{min}}{1 + \gls{friedrichs}^2}$
where $\gls{friedrichs}$ is the constant in the Friedrich's inequality $\norm{u}_{L^2(\gls{domain})} \leq \gls{friedrichs} \norm{\nabla u}_{L^2(\gls{domain})}$.
Because of the coercivity and continuity of \gls{a}, we could also use
the energy norm induced by $\norm{\varphi}_E^2 := (\varphi,\varphi)_E := \gls{a}(\varphi,\varphi)$ where the coercivity and continuity constant are equal to one,
but we stick to the $H^1$ norm throughout this thesis.

For the \gls{fe} discretization, we use linear finite elements.

\section{Maxwell's Equations}
 \subsection{Full Maxwell's Equations in Original Form}
The
Maxwell's equations were
first published by James Clerk Maxwell in 1861
\cite{Maxwell1861} and 1865
\cite{maxwell1864dynamical}.
However, he did not have the
vector notation which is common today.
The form of the Maxwell's equations which
is commonly in use today and stated below
was published by Oliver Heaviside in 1892
\cite{heavisideelectricalpapers}.
In Maxwell's equations, the unknowns are
the electric field \gls{electric_field},
the electric flux \gls{electric_flux},
the magnetic field \gls{magnetic_field},
and the magnetic flux \gls{magnetic_flux}.
In addition, there is the charge density
\gls{charge_density} and the
current density \gls{current_density},
which are sources or unknowns, depending
on the problem setting.
\gls{electric_field}, \gls{electric_flux}, \gls{magnetic_field}, \gls{magnetic_flux}, and \gls{current_density}
are space- and time-dependent vector fields while \gls{charge_density}
is a space- and time-dependent scalar field.
The classic Maxwell's equations read
\begin{subequations}
\begin{eqnarray}
\nabla \cdot \gls{electric_flux} &=& \gls{charge_density} \label{eq:maxwell gauss law}\\
\nabla \cdot \gls{magnetic_flux} &=& 0 \label{eq:maxwell magnetic gauss law}\\
\nabla \times \gls{electric_field} &=& - \frac{\partial}{\partial t} \gls{magnetic_flux} \label{eq:maxwell faradays law}\\
\nabla \times \gls{magnetic_field} &=& \frac{\partial}{\partial t} \gls{electric_flux} + \gls{current_density} .\label{eq:maxwell apere law}
\end{eqnarray}
\end{subequations}
Together with the electric permittivity \gls{electric_permittivity}
and the magnetic permeability \gls{magnetic_permeability}
in the material laws
\begin{equation}
\gls{electric_flux} = \gls{electric_permittivity} \gls{electric_field}
\qquad
\gls{magnetic_flux} = \gls{magnetic_permeability} \gls{magnetic_field}
\end{equation}
and suitable boundary conditions they define the problem.
Equation \eqref{eq:maxwell gauss law} is also known as Gauss's law,
Equation \eqref{eq:maxwell magnetic gauss law} is also known as Gauss's law for magnetism,
Equation \eqref{eq:maxwell faradays law} is also known as Faraday's law,
and
Equation \eqref{eq:maxwell apere law} is also known as Ampère's law with Maxwell's addition.

The material properties \gls{electric_permittivity} and \gls{magnetic_permeability}
are in general time, space, and field dependent tensors,
but we assume them to be only space dependent scalars throughout this thesis.
We only consider examples where the magnetic permeability is equal
to the magnetic permeability in vacuum \gls{magnetic_permeability}.
\subsection{Time Harmonic Maxwell's Equation}
Depending on the application, it might be advantageous
to consider the time harmonic Maxwell's equations.
The time harmonic Maxwell's equations can be derived in
two ways (see also \cite{monk2003finite,Zaglmayr2006}).
One either assumes that all quantities are time harmonic
and uses the ansatz
\begin{equation}
\gls{some_field}(x,t) = \gls{realpart} \left( \widehat{\gls{some_field}}(x) e^{i \gls{angular_frequency} t} \right)
\end{equation}
for $\gls{some_field}$ in $\Big\{ \gls{electric_field}, \gls{electric_flux}, \gls{magnetic_field}, \gls{magnetic_flux}, \gls{charge_density},
\gls{current_density} \Big\}$
where 
$
\widehat{\gls{electric_field}}(x),
\widehat{\gls{electric_flux}}(x),
\widehat{\gls{magnetic_field}}(x),
\widehat{\gls{magnetic_flux}}(x), \text{ and }
\widehat{\gls{current_density}}(x)
$
are complex valued vector fields only
dependent on space and
$
\widehat{\gls{charge_density}}(x)
$
is a complex valued scalar field
only dependent on space.

The second option to obtain the time harmonic Maxwell's equations
is to use the Fourier transform of the fields involved and use
\begin{align}
\widehat{\gls{some_field}}(x, \gls{angular_frequency}) &= \int_{-\infty}^\infty \gls{some_field}(x,t) \ e^{- i \gls{angular_frequency} t} \df t &
\gls{some_field}(x, t) &= \frac{1}{2 \pi} \int_{-\infty}^\infty \widehat{\gls{some_field}}(x,\gls{angular_frequency}) \ e^{i \gls{angular_frequency} t} \df\gls{angular_frequency} 
\end{align}
for $\gls{some_field}$ in $\Big\{ \gls{electric_field}, \gls{electric_flux}, \gls{magnetic_field}, \gls{magnetic_flux}, \gls{charge_density},
\gls{current_density} \Big\}$.
Under the assumption that
the materials are linear,
one can obtain the time harmonic Maxwell's equations
\begin{subequations}
\begin{eqnarray}
\nabla \cdot \widehat{\gls{electric_flux}} &=& \widehat{\gls{charge_density}}\\
\nabla \cdot \widehat{\gls{magnetic_flux}} &=& 0 \\
\nabla \times \widehat{\gls{electric_field}} &=& - i \gls{angular_frequency} \widehat{\gls{magnetic_flux}}\\
\nabla \times \widehat{\gls{magnetic_field}} &=& i \gls{angular_frequency} \widehat{\gls{electric_flux}} + \widehat{\gls{current_density}}
\end{eqnarray}
\end{subequations}
where \gls{angular_frequency} is the angular frequency.
In the following, we do not differentiate
between the time-dependent and time-harmonic quantities.
\subsection{Reduction to one Unknown Field}
\label{sec:reduction one field}
Maxwell's equations can be reduced to one equation
for the electric field by first 
dividing Faraday's law (Equation \eqref{eq:maxwell faradays law})
by the magnetic permeability \gls{magnetic_permeability}
and taking its curl, obtaining
\begin{equation}
\nabla \times \frac{1}{\gls{magnetic_permeability}} \nabla \times \gls{electric_field} = - \nabla \times \frac{\partial}{\partial t} \gls{magnetic_field}.
\end{equation}
Exchanging the curl with the time derivative on the right hand side
and plugging in Ampère's law (Equation \eqref{eq:maxwell apere law})
one obtains
\begin{equation}
\nabla \times \frac{1}{\gls{magnetic_permeability}} \nabla \times \gls{electric_field} = 
- \frac{\partial ^2}{\partial t^2} \gls{electric_permittivity} \gls{electric_field}
- \frac{\partial}{\partial t} \gls{current_density}
\end{equation}
or in the time harmonic setting
\begin{equation}
\label{eq:thcurlcurl}
\nabla \times \frac{1}{\gls{magnetic_permeability}} \nabla \times \gls{electric_field} = 
\gls{angular_frequency}^2 \gls{electric_permittivity} \gls{electric_field}
- i \gls{angular_frequency} \gls{current_density}
.
\end{equation}
We will use this equation as starting point for our simulations.

\subsection{Weak Formulation of Time Harmonic Maxwell's Equation}
For the \gls{fe} approximation, one considers Maxwell's
equations in weak form. 
We consider the problem on a domain \gls{domain}
which we assume to be Lipschitz.
For the time harmonic setting, one starts with
\cref{eq:thcurlcurl}, multiplies it with a sufficiently smooth,
vector valued test function
and integrates over the domain \gls{domain}.
In the weak setting, we denote the solution for the
electric field by \gls{sol} and the test function
by $\overline v$
and omit the space dependency for notational brevity.
One obtains
\begin{equation}
\int_{\gls{domain}}
\left(
\nabla \times \frac{1}{\gls{magnetic_permeability}} \nabla \times \gls{sol}
-\gls{angular_frequency}^2 \gls{electric_permittivity} \gls{sol} 
\right)
\cdot \overline v
 \dx
= 
- i \gls{angular_frequency} 
\int_{\gls{domain}}
\gls{current_density} \cdot \overline v
 \dx
\qquad \forall v \in V .
\end{equation}
We use Stokes' theorem (see e.g.~\cite[Eq. 3.27]{monk2003finite})
\begin{equation}
\int_{\gls{domain}}
\left( \nabla \times \gls{some_field} \right) \cdot \overline v \df V
=
\int_{\gls{domain}}
\gls{some_field} \cdot \left( \nabla \times \overline v \right) \df V
+ \int_{\partial \gls{domain}}
\left( n \times \gls{some_field} \right) \cdot \overline v \df A
\end{equation}
where $n$ is the outer normal on the boundary.
Applying this
with $\gls{some_field} = \frac{1}{\gls{magnetic_permeability}} \nabla \times u$
yields
\begin{equation}
\int_{\gls{domain}}
\frac{1}{\gls{magnetic_permeability}} \left( \nabla \times \gls{sol} \right)
\cdot 
\left( \nabla \times \overline v \right)
-\gls{angular_frequency}^2 \gls{electric_permittivity} \gls{sol} 
\cdot \overline v
 \dx
+ \int_{\partial \gls{domain}}
\left( n \times 
\frac{1}{\gls{magnetic_permeability}} \nabla \times \gls{sol}
\right) \cdot \overline v
 \dx
= 
- i \gls{angular_frequency} 
\int_{\gls{domain}}
\gls{current_density} \cdot \overline v
 \dx
\qquad \forall v \in V .
\end{equation}
The term
\begin{equation}
\int_{\partial \gls{domain}}
\left( n \times 
\frac{1}{\gls{magnetic_permeability}} \nabla \times \gls{sol}
\right) \cdot \overline v
 \dx
\end{equation}
is a triple product, which is invariant under cyclic permutation.
So it is equal to 
\begin{equation}
\label{eq:maxwell boundary term}
\int_{\partial \gls{domain}}
\left(\frac{1}{\gls{magnetic_permeability}} \nabla \times \gls{sol} \right)
\cdot 
\left( 
\overline v
\times 
n 
\right)
 \dx
.
\end{equation}
On the boundary $\gls{boundary} := \partial \gls{domain}$
we impose either Dirichlet, Neumann, or
conducting wall boundary conditions. We denote
the parts of the boundary by $\gls{boundary}_D$,
$\gls{boundary}_N$, and $\gls{boundary}_R$
respectively and assume
$\gls{boundary}_D \cup \gls{boundary}_N \cup \gls{boundary}_R = \gls{boundary}$.
On the Dirichlet boundary we set the tangential part
of the electric field to zero
\begin{equation}
\gls{sol} \times n = 0\qquad \text{on } \gls{boundary}_D.
\end{equation}
On the Neumann boundary we set the normal derivative to the electric
field to zero
\begin{equation}
n \times \nabla \times \gls{sol} = 0\qquad \text{on } \gls{boundary}_N.
\end{equation}
On the conducting wall boundary, we
impose the radiation condition
(cf.~\cite[Eq.~(1.25c)]{monk2003finite}\footnote{Note that we use a different sign convention than Peter Monk.})
\begin{equation}
\left( n \times 
\frac{1}{\gls{magnetic_permeability}} \nabla \times \gls{sol}
\right) \cdot \overline v
=
i \gls{angular_frequency} \gls{electric_conductivity} \gls{sol}_T
\end{equation}
where 
$\gls{sol}_T = n \times (\gls{sol} \times n)$ is the tangential part
of the solution at the boundary
and \gls{electric_conductivity} is the specific electric
conductivity at the boundary.

The problem can thus be formulated using a sesquilinear form
and an antilinear form defined by
\begin{align}
\label{eq:maxwell bilinear linear form}
a(\gls{sol},v;\gls{angular_frequency}) :=& \int_{\gls{domain}} \frac{1}{\gls{magnetic_permeability}} (\nabla \times \gls{sol}) \cdot (\nabla \times \overline v)
- \gls{electric_permittivity} \gls{angular_frequency} ^2 (\gls{sol} \cdot \overline v) \dx \nonumber
\\&
+ i \gls{angular_frequency} \gls{electric_conductivity} \int_{\gls{boundary}_R} (\gls{sol} \times n) \cdot (\overline v \times n) \dx,
\nonumber\\
f(v;\gls{angular_frequency}) :=& - i \gls{angular_frequency} \int_{\gls{domain}} (\gls{current_density} \cdot \overline v ) \dx
\end{align}
where $\gls{angular_frequency}$ is again the angular frequency. 
Note that we have chosen the sesquilinear form to be antilinear in the test
function, as this is the common choice in the literature (cf.~\cref{chap:complex handling in pymor}).
We see $\gls{angular_frequency}$ as a parameterization to this problem.

We consider the space
\begin{equation}
\label{eq:maxwellspace}
V := \Big\{ \varphi \in \gls{hcurl} \, \Big| \, \left(\varphi \times n \right)\big|_{\gls{boundary}_D} = 0 \Big\}
\end{equation}
where the restriction to the boundary is to be understood in the sense of traces.
The space \gls{hcurl} is defined as usual as
\begin{equation}
\gls{hcurl} := \Big\{ \varphi \in \left( L^2(\gls{domain}) \right) ^{\gls{dimension}}
\ \ \Big|\ \ 
\nabla \times \varphi \in \left( L^2(\gls{domain}) \right) ^{\gls{dimension}}
\Big\}.
\end{equation}
We consider the problem defined in \cref{def:variational problem}
with the bilinear and linear form
defined in \cref{eq:maxwell bilinear linear form}
in the space $\gls{fspace}$ defined above.
We use the problem specific inner product
\begin{eqnarray}
(\varphi_1, \varphi_2) &:=&
\int_{\gls{domain}} \frac{1}{\gls{magnetic_permeability}} (\nabla \times \varphi_2) \cdot (\nabla \times \overline \varphi_1)
+ \gls{electric_permittivity} \gls{angular_frequency}_\mathrm{max} ^2 (\varphi_2 \cdot \overline \varphi_1) \dx \nonumber
\\&&
+ \ \gls{angular_frequency}_\mathrm{max} \gls{electric_conductivity} \int_{\gls{boundary}_R} (\varphi_2 \times n) \cdot (\overline \varphi_1 \times n) \dx
\label{eq:maxwell energy norm}
\end{eqnarray}
and the thereby induced norm where $\gls{angular_frequency}_\mathrm{max}$
is the problem specific maximum angular frequency considered.
We can thus estimate the continuity constant by one, but we cannot
in general estimate the inf-sup constant from below.
Obtaining a lower bound for the inf-sup constant
is a challenging task which we omit in this thesis.
Lower bounds can be obtained numerically for example
by solving an eigenvalue problem or by the
\gls{scm} \cite{Huynh2007,Chen2009},
but these approaches are unsuitable for our targeted
application.

The bilinear form has an affine parameter dependence
\begin{equation}
\gls{a}(\gls{sol},v;\gls{angular_frequency}) =
\gls{a}_{cc}(u,v) - \gls{angular_frequency}^2 \gls{a}_{ii}(u,v)
+ i \gls{angular_frequency} \gls{a}_{b}(u,v)
\end{equation}
with
\begin{subequations}
\begin{equation}
\gls{a}_{cc} :=  \int_{\gls{domain}} \frac{1}{\gls{magnetic_permeability}} (\nabla \times \gls{sol}) \cdot (\nabla \times \overline v)\\
\end{equation}
and
\begin{equation}
\gls{a}_{ii} := \int_{\gls{domain}} \gls{electric_permittivity} (\gls{sol} \cdot \overline v) \dx
\end{equation}
and
\begin{equation}
\gls{a}_{b} := \int_{\gls{boundary}_R} \gls{electric_conductivity} (\gls{sol} \times n) \cdot (\overline v\times n) \dx
.
\end{equation}
\end{subequations}

\subsection{Finite Element Approximation in \gls{hcurl}}
\label{sec:nedelec ansatz}
\begin{figure}
\centering
\begin{tabular}{ccc}
\includegraphics[width=0.25\textwidth]{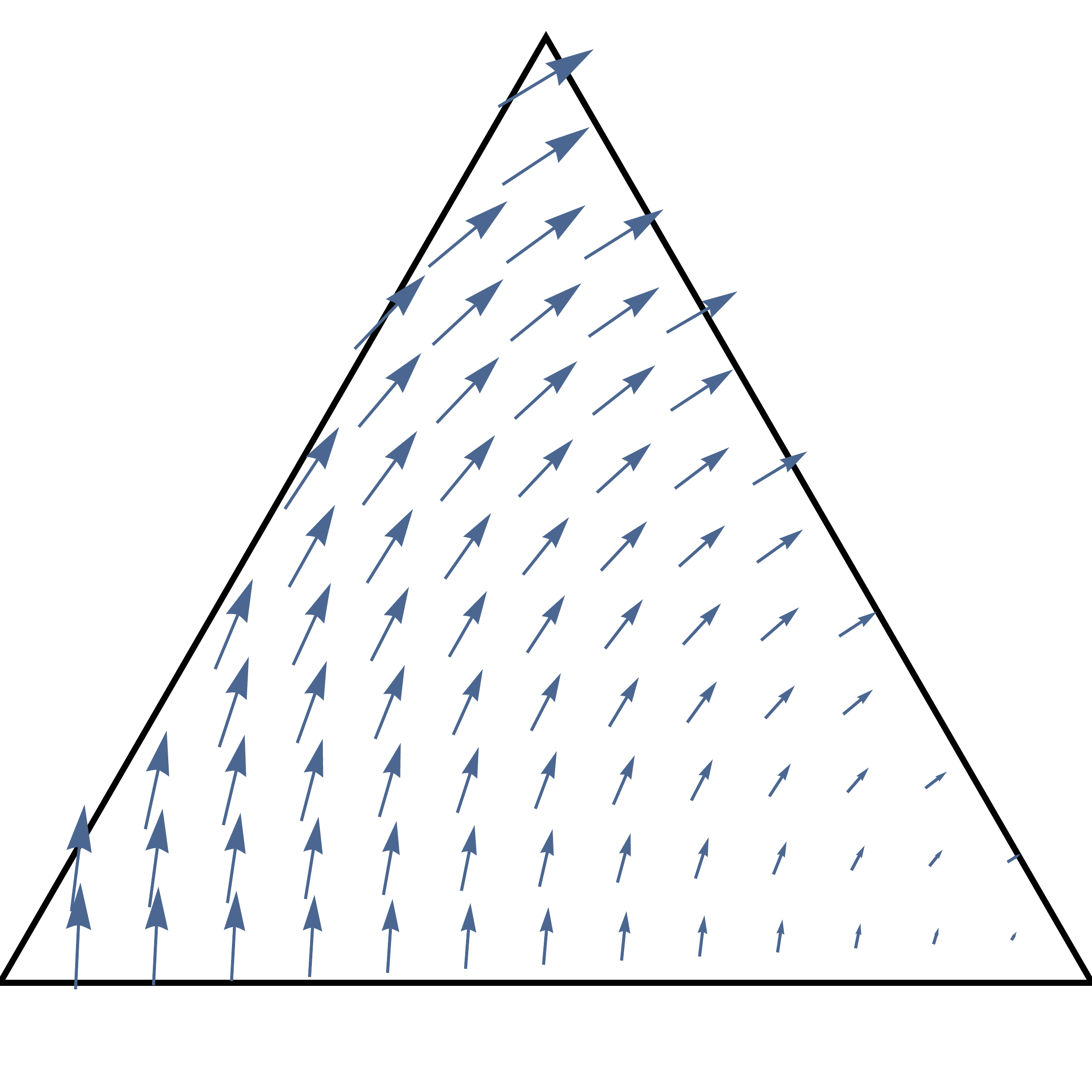}&
\includegraphics[width=0.25\textwidth]{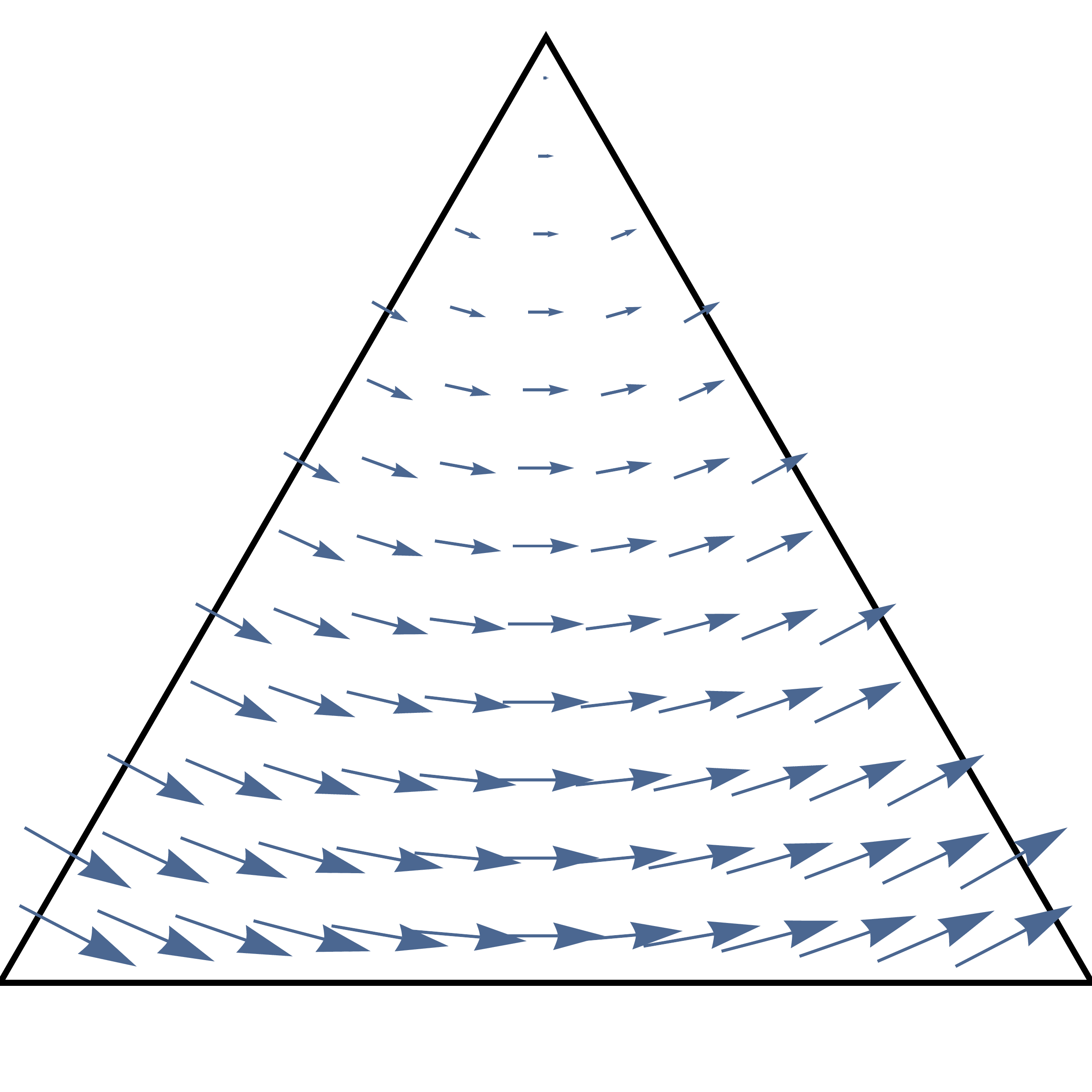}&
\includegraphics[width=0.25\textwidth]{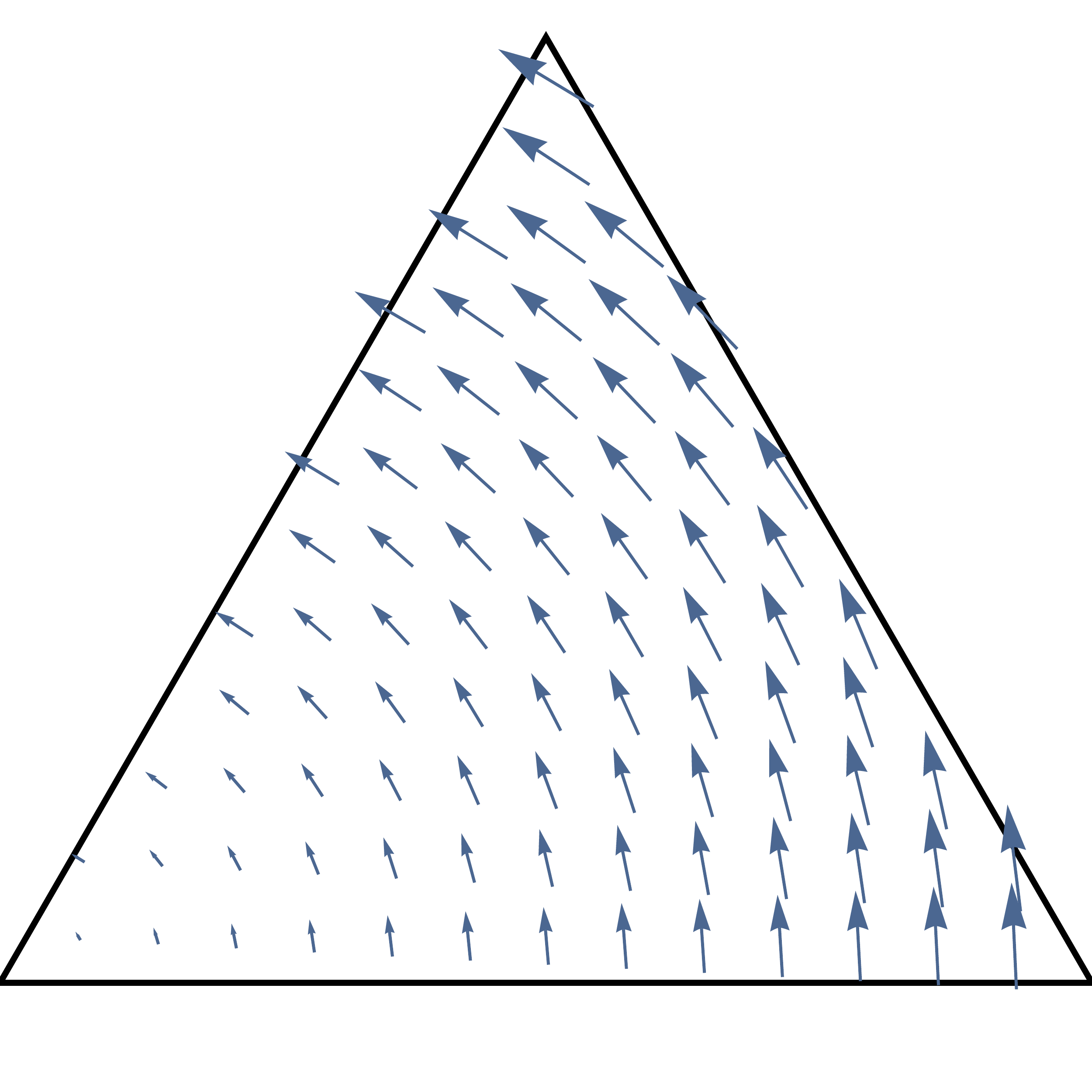}\\
first & second & third
\end{tabular}
\caption[Visualization of Nédélec ansatz functions (edge elements) for one triangle.]
        {Visualization of Nédélec ansatz functions (edge elements) for one triangle (reproduction: \cref{repro:fig:nedelec_ansatz}).}
\label{fig:nedelec_ansatz}
\end{figure}

For the finite element approximation of \gls{hcurl}
the most common approach is to use Nédélec's ansatz functions.
Nédélec published two articles introducing
his ansatz functions, one in 1980 \cite{Nedelec1980}
 and the second in
1986 \cite{Nedelec1986}.
For the construction of higher order
\gls{hcurl} conforming elements, we refer to
\cite{Zaglmayr2006}.
In this thesis, Nédélec elements of first kind
of lowest order are used, as formulated
in \cite[Lemma 4.12]{Zaglmayr2006}.
These elements are also called Whitney elements,
as they appeared in \cite{whitney1957}.
For a visualization of these ansatz functions, see
\cref{fig:nedelec_ansatz}.
\subsection{2D Version of Maxwell's Equations}
There are two common variants of deriving two dimensional
equations from the three dimensional Maxwell's equations,
which are shown in the following.
We denote the three space dimensions by $x, y,$ and $z$
and we will eliminate the $z$-direction.
In both variants, one assumes that the derivatives
with respect to the $z$ dimension vanish for all
quantities, i.e.
\begin{equation}
0=
\frac{\partial}{\partial z} 
\gls{electric_field}_x
=
\frac{\partial}{\partial z} 
\gls{electric_field}_y
=
\frac{\partial}{\partial z} 
\gls{electric_field}_z
=
\frac{\partial}{\partial z} 
\gls{electric_flux}_x
=
\frac{\partial}{\partial z} 
\gls{electric_flux}_y
=
\frac{\partial}{\partial z} 
\gls{electric_flux}_z
=
\frac{\partial}{\partial z} 
\gls{magnetic_field}_x
= \dots
\qquad
.
\end{equation}
We present the two approaches in the following.
\subsubsection{Normal Electric Approach}
The first common approach is to assume that
the electric field has only a component perpendicular
to the $xy$-plane and the magnetic field has only
components in the plane. One assumes
\begin{equation}
0=
\gls{electric_field}_x =
\gls{electric_field}_y =
\gls{magnetic_field}_z =
\gls{charge_density} =
\gls{current_density}_x =
\gls{current_density}_y
.
\end{equation}
This assumptions lead to a good approximation
of electric fields between two \gls{pec}
plates separated by a very small gap.
With these assumptions, Maxwell's equations reduce to
\begin{subequations}
\begin{align}
\nabla \cdot \gls{electric_flux} &= 
\frac{\partial}{\partial x} \gls{electric_flux}_x + 
\frac{\partial}{\partial y} \gls{electric_flux}_y + 
\frac{\partial}{\partial z} \gls{electric_flux}_z
&=&~
0
&&
\\
\nabla \cdot \gls{magnetic_flux} &= 
\frac{\partial}{\partial x} \gls{magnetic_flux}_x + 
\frac{\partial}{\partial y} \gls{magnetic_flux}_y + 
\frac{\partial}{\partial z} \gls{magnetic_flux}_z
&=&~
\frac{\partial}{\partial x} \gls{magnetic_flux}_x + 
\frac{\partial}{\partial y} \gls{magnetic_flux}_y 
&=&~
0 \\
\nabla \times \gls{electric_field} &= 
\begin{pmatrix}
\partial_y \gls{electric_field}_z - \partial_z \gls{electric_field}_y \\
\partial_z \gls{electric_field}_x - \partial_x \gls{electric_field}_z \\
\partial_x \gls{electric_field}_y - \partial_y \gls{electric_field}_x 
\end{pmatrix}
&=&
\begin{pmatrix}
\partial_y \gls{electric_field}_z \\
- \partial_x \gls{electric_field}_z \\
0
\end{pmatrix}
&=&
- \frac{\partial}{\partial t} \gls{magnetic_flux}\\
\nabla \times \gls{magnetic_field} &= 
\begin{pmatrix}
\partial_y \gls{magnetic_field}_z - \partial_z \gls{magnetic_field}_y \\
\partial_z \gls{magnetic_field}_x - \partial_x \gls{magnetic_field}_z \\
\partial_x \gls{magnetic_field}_y - \partial_y \gls{magnetic_field}_x 
\end{pmatrix}
&=&
\begin{pmatrix}
0\\
0\\
\partial_x \gls{magnetic_field}_y - \partial_y \gls{magnetic_field}_x 
\end{pmatrix}
&=&~
\frac{\partial}{\partial t} \gls{electric_flux} + \gls{current_density}.
\end{align}
\end{subequations}
or in short
\begin{subequations}
\begin{eqnarray}
\frac{\partial}{\partial x} \gls{magnetic_flux}_x + 
\frac{\partial}{\partial y} \gls{magnetic_flux}_y 
&=& 0 \\
\partial_x \gls{magnetic_field}_y - \partial_y \gls{magnetic_field}_x 
&=& \frac{\partial}{\partial t} \gls{electric_flux}_z + \gls{current_density}_z\\
\begin{pmatrix}
\partial_y \gls{electric_field}_z \\
- \partial_x \gls{electric_field}_z
\end{pmatrix}
&=&
- \frac{\partial}{\partial t}
\begin{pmatrix}
\gls{magnetic_flux}_x \\
\gls{magnetic_flux}_y
\end{pmatrix} .
\end{eqnarray}
\end{subequations}
Applying the same procedure as in \cref{sec:reduction one field}, 
we end up with the Helmholtz-like equation
\begin{equation}
\mathrm{div} \left( \frac{1}{\gls{magnetic_permeability}} \mathrm{grad}\left(\gls{electric_field}_z\right) \right)
+ \gls{angular_frequency}^2 \gls{electric_permittivity} \gls{electric_field}_z = i \gls{angular_frequency} \gls{current_density}_z
\end{equation}
for the time harmonic case.
This equation is scalar.
In the development of numerical methods for Maxwell's equations,
it is often a good first step to analyze this equation.
It requires only little modifications of codes which
were written for the heat equation but already
exposes the challenges of inf-sup stable problems.
In this thesis, this equation is
used in \cref{subsect:helmholtz}.
\subsubsection{Normal Magnetic Approach}
\label{sec:maxwell normal magnetic approach}
The second common approach is to assume that the electric field has only
components in the plane and that the magnetic field is
perpendicular to the plane. One assumes
\begin{equation}
0 =
\gls{electric_field}_z
= \gls{magnetic_field}_x
= \gls{magnetic_field}_y
= \gls{current_density}_z .
\end{equation}
With these assumptions, Maxwell's equations reduce to
\begin{subequations}
\begin{align}
\nabla \cdot \gls{electric_flux} &= 
\frac{\partial}{\partial x} \gls{electric_flux}_x + 
\frac{\partial}{\partial y} \gls{electric_flux}_y + 
\frac{\partial}{\partial z} \gls{electric_flux}_z
&&=
\frac{\partial}{\partial x} \gls{electric_flux}_x + 
\frac{\partial}{\partial y} \gls{electric_flux}_y
&&=
\gls{charge_density}\\
\nabla \cdot \gls{magnetic_flux} &= 
\frac{\partial}{\partial x} \gls{magnetic_flux}_x + 
\frac{\partial}{\partial y} \gls{magnetic_flux}_y + 
\frac{\partial}{\partial z} \gls{magnetic_flux}_z
&&=
0
&&=
0 \\
\nabla \times \gls{electric_field} &= 
\begin{pmatrix}
\partial_y \gls{electric_field}_z - \partial_z \gls{electric_field}_y \\
\partial_z \gls{electric_field}_x - \partial_x \gls{electric_field}_z \\
\partial_x \gls{electric_field}_y - \partial_y \gls{electric_field}_x 
\end{pmatrix}
&&=
\begin{pmatrix}
0 \\
0 \\
\partial_x \gls{electric_field}_y - \partial_y \gls{electric_field}_x 
\end{pmatrix}
&&=
- \frac{\partial}{\partial t} \gls{magnetic_flux}\\
\nabla \times \gls{magnetic_field} &= 
\begin{pmatrix}
\partial_y \gls{magnetic_field}_z - \partial_z \gls{magnetic_field}_y \\
\partial_z \gls{magnetic_field}_x - \partial_x \gls{magnetic_field}_z \\
\partial_x \gls{magnetic_field}_y - \partial_y \gls{magnetic_field}_x 
\end{pmatrix}
&&=
\begin{pmatrix}
\partial_y \gls{magnetic_field}_z \\
- \partial_x \gls{magnetic_field}_z \\
0
\end{pmatrix}
&&=
\frac{\partial}{\partial t} \gls{electric_flux} + \gls{current_density}.
\end{align}
\end{subequations}
or in short
\begin{subequations}
\begin{eqnarray}
\frac{\partial}{\partial x} \gls{electric_flux}_x + 
\frac{\partial}{\partial y} \gls{electric_flux}_y 
&=& \gls{charge_density} \\
\partial_x \gls{electric_field}_y - \partial_y \gls{electric_field}_x 
&=&
- \frac{\partial}{\partial t}
\gls{magnetic_flux}_z\\
\begin{pmatrix}
\partial_y \gls{magnetic_field}_z \\
- \partial_x \gls{magnetic_field}_z
\end{pmatrix}
&=& \frac{\partial}{\partial t} 
\begin{pmatrix}
\gls{electric_flux}_x \\
\gls{electric_flux}_y
\end{pmatrix} + 
\begin{pmatrix}
\gls{current_density}_x\\
\gls{current_density}_y
\end{pmatrix}
.
\end{eqnarray}
\end{subequations}
Applying the same procedure as in \cref{sec:reduction one field}, 
one obtains
\begin{equation}
\begin{pmatrix}
\partial_y \frac{1}{\gls{magnetic_permeability}} \partial_y \gls{electric_field}_x 
- \partial_y \frac{1}{\gls{magnetic_permeability}} \partial_x \gls{electric_field}_y
\\
\partial_x \frac{1}{\gls{magnetic_permeability}} \partial_x \gls{electric_field}_y 
- \partial_x \frac{1}{\gls{magnetic_permeability}} \partial_y \gls{electric_field}_x
\end{pmatrix}
+ \gls{angular_frequency}^2 \gls{electric_permittivity} 
\begin{pmatrix}
\gls{electric_field}_x \\
\gls{electric_field}_y
\end{pmatrix}
=
i \gls{angular_frequency}
\begin{pmatrix}
\gls{current_density}_x \\
\gls{current_density}_y
\end{pmatrix}
.
\end{equation}
In the development of numerical methods for Maxwell's equations,
it is often a good second step to analyze this equation.
It is vector-valued, thus requiring vector-valued
ansatz functions like Nédélec ansatz functions and
the usual discretization has a low frequency
instability.
But the problems arising from discretizing this equation
are still two dimensional and thus the number of
unknowns stays rather small in comparison with a 
three dimensional problem.
In this thesis, this equation is used in the example presented in \cref{sec:example 2d maxwell}
and analyzed in \cref{sec:arbilomod experiments for maxwell}.

\section{Examples Used for Numerical Experiments}
In this section, we define the problems which
are used as examples in the numerical experiments.
We use four different examples:
The \exa{}, the \exb{}, the \exc{},
and the \exd{}.
Their numerical treatment is increasingly challenging.
\exb{} adds high contrast and thereby strong nonlocal
effects, \exc{} is additionally vector-valued
and inf-sup stable and \exd{}
is the most challenging,
having channels, inf-sup stability, vector-valued unknowns
and a high \gls{dof} count.
In \cref{sec:rangefinder artificial test}
we will introduce two additional examples:
The \exrangea{} and \exrangeb{}.
They are used only in that particular chapter and thus stated there.
\subsection{Thermal Block Example}
The first numerical example we denote by
\exa{}.
It is a simple parametric example which 
is often used in introductory texts on parameterized
model order reduction (for example in \cite{Patera2007} or \cite{Haasdonk2017a}).

We solve the problem in \cref{def:variational problem}
with the bilinear form and the linear form defined as in
\cref{eq:weak heat equation}
and the space \gls{fspace} defined in \cref{eq:fspace heat}.
\begin{figure}
\begin{center}
%%Created by jPicEdt 1.6-pre1 (revision 2039): mixed JPIC-XML/LaTeX format
%%Tue Jan 22 17:06:33 CET 2019
%%Begin JPIC-XML
%<?xml version="1.0" standalone="yes"?>
%<jpic x-min="15" x-max="60" y-min="15" y-max="60" auto-bounding="true">
%<parallelogram p1="(40,40)"
%	 p2="(60,40)"
%	 p3="(60,60)"
%	 fill-color="#ffccff"
%	 fill-style="solid"
%	 />
%<parallelogram p1="(20,40)"
%	 p2="(40,40)"
%	 p3="(40,60)"
%	 fill-color="#ffcccc"
%	 fill-style="solid"
%	 />
%<parallelogram p1="(40,20)"
%	 p2="(60,20)"
%	 p3="(60,40)"
%	 fill-color="#ccffcc"
%	 fill-style="solid"
%	 />
%<parallelogram p1="(20,20)"
%	 p2="(40,20)"
%	 p3="(40,40)"
%	 fill-color="#ffffcc"
%	 fill-style="solid"
%	 />
%<text anchor-point="(20,15)"
%	 >
%0
%</text>
%<text anchor-point="(60,15)"
%	 >
%1
%</text>
%<text anchor-point="(15,20)"
%	 >
%0
%</text>
%<text anchor-point="(15,60)"
%	 >
%1
%</text>
%<text anchor-point="(30,30)"
%	 >
%$\gls{parameter}_{00}$
%</text>
%<text anchor-point="(50,30)"
%	 >
%$\gls{parameter}_{10}$
%</text>
%<text anchor-point="(30,50)"
%	 >
%$\gls{parameter}_{01}$
%</text>
%<text anchor-point="(50,50)"
%	 >
%$\gls{parameter}_{11}$
%</text>
%</jpic>
%%End JPIC-XML
\ifx\JPicScale\undefined\def\JPicScale{1}\fi
\unitlength \JPicScale mm
\begin{tikzpicture}[x=\unitlength,y=\unitlength,inner sep=0pt]
\definecolor{userFillColour}{rgb}{1,0.8,1}
\draw [fill=userFillColour](40,40) rectangle (60,60);
\definecolor{userFillColour}{rgb}{1,0.8,0.8}
\draw [fill=userFillColour](20,40) rectangle (40,60);
\definecolor{userFillColour}{rgb}{0.8,1,0.8}
\draw [fill=userFillColour](40,20) rectangle (60,40);
\definecolor{userFillColour}{rgb}{1,1,0.8}
\draw [fill=userFillColour](20,20) rectangle (40,40);
\draw (20,15) node {0};
\draw (60,15) node {1};
\draw (15,20) node {0};
\draw (15,60) node {1};
\draw (30,30) node {$\gls{parameter}_{00}$};
\draw (50,30) node {$\gls{parameter}_{10}$};
\draw (30,50) node {$\gls{parameter}_{01}$};
\draw (50,50) node {$\gls{parameter}_{11}$};
\end{tikzpicture}
\end{center}
\caption[Geometry of \exa{}.]
{Geometry of \exa{}: Four blocks of constant, but parameterized, heat conductivity.}
\label{fig:thermal block geometry}
\end{figure}
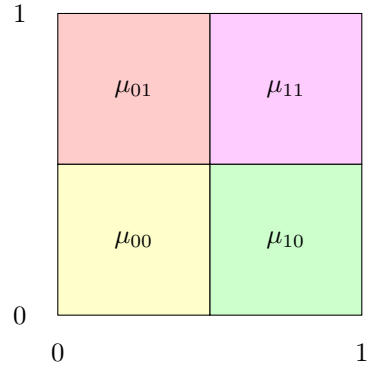
On the domain $\gls{domain} := (0, 1)^2$ we consider the stationary
heat conduction (cf. \cref{eq:stationary heat equation})
\begin{equation}
- \mathrm{div} \Big( \gls{heat_conductivity}(x) \nabla \gls{sol}(x) \Big) = 1
\label{eq:thermalblock}
\end{equation}
with Dirichlet zero boundary conditions on $\gls{boundary}_D := \partial \gls{domain}$. The heat conductivity is defined as
\begin{equation}
\gls{heat_conductivity}_{\gls{parameter}} := \sum_{i,j=0}^1 {\gls{parameter}}_{ij}\cdot\chi_{[i/2, (i+1)/2]\times [j/2, (j+1)/2]}
\end{equation}
(see \cref{fig:thermal block geometry}), 
denoting by $\chi_A$ the characteristic function of the set $A$.
This heat conductivity has an affine parameter decomposition
as specified in \cref{eq:affineafheat} and thus
the corresponding bilinear form has an affine parameter decomposition.
Denoting the local blocks by 
\begin{equation}
\begin{split}
\gls{domain}_{00} := (0,0.5) \times (0, 0.5)
\qquad
\gls{domain}_{10} := (0.5,1) \times (0, 0.5)
\\
\qquad
\gls{domain}_{01} := (0,0.5) \times (0.5, 1)
\qquad
\gls{domain}_{11} := (0.5,1) \times (0.5, 1)
\end{split}
\end{equation}
the affine decomposition of the bilinear form is
\begin{equation}
\begin{split}
\gls{a}_{\gls{parameter}}(u,v) =
\gls{parameter}_{00} \int_{\gls{domain}_{00}} \nabla u \cdot \nabla v \dx
+
\gls{parameter}_{10} \int_{\gls{domain}_{10}} \nabla u \cdot \nabla v \dx\\
\hfill
+
\gls{parameter}_{01} \int_{\gls{domain}_{01}} \nabla u \cdot \nabla v \dx
+
\gls{parameter}_{11} \int_{\gls{domain}_{11}} \nabla u \cdot \nabla v \dx
.
\end{split}
\end{equation}
The parameters
${\gls{parameter}} :=\{{\gls{parameter}}_{ij}\}_{i,j=0}^1$ were allowed to vary in the space $\gls{parameterspace} = [0.1, 1.0]^4$.

The problem is discretized using linear finite elements on a regular mesh with
$500\times 500 \times 4$ triangular entities,
an example solution is shown in \cref{fig:thermal block solution}. 
Discretization is done using \gls{pymor}.
The \exa{} is used
in \cref{sec:thermal block numericalexample}.
\begin{figure}
\begin{center}
\includegraphics[scale=0.3, trim=00 230 0 200, clip]{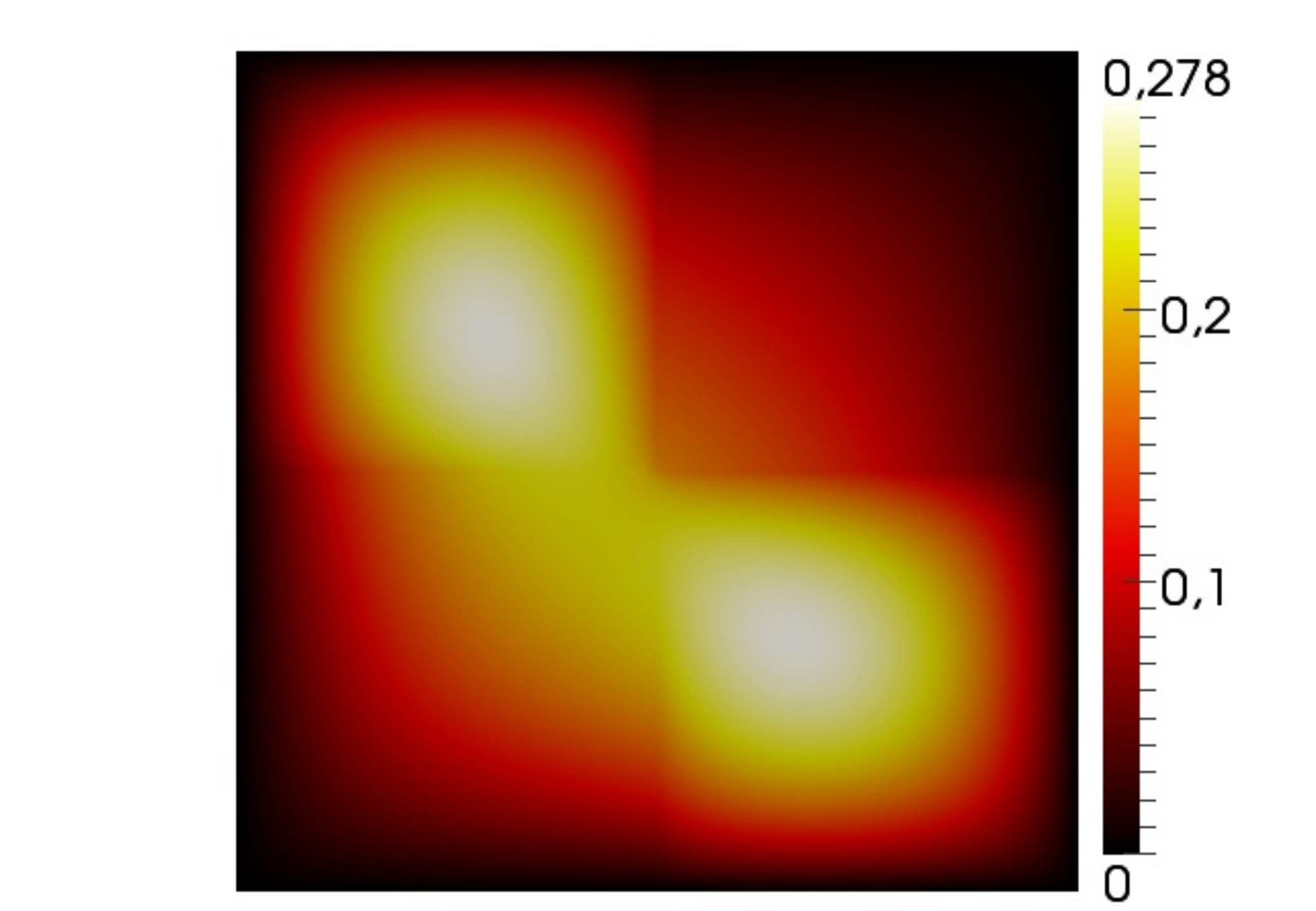}
\end{center}
\caption[High-dimensional solution of \exa{}.]
        {High-dimensional solution of \cref{eq:thermalblock} in \exa{} for ${\gls{parameter}} = (0.1, 1.0, 0.4, 0.1)$.
(reproduction: \cref{repro:fig:thermal block solution})
}
\label{fig:thermal block solution}
\end{figure}

\subsection{Example for Stationary Heat Equation With Channels}
\label{sec:exb}
The second numerical example is an example of heat conduction,
but with the additional features of having a very high
contrast (up to $10^5$) and conducting channels. We denote the
second numerical example by \exb.
We solve the problem in \cref{def:variational problem}
with the bilinear form and the linear form defined as in
\cref{eq:weak heat equation}
and the space \gls{fspace} defined in \cref{eq:fspace heat}.
\begin{figure}
\centering
\includegraphics[width=0.3\textwidth]{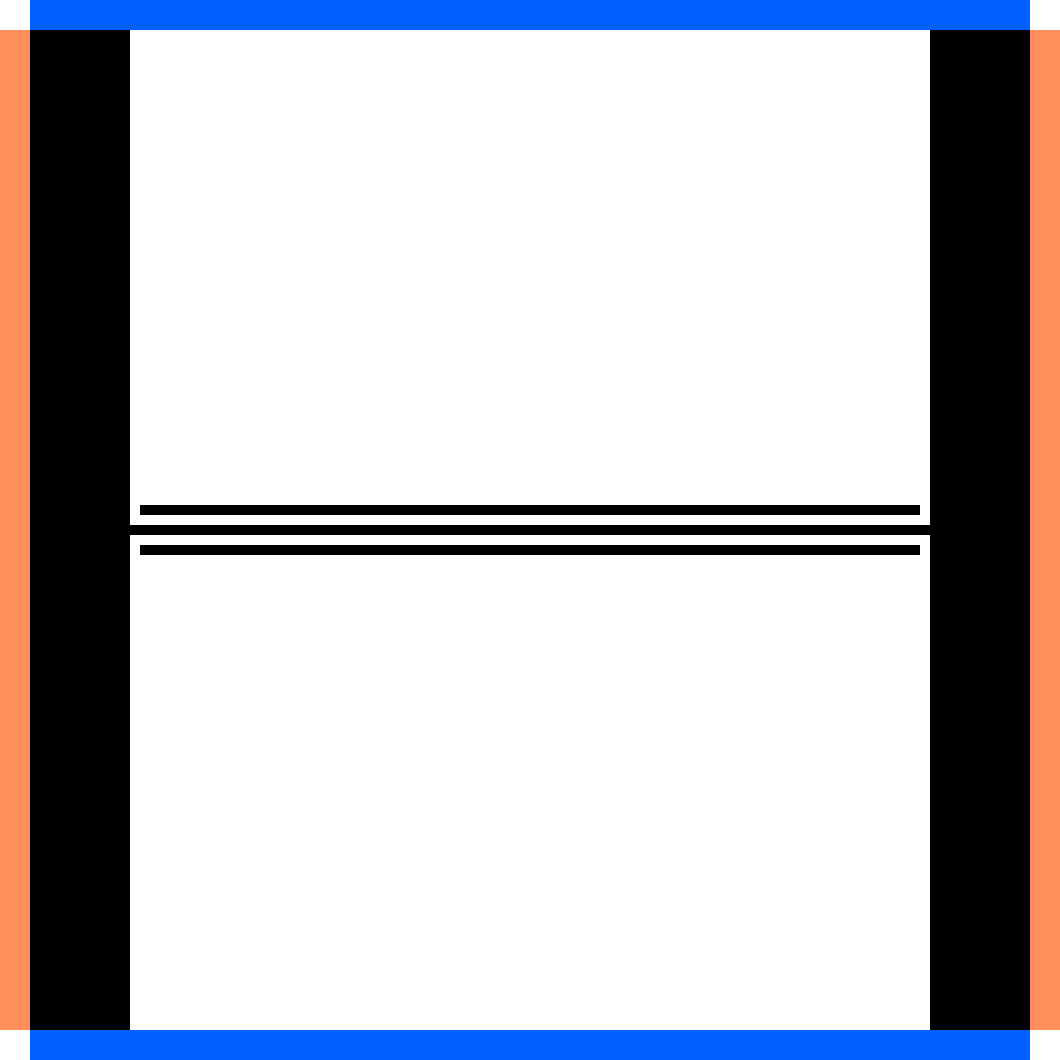}
\caption[First structure in sequence of simulated structures in \exb.]
{First structure in sequence of simulated structures in \exb. Unit square with
high and low conductivity regions. White: constant conductivity region
($\gls{heat_conductivity} = 1$), black: parameterized high conductivity region
($\gls{heat_conductivity}_{\gls{parameter}} = 1 + {\gls{parameter}}$).
Homogeneous Neumann boundaries $\gls{boundary}_N$ at top and bottom marked blue,
inhomogeneous Dirichlet boundaries $\gls{boundary}_D$ at left and
right marked light red.}
\label{fig:geo1}
\end{figure}

\begin{figure}
\footnotesize
\centering
\begin{tabular}{c|c|c|c|cc}
\# & left (zoom) & full structure & right (zoom) &
\multicolumn{2}{l}{solution for $\gls{heat_conductivity}_{\gls{parameter}} = 10^5$}
\\
\hline
1 &
\encloseimage{\includegraphics[width=0.17\textwidth,frame,trim=50 450 850 450,clip]{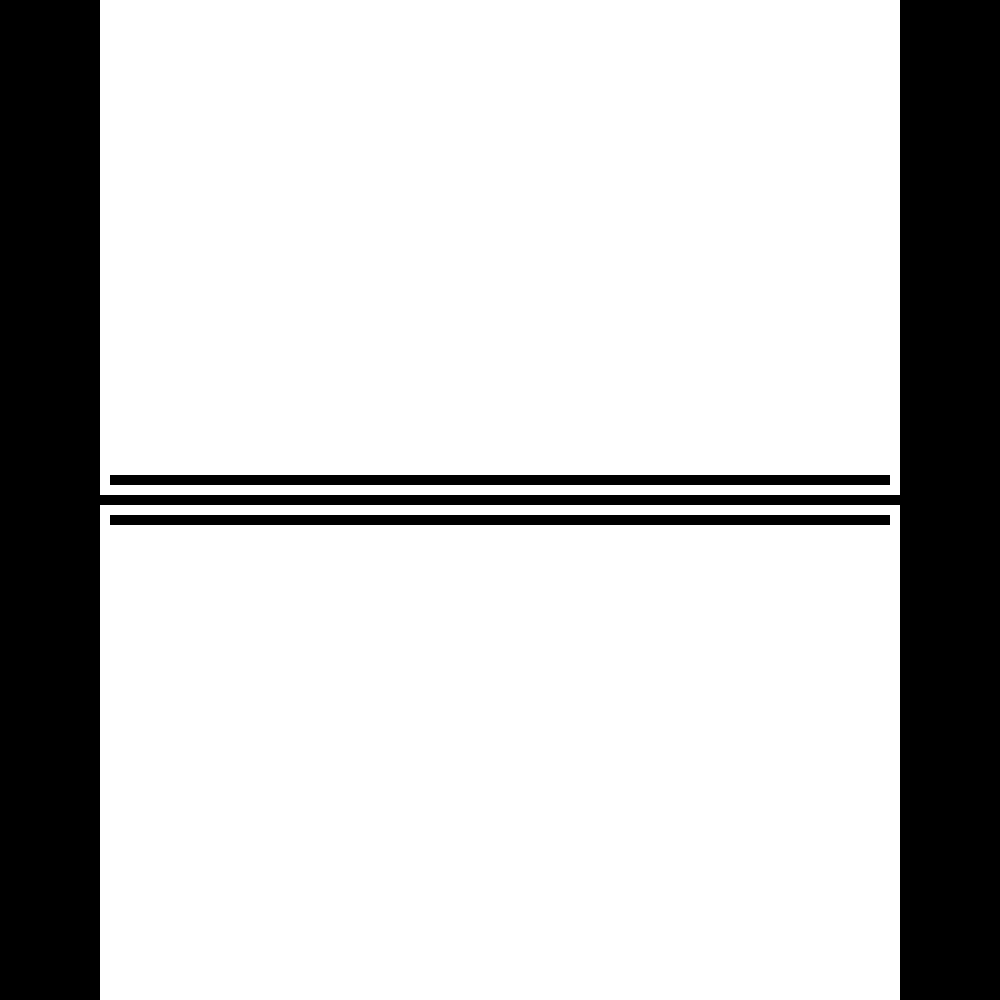}} &
\encloseimage{\includegraphics[width=0.17\textwidth,frame]{h_seq1}} &
\encloseimage{\includegraphics[width=0.17\textwidth,frame,trim=850 450 50 450,clip]{h_seq1}} &
\encloseimage{\includegraphics[width=0.17\textwidth]{sol1}} &
\hspace{-25pt}
\encloseimage{
\includegraphics[width=0.05\textwidth]{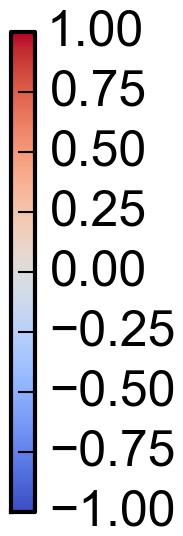}
}
\\
\hline
2 &
\encloseimage{\includegraphics[width=0.17\textwidth,frame,trim=50 450 850 450,clip]{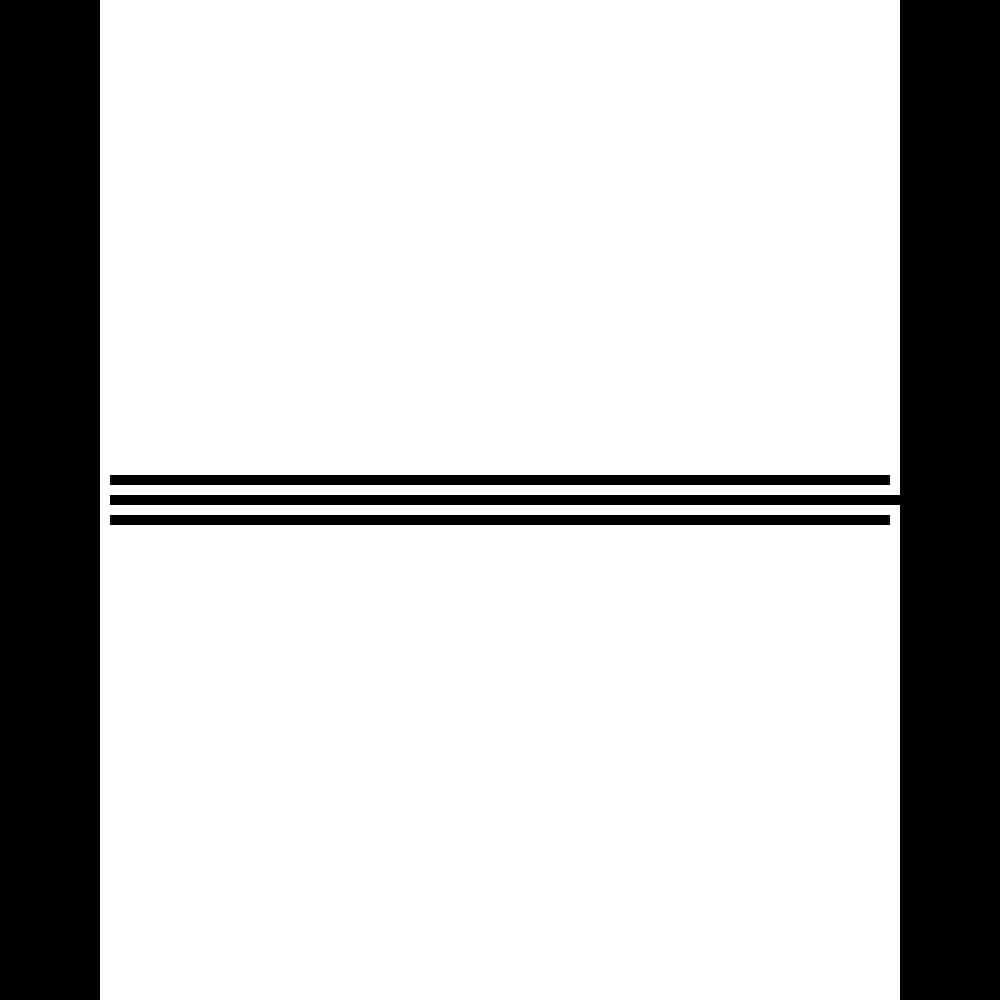}} &
\encloseimage{\includegraphics[width=0.17\textwidth,frame]{h_seq2}} &
\encloseimage{\includegraphics[width=0.17\textwidth,frame,trim=850 450 50 450,clip]{h_seq2}} &
\encloseimage{\includegraphics[width=0.17\textwidth]{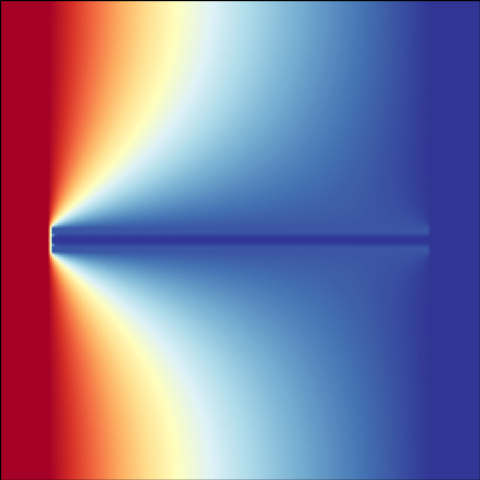}}
\\
\hline
3 &
\encloseimage{\includegraphics[width=0.17\textwidth,frame,trim=50 450 850 450,clip]{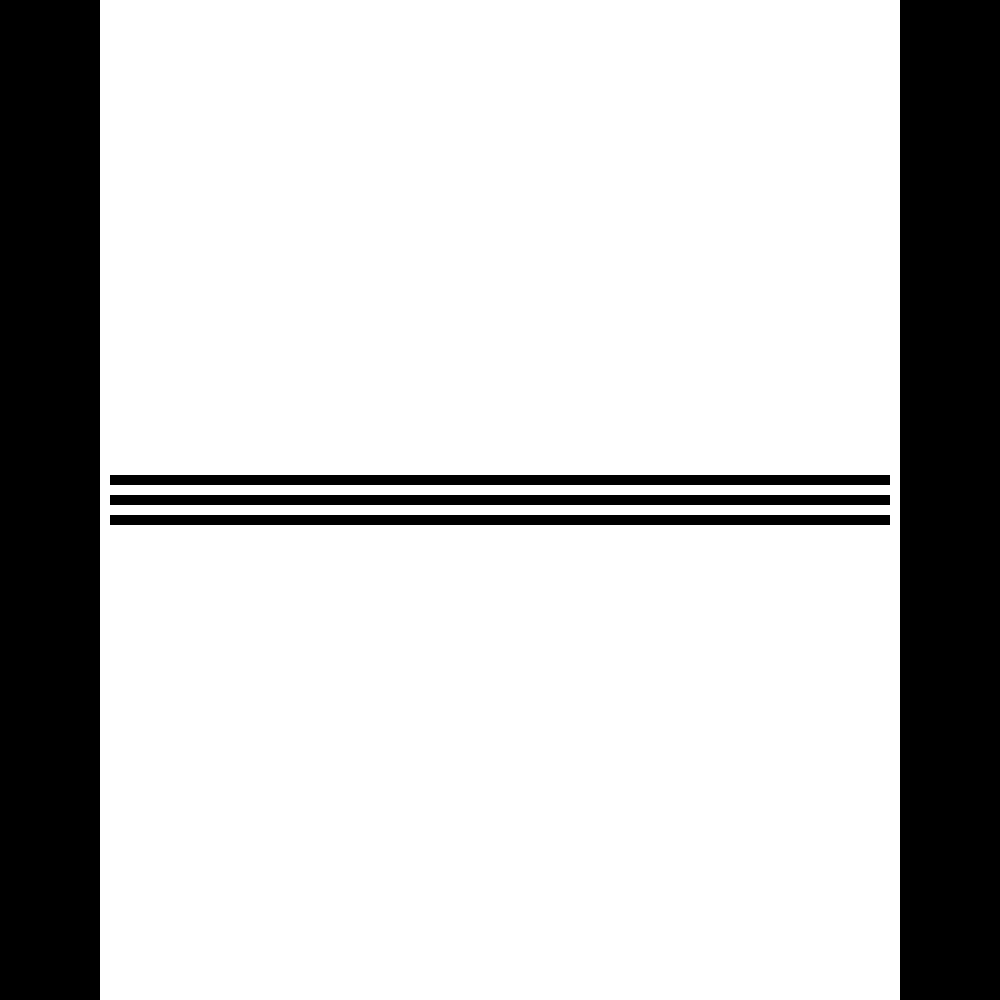}} &
\encloseimage{\includegraphics[width=0.17\textwidth,frame]{h_seq3}} &
\encloseimage{\includegraphics[width=0.17\textwidth,frame,trim=850 450 50 450,clip]{h_seq3}} &
\encloseimage{\includegraphics[width=0.17\textwidth]{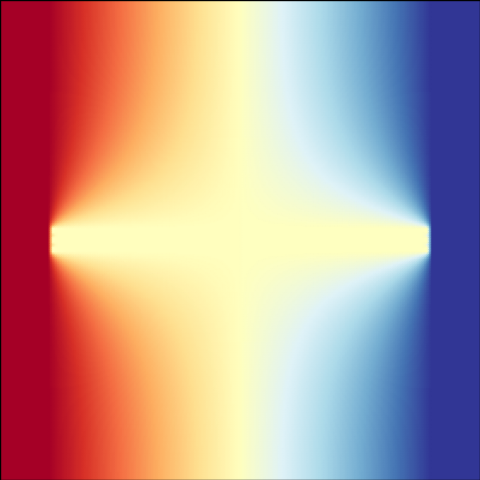}}
\\
\hline
4 &
\encloseimage{\includegraphics[width=0.17\textwidth,frame,trim=50 450 850 450,clip]{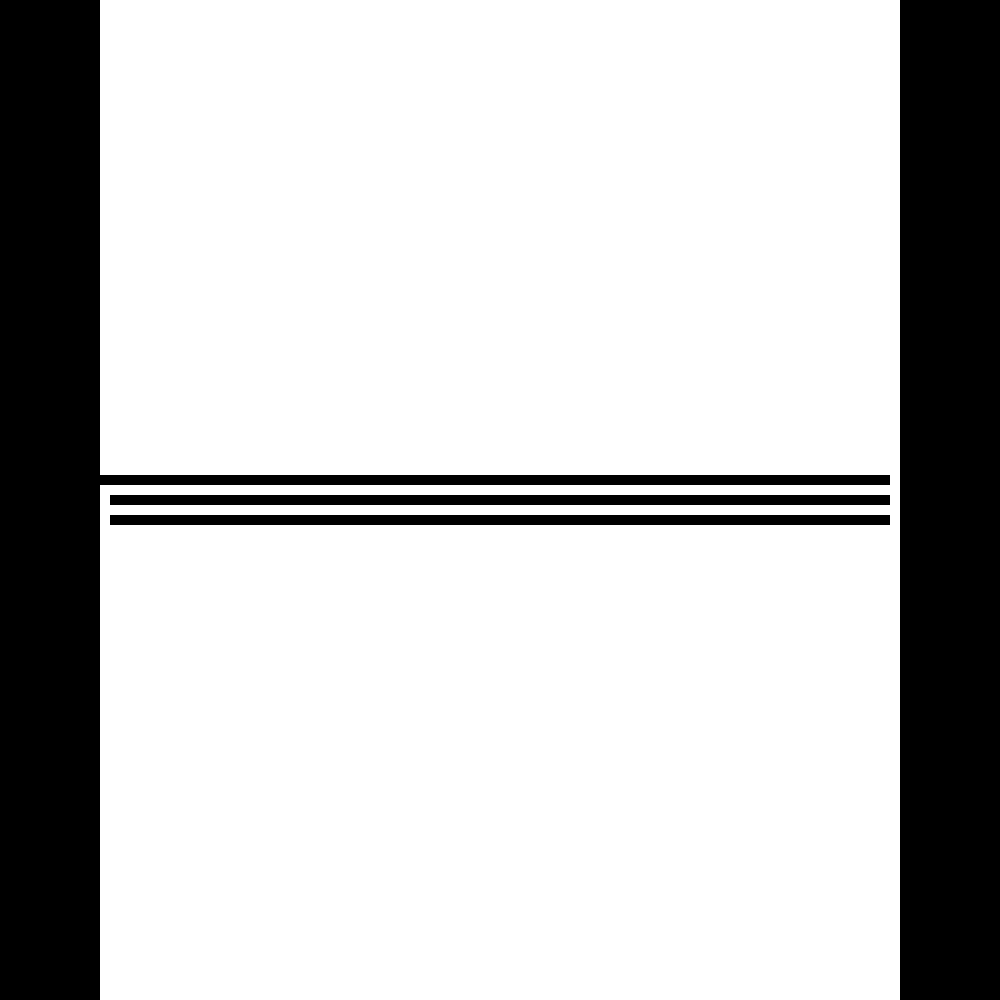}} &
\encloseimage{\includegraphics[width=0.17\textwidth,frame]{h_seq4}} &
\encloseimage{\includegraphics[width=0.17\textwidth,frame,trim=850 450 50 450,clip]{h_seq4}} &
\encloseimage{\includegraphics[width=0.17\textwidth]{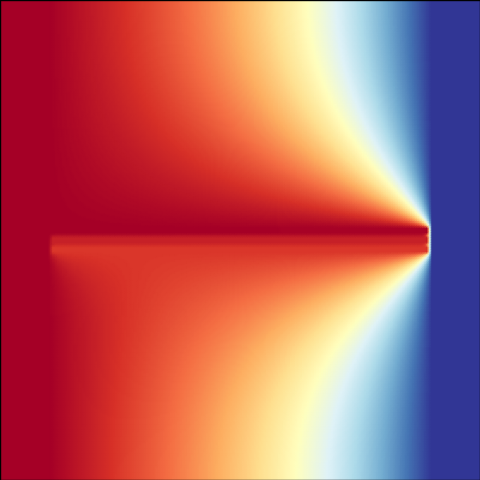}}
\\
\hline
5 &
\encloseimage{\includegraphics[width=0.17\textwidth,frame,trim=50 450 850 450,clip]{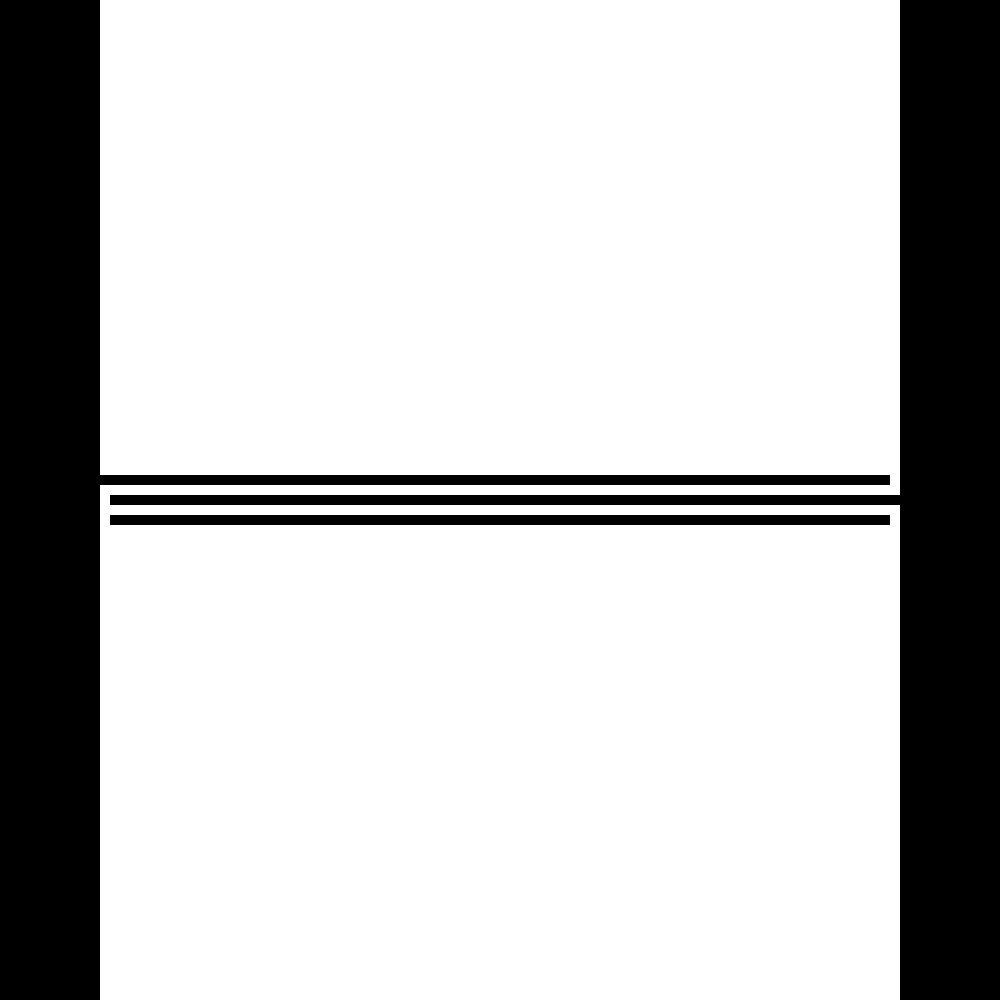}} &
\encloseimage{\includegraphics[width=0.17\textwidth,frame]{h_seq5}} &
\encloseimage{\includegraphics[width=0.17\textwidth,frame,trim=850 450 50 450,clip]{h_seq5}} &
\encloseimage{\includegraphics[width=0.17\textwidth]{sol5}}
\end{tabular}
\caption[Sequence of structures simulated in \exb{} along with solutions.]
{Sequence of structures simulated in \exb{} along with solutions for
one parameter value. Localized changes in the geometry cause global changes
in the solution.
(reproduction: \cref{repro:fig:structures})
}
\label{fig:structures}
\end{figure}

As we target on changing geometries with local changes,
we define a sequence of five geometries.
We consider
heat conduction without heat sources in the domain
on the unit square $\gls{domain} := (0,1)^2$.
We approximate $\gls{sol}_{\gls{parameter}}$ solving
\begin{equation}
-\nabla \cdot ( \sigma_{\gls{parameter}} \nabla \gls{sol}_{\gls{parameter}}) = 0
.
\end{equation}
We apply homogeneous Neumann boundaries at the top and the bottom
\begin{equation}
\gls{boundary}_N
	:= ( (0,1) \ \times \ 0 ) \cup ( (0,1) \ \times \ 1 )
\end{equation}
and inhomogeneous Dirichlet boundaries at the left and right:
\begin{equation}
\gls{sol}_{\gls{parameter}} = 1 \ \mathrm{on} \ \gls{boundary}_{D,1} := 0 \ \times \ (0,1)
\qquad
\gls{sol}_{\gls{parameter}} = -1 \ \mathrm{on} \ \gls{boundary}_{D,-1} := 1 \ \times \ (0,1)
\end{equation}
(see also \cref{fig:geo1}).

The unit square is partitioned into two regions:
one region with constant heat conductivity $\gls{heat_conductivity}_{\gls{parameter}} = 1$ and
one region with constant, but parameterized
conductivity
\begin{equation}\gls{heat_conductivity}_{\gls{parameter}} = 1 + {\gls{parameter}}
\text{ where }
\gls{parameter} \in \gls{parameterspace} := [10^0, 10^5].
\end{equation}
We call this second region the ``high conductivity region'' $\Omega_{h,i}$.
For reproduction or benchmarking, we give
precise definitions in the following.
In the first geometry, the high conductivity region is
\begin{eqnarray}
\Omega_{h,1} &:=&
\big[(0.0 ,0.1) \times (0, 1) \big]
\ \cup \
\big[(0.9, 1.0) \times (0, 1) \big]
\ \cup \ \\ &&
\nonumber
\big[(0.11, 0.89) \times (0.475, 0.485) \big]
\ \cup \
\big[(0.1, 0.9) \times (0.495, 0.505) \big]
\ \cup \ \\ &&
\nonumber
\big[(0.11, 0.89) \times (0.515, 0.525) \big],
\end{eqnarray}
see also \cref{fig:geo1}.
The inhomogeneous Dirichlet boundary conditions are handled by a shift function
$u_s$ and we solve for $u_0$ having homogeneous Dirichlet values where
$u = u_0 + u_s$.
The parameterization of the conductivity in the high conductivity region,
$\gls{heat_conductivity}_{\gls{parameter}}$, leads to a term in the affine decomposition of the bilinear form.
The affine decomposition of the bilinear form and linear form are
\begin{eqnarray}
a_{\gls{parameter}}(u_0,\varphi) &=& {\gls{parameter}} \int_{\Omega_{h,i}}  \nabla u_0 \cdot \nabla \varphi \dx + \int_{\Omega}  \nabla u_0 \cdot \nabla \varphi \dx \\
\nonumber
\gls{f}_{\gls{parameter}}(\varphi) &=& - {\gls{parameter}} \int_{\Omega_{h,i}} \nabla u_s \cdot \nabla \varphi \dx - \int_{\Omega} \nabla u_s \cdot \nabla \varphi \dx .
\end{eqnarray}
The coercivity constant $\gls{coercconst}_{\gls{parameter}}$ of the corresponding bilinear form with
respect to the $H^1$ norm is bounded from
below by $\gls{coercconst}_{LB} := \frac{\gls{heat_conductivity}_\mathrm{min}}{\gls{friedrichs}^2+1}$ where $\gls{heat_conductivity}_\mathrm{min}$
is the minimal conductivity and $\gls{friedrichs} = \frac{1}{\sqrt{2}\pi}$.
The problem is discretized using $P^1$ ansatz functions on a structured triangle mesh with maximum triangle size $\gls{h}$. The mesh is carefully constructed to resolve the
high conductivity regions, i.e.~$\gls{h}$ is chosen to be $1/n$ where $n$ is a multiple of 200.
To mimic ``arbitrary local modifications'', the high conductivity region is changed slightly four times, which leads to a sequence of five structures to be simulated in total. The high conductivity regions are defined as:
\begin{eqnarray}
\Omega_{h,2} &:=& \Omega_{h,1} \setminus \big[(0.1, 0.11) \times (0.495, 0.505) \big]\\
\Omega_{h,3} &:=& \Omega_{h,2} \setminus \big[(0.89, 0.9) \times (0.495, 0.505) \big] \nonumber\\
\Omega_{h,4} &:=& \Omega_{h,3} \cup \big[(0.1, 0.11) \times (0.515, 0.525) \big] \nonumber\\
\Omega_{h,5} &:=& \Omega_{h,4} \cup \big[(0.89, 0.9) \times (0.495, 0.505) \big]
\nonumber
\end{eqnarray}
These modifications only affect a very small portion of the domain
(actually, only 0.01\%), but for high contrast configurations, they lead to global
changes in the solution, see \cref{fig:structures}.
The \exb{} is used in \cref{sec:numerical_example}.
A slightly modified version of this example,
which is driven by a right hand side instead of inhomogeneous 
Dirichlet boundary conditions is defined and used in \cref{sec:enrichment numerical example}.

\subsection{Example for 2D Maxwell Equation}
\label{sec:example 2d maxwell}
\begin{figure}
\begin{center}
\includegraphics[width=0.3\textwidth]{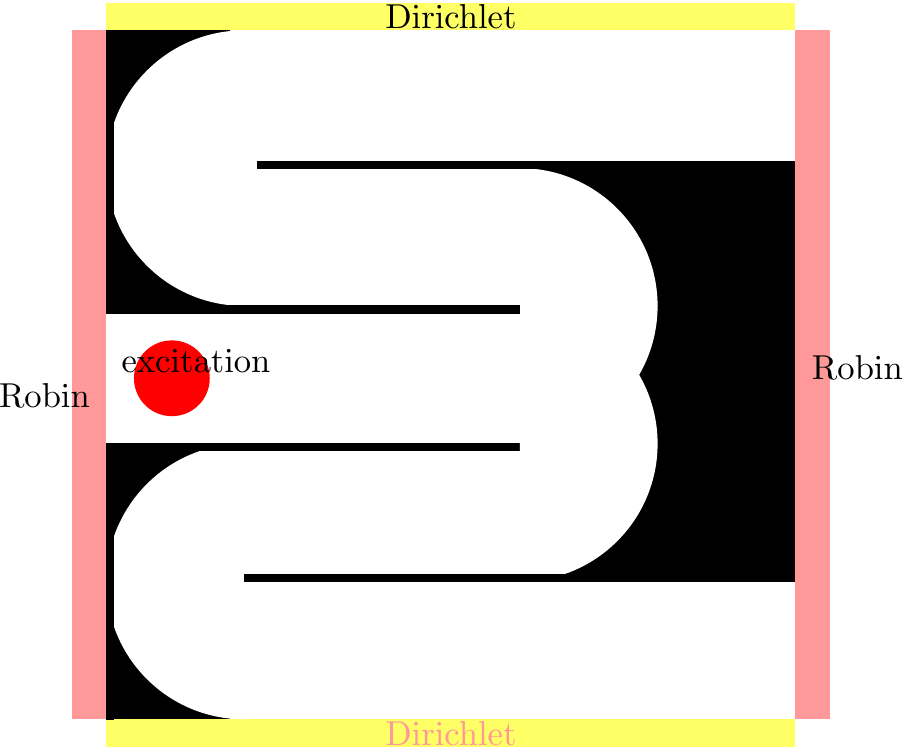}
\hspace{20pt}
\includegraphics[width=0.3\textwidth]{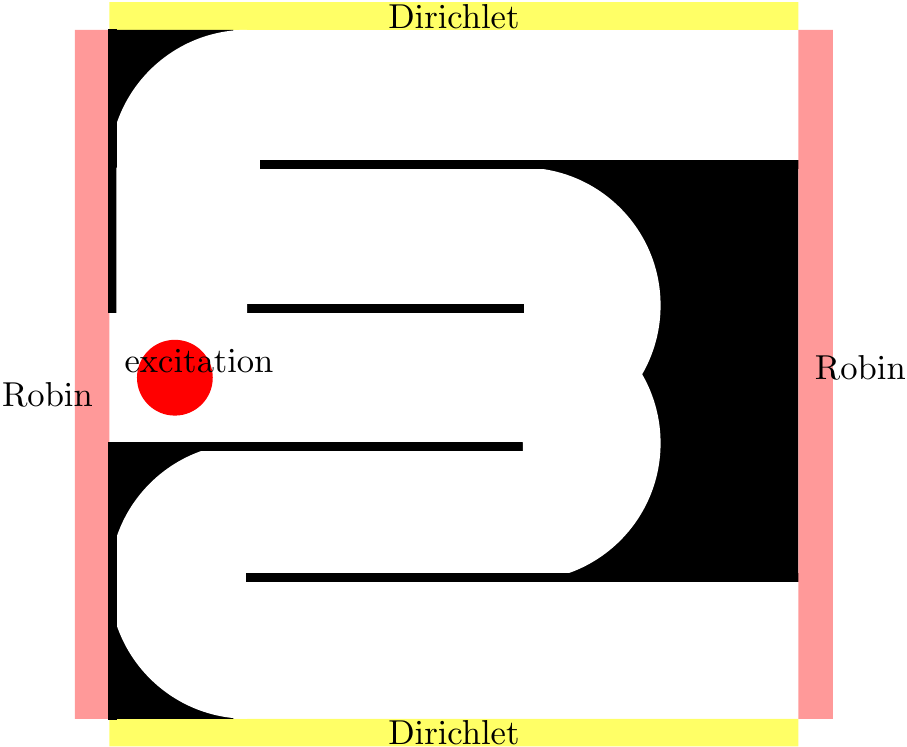}
\end{center}
\caption[Geometries simulated in \exc.]
{Geometries simulated in \exc.
Black area is not part of the domain and treated as Dirichlet zero boundary. Note that the change is topology changing.}
\label{fig:example 2d maxwell}
\end{figure}
The third example is a two dimensional Maxwell example.
We solve the problem in \cref{def:variational problem}
with the bilinear form and the linear form defined as in
\cref{eq:maxwell bilinear linear form} 
and the space \gls{fspace} as defined in \cref{eq:maxwellspace}.
With the two dimensional Maxwell's equations in the normal magnetic approach 
as specified in \cref{sec:maxwell normal magnetic approach}.
We simulate on the unit square $\gls{domain} := (0,1)\times (0,1)$ with robin boundary conditions
at $\gls{boundary}_R := 0 \times (0,1) \cup 1 \times (0,1)$ and Dirichlet zero boundary conditions
at $\gls{boundary}_D :=(0,1) \times 0 \cup (0,1) \times 1$. The surface impedance
parameter $\gls{electric_conductivity}$ is chosen as the impedance of free space,
$\gls{electric_conductivity} = 1 / 376.73$ Ohm. We introduce some structure by inserting \gls{pec} into the domain
and thus shrinking \gls{domain} and enlarging $\gls{boundary}_D$,
see \cref{fig:example 2d maxwell}. The \gls{pec} is modeled as Dirichlet zero boundary condition. 
Note that the geometry is slightly asymmetric intentionally, to produce more interesting behavior.
The mesh does not resolve the geometry. Rather, we use a structured mesh and remove all degrees of freedom which
are associated with an edge whose center is inside of the \gls{pec} structure. The structured mesh consists of
100 $\times$ 100 squares, each of which is divided into four triangles. With each edge, one degree of freedom is
associated, which results in 60200 degrees of freedom, some of which are ``disabled'' because they are
in \gls{pec} or on a Dirichlet boundary. The parameter domain is the range from 10 MHz to 1 GHz.
For the discretization, the software package \gls{pymor} is used.
To simulate an ``arbitrary local modification'', the part of the \gls{pec} within $(0.01, 0.2) \times (0.58,0.80)$ is removed and the simulation domain is enlarged.
The excitation is a current of
\begin{equation}
\gls{current_density}(x,y) := \mathrm{exp}\left( - \frac{(x - 0.1)^2 + (y - 0.5)^2}{1.25 \cdot 10^{-3}}\right) \cdot \mathrm{e}_y
.
\end{equation}
\begin{figure}
\begin{center}
\begin{tabular}{ccc}
\includegraphics[width=0.25\textwidth]{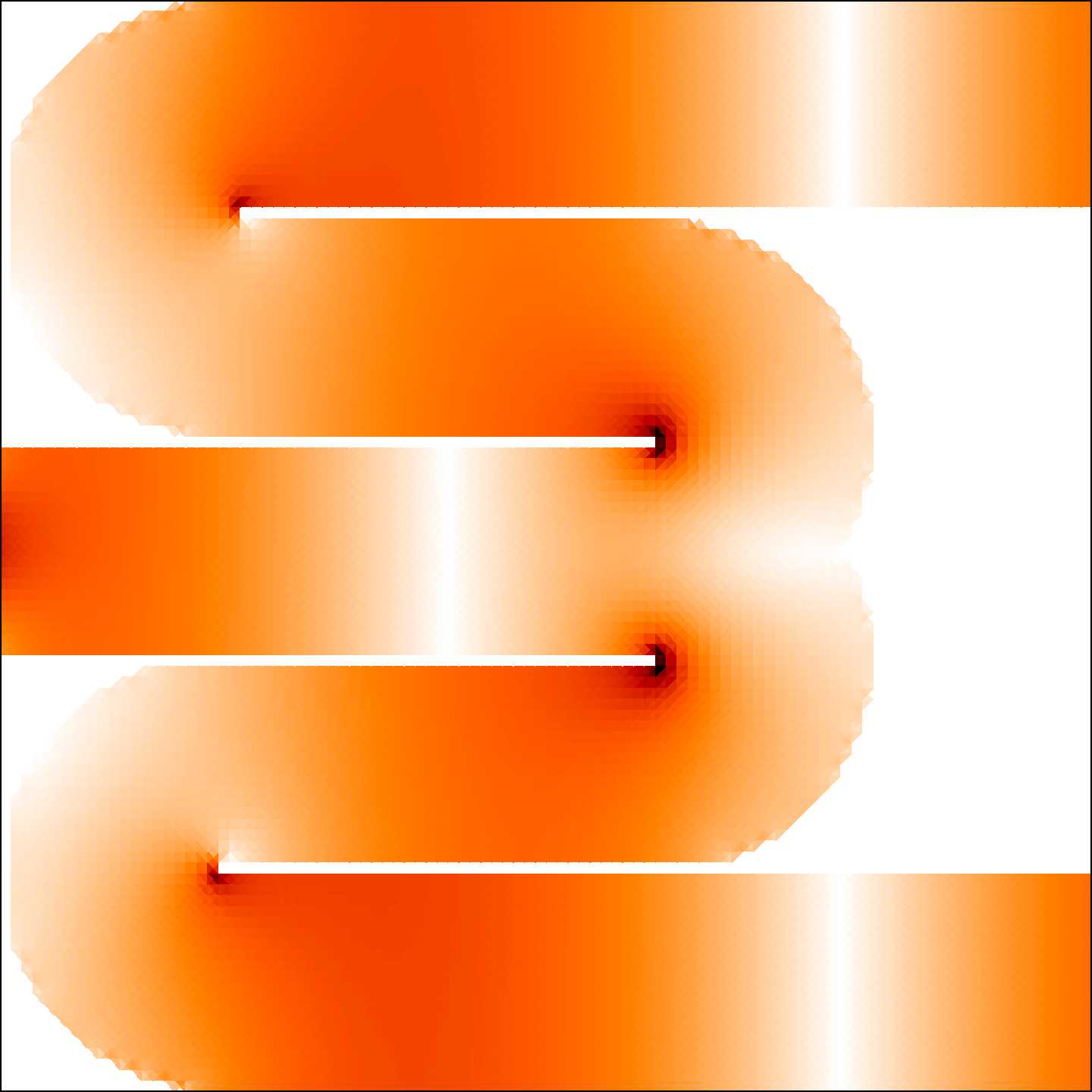}&
\includegraphics[width=0.25\textwidth]{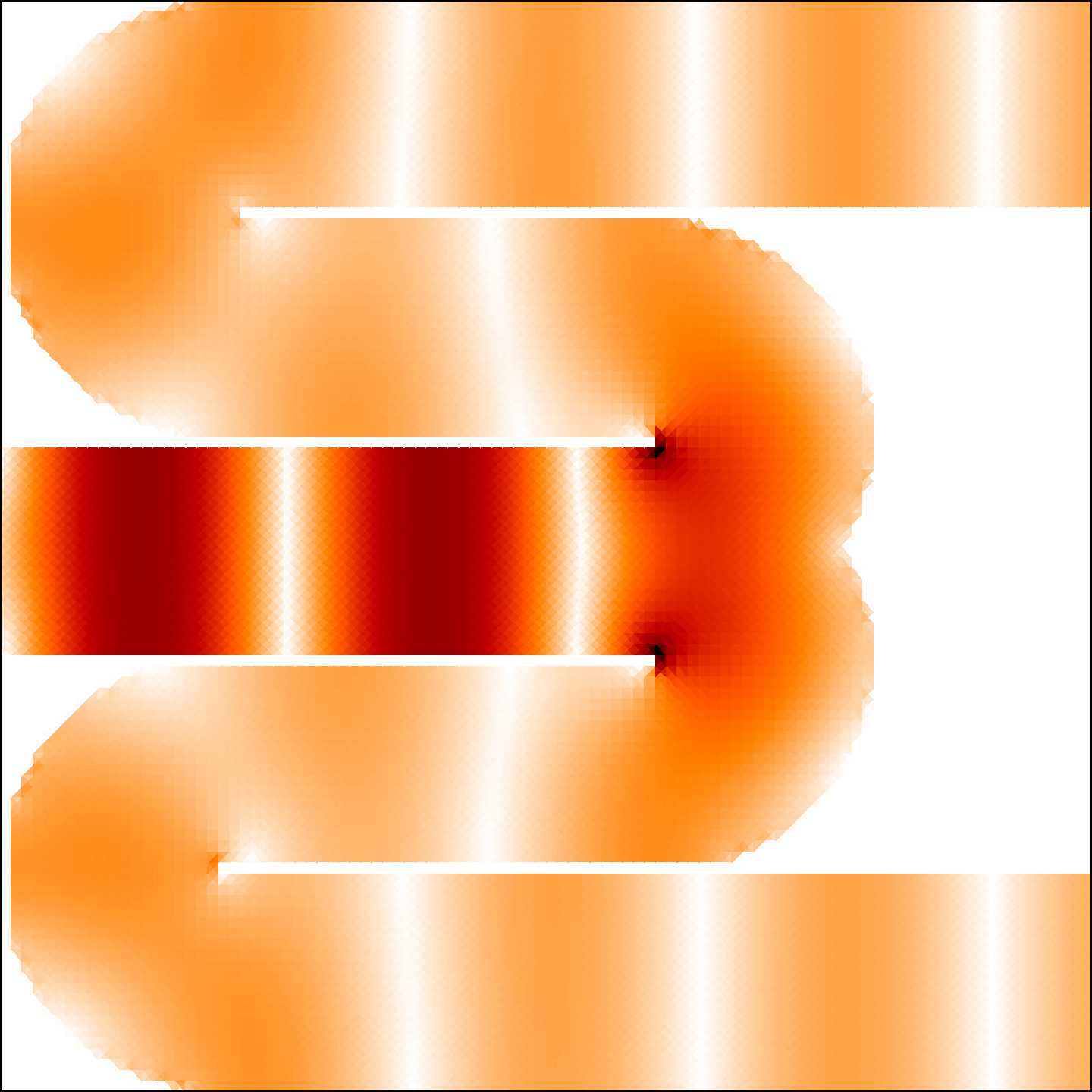}&
\includegraphics[width=0.25\textwidth]{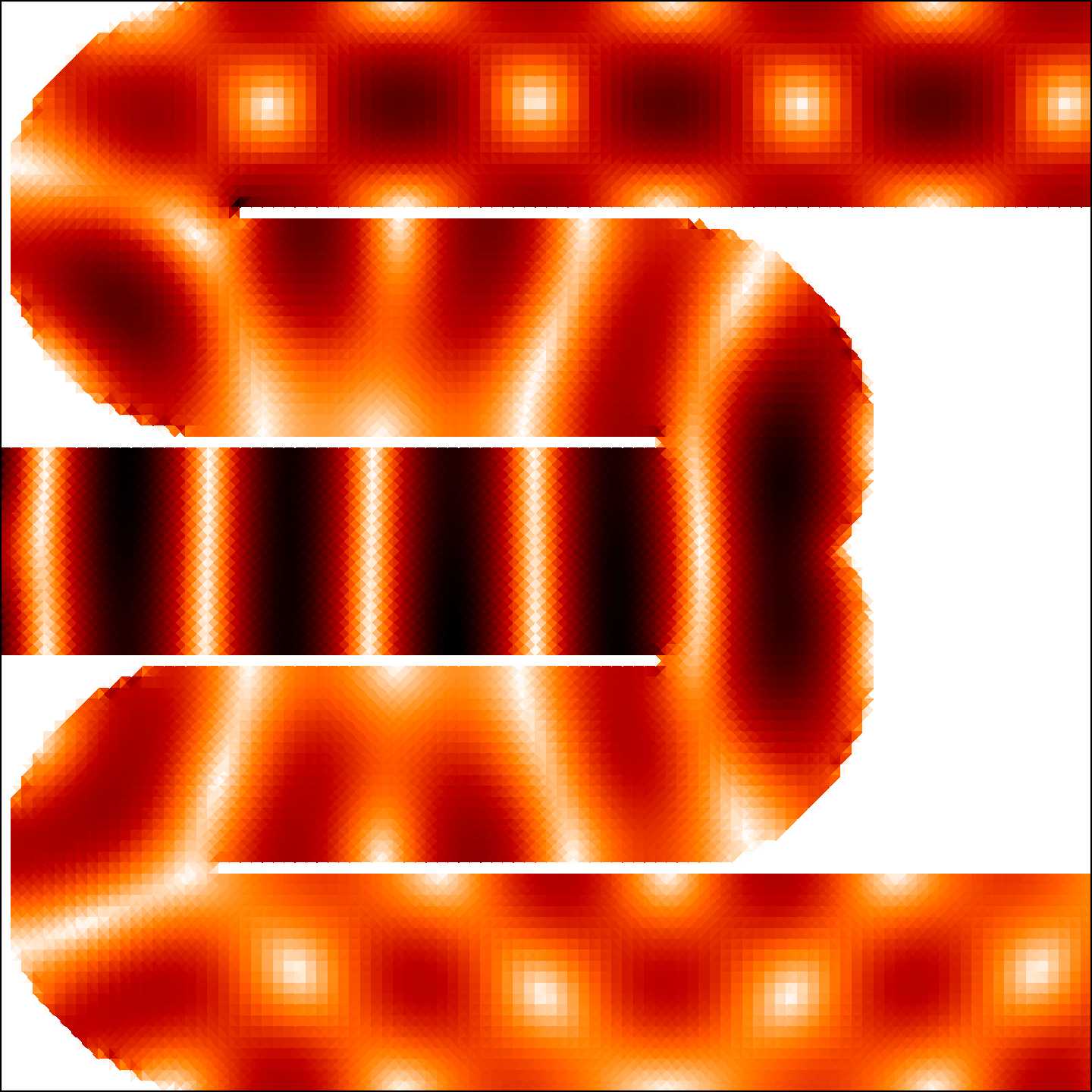}\\
\includegraphics[width=0.25\textwidth]{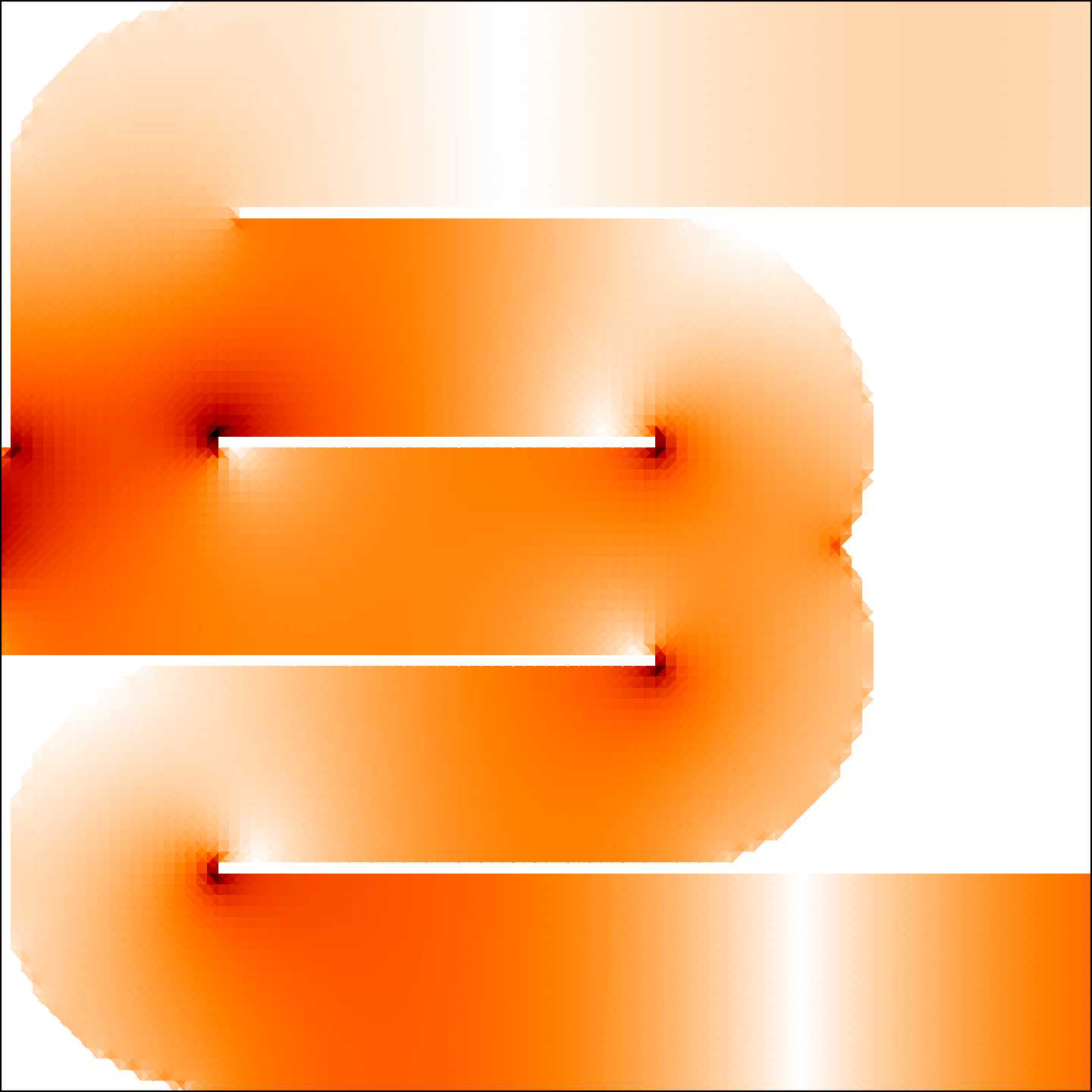}&
\includegraphics[width=0.25\textwidth]{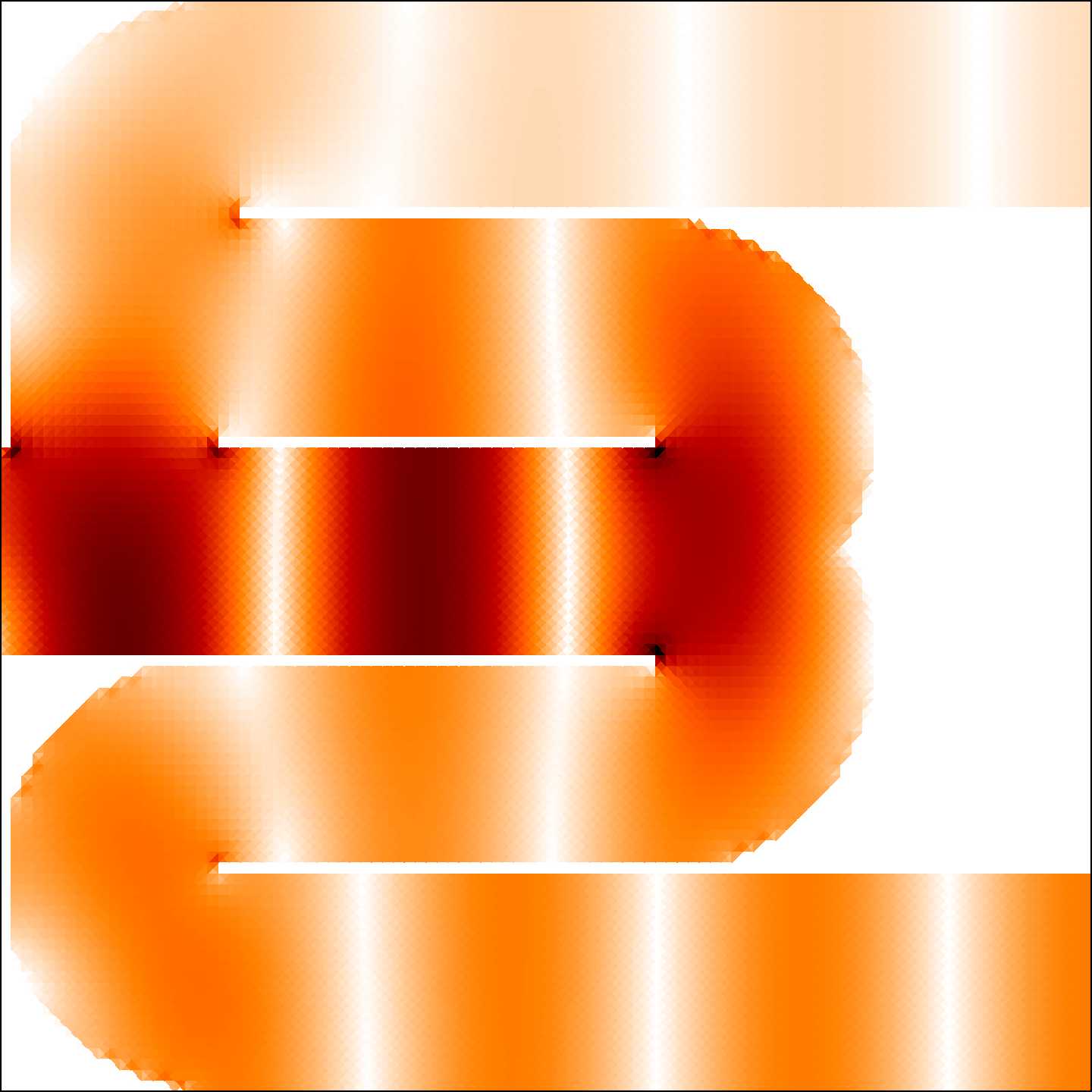}&
\includegraphics[width=0.25\textwidth]{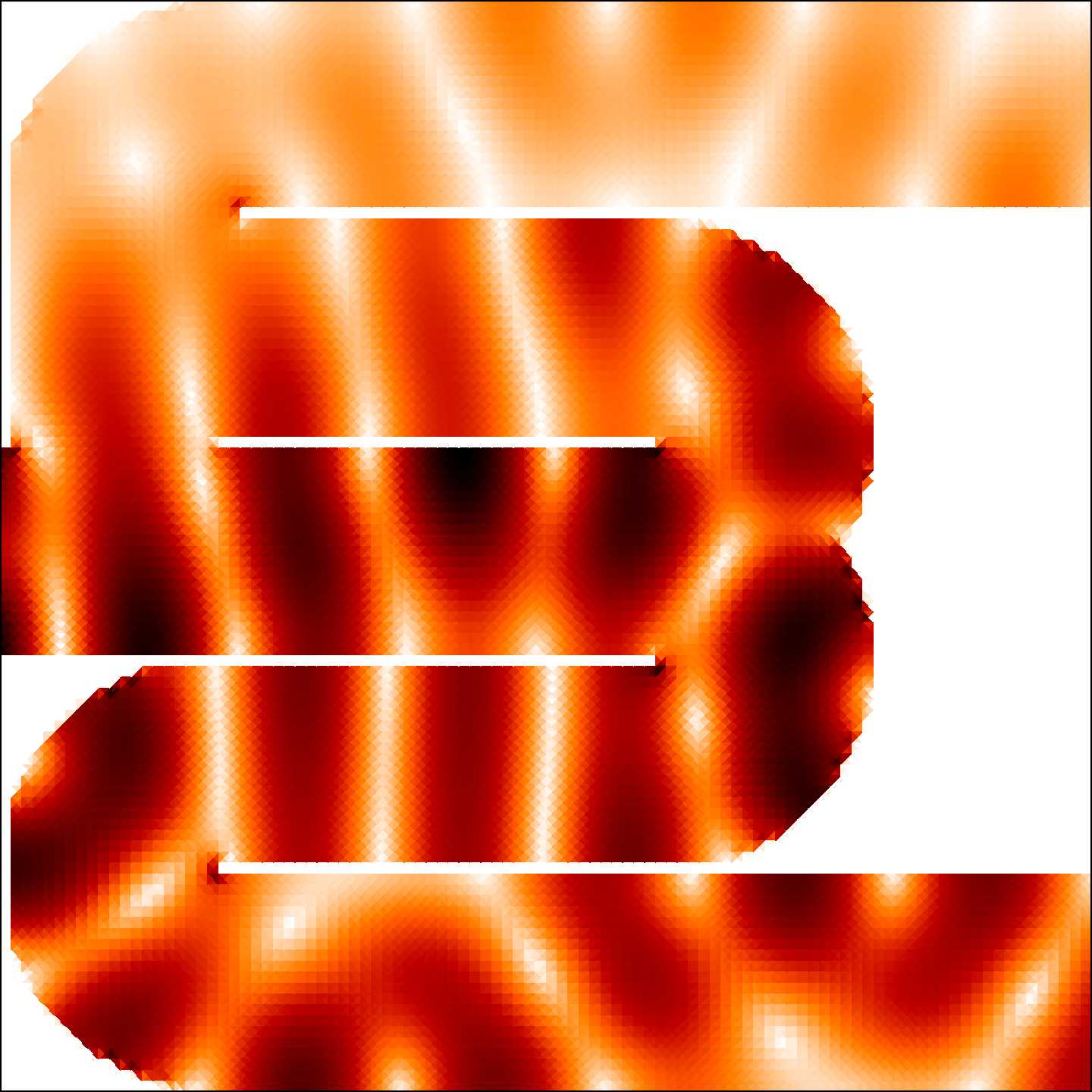}
\end{tabular}
\end{center}
\caption[Example solutions for \exc{}.]
{Example solutions for \exc{} for $\gls{angular_frequency}= 2 \pi \cdot 186$\,MHz, $\gls{angular_frequency} = 2 \pi \cdot 561$\,MHz and $\gls{angular_frequency}= 2 \pi \cdot 1$\,GHz for the first and second geometry. Plotted is $|\gls{realpart}(\gls{electric_field})|$.
(reproduction: \cref{repro:fig:maxwell solutions})
}
\label{fig:maxwell solutions}
\end{figure}
To get an impression of the solutions, some example solutions are plotted in \cref{fig:maxwell solutions}.
The \exc{} is used in \cref{sec:arbilomod experiments for maxwell}.

\subsection{Olimex A64 Example}
\label{sec:olimex description}
The fourth example is the \exd{}.
Here we solve the problem in \cref{def:variational problem}
with the bilinear form and the linear form defined as in
\cref{eq:maxwell bilinear linear form}
and the space $V$ defined in \cref{eq:maxwellspace}.
The \exd{} is an example from the targeted application domain:
Signal integrity simulation in a high frequency \gls{pcb}.
It features high geometrical complexity:
The geometry is described by six layers of metal, 
separated by insulating layers (cf.~\cref{fig:stackup}).
In addition, we simulate a layer of air above and below
the board.
In each metal layer, the shape of the metal
is described by polygons.
The polygonal description of all metal layers
have combined 280.378 vertices.
Photos of the board are shown in
\cref{fig:olimex picture 1,fig:olimex picture 2}.
In the experiments, we will be especially
concerned with the metal traces connecting CPU
and RAM, see right picture in \cref{fig:olimex picture 2}.
Meandering traces are used to get correct signal runtimes.
The geometry is provided by the creator and manufacturer
Olimex\footnote{\url{https://www.olimex.com}}. It is published
on Github\footnote{\url{https://github.com/OLIMEX/OLINUXINO}}.
We provide a snapshot of the project files used
in this thesis
at Zenodo \cite{Olimex_zenodo}
and a snapshot of the software project
KiCad, which can read the project file, at \cite{diss_zenodo}.
The calculation domain is
\begin{equation}
\gls{domain} :=
\left[ (-0.35, 90.35) \times (-0.35, 62.85) \times (-0.35, 1.95) \right] mm^3,
\end{equation}
which is the size of the \gls{pcb}
($
\left[ (0, 90) \times (0, 62.5) \times (0, 1.6) \right] mm^3
$)
and a surrounding layer of air of $0.35mm$ thickness.
On the outer boundaries, we use Neumann boundary conditions
on $\gls{boundary}_N := \partial \gls{domain}$.
The metal layers are modeled as thin \gls{pec}
sheets which pose internal Dirichlet boundary conditions.
In the air layers, we assume an electric permittivity
of $\gls{electric_permittivity} = \gls{electric_permittivity}_0 = \frac{1}{\gls{magnetic_permeability} \gls{speedoflight}^2} \approx 8.854 \cdot 10^{-12} \frac{F}{m}$ where $\gls{speedoflight} = 299792458 \frac{m}{s}$ is the speed of light.
In the insulating layers, we assume an electric permittivity of $\gls{electric_permittivity} = \gls{electric_permittivity}_r \cdot \gls{electric_permittivity}_0$ with $\gls{electric_permittivity}_r = 4.5$ which is a typical relative permittivity in FR4
material which is a typical material for \gls{pcb}s.
For the discretization we use a structured mesh of
$960 \times 673 \times 33 = 21,320,640$
cubes with lowest order
Nédélec ansatz functions, which leads to approximately
65 million \gls{dof}s,
see \cref{fig:olimex picture mesh}
for a part of the mesh.
The \exd{} is used in \cref{sec:olimex decay}.
\begin{figure}
\begin{center}
\includegraphics[width=0.4\textwidth]{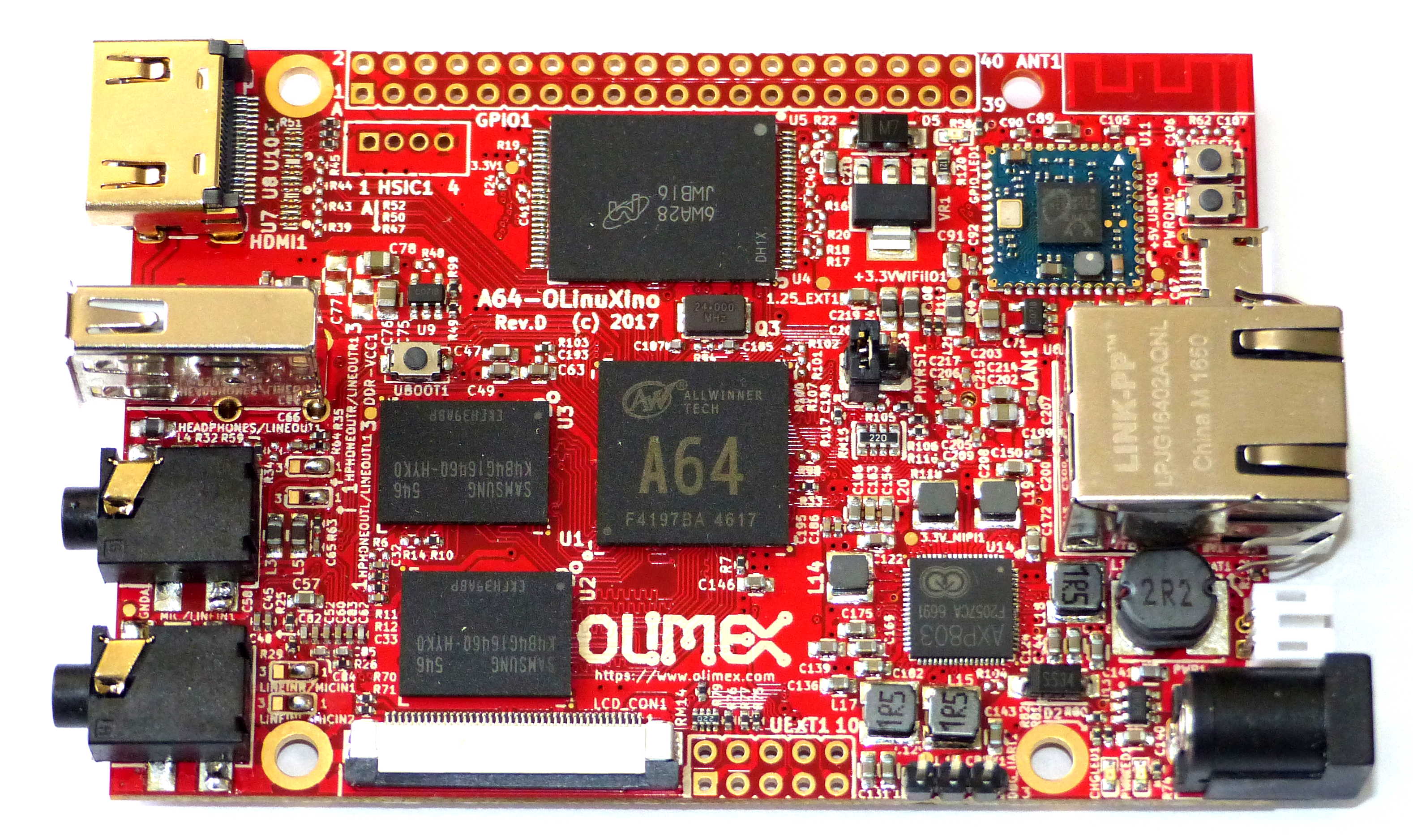}
\hspace{10pt}
\includegraphics[width=0.4\textwidth]{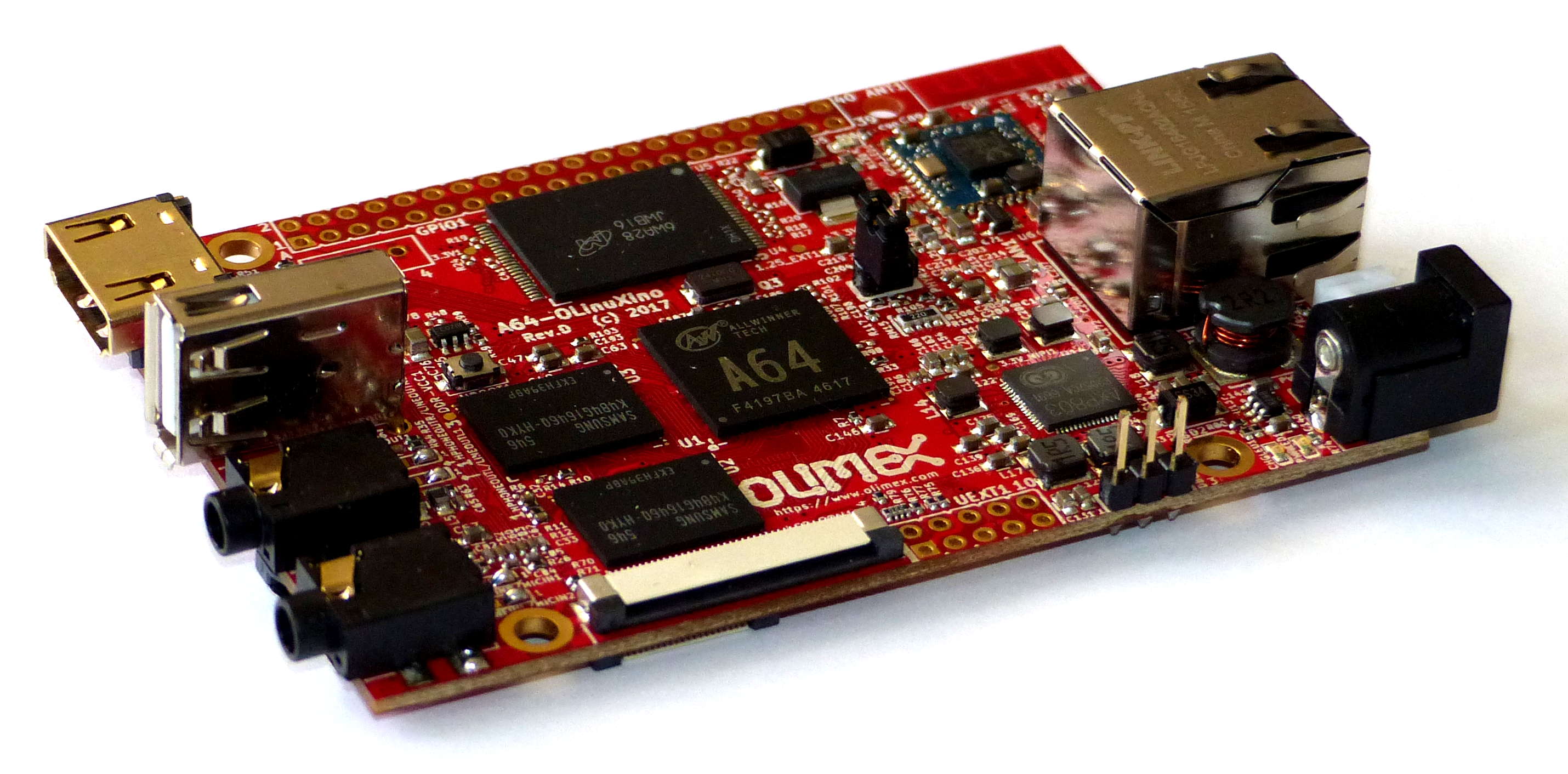}
\end{center}
\caption{Front and perspective view of \exd{}.}
\label{fig:olimex picture 1}
\end{figure}

\begin{figure}
\begin{center}
\includegraphics[width=0.4\textwidth]{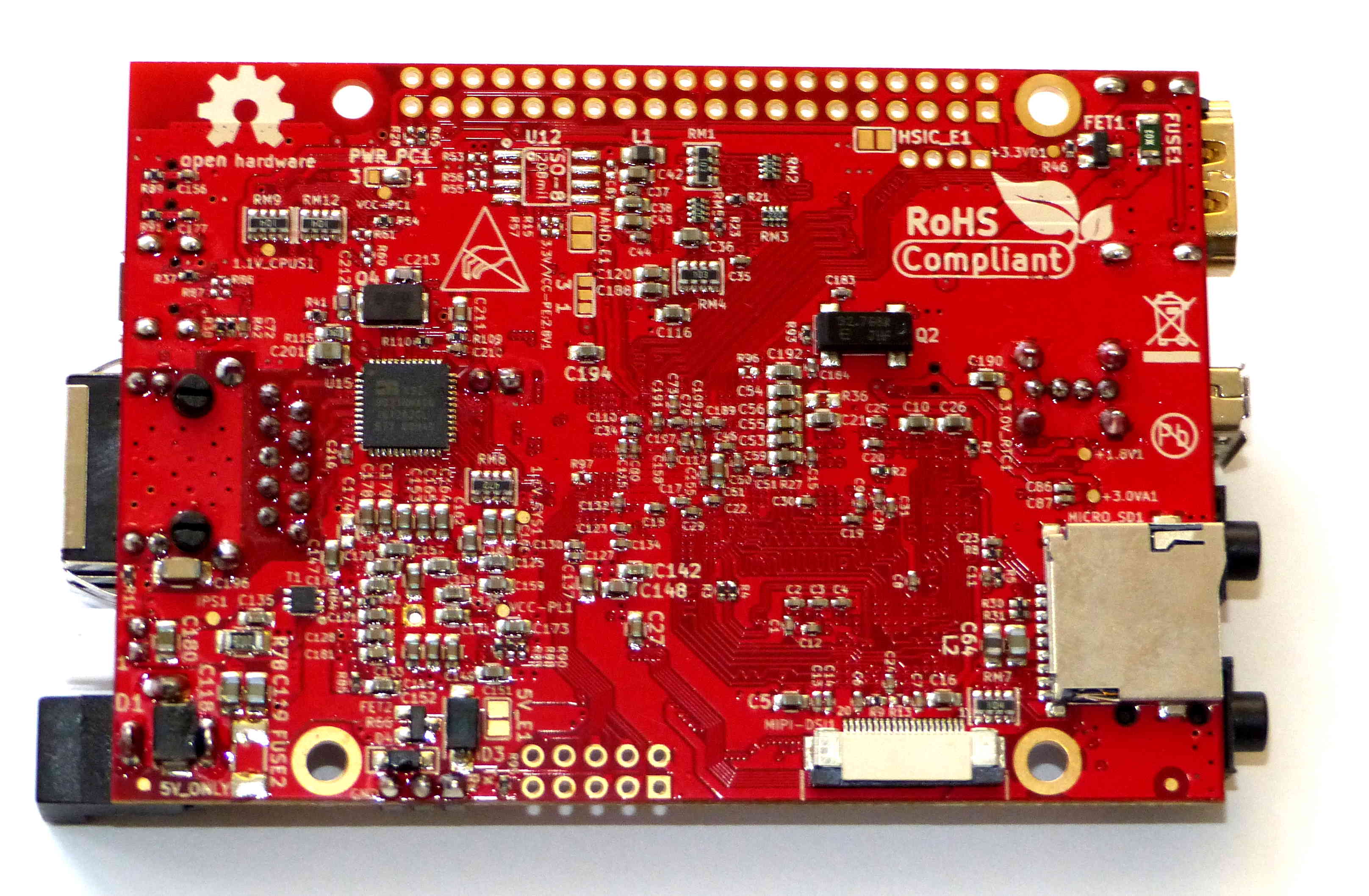}
\hspace{10pt}
\includegraphics[width=0.5\textwidth]{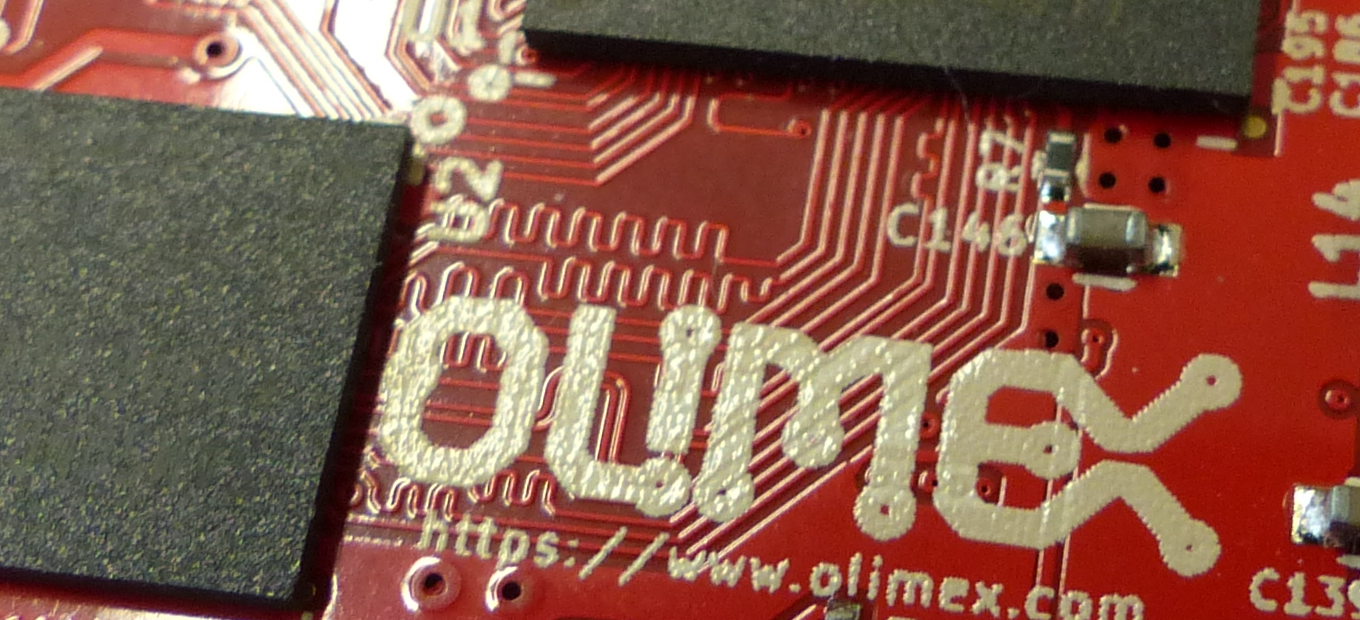}
\end{center}
\caption[Back and detail view of \exd{}.]
{
Back and detail view of \exd.
In the right picture,  the meandering traces between the CPU (top)
and the RAM (left) are visible.}
\label{fig:olimex picture 2}
\end{figure}

\begin{figure}
\begin{center}
\includegraphics[width=0.495\textwidth]{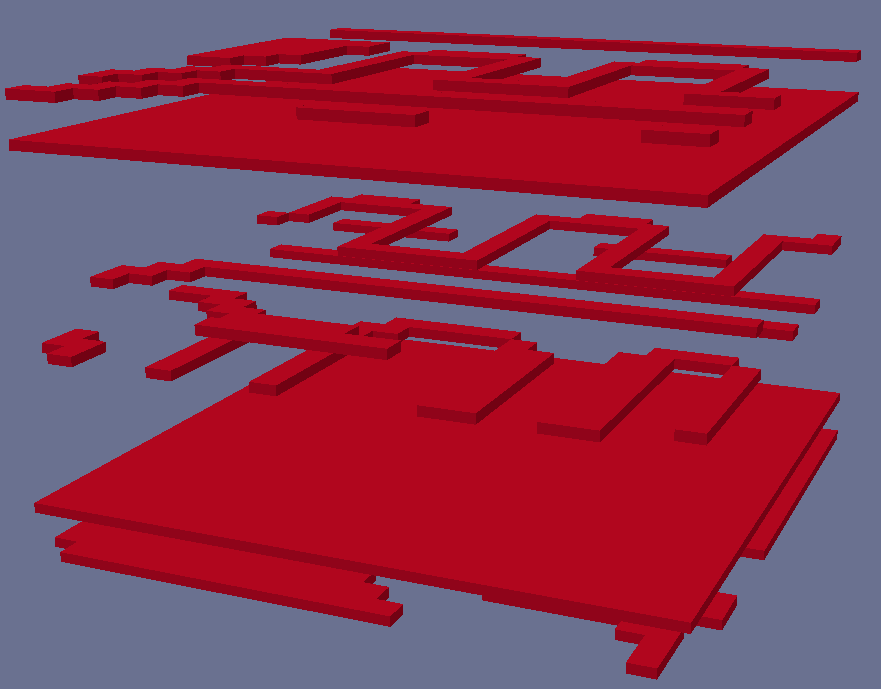}
\includegraphics[width=0.4\textwidth]{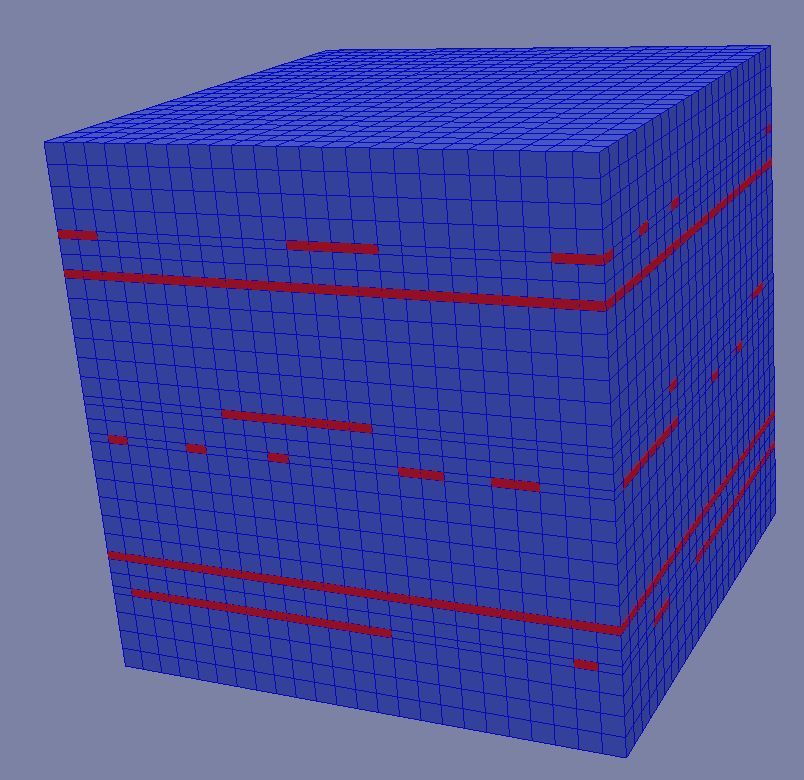}
\end{center}
\caption[Mesh detail of \exd{}.]
{
Meshed detail of \exd{}.
Metal parts, modeled as Dirichlet boundaries, are shown in red.
A staircase approximation of the geometry is used.
}
\label{fig:olimex picture mesh}
\end{figure}

\begin{figure}
\begin{center}
\def\JPicScale{0.56}
{\footnotesize
\ifx\JPicScale\undefined\def\JPicScale{1}\fi
\unitlength \JPicScale mm
\begin{tikzpicture}[x=\unitlength,y=\unitlength,inner sep=0pt]
\draw (0,0) rectangle (20,17.5);
\draw [fill=yellow](0,17.5) rectangle (20,19.25);
\definecolor{userFillColour}{rgb}{0.4,1,0.4}
\draw [fill=userFillColour](0,19.25) rectangle (20,25.6);
\draw [fill=yellow](0,25.6) rectangle (20,27.35);
\definecolor{userFillColour}{rgb}{0.4,1,0.4}
\draw [fill=userFillColour](0,27.35) rectangle (20,52.35);
\draw [fill=yellow](0,52.35) rectangle (20,54.1);
\definecolor{userFillColour}{rgb}{0.4,1,0.4}
\draw [fill=userFillColour](0,54.1) rectangle (20,60.45);
\draw [fill=yellow](0,60.45) rectangle (20,62.2);
\definecolor{userFillColour}{rgb}{0.4,1,0.4}
\draw [fill=userFillColour](0,62.2) rectangle (20,87.2);
\draw [fill=yellow](0,87.2) rectangle (20,88.95);
\definecolor{userFillColour}{rgb}{0.4,1,0.4}
\draw [fill=userFillColour](0,88.95) rectangle (20,95.3);
\draw [fill=yellow](0,95.3) rectangle (20,97.05);
\draw (0,97.05) rectangle (20,114.55);
\draw (40,8.75) node {air};
\draw (40,18.38) node {copper};
\draw (40,22.43) node {prepreg};
\draw (40,26.48) node {copper};
\draw (40,39.85) node {FR4};
\draw (40,53.23) node {copper};
\draw (40,57.27) node {prepreg};
\draw (40,61.33) node {copper};
\draw (40,74.7) node {FR4};
\draw (40,88.08) node {copper};
\draw (40,92.12) node {prepreg};
\draw (40,96.17) node {copper};
\draw (40,105.8) node {air};
\draw (70,8.75) node {350 $\mu$m};
\draw (70,18.38) node {35 $\mu$m};
\draw (70,22.43) node {127 $\mu$m};
\draw (70,26.48) node {35 $\mu$m};
\draw (70,39.85) node {500 $\mu$m};
\draw (70,53.23) node {35 $\mu$m};
\draw (70,57.27) node {127 $\mu$m};
\draw (70,61.33) node {35 $\mu$m};
\draw (70,74.7) node {500 $\mu$m};
\draw (70,88.08) node {35 $\mu$m};
\draw (70,92.12) node {127 $\mu$m};
\draw (70,96.17) node {35 $\mu$m};
\draw (70,105.8) node {350 $\mu$m};
\draw (100,8.75) node {$\gls{electric_permittivity}_r = 1$};
\draw (100,22.43) node {$\gls{electric_permittivity}_r = 4.5$};
\draw (100,39.85) node {$\gls{electric_permittivity}_r = 4.5$};
\draw (100,57.27) node {$\gls{electric_permittivity}_r = 4.5$};
\draw (100,74.7) node {$\gls{electric_permittivity}_r = 4.5$};
\draw (100,92.12) node {$\gls{electric_permittivity}_r = 4.5$};
\draw (100,105.8) node {$\gls{electric_permittivity}_r = 1$};
\end{tikzpicture}
}
\end{center}
\caption[Stackup of \exd{}.]
{Stackup of \exd{}: Six layers of metal with insulating layers in between.
Data provided by Olimex in private communication.
}
\label{fig:stackup}
\end{figure}

\chapter{ArbiLoMod }
\label{chap:arbilomod}
In order to overcome the limitations
of existing methods, we devised 
the simulation methodology
\gls{arbilomod},
named after its main design goal of
handling arbitrary local modifications.
In this chapter, \gls{arbilomod}
is presented. We discuss the design goals,
review the most important design decisions
and define the method.
\gls{arbilomod} is a localized model
order reduction method as introduced in 
\cref{sec:lmor}, based on a localizing
space decomposition and a Galerkin
projection.
To define it, first the wirebasket
space decomposition is presented.
Second, localized basis generation
algorithms for interfaces and volume
spaces are introduced.
Third, a residual based online enrichment
is proposed.
The fourth major ingredient, 
a localized a posteriori error estimator, is postponed
to \cref{chap:a posteriori} because of its
importance, but also because it is coupled
only loosely to \gls{arbilomod}
and might be useful in other settings.
We conclude this chapter by numerical
experiments for the stationary
heat equation and time harmonic Maxwell's
equation.

\section{Method Design}
\gls{arbilomod}
was devised to be a method engineers can rely
upon, while at the same time incorporating
advanced mathematical algorithms.
Besides the ability to handle arbitrary local modifications,
the design goals were:
\begin{enumerate}
\item
Capability of handling interfaces with very complex
geometry and complex field patterns on the interfaces.
\item
Little communication, possibly at the cost 
of CPU cycles, to be performant in current cloud environments.
\item
Implementable on top of existing, conforming \gls{fe} codes.
\item
Little data dependencies, in order
to reuse as many intermediate results as possible after
a localized geometry change.
\item
Reliable results: The method gives a result of sufficient
quality or raise a warning.
\end{enumerate}
\begin{figure}
\begin{center}
\def\JPicScale{0.7}
%%Created by jPicEdt 1.6-pre1 (revision 2039): mixed JPIC-XML/LaTeX format
%%Thu Feb 21 12:55:55 CET 2019
%%Begin JPIC-XML
%<?xml version="1.0" standalone="yes"?>
%<jpic x-min="-5" x-max="105" y-min="-5" y-max="80" auto-bounding="true">
%<parallelogram p1="(0,0)"
%	 p2="(100,0)"
%	 p3="(100,10)"
%	 />
%<parallelogram p1="(8,12)"
%	 p2="(28,12)"
%	 p3="(28,52)"
%	 />
%<parallelogram p1="(6,52)"
%	 p2="(30,52)"
%	 p3="(30,54)"
%	 />
%<parallelogram p1="(6,10)"
%	 p2="(30,10)"
%	 p3="(30,12)"
%	 />
%<parallelogram p1="(40,12)"
%	 p2="(60,12)"
%	 p3="(60,52)"
%	 />
%<parallelogram p1="(38,52)"
%	 p2="(62,52)"
%	 p3="(62,54)"
%	 />
%<parallelogram p1="(38,10)"
%	 p2="(62,10)"
%	 p3="(62,12)"
%	 />
%<parallelogram p1="(72,12)"
%	 p2="(92,12)"
%	 p3="(92,52)"
%	 />
%<parallelogram p1="(70,52)"
%	 p2="(94,52)"
%	 p3="(94,54)"
%	 />
%<parallelogram p1="(70,10)"
%	 p2="(94,10)"
%	 p3="(94,12)"
%	 />
%<multicurve points="(0,54);(0,54);(100,54);(100,54);(100,54);(50,76);
%	(50,76);(50,76);(50,76);(50,76);(50,76);(0,54)"
%	 />
%<text anchor-point="(50,57)"
%	 >
%Localized Model Order Reduction
%</text>
%<text anchor-point="(50,4)"
%	 >
%Space Decomposition
%</text>
%<text anchor-point="(18,32)"
%	 >
%\rotatebox[origin=c]{70}{Training}
%</text>
%<text anchor-point="(50,32)"
%	 >
%\rotatebox[origin=c]{70}{Error Estimator}
%</text>
%<text anchor-point="(82,32)"
%	 >
%\rotatebox[origin=c]{70}{Enrichment}
%</text>
%<parallelogram p1="(-5,-5)"
%	 p2="(105,-5)"
%	 p3="(105,80)"
%	 stroke-color="#ffffff"
%	 />
%</jpic>
%%End JPIC-XML
\ifx\JPicScale\undefined\def\JPicScale{1}\fi
\unitlength \JPicScale mm
\begin{tikzpicture}[x=\unitlength,y=\unitlength,inner sep=0pt]
\draw (0,0) rectangle (100,10);
\draw (8,12) rectangle (28,52);
\draw (6,52) rectangle (30,54);
\draw (6,10) rectangle (30,12);
\draw (40,12) rectangle (60,52);
\draw (38,52) rectangle (62,54);
\draw (38,10) rectangle (62,12);
\draw (72,12) rectangle (92,52);
\draw (70,52) rectangle (94,54);
\draw (70,10) rectangle (94,12);
\draw (0,54) -- (100,54) -- (50,76) .. controls (50,76) and (50,76) .. (50,76) -- cycle;
\draw (50,57) node {Localized Model Order Reduction};
\draw (50,4) node {Space Decomposition};
\draw (18,32) node {\rotatebox[origin=c]{70}{Training}};
\draw (50,32) node {\rotatebox[origin=c]{70}{Error Estimator}};
\draw (82,32) node {\rotatebox[origin=c]{70}{Enrichment}};
\draw [color=white](-5,-5) rectangle (105,80);
\end{tikzpicture}
\def\JPicScale{1}
\end{center}
\caption{Main components of localized model order reduction}
\label{fig:saeulen}
\end{figure}
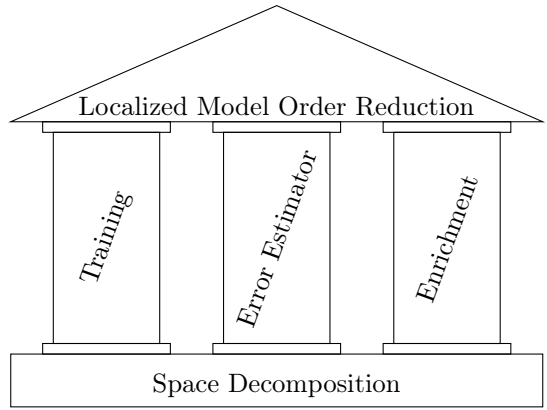

To meet these requirements, we designed a new
space decomposition which will be described in 
the following \cref{sec:space_decomposition}
and structured three core parts (see \cref{fig:saeulen})
of the
algorithm around it:
\begin{itemize}
\item
Localized training
\item
Localized a posteriori error estimation
\item
Localized online enrichment
\end{itemize}
Those parts form a global loop,
as visualized in \cref{fig:workflow}.
Even though there is a global loop,
\gls{arbilomod} is not designed
to be an iterative solution method.
Rather, it should deliver a solution
of sufficient quality in the first iteration.
Only if this fails, online enrichment is employed.

\subsection{Design Decision: Handling Channels}
Numerical multiscale methods like the
\gls{msfem} or the \gls{lod} have a fixed
number of basis functions per coarse grid element and
the basis functions are computed on an oversampling domain.
When applying such a method to a problem with channels,
some modification has to be made.
When a number of channels passes an element and
its oversampling region, the basis construction
can not know which of these channels is active.
So at least one basis function per channel is necessary.
If the number of channels is higher than the fixed number
of basis functions per coarse grid element, the method
probably fails to deliver good results.
There are three possible solutions in this situation.
The first one is to adapt the coarse mesh, steered
by an error indicator. That way, the coarse
mesh cells shrink until they contain less channels
than they have basis functions.
The second option is to increase the oversampling region,
to yield more information about the channels in the
basis construction.
The third option is to allow a variable, adaptive number
of basis functions per coarse grid element.

To keep the implementation simple, we would
like to keep the coarse mesh fixed and independent
of the geometry, therefore we do not want to
adapt the coarse mesh.
The second option, increasing the oversampling
region, would lead to very large oversampling
regions in the simulation of \gls{pcb}s,
as the channels are very long.
Additionally, it would break the desired
localization.
The option we haven chosen for \gls{arbilomod}
is the third one: Having an adaptive number
of basis functions per coarse grid element.

\subsection{Design Decision: Interface Spaces}
One major decision in the design of
localized model order reduction methods is
whether or not to use interface spaces.
Many methods do not use interface
spaces:
For example
\gls{gfem},
\gls{lrbms},
\gls{srbe}, and
\gls{gmsfem}
do not employ interface spaces
while
\gls{cms},
\gls{rbe}, and
\gls{prscrbe}
use them.
The methods without interface spaces tend to be
easier to implement, as there is usually
only one type of local space, while
methods with interface spaces have at least
spaces associated with volumes and
spaces associated with interfaces.
The same is reflected in the mathematical
description of
the methods. Interface spaces lead to more
complexity in notation.
However, we decided
to employ interface spaces,
as they lead to a reduction of computational
requirements in our setting, due to two reasons:
First, interface spaces can usually be approximated
by smaller reduced spaces than volume spaces.
This is beneficial when coupling
is done through the interface spaces,
it can reduced communication in a parallel
implementation.
Second, the generation of a reduced approximation
space for an interface space is
usually computationally less expensive than the 
generation of a volume space.
This is beneficial in a possible
parallel implementation, where the
generation of the same space on different
compute nodes is desirable to avoid
communication of unreduced quantities.
\begin{figure}
\centering
\ifx\JPicScale\undefined\def\JPicScale{1}\fi
\unitlength \JPicScale mm
\begin{tikzpicture}[x=\unitlength,y=\unitlength,inner sep=0pt]
\draw (20,86) ellipse [x radius=15, y radius=4];
\draw (20,86) node {Start};
\draw (20,70) node {Reduced Interface Spaces};
\draw (70,48) -- (50,44);
\draw (70,48) -- (90,44);
\draw (50,44) -- (70,40);
\draw (70,40) -- (90,44);
\draw (70,44) node {converged?};
\draw (20,74) node {Generate Initial};
\draw (90,44) -- (92,44);
\draw (66,50) node {no};
\draw (98,50) node {yes};
\draw [-{Latex[width=3mm]}](20,82) -- (20,76);
\draw (0,76) rectangle (40,68);
\draw [-{Latex[width=3mm]}](20,68) -- (20,62);
\draw (20,56) node {Reduced Volume Spaces};
\draw (20,60) node {Generate};
\draw (0,62) rectangle (40,54);
\draw (20,42) node {Reduced Solution};
\draw (20,46) node {Calculate};
\draw [-{Latex[width=3mm]}](20,54) -- (20,48);
\draw (0,48) rectangle (40,40);
\draw [-{Latex[width=3mm]}](40,44) -- (50,44);
\draw [-{Latex[width=3mm]}](92,44) -- (92,82);
\draw (92,86) ellipse [x radius=16, y radius=4];
\draw (92,86) node {End};
\draw (70,56) node {Reduced Interface Spaces};
\draw (70,60) node {Enrich};
\draw (50,62) rectangle (90,54);
\draw [-{Latex[width=3mm]}](70,48) -- (70,54);
\draw [-{Latex[width=3mm]}](50,58) -- (40,58);
\end{tikzpicture}
\caption[Overview of ArbiLoMod.]
{Overview of \gls{arbilomod}.
Generation of initial reduced interface spaces by training
and of reduced volume spaces by greedy basis generation is subject of \cref{sec:training_and_greedy}
and \cref{chap:training}.
Convergence is assessed with the localized a posteriori error estimator presented in \cref{chap:a posteriori}.
Enrichment is discussed in \cref{sec:enrichment,chap:enrichment}.
On each geometry change, the procedure starts over,
where new initial interface spaces are only generated in the region
affected by the change.
}
\label{fig:workflow}
\end{figure}

\subsection{Design Decision: Using a-harmonic Extensions}
The decision to employ interface
spaces is contrary to the design goal
of being implementable on
top of existing, conforming \gls{fe}
codes. Existing, conforming \gls{fe}
codes usually do not feature interface
spaces
for mortar-type coupling.
Interface spaces
can be formed by all ansatz
functions having support on that
interface.
We define these spaces below
as ``basic spaces'' in \cref{sec:basic spaces}.
However, using these
spaces directly would have some disadvantages:
First, the mappings into the local subspaces
would have a large operator norm
in the $H^1$ norm or energy norm,
because very steep gradients would be introduced.
Second, it would lead to
a bad condition of the reduced system matrices,
because of these gradients.
Third, the properties of the interface spaces
would be very dependent on the fine mesh.
These drawbacks can be avoided
when using \gls{a}-harmonic extensions which spread the support
of the interface functions to the adjacent
domains by solving a local problem.
This will be described in detail below in
\cref{sec:decomposition}.
Additionally, it has the advantage
of smaller reduced volume spaces when the
extension is done by solving the 
underlying equation.

\section{Space Decomposition}
\label{sec:wirebasket space decomposition}
\label{sec:space_decomposition}
In the following, the wirebasket space decomposition,
which is used in \gls{arbilomod},
is defined. Two other common localizing space decompositions
were introduced in \cref{sec:introduction space decompositions}:
the partition of unity decomposition
on overlapping domains
and the restriction decomposition
on non overlapping domains.
In contrast to the partition of unity decomposition and the restriction decomposition,
the wirebasket decomposition which we  introduce here is problem
dependent, as the bilinear form is used in its construction.
The choice of the space decomposition heavily 
affects the performance of every part of the method.

To define the wirebasket space decomposition, we assume that a variational
problem as defined in \cref{def:variational problem}
is given, which depends on a number of parameters $\gls{parameter} \in \gls{parameterspace}$
as described in \cref{sec:parameterized problem}.
We further assume that the global function
space \gls{fspaceh} is a
\gls{fe} approximation space
on a fine mesh which resolves
a non overlapping domain decomposition $\{\gls{subdomainnol}\}_{i=1}^{\gls{numdomainsnol}}$
as defined in \cref{sec:restriction decomposition}.
The wirebasket space decomposition is defined on non overlapping
domains, but has continuous local spaces with
overlapping support.
The non overlapping domain decomposition forms a coarse mesh.
The local spaces $\gls{lfspacewb}$ are associated with the vertices,
edges, faces and volumes of this coarse mesh.
The functions associated with a mesh entity are defined 
by the function values on that entity.
However, they are extended to the neighboring domains
to form continuous functions.
In the following, we introduce definitions for the different kinds of
spaces needed in the wirebasket space decomposition.
We first define ``basic subspaces'' and then, based
on these, we define the wirebasket decomposition.

We formulate the method on a discrete level.
The definitions here mirror the implementation,
wherein the \gls{fe} ansatz functions are grouped
and associated with entities of the coarse mesh, and
afterwards basis functions spanning multiple of
these groups are constructed.
\subsubsection{Basic Subspaces}
\label{sec:basic spaces}
$\gls{fspaceh}$ denotes a discrete ansatz space, spanned by ansatz functions
$\{\psi_i\}_{i=1}^N =: \gls{febasis}$.
We assume that
the ansatz functions have a localized support, which is true
for many classes of ansatz functions like Lagrange- or
N\'ed\'elec-type functions.
To obtain the subspaces we classify the
ansatz functions by their support and define each subspace as the
span of all ansatz functions of one class.
Let $\gls{subdomainnol}$ be a non overlapping domain
decomposition as introduced in \cref{sec:introduction space decompositions}.
For each $\psi$ in $\gls{febasis}$, we call $\gls{domains}_\psi$
the set
of indices of subdomains that have non-empty intersection with the support of $\psi$, i.e.
\begin{equation}
\gls{domains}_\psi :=
\Big\{i \in \{1, \dots, \gls{numdomainsnol}\} \ \Big| \ \supp(\psi) \cap \gls{subdomainnol} \ne \emptyset \Big\}.
\label{eq:febasisdomains}
\end{equation}
Let further $\{\gls{coarsemeshentity}_i\}_{i=1}^{\gls{numcoarseentities}}$ be a collection of all inner mesh entities of
the coarse mesh formed by the subdomains $\gls{subdomainnol}$,
i.e.~all mesh elements, all inner mesh faces, all inner mesh edges and all inner
mesh vertices, where $\gls{numcoarseentities}$ is the number of all inner mesh entities of the coarse mesh.
We associate one basic subspace with each of them.
In order to do so, we call $\gls{domains}_{\gls{coarsemeshentity}_i}$
the set of indices of subdomains that contain the coarse mesh element $\gls{coarsemeshentity}_i$, i.e.
\begin{equation}
\label{eq:domains in mesh}
\gls{domains}_{\gls{coarsemeshentity}_j} :=
\Big\{i \in \{1, \dots, \gls{numdomainsnol}\} \ \Big| \ \gls{coarsemeshentity}_j \subseteq \overline{\gls{subdomainnol}} \Big\}.
\end{equation}

We define index sets for each
codimension:
\begin{align}\label{def:codimsets}
\gls{upsilon}_0 := \Big\{ i \in \{1, \dots, \gls{numcoarseentities}\} \ \Big| \ \mathrm{codim}(\gls{coarsemeshentity}_i) = 0\Big\}, \nonumber \\
\gls{upsilon}_1 := \Big\{ i \in \{1, \dots, \gls{numcoarseentities}\} \ \Big| \ \mathrm{codim}(\gls{coarsemeshentity}_i) = 1\Big\}, \nonumber \\
\gls{upsilon}_2 := \Big\{ i \in \{1, \dots, \gls{numcoarseentities}\} \ \Big| \ \mathrm{codim}(\gls{coarsemeshentity}_i) = 2\Big\}.
\end{align}

The classification is very similar
to the classification of mesh nodes in domain decomposition methods, see
for example
\cite[Definition 3.1]{Klawonn2006}
or \cite[Definition 4.2]{Toselli2005}. 
\begin{definition}[Basic subspaces]
\label{def:basic_space_def}
For each element $i \in \{1, \dots, \gls{numcoarseentities}\}$  we define a basic subspace $\gls{lfspacebasic}$ of $\gls{fspaceh}$ as:
\begin{equation}
\gls{lfspacebasic} := \spanset\Big\{\psi \in \gls{febasis} \ \Big| \ \gls{domains}_\psi = \gls{domains}_{\gls{coarsemeshentity}_i} \Big\}.
\nonumber
\end{equation}
\end{definition}
\begin{remark}[Basic space decomposition]
The definition of $\gls{lfspacebasic}$ induces a direct decomposition of $\gls{fspaceh}$. It holds
\begin{equation}
\gls{fspaceh} = \bigoplus_{i \in \{1, \dots, \gls{numcoarseentities}\}} \gls{lfspacebasic}.
\nonumber
\end{equation}
\end{remark}
\subsubsection{Wirebasket Space Decomposition}
\label{sec:decomposition}
Based on the basic subspaces defined in the previous section,
we define the spaces of the wirebasket space decomposition here.

For each of the spaces $\gls{lfspacebasic}$ defined in \cref{def:basic_space_def},
we calculate extensions. The extensions are computed on the
``extension space'' $\extension{\gls{lfspacebasic}}$ which is defined as
\begin{equation}
\label{eq:definition_extension_space}
\extension{\gls{lfspacebasic}} := \bigoplus \Big\{V_j^\mathrm{basic} \ \Big| \ \gls{domains}_{\gls{coarsemeshentity}_j} \subseteq \gls{domains}_{\gls{coarsemeshentity}_i} \Big\}.
\end{equation}
\begin{figure}
\footnotesize
\centering
\def\myscale{1.2}
\def\circsize{0.06}
\def\circsizetwo{0.09}
\begin{tikzpicture}[scale=\myscale]
\node at (0.5+0.125,0.5) [color=Xred] {\Huge 1};
\draw [step=2.5mm,color=gray,very thin] (-0.15,-0.15) grid (1.15,1.15);
\draw [step=1cm,black,line width=0.7mm] (-0.15,-0.15) grid (1.15,1.15);
\foreach \x in {1, ..., 3} {
  \foreach \y in {1, ..., 3} {
    \draw (\x * 0.25, \y * 0.25) circle(\circsizetwo);
  }
}
\foreach \x in {1, ..., 3} {
  \foreach \y in {1, ..., 3} {
    \draw [fill=black] (\x * 0.25, \y * 0.25) circle(\circsize);
  }
}
\node at (0.5,-0.5) {$i \in \gls{upsilon}_0$};
\end{tikzpicture}
\begin{tikzpicture}[scale=\myscale]
\node at (0.5+0.125,0.5) [color=Xred] {\Huge 1};
\node at (1.5+0.125,0.5) [color=Xred] {\Huge 2};
\draw [step=2.5mm,color=gray,very thin] (-0.15,-0.15) grid (2.15,1.15);
\draw [step=1cm,black,line width=0.7mm] (-0.15,-0.15) grid (2.15,1.15);
\foreach \x in {1, ..., 7} {
  \foreach \y in {1, ..., 3} {
    \draw (\x * 0.25, \y * 0.25) circle(\circsizetwo);
  }
}
\foreach \x in {4} {
  \foreach \y in {1, ..., 3} {
    \draw [fill=black] (\x * 0.25, \y * 0.25) circle(\circsize);
  }
}
\node at (1,-0.5) {$i \in \gls{upsilon}_1$};
\end{tikzpicture}
\begin{tikzpicture}[scale=\myscale]
\node at (0.5+0.125,1.5) [color=Xred] {\Huge 1};
\node at (1.5+0.125,1.5) [color=Xred] {\Huge 2};
\node at (0.5+0.125,0.5) [color=Xred] {\Huge 3};
\node at (1.5+0.125,0.5) [color=Xred] {\Huge 4};
\draw [step=2.5mm,color=gray,very thin] (-0.15,-0.15) grid (2.15,2.15);
\draw [step=1cm,black,line width=0.7mm] (-0.15,-0.15) grid (2.15,2.15);
\foreach \x in {1, ..., 7} {
  \foreach \y in {1, ..., 7} {
    \draw (\x * 0.25, \y * 0.25) circle(\circsizetwo);
  }
}
\foreach \x in {4} {
  \foreach \y in {4} {
    \draw [fill=black] (\x * 0.25, \y * 0.25) circle(\circsize);
  }
}
\node at (1,-0.5) {$i \in \gls{upsilon}_2$};
\end{tikzpicture}
\begin{tikzpicture}[scale=\myscale]
\draw [fill=black] (0.15,1.2) circle(\circsize);
\node at (0.4,1.2) [right] {\gls{dof} of $\gls{lfspacebasic}$};
\draw (0.15,0.9) circle(\circsizetwo);
\node at (0.4,0.9) [right] {\gls{dof} of $\extension{\gls{lfspacebasic}}$};
\draw [color=gray, very thin] (0,0.6) -- (0.3,0.6);
\node at (0.4,0.6) [right] {mesh line};
\draw [line width=0.7mm] (0,0.3) -- (0.3,0.3);
\node at (0.4,0.3) [right] {domain boundary};
\node at (0.15,0) [color=Xred] {\huge 1};
\node at (0.4, 0) [right] {domain number};
\node at (0,-1) {};
\end{tikzpicture}
\caption[Visualization of basic spaces $\gls{lfspacebasic}$ and their extension spaces for $Q^1$ ansatz functions.]
        {Visualization of basic spaces $\gls{lfspacebasic}$ and their extension spaces for $Q^1$ ansatz functions
(one \gls{dof} per mesh node).}
\label{fig:extension_space}
\end{figure}
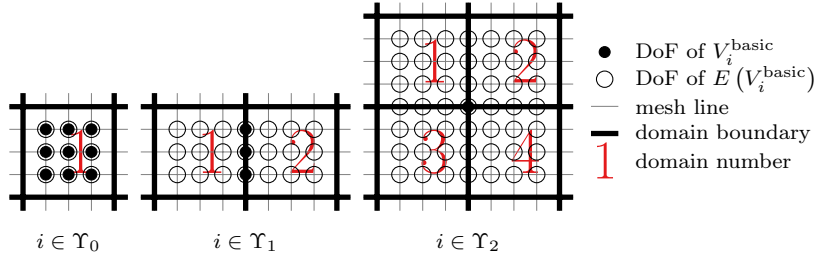
The functions in one extension space have support in a corresponding extension
domain
\begin{equation}
\gls{subdomainext} := 
\overset{\circ}{\overline{
\sum_{j \in \gls{domains}_{\gls{coarsemeshentity}_i}} \omega_j^\mathrm{nol}
}}
.
\end{equation}
Examples for extension spaces are given in \cref{fig:extension_space}.
For each space $\gls{lfspacebasic}$, a linear extension operator $\gls{extend}$ is defined:
\begin{equation}
\gls{extend} : \gls{lfspacebasic} \rightarrow \extension{\gls{lfspacebasic}}.
\end{equation}
For all $i$ in $\gls{upsilon}_0$, $\gls{extend}$ is just the identity.
For all $i$ in $\gls{upsilon}_1$, we extend by solving the
homogeneous version of the equation with Dirichlet zero
boundary values for one (arbitrary) chosen
$\defaultparameter \in \gls{parameterspace}$.
For all $i$ in $\gls{upsilon}_2$, $\gls{extend}$ is defined by first
extending linearly to zero on all edges in the extension domain, i.e.~in all basic spaces in $ \extension{\gls{lfspacebasic}}$
which belong to $\gls{upsilon}_1$. Then, in a second step, the homogeneous version
of the equation with Dirichlet boundary values is solved on the basic spaces in
$ \extension{\gls{lfspacebasic}}$ which belong to $\gls{upsilon}_0$.
The procedure is visualized in \cref{fig:vertexextension}.
The functions constructed by this two step procedure are
the same as the \gls{msfem} basis functions used by Hou and Wu \cite{Hou1997}.
Note that the base functions for $\gls{lfspacewb}, i \in \gls{upsilon}_2$ form a partition 
of unity in the interior of the coarse partition of the domain. They can be completed to form 
a partition of unity on the whole coarse partition of the domain if suitable base functions for 
the vertices at the boundary of the domain are added. 
This will be used
below for 
the robust and efficient localization of an a posteriori error estimator. 
Examples of extended functions for all codimensions are given in \cref{fig:extended_base}.
In the case of the Laplace equation these basis functions coincide with the hat functions on the 
coarse partition (see \cref{fig:vertexextension}).  
The choice of hat function can be an alternative 
choice that allows for a better a priori bound of the constants in the localized a posteriori error estimator, 
as their gradient is controlled by $1/\gls{H}$ -- where $\gls{H}$ denotes the mesh size of the macro partition -- independent of the contrast of the data.
\newlength{\foobar}
\setlength{\foobar}{2cm}
\def\myscale{1.2}
\def\circsize{0.06}
\def\circsizetwo{0.08}
\begin{figure}
\footnotesize
\centering
\begin{subfigure}[b]{0.19\textwidth}
\centering
\begin{tikzpicture}[scale=\myscale]
\draw [step=0.4mm,color=gray,very thin] (-0.15,-0.15) grid (1.15,1.15);
\draw [step=1cm,black,line width=0.7mm] (-0.15,-0.15) grid (1.15,1.15);
\node at (0.5,0.5) {\includegraphics[width=\myscale cm]{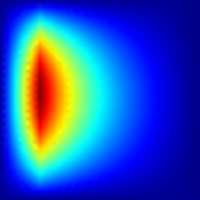}};
\end{tikzpicture}
\caption{$i \in \gls{upsilon}_0$}
\label{fig:extended_base_0}
\end{subfigure}
\begin{subfigure}[b]{0.24\textwidth}
\centering
\begin{tikzpicture}[scale=\myscale]
\draw [step=0.4mm,color=gray,very thin] (-0.15,-0.15) grid (2.15,1.15);
\draw [step=1cm,black,line width=0.7mm] (-0.15,-0.15) grid (2.15,1.15);
\node at (1,0.5) {\includegraphics[width=\myscale\foobar]{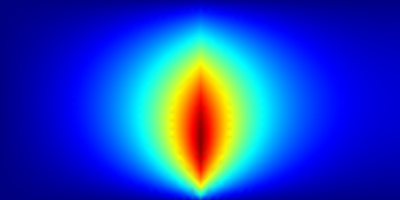}};
\end{tikzpicture}
\caption{$i \in \gls{upsilon}_1$}
\label{fig:extended_base_1}
\end{subfigure}
\begin{subfigure}[b]{0.26\textwidth}
\centering
\begin{tikzpicture}[scale=\myscale]
\draw [step=0.4mm,color=gray,very thin] (-0.15,-0.15) grid (2.15,2.15);
\draw [step=1cm,black,line width=0.7mm] (-0.15,-0.15) grid (2.15,2.15);
\node at (1,1) {\includegraphics[width=\myscale\foobar]{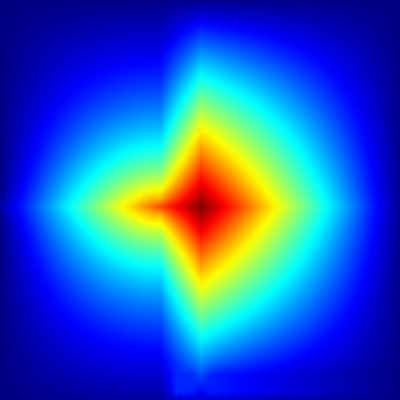}};
\end{tikzpicture}
\caption{$i \in \gls{upsilon}_2$}
\label{fig:extended_base_2}
\end{subfigure}
\begin{subfigure}[b]{0.26\textwidth}
\centering
\begin{tikzpicture}[scale=\myscale]
\draw [line width=0.6mm] (0,0) -- (0.3,0) node[right]{domain \!boundary};
\draw [color=gray, very thin] (0,0.3) -- (0.3,0.3) node[right,color=black]{mesh line};
\node at (0,-1) {};
\end{tikzpicture}
\end{subfigure}
\caption[Visualization of some example elements of the local subspaces $\gls{lfspacewb}$.]
{
Visualization of some example elements of the local subspaces $\gls{lfspacewb}$
for inhomogeneous coefficients.
The structure in the solution results from variations in the
heat conduction coefficient.
}
\label{fig:extended_base}
\end{figure}

\begin{figure}
\centering
\begin{subfigure}[b]{0.3\textwidth}
\centering
\includegraphics[width=\textwidth]{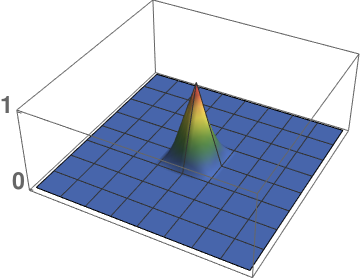}
\caption{Value 1 in $\gls{lfspacebasic}$}
\label{fig:vertexextension1}
\end{subfigure}
\begin{subfigure}[b]{0.3\textwidth}
\centering
\includegraphics[width=\textwidth]{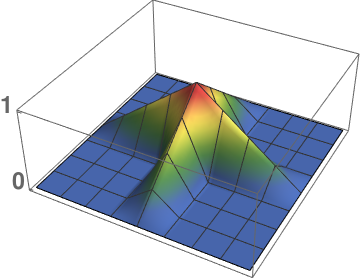}
\caption{Linear in $V_j^\mathrm{basic}, \ j \in \gls{upsilon}_1$}
\label{fig:vertexextension2}
\end{subfigure}
\begin{subfigure}[b]{0.3\textwidth}
\centering
\includegraphics[width=\textwidth]{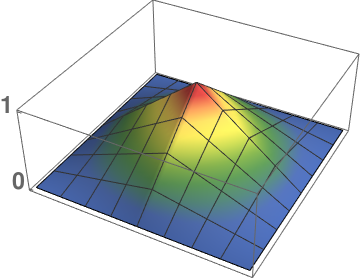}
\caption{Solving in $V_k^\mathrm{basic}, \ k \in \gls{upsilon}_0$}
\label{fig:vertexextension3}
\end{subfigure}
\caption[$\gls{extend}$ operator is executed in two steps for spaces $\gls{lfspacewb}, i \in \gls{upsilon}_2$.]
{$\gls{extend}$ operator is executed in two steps for spaces $\gls{lfspacewb}, i \in \gls{upsilon}_2$. 
Mesh and spaces as depicted in \cref{fig:extension_space}.
(reproduction: \cref{repro:fig:vertexextension})
}
\label{fig:vertexextension}
\end{figure}

For the communication avoiding properties of the \gls{arbilomod}, it is
important to note that extensions can be calculated independently on
each domain, i.e. $\gls{extend}(\varphi)|_{\gls{subdomainnol}}$ can be calculated
having only information about $\varphi$ and $\gls{subdomainnol}$, without
knowledge about other domains.
Using this operator, we define the local subspaces $\gls{lfspacewb}$:

\begin{definition}[Extended subspaces]
\label{def:extended_space_def}
For each element $i \in \{1, \dots, \gls{numcoarseentities}\}$  we define an extended subspace $\gls{lfspacewb}$ of $\gls{fspaceh}$ as:
\begin{equation}
\label{eq:space_decomposition}
\gls{lfspacewb} := \Big\{ \gls{extend} (\varphi) \ \Big| \ \varphi \in \gls{lfspacebasic} \Big\}.
\end{equation}
According to the definition in \eqref{def:codimsets} we call $\gls{lfspacewb} $ a cell, face, or vertex space,
if $i \in \gls{upsilon}_0, i \in \gls{upsilon}_1$, or $i \in \gls{upsilon}_2$, respectively (cf.\ \cref{fig:extended_base}).
\end{definition}
\begin{remark}[Extended subspaces]
The definition of $\gls{lfspacewb}$ induces a direct decomposition of $\gls{fspaceh}$:
\begin{equation}
\gls{fspaceh} = \bigoplus_{i \in \{1, \dots, \gls{numcoarseentities}\}} \gls{lfspacewb}.
\end{equation}
\end{remark}
\noindent Space decompositions of the same spirit are used
in the context of Component Mode Synthesis
(CMS), see \cite{Hetmaniuk2010,Hetmaniuk2014}
and recently, it was also applied in the \gls{prscrbe}
context \cite[Section 5.3.3]{marioalee2018}.
\subsubsection{Projections}
\label{sec:projection_operators}

\begin{definition}[Local projection operators]
\label{def:local_projection_operators}
The projection operators
$\gls{lfspacemapbasic} : \gls{fspaceh} \rightarrow \gls{lfspacebasic}$
and
$\gls{lfspacemapwb} : \gls{fspaceh} \rightarrow \gls{lfspacewb}$
are defined by the relation
\begin{equation}
\varphi =
\sum_{i \in \{1, \dots, \gls{numcoarseentities}\}} \gls{lfspacemapbasic}(\varphi)
= \sum_{i \in \{1, \dots, \gls{numcoarseentities}\}} \gls{lfspacemapwb}(\varphi)
\qquad \forall \varphi \in \gls{fspaceh}.
\end{equation}
As both the subspaces $\gls{lfspacebasic}$ and the subspaces $\gls{lfspacewb}$ form
a direct decomposition of the space $\gls{fspaceh}$, the projection operators are
uniquely defined by this relation.
\end{definition}

The implementation of the projection operators
$\gls{lfspacemapbasic}$ is very easy: It is just extracting
the coefficients of the basis functions forming $\gls{lfspacebasic}$ out
of the global coefficient vector.
The implementation of the projection operators $\gls{lfspacemapwb}$
is more complicated and involves the solution of local problems,
see \cref{algo:projections}.
\IncMargin{1em}
\begin{algorithm2e}
\DontPrintSemicolon%
\SetAlgoVlined%
\SetKwFunction{SpaceDecomposition}{SpaceDecomposition}%
\SetKwInOut{Input}{Input}
\SetKwInOut{Output}{Output}
\Fn{\SpaceDecomposition{$\varphi$}}{
  \Input{function $\varphi \in \gls{fspaceh}$}
  \Output{decomposition of $\varphi$}
  \tcc{iterate over all codimensions in decreasing order}
  \For{
    $\mathrm{codim} \in \{\gls{dimension}, \dots, 0\}$
  }%
  {
    \For{
        $i \in \gls{upsilon}_\mathrm{codim}$
    }
    {
        $\varphi_i \leftarrow \gls{extend}(\gls{lfspacemapbasic}(\varphi))$\;
        $\varphi \leftarrow \varphi - \varphi_i$
    }
  }
  \Return $\{\varphi_i\}$\;
}
\caption{Projections of wirebasket decomposition \gls{lfspacemapwb}.}
\label{algo:projections}
\end{algorithm2e}
\DecMargin{1em}

\noindent
With the projections defined, the definition 
of the wirebasket space decomposition
\begin{equation}
\left\{
\gls{domain}, \gls{fspaceh},
\{\gls{subdomainext}\}_{i=1}^{\gls{numcoarseentities}},
\{\gls{lfspacewb}\}_{i=1}^{\gls{numcoarseentities}},
\{\gls{lfspacemapwb}\}_{i=1}^{\gls{numcoarseentities}}
\right\}
\end{equation}
is complete.

\section{Local Basis Generation}
\label{sec:training_and_greedy}
For each local subspace $\gls{lfspacewb}$, an initial reduced local
subspace $\gls{rlfspacewb} \subseteq \gls{lfspacewb}$ is generated, using
only local information from an environment around the support of the
elements in $\gls{lfspacebasic}$. The strategy used to construct
these
reduced local subspaces depends on the type of the
space: whether $i$ belongs to $\gls{upsilon}_0$, $\gls{upsilon}_1$ or
$\gls{upsilon}_2$. The three strategies are given in the
following.
The local basis generation algorithms on different domains for the same codimension
can be run in parallel, completely independent of each other.
The basis generation for the spaces $\gls{rlfspacewb}, i \in \gls{upsilon}_0$
requires the spaces $\gls{rlfspacewb}, i \in \gls{upsilon}_1 \cup \gls{upsilon}_2$ (for the 2D case).
See \cref{sec:runtime_and_communication} for further
discussion of the potential parallelization.
As the algorithms only use local information, their results do not change
when the problem definition is changed outside of the area they took into
account. So there is no need to rerun the algorithms in this case.
Our numerical results indicate that the spaces obtained by
local trainings and greedys have good approximation properties
(see \cref{sec:numerical_experiments}).
The quality of the obtained solution will be guaranteed by
the a posteriori
error estimator presented in \cref{chap:a posteriori}.
\subsection{Basis Construction for Reduced Vertex Spaces}
\label{sec:codim_2_spaces}
The spaces $\gls{lfspacewb}$ for $i \in \gls{upsilon}_2$ are spanned by
only one function (see \cref{fig:extended_base_2} for an example)
and are thus one dimensional. The reduced spaces
are therefore chosen to coincide with the original space, i.e.
$
\gls{rlfspacewb} := \gls{lfspacewb}, \forall i \in \gls{upsilon}_2.
$
\subsection{Local Training for Basis Construction of Reduced Face Spaces}
\label{sec:codim_n_training}
To generate an initial reduced local subspace
$\gls{rlfspacewb}, \ i \in \gls{upsilon}_1$
we use a local training procedure.
We present a simple training procedure here.
An improved variant is the topic of \cref{chap:training}.
To sample the parameter space \gls{parameterspace},
a discrete set $\gls{trainingset} \subset \gls{parameterspace}$
is used.
The main four steps of the training are to
\begin{enumerate}
\item solve the equation on a small domain around the space in question
with zero boundary values for all parameters in the training set $\gls{trainingset}$,
\item
solve the homogeneous equation
repeatedly on a small domain around the space in question with
random boundary values for all parameters in $\gls{trainingset}$,
\item
apply the space decomposition to all obtained local solutions
to obtain the part belonging to the space in question
and
\item
use a greedy procedure to create a space approximating this set.
\end{enumerate}
The complete algorithm is given in \cref{algo:training}
and explained below.

The training is inspired by the ``Empirical Port Reduction'' introduced in
Eftang et al.\ \cite{Eftang2013} but differs in some key points.
The main differences are:
(1)~%
Within~\cite{Eftang2013}, the trace of solutions at the interface to be trained is used.
This leads to the requirement that interfaces between domains do not intersect.
In \gls{arbilomod}, a space decomposition is used instead. This allows ports to intersect,
which in turn allows the decomposition of space into domains.
(2)~%
The ``Empirical Port Reduction'' trains with a pair of domains. We use an environment
of the interface in question, which contains six domains in the 2D case.
In 3D, it contains 18 domains.
(3)~%
\gls{prscrbe} aims at providing a library of domains
which can be connected at their interfaces. The reduced interface spaces
are used in different domain configurations and have to be valid in all of them.
Within the context of \gls{arbilomod}, no database of domains is created and 
the interface space is constructed only for the configuration at hand,
which simplifies the procedure.
(4)~%
The random boundary values used in \cite{Eftang2013} are generalized
Legendre polynomials with random coefficients.
In \gls{arbilomod}, the finite element basis functions with random coefficients
are used, which simplifies the construction, especially when there
is complex structure within the interface.

\subsubsection*{The Training Space}
\begin{figure}
\centering
\def\myscale{1.3}
\def\circsize{0.06}
\def\circsizetwo{0.09}
\begin{tikzpicture}[scale=\myscale]
\node at (0.5+0.125,1.5) [color=Xred] {\Huge 1};
\node at (1.5+0.125,1.5) [color=Xred] {\Huge 2};
\node at (2.5+0.125,1.5) [color=Xred] {\Huge 3};
\node at (0.5+0.125,0.5) [color=Xred] {\Huge 4};
\node at (1.5+0.125,0.5) [color=Xred] {\Huge 5};
\node at (2.5+0.125,0.5) [color=Xred] {\Huge 6};
\draw [step=2.5mm,color=gray,very thin] (-0.2,-0.2) grid (3.2,2.2);
\draw [step=1cm,black,line width=0.7mm] (-0.2,-0.2) grid (3.2,2.2);
\foreach \x in {1, ..., 11} {
  \foreach \y in {1, ..., 7} {
    \draw (\x * 0.25, \y * 0.25) circle(\circsizetwo);
  }
}
\foreach \x in {5, ..., 7} {
  \foreach \y in {4} {
    \draw [fill=black] (\x * 0.25, \y * 0.25) circle(\circsize);
  }
}
\foreach \x in {1, ..., 11} {
  \foreach \y in {0,8} {
    \draw [Xred,fill=Xred!60] (\x * 0.25, \y * 0.25) circle(\circsize);
  }
}
\foreach \x in {0,12} {
  \foreach \y in {0, ..., 8} {
    \draw [Xred,fill=Xred!60] (\x * 0.25, \y * 0.25) circle(\circsize);
  }
}
\node at (1.5,-0.5) {$i \in \gls{upsilon}_1$};
\end{tikzpicture}
\begin{tikzpicture}[scale=\myscale]
\draw [fill=black] (0.15,1.5) circle(\circsize);
\node at (0.4,1.5) [right] {\gls{dof} of $\gls{lfspacebasic}$};
\draw (0.15,1.2) circle(\circsizetwo);
\node at (0.4,1.2) [right] {\gls{dof} of $\training{\gls{lfspacewb}}$};
\draw (0.15,0.9) [Xred!,fill=Xred!60] circle(\circsize);
\node at (0.4,0.9) [right]{\gls{dof} of $\coupling{\training{\gls{lfspacewb}}}$};
\draw [color=gray, very thin] (0,0.6) -- (0.3,0.6);
\node at (0.4,0.6) [right] {mesh line};
\draw [line width=0.7mm] (0,0.3) -- (0.3,0.3);
\node at (0.4,0.3) [right] {domain boundary};
\node at (0.15,0) [color=Xred] {\huge 1};
\node at (0.4, 0) [right] {domain number};
\node at (0,-.9) {};
\end{tikzpicture}
\caption[Visualization of basic spaces \gls{lfspacebasic}, training space, and coupling space.]
{Visualization of spaces for a codim 1 coarse mesh entity.
Basic space \gls{lfspacebasic}, training space, and the coupling space of training space for $Q^1$ ansatz functions
(one \gls{dof} per mesh node).}
\label{fig:training_space}
\end{figure}
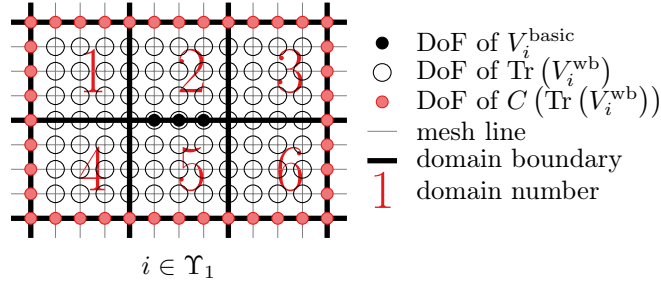
To train a basis for $\gls{rlfspacewb}$, $i \in \gls{upsilon}_1$, 
we define a training
space $\training{\gls{lfspacewb}}$ on an environment
associated with the face $\gls{coarsemeshentity}_i$.
The following definition is geometric
and tailored to a domain decomposition in rectangular domains. 
More complex domain decompositions would need a more complex definition here.
We define the neighborhood
\begin{equation}
\gls{neigh}_j := \Big\{i \in \{1, \dots, \gls{numdomainsnol}\} \ \Big| \ \overline{\gls{subdomainnol}} \cap \overline{\gls{coarsemeshentity}_j} \ne \emptyset \Big\}
\end{equation}
and the training spaces
\begin{equation}
\label{eq:arbilomod training}
\training{\gls{lfspacewb}} := \bigoplus \Big\{V_j^\mathrm{basic} \ \Big| \ \gls{domains}_{\gls{coarsemeshentity}_j} \subseteq \gls{neigh}_i \Big\}.
\end{equation}
The training space is coupled to the rest of the system via its
coupling space%
\begin{equation}
\label{eq:arbilomod training coupling}
\coupling{\training{\gls{lfspacewb}}}
:=
\bigoplus \Big\{V_j^\mathrm{basic} \ \Big| \ \gls{domains}_{\gls{coarsemeshentity}_j} \cap \gls{neigh}_i \ne \emptyset , \gls{domains}_{\gls{coarsemeshentity}_j} \nsubseteq \gls{neigh}_i \Big\}.
\end{equation}
A sketch of the degrees of freedom associated with the respective spaces is given in \cref{fig:training_space}.
These definitions are also suitable in the 3D case.
We have fixed the size of the neighborhood to one domain from the interface in question in each direction.
This facilitates the setup of local problems and the 
handling of local changes: After a local change, the affected domains are determined.
Afterwards, all trainings have to be redone 
for those spaces which contain an affected domain in their training domain.
While a larger or smaller training domain might be desirable in some cases 
(see \cite{henning_oversampling}), it is not
necessary:
As missing global information is added in the enrichment step,
\gls{arbilomod} always converges to the desired accuracy,
even if the training domain is not of optimal size. So the advantages of having
the size of the training domain fixed to one domain outweighs its drawbacks.

The reduced basis must be rich enough to
handle two types of right hand sides up to a given accuracy $\varepsilon_\mathrm{train}$:
(a) source terms and boundary conditions, and
(b) arbitrary values on the coupling interface,
both in the whole parameter space $\gls{parameterspace}$.
We define an extended parameter
space $\gls{parameterspace} \times \coupling{\training{\gls{lfspacewb}}}$. For this
parameter space we construct a training space
$\gls{trainingset} \times G \subset \gls{parameterspace} \times \coupling{\training{\gls{lfspacewb}}}$, where $G$
denotes an appropriate sampling of
$\coupling{\training{\gls{lfspacewb}}}$.
We use the finite element
basis $\gls{febasis}_{\coupling{\training{\gls{lfspacewb}}}}$ on the coupling space and
generates $M$ random coefficient vectors $r_i$ of size $N_{\mathcal{B}_C} :=
\dim({\coupling{\training{\gls{lfspacewb}}}})$. With this an
individual coupling space function $\varphi \in
\coupling{\training{\gls{lfspacewb}}}$ is constructed
as
\begin{equation}
\varphi_i = \sum_{j=1}^{N_{\mathcal{B}_C}} r_{ij} \phi_j; \qquad\phi_j \in \gls{febasis}_{\coupling{\training{\gls{lfspacewb}}}}.\label{eq:3}
\end{equation}
For our numerical experiments in \cref{sec:numerical_experiments},
we use uniformly distributed random coefficients over the
interval $[-1,1]$ and Lagrange basis functions.
For each ${\gls{parameter}} \in \gls{trainingset}$ and each pair $({\gls{parameter}},g_c) \in \gls{trainingset} \times G$
we construct snapshots $u_f$ and $u_c$ as solutions for
right hand sides $f_{\gls{parameter}}(.)$, $a_{{\gls{parameter}}}(g_c,.)$ respectively, i.e.
find $u_f, u_c \in \training{\gls{lfspacewb}}$ such that
\begin{align}
\label{eq:trainingproblem}
  a_{{\gls{parameter}}}(u_f,\phi) &= \dualpair{f_{\gls{parameter}}}{\phi} \qquad \forall \phi \in \training{\gls{lfspacewb}},\\
  a_{{\gls{parameter}}}(u_c,\phi) & = - a_{{\gls{parameter}}}(g_c,\phi) \quad \forall \phi \in \training{\gls{lfspacewb}}\,.
  \nonumber
\end{align}
Existence and uniqueness of $u_f$ and $u_c$ are guaranteed in the coercive case,
as the problem \cref{eq:trainingproblem} is a Galerkin
projected variant of the full problem and the coercivity is
maintained during projection (cf.~\cref{sec:reduced problem}).
In the inf-sup stable case, existence and uniqueness of $u_f$ and $u_c$
are not guaranteed.
Based on the set of snapshots which we call $Z$, a reduced basis $\rbasis_{\gls{rlfspacewb}}$
is constructed using the greedy procedure \cref{algo:snapshot_greedy}.
In the numerical experiments, the $V$-norm and $V$-inner product are used.
The complete generation of the reduced face spaces $\gls{rlfspacewb}, \ i \in \gls{upsilon}_1$ with basis $\rbasis_{\gls{rlfspacewb}}$
is summarized in \cref{algo:training}.
\begin{algorithm2e}
\DontPrintSemicolon%
\SetAlgoVlined%
\SetKwFunction{SnapshotGreedy}{SnapshotGreedy}%
\SetKwInOut{Input}{Input}
\SetKwInOut{Output}{Output}
\Fn{\SnapshotGreedy{$Z,\varepsilon_\mathrm{train}$}}{
  \Input{set of elements to approximate $Z$,\\ training tolerance $\varepsilon_\mathrm{train}$}
  \Output{basis of approximation space $\mathfrak{B}$}
  $\mathfrak{B} \leftarrow \emptyset$ \;
  \While{$\max_{z \in Z} \norm{z} > \varepsilon_\mathrm{train}$}{
    $\hat z \leftarrow \argmax_{z \in Z} \norm{z}$\;
    $\hat z \leftarrow \frac{\hat z}{\norm{\hat z}}$\;
    $Z \leftarrow \left\{z - (z,\hat z)\hat z \ | \ z \in Z \right\}$\;
    $\mathfrak{B} \leftarrow \mathfrak{B} \cup \{\hat z\}$ \;
  }
  \Return $\mathfrak{B}$\;
}
\caption{SnapshotGreedy}
\label{algo:snapshot_greedy}
\end{algorithm2e}
\begin{algorithm2e}
\DontPrintSemicolon%
\SetAlgoVlined%
\SetKwFunction{Training}{Training}%
\SetKwFunction{RandomSampling}{RandomSampling}%
\SetKwFunction{SnapshotGreedy}{SnapshotGreedy}%
\SetKwInOut{Input}{Input}
\SetKwInOut{Output}{Output}
\Fn{\Training{$i, M, \varepsilon_\mathrm{train}$}}{
  \Input{space identifier $i$,\\ number of random samples $M$,\\ training tolerance $\varepsilon_\mathrm{train}$}
  \Output{reduced local subspace $\gls{rlfspacewb}$}
  $G \leftarrow$ \RandomSampling{$i$, M}\;
  $Z \leftarrow \emptyset$\;
  \ForEach{${\gls{parameter}} \in \gls{trainingset}$}{
    \Find{$u_f \in \training{\gls{lfspacewb}}$ \st}{
      $
      \qquad a_{{\gls{parameter}}}(u_f,\phi) = f_{\gls{parameter}}(\phi) \qquad \forall \phi \in \training{\gls{lfspacewb}}
      $}
    $Z \leftarrow Z \cup \gls{lfspacemapwb}(u_f)$\;
    \ForEach{$g_c \in G$}{
      \Find{$u_c \in \training{\gls{lfspacewb}}$ \st}{
        $
        \qquad a_{{\gls{parameter}}}(u_c + g_c,\phi) = 0 \qquad \forall \phi \in \training{\gls{lfspacewb}}
        $}
      $Z \leftarrow Z \cup \gls{lfspacemapwb}(u_c)$\;
    }
  }
  $\rbasis_{\gls{rlfspacewb}} \leftarrow$ \SnapshotGreedy{$Z, \varepsilon_\mathrm{train}$}\;
  \Return $\spanset(\rbasis_{\gls{rlfspacewb}})$\;
}
\caption[Training to construct reduced face spaces.]
        {Training to construct reduced face spaces $\gls{rlfspacewb}, \ i \in \gls{upsilon}_1$.}
\label{algo:training}
\end{algorithm2e}
\subsection{Basis Construction for Reduced Cell Spaces
Using Local Greedy}
\label{sec:codim_0_greedy}
For each cell space $\gls{lfspacewb}, \ i \in \gls{upsilon}_0$ we create a reduced space
$\gls{rlfspacewb}$.
The complete algorithm is given in \cref{algo:local_greedy} and explained here.
These spaces should be able to
approximate the solution in the associated part of the space decomposition for any
variation of functions from reduced vertex or face spaces that are coupled with it.
We define the reduced coupling space $\rcoupling{\gls{lfspacewb}}$
and its basis $\rbasis_{\rcoupling{\gls{lfspacewb}}}$.
\begin{equation}
\gls{upsilon}^C_i := \Big\{j \in 1, \dots,  \gls{numcoarseentities}\ | \ i \in \gls{domains}_{\mathcal{E}_j},
\gls{domains}_{\mathcal{E}_j} \neq \{i\} \Big\}
\end{equation}
\begin{equation}
\rcoupling{\gls{rlfspacewb}} := \bigoplus_{j \in \gls{upsilon}^C_i} \widetilde{V}_j^\mathrm{wb}
\qquad \qquad
\rbasis_{\rcoupling{\gls{lfspacewb}}} := \bigcup_{j \in \gls{upsilon}^C_i}
\rbasis_{\widetilde{V}_j^\mathrm{wb}}
\end{equation}
We introduce an extended training set:
$\gls{trainingset} \times \{1, \dots,
N_\rbasis+1\}$, $N_{\rbasis} := \dim(\rcoupling{\gls{rlfspacewb}})$.
Given a pair $({\gls{parameter}}, j) \in \gls{trainingset} \times \{1, \dots, N_\rbasis+1\}$, we define the associated right hand
side as
\begin{equation}
  g_{{\gls{parameter}},j}(\phi) :=
  \begin{cases}
    - a_{\gls{parameter}}(\psi_j, \phi) & \qquad \text{if } j \le N_\rbasis\\
    \dualpair{f_{\gls{parameter}}}{\phi} & \qquad \text{if } j = N_\rbasis+1\,,
  \end{cases}
\end{equation}
where $\psi_j$ denotes the $j$-th basis function of $\rbasis_{\rcoupling{\gls{lfspace}}}$.
We then construct the reduced cell space $\gls{rlfspacewb}$
as the classical reduced basis space
with respect to the following parameterized local problem:
Given a pair $({\gls{parameter}}, j) \in \gls{trainingset} \times \{1, \dots, N_\rbasis+1\}$, find $u_{{\gls{parameter}},j} \in \gls{lfspacewb}$ such that
\begin{equation}
a_{\gls{parameter}}(u_{{\gls{parameter}},j},\phi) = g_{{\gls{parameter}},j}(\phi) \qquad \forall \phi \in \gls{lfspacewb}.
\end{equation}
The corresponding reduced solutions are hence defined as:
Find $\wt u_{{\gls{parameter}},j} \in \gls{rlfspacewb}$ such that:
\begin{equation}
a_{\gls{parameter}}(\wt u_{{\gls{parameter}},j},\phi) = g_{{\gls{parameter}},j}(\phi) \qquad \forall \phi \in \gls{rlfspacewb}
\end{equation}
Both problems have unique solutions in the coercive case.
For the LocalGreedy we use the standard Reduced Basis residual error estimator, i.e.
\begin{equation}
\norm{u_{{\gls{parameter}},j} - \wt u_{{\gls{parameter}},j}}_{\gls{lfspace}} \leq \Delta_{cell}(\wt u_{{\gls{parameter}},j}) := \frac{1}{\gls{coercconst}_{LB}} \norm{ \gls{residual}_{{\gls{parameter}},j}(\wt u_{{\gls{parameter}},j}) }_{\gls{lfspacewb}'}\,,
\end{equation}
with the local residual
\begin{eqnarray}
\gls{residual}_{{\gls{parameter}}, j}: \gls{lfspacewb} &\rightarrow& \gls{lfspacewb}'\\
\varphi &\mapsto& g_{{\gls{parameter}}, j}(\cdot) - a_{\gls{parameter}}(\varphi, \cdot)
\nonumber
\end{eqnarray}
and a lower bound for the coercivity constant $\gls{coercconst}_{LB}$.
The idea of using a local greedy to generate a local space for all
possible boundary values can also be found in \cite{Iapichino2012b,Antonietti2016}.
\begin{algorithm2e}[t]
\DontPrintSemicolon%
\SetAlgoVlined%
\SetKwFunction{LocalGreedy}{LocalGreedy}%
\SetKwInOut{Input}{Input}
\SetKwInOut{Output}{Output}
\Fn{\LocalGreedy{$i, \varepsilon_\mathrm{greedy}$}}{
  \Input{space identifier $i$,\\ greedy tolerance $\varepsilon_\mathrm{greedy}$}
  \Output{reduced local subspace $\gls{rlfspacewb}$}
  $\rbasis_{\gls{rlfspacewb}} \leftarrow \emptyset$ \;
  \While{
    $\max\limits_{\substack{
        \mathllap{{\gls{parameter}}} \in \mathrlap{\gls{trainingset}} \\
        \mathllap{j} \in \mathrlap{\{1, \dots, N_\rbasis +1 \}}}
    }\ \Delta_{cell}(\wt{u}_{{\gls{parameter}},j}) > \varepsilon_\mathrm{greedy}$
  }%
  {
    $\hat {\gls{parameter}},\jhat \leftarrow \argmax\limits_{\substack{
        \mathllap{{\gls{parameter}}} \in \mathrlap{\gls{trainingset}} \\
        \mathllap{j} \in \mathrlap{\{1, \dots, N_\rbasis +1 \}}}}%
    \ \Delta_{cell}(\wt{u}_{{\gls{parameter}},j})$\;
    \Find{$u_{\hat {\gls{parameter}}, \jhat} \in \gls{lfspacewb}$ \st}{
      $
      \qquad a_{\hat {\gls{parameter}}}(u_{\hat {\gls{parameter}}, \jhat},\phi) = g_{\hat {\gls{parameter}}, \jhat}(\phi) \qquad \forall \phi \in \gls{lfspacewb}
      $}
    $u_{\hat {\gls{parameter}}, \jhat} \leftarrow u_{\hat {\gls{parameter}}, \jhat} - \sum\limits_{\mathclap{\phi\in\rbasis_{\gls{rlfspacewb}}}} {( \phi, u_{\hat {\gls{parameter}}, \jhat})_{\gls{fspace}}} \, \phi$\;
    $\rbasis_{\gls{rlfspacewb}} \leftarrow \rbasis_{\gls{rlfspacewb}} \cup \left\{\norm{u_{\hat {\gls{parameter}}, \jhat}}_{\gls{fspace}}^{-1}u_{\hat {\gls{parameter}}, \jhat}\right\}$\;
  }
  \Return $\spanset(\rbasis_{\gls{rlfspace}})$\;
}
\caption[LocalGreedy to construct local cell spaces.]
        {LocalGreedy to construct local cell spaces $\gls{rlfspacewb}, \ i \in \gls{upsilon}_0$.}
\label{algo:local_greedy}
\end{algorithm2e}

\section{Enrichment Procedure}
\label{sec:enrichment}
The first \gls{arbilomod} solution is obtained using the initial reduced local subspaces generated
using the local training and greedy procedures described in \cref{sec:training_and_greedy}.
The error of this solution is bounded by an a posteriori error estimator
\gls{locerrest_g}. The definition of this a posteriori error estimator
is postponed to \cref{chap:a posteriori}.
If this solution is not good enough according
to the a posteriori error estimator, the solution is improved by
enriching the reduced local subspaces and then solving the global reduced
problem again.
The full procedure is given in \cref{algo:online_enrichment_arbilomod}
and described in the following.

For the enrichment, we use the overlapping local subspaces \gls{lfspacepou} introduced
in \eqref{sec:poudecomposition}, which are also used for the
a posteriori error estimator.
Online enrichment on the wirebasket space decomposition
itself led to stagnation in numerical experiments.
Local problems are solved in the overlapping spaces.
The original bilinear form is used, but as a right hand side
the residual of the last reduced solution is employed.
The local spaces and the parameter values for which the enrichment
is performed are selected in a D\"orfler-like \cite{Doerfler1996}
algorithm.
The thus obtained local solutions $u_l$ do not fit into our
space decomposition, as they lie in one of the overlapping spaces \gls{lfspacepou}, not
in one of the local subspaces used for the basis construction \gls{lfspacewb}. Therefore,
the $u_l$ are decomposed using the projection operators $\gls{lfspacemapwb}$
defined in \cref{def:local_projection_operators}.
In the setting of our numerical example (\cref{sec:numerical_example}), this decomposition yields at most 9 parts
(one codim-2 part, four codim-1 parts and four codim-0 parts).
Of these parts, the one worst approximated by the existing reduced
local subspace is selected for enrichment. ``Worst approximated'' is here defined
as having the largest part orthogonal to the existing reduced local subspace.
We denote the orthogonal projection on \gls{rlfspacewb} by 
$\gls{orthogonalp}_{\gls{rlfspacewb}}$,
so $(1 - \gls{orthogonalp}_{\gls{rlfspacewb}}) \gls{lfspacemapwb}(u_l)$
is the part of $\gls{lfspacemapwb}(u_l)$ orthogonal to $\gls{rlfspacewb}$.

To avoid communication, cell spaces $\gls{rlfspacewb}, i \in \gls{upsilon}_0$
are not enriched at this point. Such an enrichment would require the
communication of the added basis vector, which might be large. Instead,
only the other spaces are enriched, and the cell spaces associated with $\gls{upsilon}_0$
are regenerated using the greedy procedure from \cref{sec:codim_0_greedy}.
For the other spaces, a strong compression of the basis vectors is possible
(cf.~\cref{sec:runtime_and_communication}).

This selection of the local spaces can lead to one reduced local space
being enriched several times in one iteration. Numerical experiments have
shown that this leads to poorly conditioned systems, as
the enrichment might introduce the same feature into a local basis twice.
To prevent this, the enrichment algorithm enriches each reduced
local subspace at most once per
iteration.
\begin{algorithm2e}
\DontPrintSemicolon%
\SetAlgoVlined%
\SetKwFunction{OnlineEnrichment}{OnlineEnrichment}%
\SetKwInOut{Input}{Input}
\SetKwInOut{Output}{Output}
\Fn{\OnlineEnrichment{$\gls{enrichmentfraction}, \mathrm{tol}$}}{
  \Input{enrichment fraction $\gls{enrichmentfraction}$,\\ target error $\mathrm{tol}$}
  \While{
    $\max\limits_{{\gls{parameter}} \in \gls{trainingset}}
    \ \Delta(\wt{u}_{\gls{parameter}}) > \mathrm{tol}$
  }%
  {
    $E \leftarrow \emptyset$\;
    \While{
      $\left(\sum\limits_{({\gls{parameter}},i) \in E} \norm{R_{\gls{parameter}}(\gls{rsol}_{\gls{parameter}})}_{(\gls{lfspacepou})'} \right)
      \Big/ \left(\sum\limits_{({\gls{parameter}},i) \in (\gls{trainingset} \times \{1, \dots, \gls{numdomainsol}\})} \norm{R_{\gls{parameter}}(\gls{rsol}_{\gls{parameter}})}_{(\gls{lfspacepou})'} \right)< \gls{enrichmentfraction}$
    }%
    {
    $\hat {\gls{parameter}},\hat i \leftarrow
    \argmax\limits_
    {({\gls{parameter}}, i) \in \left(\gls{trainingset} \times \{1, \dots, \gls{numdomainsol}\}\right) \setminus E}
    \norm{R_{\gls{parameter}}(\gls{rsol}_{\gls{parameter}})}_{(\gls{lfspacepou})'}$\;
    $E \leftarrow E \cup (\hat {\gls{parameter}}, \hat i)$\;
    }
    \tcc{$S$ is used for double enrichment protection.}
    $S \leftarrow \emptyset$\;
    \For{$({\gls{parameter}},i) \in E$}
    {
    \Find{$u_l \in \gls{lfspacepou}$ \st}{
      $a_{{\gls{parameter}}}(u_l, \varphi) =
      \dualpair{\gls{residual}_{\gls{parameter}}(\gls{rsol}_{{\gls{parameter}}})}{\varphi} \qquad \forall
      \varphi \in \gls{lfspacepou}$}
    $\check i \leftarrow \argmax\limits_{i \in \{1, \dots, \gls{numcoarseentities}\} \setminus \gls{upsilon}_0}\norm{ (1 - \gls{orthogonalp}_{\gls{rlfspacewb}}) \gls{lfspacemapwb}(u_l) } _{\gls{lfspace}}$\;
    \If{$\check i \notin S$}{
      $\widetilde V^\mathrm{wb}_{\check i} \leftarrow \widetilde V^\mathrm{wb}_{\check i} \oplus \spanset \left((1 - \gls{orthogonalp}_{\gls{rlfspacewb}}) \gls{lfspacemapwb}(u_l)\right)$\;
      $S \leftarrow S \cup \check i$\;
    }
    }
    run \texttt{LocalGreedys}\;
    recalculate reduced solutions\;
  }
}
\caption[Online Enrichment in ArbiLoMod.]
{Online Enrichment in \gls{arbilomod}.}
\label{algo:online_enrichment_arbilomod}
\end{algorithm2e}

\section{Runtime and Communication}
\label{sec:runtime_and_communication}
A major design goal of \gls{arbilomod} is communication avoidance and
scalability in parallel environments. In the following, we
highlight the possibilities offered by \gls{arbilomod} to
reduce communication in a parallel setup.

\tikzexternaldisable
\begin{figure}
\footnotesize
\centering
\begin{subfigure}[b]{0.3\textwidth}
\centering
\begin{minipage}{0.8\linewidth}
  \begin{tikzpicture}[scale=0.9]
    \node at (0.5+0.125,2.5) [color=Xred] {\huge 1};
    \node at (1.5+0.125,2.5) [color=Xred] {\huge 2};
    \node at (2.5+0.125,2.5) [color=Xred] {\huge 3};
    \node at (0.5+0.125,1.5) [color=Xred] {\huge 4};
    \node at (1.5+0.125,1.5) [color=Xred] {\huge 5};
    \node at (2.5+0.125,1.5) [color=Xred] {\huge 6};
    \node at (0.5+0.125,0.5) [color=Xred] {\huge 7};
    \node at (1.5+0.125,0.5) [color=Xred] {\huge 8};
    \node at (2.5+0.125,0.5) [color=Xred] {\huge 9};
    \draw [step=2.5mm,color=gray,very thin] (-0.2,-0.2) grid (3.2,3.2);
    \draw [step=1cm,black,line width=0.7mm] (-0.2,-0.2) grid (3.2,3.2);
  \end{tikzpicture}
\end{minipage}
\caption{Example domain numbering.}
\end{subfigure}\hfill
\begin{subfigure}[b]{0.65\textwidth}
\centering
\def\myscale{0.6}
\begin{tabular}{ccc}
\multicolumn{3}{l}{Create basis functions for vertex, face and cell spaces:}\\
\begin{tikzpicture}[scale=\myscale]
\draw [step=2.5mm,color=gray,very thin] (-0.2,-0.2) grid (3.2,3.2);
\draw [step=1cm,black,line width=0.7mm] (-0.2,-0.2) grid (3.2,3.2);
\draw [color=Xred,line width=1.1mm] (0,1) rectangle (2,3);
\draw [color=blue,line width=1.4mm] (1,2) circle (0.2);
\end{tikzpicture}&
\begin{tikzpicture}[scale=\myscale]
\draw [step=2.5mm,color=gray,very thin] (-0.2,-0.2) grid (3.2,3.2);
\draw [step=1cm,black,line width=0.7mm] (-0.2,-0.2) grid (3.2,3.2);
\draw [color=Xred,line width=1.1mm] (0,0) rectangle (2,3);
\draw [color=blue,line width=1.4mm] (0.8,0.8) rectangle (1.2,2.2);
\end{tikzpicture}&
\begin{tikzpicture}[scale=\myscale]
\draw [step=2.5mm,color=gray,very thin] (-0.2,-0.2) grid (3.2,3.2);
\draw [step=1cm,black,line width=0.7mm] (-0.2,-0.2) grid (3.2,3.2);
\draw [color=Xred,line width=1.1mm] (1,1) rectangle (2,2);
\draw [color=blue,line width=1.4mm] (1.2,1.2) rectangle (1.8,1.8);
\end{tikzpicture}\\[.5ex]
$\gls{lfspacewb}, i \in \gls{upsilon}_2$&
$\gls{lfspacewb}, i \in \gls{upsilon}_1$&
$\gls{lfspacewb}, i \in \gls{upsilon}_0$\\
\end{tabular}
\caption{To generate reduced space associated with blue marked entity, only geometry information in red marked area is needed.}
\end{subfigure}
\vspace*{-2ex}
\caption[Data dependencies in ArbiLoMod]
{
Data dependencies in \gls{arbilomod}:
Before online enrichment, it is possible to compute all reduced basis
function having support on a domain ($\omega_5^\mathrm{nol}$ here) using only
local information about the domain and its surrounding domains.
}
\label{fig:no_communication}
\end{figure}
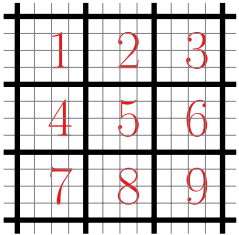
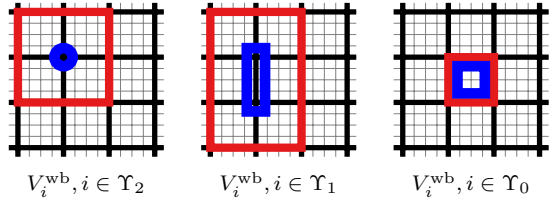
\tikzexternalenable

Similar to overlapping Domain Decomposition methods we require that
not only the local domain, but also an overlap region is available
locally. For a subdomain $\gls{subdomainnol}$ the overlap region is the domain
itself and all adjacent domains, as depicted in \cref{fig:no_communication},
i.e.\ all subdomains in the neighborhood $\gls{neigh}_i$.
As the overlap region includes the
support of all training spaces, one can compute all initial reduced local subspaces
with support in $\gls{subdomainnol}$ without further communication.
This work can be distributed on many nodes.
Afterwards, only reduced representations of the operator have to be communicated.
Using the operator decomposition
$a^b(u,v) = \sum_{i=1}^{\gls{numdomainsnol}} a_{\gls{subdomainnol}}^b(u,v)$, a global, reduced
operator is collected using an all-to-one communication of reduced matrices.
The global reduced problem is then solved on a single node.
It is assumed that the global, reduced system is sufficiently small.

If the accuracy is not sufficient, online enrichment is
performed. This step requires additional communication; first for the
evaluation of the error estimator and second to communicate new basis
vectors of reduced face spaces $\gls{rlfspacewb}, \ i \in
\gls{upsilon}_1$.
Note that it is sufficient to communicate the local projection
$\gls{lfspacemapbasic}(\psi)$ and reconstruct the actual basis
function as its extension, so that we save communication costs
proportional to the volume to surface ratio.

The evaluation of the localized error estimator requires the dual
norms of the residual in the localized spaces
$\norm{\gls{residual}_{\gls{parameter}}(\gls{rsol}_{\gls{parameter}})}_{\gls{lfspacepou}'}$. Using an offline/online
splitting, the evaluation of the error estimator can be
evaluated for the full system using only reduced quantities. The
computation of the reduced operators is performed in parallel, similar
to the basis construction in the first step. The actual evaluation of
the error estimator can be performed on a single node and is
independent of the number of degrees of freedom of the high fidelity
model.

\label{sec:H_explanation}
An important parameter for \gls{arbilomod}'s runtime is the domain size $\gls{H}$.
The domain size affects the size of the local problems, 
the amount of parallelism in the algorithm,
and the size of the reduced global problem.
An $\gls{H}$ too large leads to large local problems, while an $\gls{H}$ too small
leads to a large reduced global problem (see also the numerical
example in \cref{sec:num_results} and especially
the results in \cref{tab:H_study}).
$\gls{H}$ has to be chosen
to balance these two effects.

\section{Numerical Experiments}
\subsection{Numerical Example 1: Thermal Channels}
\label{sec:numerical_example}
\label{sec:numerical_experiments}
We first demonstrate the \gls{arbilomod} on \exb{}
as defined in \cref{sec:exb}.
The numerical experiments were performed using \gls{pymor} \cite{Milk2016}.
If not specified otherwise, we use
a mesh size of $1/\gls{h} = 200$,
a training tolerance of $\varepsilon_\mathrm{train} = 10^{-4}$,
a number of random samplings of $M = 60$,
a greedy tolerance of $\varepsilon_\mathrm{greedy} = 10^{-3}$,
a convergence criterion of $\norm{\gls{residual}_{\gls{parameter}}(\gls{rsol}_{\gls{parameter}})}_{\gls{fspaceh}'} < 10^{-2}$,
an enrichment fraction of $\gls{enrichmentfraction} = 0.5$,
a training set of size $|\gls{trainingset}| = 6$,
and the parameter for extension calculation is $\overline{\gls{parameter}} = 10^5$.
\label{sec:num_results}
\begin{figure}
\centering
\includegraphics[width=0.6\textwidth]{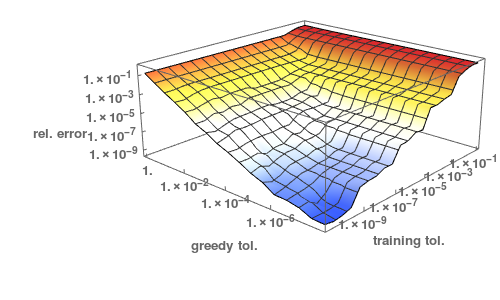}
\caption[Maximum relative $H^1$-error on training set $\gls{trainingset}$ for \exb.]
{Maximum relative $H^1$-error on training set $\gls{trainingset}$ in dependence of tolerances in codim 1 training
and codim 0 greedy for \exb. Online enrichment disabled. 
(reproduction: \cref{repro:fig:tolerances})}
\label{fig:tolerances}
\end{figure}
The initial reduced space is created using the local trainings and
greedy algorithms. In both the trainings and the greedy algorithms
a tolerance parameter steers the quality of the obtained reduced space:
In the trainings,
$\varepsilon_\mathrm{train}$
is the stopping criterion for the SnapshotGreedy
(\cref{algo:snapshot_greedy}) and the local greedys
stop when the local error estimator stays below the prescribed tolerance
$\varepsilon_\mathrm{greedy}$.
The resulting reduction errors in
dependence on the two tolerances are depicted in \cref{fig:tolerances}.
\begin{table}
\footnotesize
\centering
\begin{tabu}{c|[1pt]c|[lightgray]c|c|[lightgray]c|c|[lightgray]c|[1pt]c|[lightgray]c}
\multirow{2}{*}{geometry}&
\multicolumn{6}{c|[1pt]}{\em with training} &
\multicolumn{2}{c}{\em without training}\\
&\multicolumn{2}{c|}{trainings} &
\multicolumn{2}{c|}{greedys} &
\multicolumn{2}{c|[1pt]}{iterations} &
\multicolumn{2}{c}{iterations}
\\
\hline
&\multicolumn{2}{c|}{reuse:} &
\multicolumn{2}{c|}{reuse:} &
\multicolumn{2}{c|[1pt]}{reuse:} &
\multicolumn{2}{c}{reuse:}
\\
 & no & yes & no & yes & no & yes & no & yes\\
\tabucline[lightgray]{2-}
1 & 112 & 112 (-0 \%)  & 64 & 64 (-0 \%)  & 24 & 24 (-0 \%)
& 46 & 46 (-0 \%) \\
2 & 112 & 5 (-96 \%)  & 64 & 8 (-88 \%)  & 24 & 13 (-46 \%)
& 48 & 28 (-42 \%) \\
3 & 112 & 5 (-96 \%)  & 64 & 8 (-88 \%)  & 20 & 14 (-30 \%)
& 42 & 27 (-36 \%) \\
4 & 112 & 3 (-97 \%)  & 64 & 6 (-91 \%)  & 25 & 10 (-60 \%)
& 54 & 23 (-57 \%) \\
5 & 112 & 5 (-96 \%)  & 64 & 8 (-88 \%)  & 25 & 12 (-52 \%)
& 52 & 27 (-48 \%) \\
\end{tabu}
\caption[Number of iterations of online enrichment for \exb]
{Number of iterations of online enrichment for \exb:
(a) With and without codim 1 training.
(b) With and without reuse of basis
functions of previous simulations.
Convergence criterion: $\norm{\gls{residual}_{\gls{parameter}}(\gls{rsol}_{\gls{parameter}})}_{\gls{fspaceh}'} < 10^{-4}$,
greedy tolerance: $\varepsilon_\mathrm{greedy} = 10^{-5}$
See also \cref{fig:basisreusewithtraining,fig:basisreuse}.
(reproduction: \cref{repro:basisreuse})
}
\label{tab:basisreusecompare}
\end{table}

\begin{figure}
\centering
\input{figure_basisreuse_with_training}
\caption[Relative error with training during iteration for \exb.]
{Relative error over iteration with and without basis reuse for
after geometry change for \exb. With codim 1 training.
Convergence criterion: $\norm{\gls{residual}_{\gls{parameter}}(\gls{rsol}_{\gls{parameter}})}_{\gls{fspaceh}'} < 10^{-4}$,
greedy tolerance: $\varepsilon_\mathrm{greedy} = 10^{-5}$.
See also \cref{tab:basisreusecompare}.
(reproduction: \cref{repro:basisreuse})
}
\label{fig:basisreusewithtraining}
\end{figure}

\begin{figure}
\centering
\input{figure_basisreuse}
\caption[Relative error without training during iteration for \exb.]
{Relative $H^1$ error over iterations with and without basis reuse for 
after geometry change for \exb. Without codim 1 training.
Convergence criterion: $\norm{R_{\gls{parameter}}(\gls{rsol}_{\gls{parameter}})}_{V_h'} < 10^{-4}$,
greedy tolerance: $\varepsilon_\mathrm{greedy} = 10^{-5}$.
See also \cref{tab:basisreusecompare}.
(reproduction: \cref{repro:basisreuse})
}
\label{fig:basisreuse}
\end{figure}

\begin{figure}
\centering
\input{figure_estimatorperformance}
\caption[
Error estimators $\gls{rerrest}$, $\gls{rerrest4}$ during iteration for \exb{}.]
{
Error estimators $\gls{rerrest}$ and
$\gls{rerrest4}$ over iterations, compared to the
relative error for \exb{}. Plotted for $\gls{coercconst}_{\gls{parameter}} = \gls{decompositionboundvr} = 1$.
Simulation performed with
a convergence criterion of $\norm{R_{\gls{parameter}}(\gls{rsol}_{\gls{parameter}})}_{V_h'} < 10^{-6}$,
a training tolerance of $\varepsilon_\mathrm{train} = 10^{-5}$,
and a greedy tolerance of $\varepsilon_\mathrm{greedy} = 10^{-7}$.
(reproduction: \cref{repro:fig:estimatorperformance})
}
\label{fig:estimatorperformance}
\end{figure}

If the resulting error is too big, it can be further reduced using
iterations of online enrichment as depicted in
\cref{fig:basisreusewithtraining}. Results suggest an
very rapid decay of the error with online enrichment.
The benefits of \gls{arbilomod} can be seen in
\cref{fig:basisreusewithtraining} and 
\cref{tab:basisreusecompare}: after
the localized geometry changes, most of the
work required in the initial basis creation does not need to be
repeated and the online enrichments converge faster for
subsequent simulations, leading to less iterations.
The online enrichment presented here
converges even when started on empty bases, as depicted in
\cref{fig:basisreuse}.
It does not rely on properties
of the reduced local subspaces created by trainings and greedys.
The performance of the localized a posteriori error estimator
(defined below in \cref{sec:relative error estimators})
$\gls{rerrest4}$ can be seen in \cref{fig:estimatorperformance}.
Comparison of the localized estimator 
$\gls{rerrest4}$
with the global estimator
$\gls{rerrest}$
(also defined below in \cref{sec:relative error estimators})
shows that, for the example considered here,
the localization does not add a significant factor beyond the factor
$\gls{decompositionboundvr}$, which is
neglected in \cref{fig:estimatorperformance}.
\newcommand{\rot}[1]{\multicolumn{1}{l}{\adjustbox{angle=24,lap=\width-1em}{#1}}  }
\begin{table}
\begin{center}
\footnotesize
\begin{tabular}{r|r|r|r|r|r|r|r|r|r}
\multicolumn{1}{c}{$1/\gls{h}$} & \rot{global \gls{dof}s} & \rot{global solve time [s]} &
\rot{\# \gls{dof}s, codim 1 training space} &
\rot{avg. time per codim 1 training [s]} &
\rot{\# \gls{dof}s, codim 0 space} &
\rot{max time per codim 0 greedy [s]} & \rot{\# \gls{dof}s, reduced problem} & \rot{solve time, reduced [ms]} & \rot{max error [\permil]}
\\
\hline
200 & 80,401 & 0.656 & 7,626 & 1.02 & 1,201 & 4.9 & 1,178 & 21.8 & 1.316\\
400 & 320,801 & 4.87 & 30,251 & 5.14 & 4,901 & 7.04 & 1,151 & 22.4 & 1.433\\
600 & 721,201 & 23.6 & 67,876 & 14 & 11,101 & 10.7 & 1,116 & 19.1 & 2.035\\
800 & 1,281,601 & 41.8 & 120,501 & 29.5 & 19,801 & 17.8 & 1,101 & 17.1 & 2.735\\
1000 & 2,002,001 & 86.4 & 188,126 & 51.3 & 31,001 & 24.2 & 1,089 & 18.8 & 1.351\\
1200 & 2,882,401 & 230 & 270,751 & 81.2 & 44,701 & 36.6 & 1,082 & 18.6 & 4.462\\
1400 & 3,922,801 & 230 & 368,376 & 120 & 60,901 & 51.7 & 1,073 & 18.2 & 2.379
\end{tabular}
\end{center}
\caption[Runtimes for selected parts of ArbiLoMod for \exb{}.]
{Runtimes for selected parts of ArbiLoMod without
online enrichment for \exb{}.
``max error'' denotes $\max_{{\gls{parameter}} \in \gls{trainingset}} {\norm{u_{\gls{parameter}} -\gls{rsol}_{\gls{parameter}}}_{\gls{fspace}}}/{\norm{u_{\gls{parameter}}}_{\gls{fspace}}}$.
Runtimes measured using
a pure Python implementation, using SciPy solvers (SuperLU sequential).
Note that the global solve time is for one parameter value
while training and greedy produce spaces valid for all parameter
values in the training set $\gls{trainingset}$.
(reproduction: \cref{repro:tab:runtimes})}
\label{tab:runtimes}
\end{table}

\begin{table}
\begin{tabular}{r|r|r|r|r|r|r|r|r|r}
\multicolumn{1}{c}{$1/\gls{H}$} &
\rot{\# \gls{dof}s, codim 1 training space} &
\rot{mean training time [s]} &
\rot{max training time [s]} &
\rot{\# \gls{dof}s, codim 0 space} &
\rot{mean greedy time [s]} &
\rot{max greedy time [s]} &
\rot{\# \gls{dof}s, reduced problem} &
\rot{solve time, reduced [ms]} &
\rot{max error [\permil]}
\\
\hline
4 & 30,251 & 10.9 & 14.1 & 4,901 & 3.07 & 6.6 & 403 & 3.85 & 0.362\\
5 & 19,401 & 6.88 & 8.74 & 3,121 & 2.32 & 11.7 & 517 & 4.38 & 0.435\\
8 & 7,626 & 2.63 & 3.55 & 1,201 & 1.35 & 5.69 & 1,178 & 14.9 & 1.32\\
10 & 4,901 & 1.68 & 1.94 & 761 & 0.655 & 3.98 & 1,451 & 19.8 & 0.220\\
20 & 1,251 & 0.483 & 0.661 & 181 & 0.305 & 3.18 & 5,025 & 94.4 & 0.0804
\end{tabular}
\caption[Influence of domain size $\gls{H}$ in ArbiLoMod for \exb{}.]
{
Influence of domain size $\gls{H}$ in \gls{arbilomod} for \exb{}. Fine mesh resolution: $1/\gls{h} = 200$.
``max error'' is $\max_{{\gls{parameter}} \in \gls{trainingset}} {\norm{u_{\gls{parameter}} -\gls{rsol}_{\gls{parameter}}}_{\gls{fspace}}}/{\norm{u_{\gls{parameter}}}_{\gls{fspace}}}$.
Smaller domains lead to more parallelism and smaller local problems, but
also to more global \gls{dof}s and a worse constant in the a-posteriori error estimator.
Measured using
a pure Python implementation, using SciPy solvers (SuperLU sequential).
(reproduction: \cref{repro:tab:H_study})
}
\label{tab:H_study}
\end{table}

Even though our implementation was not tuned for performance and is not
parallel, we present some timing measurements in \cref{tab:runtimes} and \cref{tab:H_study}.
\cref{tab:runtimes} shows that, already in our unoptimized implementation, trainings and greedys
have a shorter runtime than a single global solve for problems of sufficient 
size. Taking into account that trainings and greedys create a solution
space valid in the whole parameter space, this data is a strong hint
that \gls{arbilomod} can realize its potential for acceleration 
for large 
problems, in an optimized implementation and a parallel computing environment.
Especially when the solution has to be calculated at multiple parameter values.
\cref{tab:H_study} shows the effect of choosing the domain size $\gls{H}$: 
Large domains lead to large local problems, while small domains lead to a large
reduced global problem (see also \cref{sec:H_explanation}).
\begin{figure}
\centering
\includegraphics[width=0.35\textwidth]{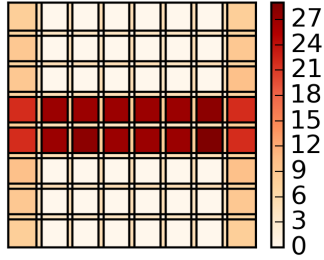}
\caption[Distribution of local basis sizes after initial training for \exb{}.]
{Distribution of local basis sizes after initial training for \exb{}.
Relative reduction error at this configuration: $1.3 \cdot 10^{-3}$.
(reproduction: \cref{repro:fig:basis_sizes})
}
\label{fig:basis_sizes}
\end{figure}

\subsection{Numerical Example 2: 2D Maxwell's}
\label{sec:arbilomod experiments for maxwell}
The application of \gls{arbilomod} to inf-sup
stable problems has several issues.
The local problems in trainings, enrichments and local
greedys do not necessarily have a unique solution
and also the global reduced problem might be unstable.
Nevertheless we apply \gls{arbilomod} without
local greedy algorithms, without online enrichment and without
a posteriori error estimator to the inf-sup stable
problem \exc{} to numerically investigate
its behavior in this case.
\subsubsection{Global Properties of Example}
Before analyzing the behavior of the localized model reduction, we discuss some
properties of the full model. For its stability, its continuity constant $\gls{contconst}$ and reduced inf-sup constants \gls{rinfsupconst} are the primary concern. They guarantee existence and uniqueness of the solution and their quotient enters the best-approximation inequality
\cref{thm:bestapproxinfsup}.
Due to the construction of the norm (see \cref{eq:maxwell energy norm}), the continuity constant cannot be larger than one, and numerics indicate that it is usually one (\cref{fig:infsupcont}). The inf-sup constant approaches zero when the frequency goes to zero. This is the well known low frequency instability of this formulation. There are remedies to this problem, but they are not considered here. The order of magnitude of the inf-sup constant is around $10^{-2}$: Due to the Robin boundaries, the problem is stable. With Dirichlet boundaries only, the inf-sup constant would drop to zero at several frequencies.
There are two drops in the inf-sup constant at ca.~770 MHz and 810 MHz. These correspond to resonances in the structure which arise when half a wavelength is the width of a channel ($\lambda / 2 \approx 1/5$).
\begin{figure}
\begin{tikzpicture}
\begin{axis}[
    xlabel=frequency / Hz,
    ylabel=inf-sup constant,
    width=0.40\textwidth,
    height=0.3\textwidth,
    no markers,
  ]
  \addplot table{infsups0.txt};
  \addplot table{infsups1.txt};
\end{axis}
\end{tikzpicture}
\begin{tikzpicture}
\begin{semilogyaxis}[
    xlabel=frequency / Hz,
    ylabel=inf-sup and continuity,
    width=0.40\textwidth,
    height=0.3\textwidth,
    no markers,
    legend pos=outer north east,
  ]
  \addplot table{infsups0.txt};
  \addplot table{infsups1.txt};
  \addplot table{continuities0.txt};
  \legend{inf-sup geo 1, inf-sup geo 2, continuity};
\end{semilogyaxis}
\end{tikzpicture}
\caption[inf-sup and continuity constant of bilinear form for \exc{}.]
{inf-sup and continuity constant of bilinear form for \exc{}. Linear and logarithmic.
(reproduction: \cref{repro:fig:infsupcont})
}
\label{fig:infsupcont}
\end{figure}
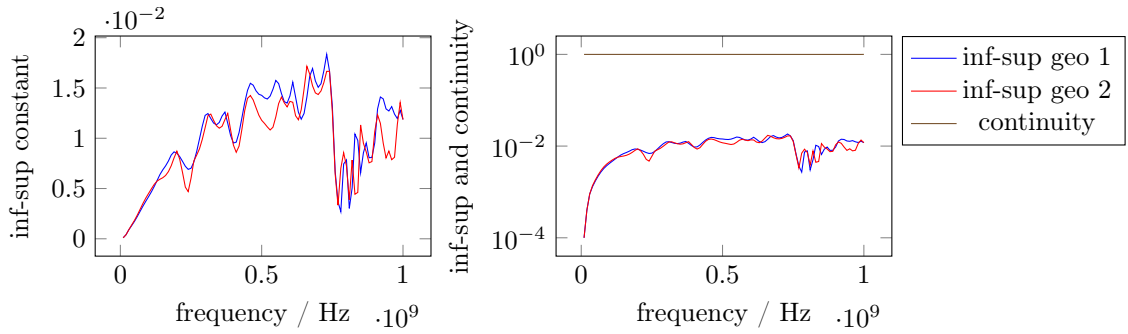
\begin{figure}
\begin{center}
\begin{tikzpicture}
\begin{semilogyaxis}[
    xlabel=basis size,
    ylabel=max rel. projection error,
    ymax=1e2,
    width=0.45\textwidth,
    height=0.3\textwidth,
    no markers,
    legend pos=outer north east,
  ]
  \addplot table{n_width0.txt};
  \addplot table{n_width1.txt};
  \legend{geometry 1, geometry 2};
\end{semilogyaxis}
\end{tikzpicture}
\end{center}
\caption[Upper bound for Kolmogorov n-width for \exc{}]
{
Error when approximating the solution set for all $\gls{parameter} \in \gls{trainingset}$ with an n-dimensional basis obtained by greedy approximation of this set. This is an upper bound for the Kolmogorov n-width. (\exc{})
(reproduction: \cref{repro:fig:global_n_width})
}
\label{fig:global_n_width}
\end{figure}

The most important question for the applicability of any reduced basis method is: is the system reducible at all,
i.e. can the solution manifold be approximated with a low dimensional solution space? The best possible answer to this
question is the Kolmogorov n-width. We measured the approximation error when approximating the solution manifold with a
basis generated by a greedy algorithm. The approximation is done by orthogonal projection onto the basis. This error is
an upper bound to the Kolmogorov n-width. Already with a basis size of 38, a relative error of $10^{-4}$ can be achieved,
see \cref{fig:global_n_width}. So this problem is well suited for reduced basis methods.

\subsubsection{Properties of Localized Spaces}
\begin{figure}
\begin{center}
\includegraphics[width=0.35\textwidth]{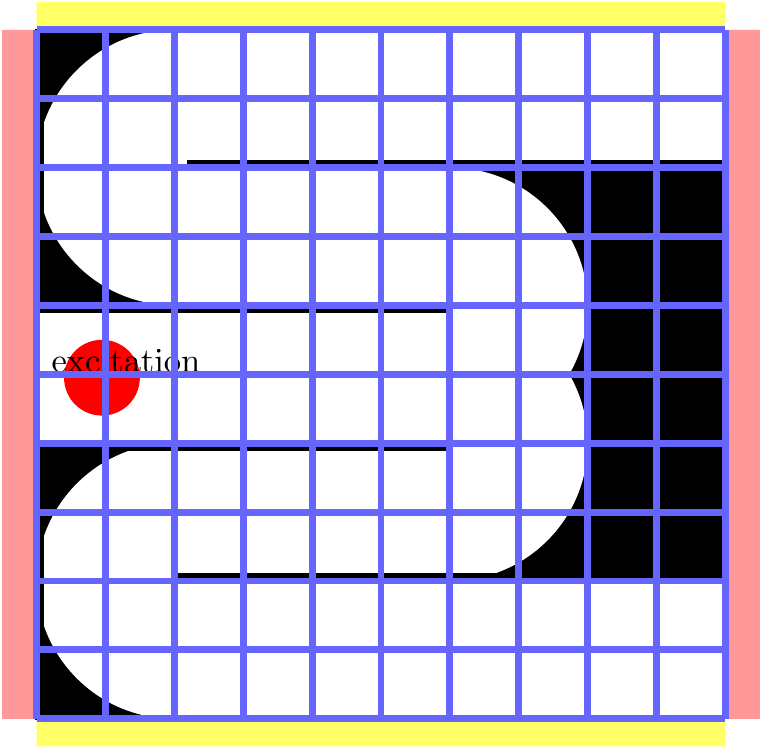}
\end{center}
\caption[Domain decomposition used for \exc{}]
{Domain decomposition used for \exc{}.}
\label{fig:exc dd}
\end{figure}

\begin{figure}
\begin{center}
\begin{tikzpicture}
\begin{semilogyaxis}[
    xlabel=basis size,
    ylabel=maximum relative error,
    ymin=1e-10,
    ymax=1e1,
    width=0.45\textwidth,
    height=0.4\textwidth,
    xmin=0,
    xmax=4000,
    no markers
  ]
  \addplot table[x index=0, y index=4]{localized0.txt};
  \addplot table[x index=0, y index=4]{localized1.txt};
  \legend{geometry 1, geometry 2};
\end{semilogyaxis}
\end{tikzpicture}
\end{center}
\caption[Maximum error when solving \exc{} with a localized basis.]
{
Maximum error when solving \exc{} with a localized basis, generated by global solves.
(reproduction: \cref{repro:fig:localized_globalsolve})
}
\label{fig:localized_globalsolve}
\end{figure}
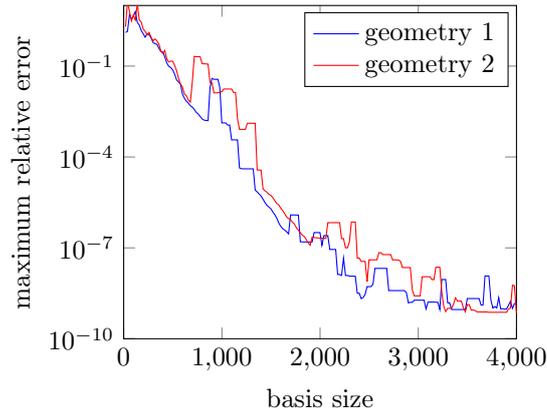
The next question is: how much do we lose by localization?
Using basis vectors with limited support, one needs a larger total
number of basis functions. To quantify this, we compare the errors with
global approximation from the previous section with the error obtained
when solving with a localized basis, using the best localized basis
we can generate. We use a 10 x 10 domain decomposition
(see \cref{fig:exc dd}) and
the space decomposition introduced in
 \cref{sec:wirebasket space decomposition}. 
To construct the best possible basis, we solve the full problem
for all parameters in the training set. For each local subspace,
we apply the corresponding projection operator
\gls{lfspacemapwb} to all global solutions and
subsequently generate a basis for these local parts of global
solutions using a snapshot greedy. The error when solving in the
resulting reduced space is depicted in
\cref{fig:localized_globalsolve}, right. Much more
basis vectors are needed, compared to the global reduced basis approach.
However the reduction in comparison to the full model (60200 \gls{dof}s)
is still significant and in contrast to standard reduced basis methods,
the reduced system matrix is not dense but block-sparse.
For a relative error of $10^{-4}$, 1080 basis vectors are necessary.

\begin{figure}
\begin{tikzpicture}
\begin{semilogyaxis}[
    xlabel=basis size,
    ylabel=maximum relative error,
    width=0.42\textwidth,
    height=0.38\textwidth,
    xmin=0,
    xmax=2500
  ]
  \addplot+[
    no markers
  ]table[x index=0, y index=4]{localized0.txt};
\end{semilogyaxis}
\end{tikzpicture}
\begin{tikzpicture}
\begin{semilogyaxis}[
    xlabel=basis size,
    ylabel=\gls{rinfsupconst},
    width=0.42\textwidth,
    height=0.38\textwidth,
    xmin=0,
    xmax=2500,
    no markers,
    legend pos=outer north east,
  ]
  \addplot table[
    x expr=\coordindex * 10 + 1,
    y index=16,
  ]{infsup_constant_0.txt};
  \addplot table[
    x expr=\coordindex * 10 + 1,
    y index=33,
  ]{infsup_constant_0.txt};
  \addplot table[
    x expr=\coordindex * 10 + 1,
    y index=69,
  ]{infsup_constant_0.txt};
  \addplot table[
    x expr=\coordindex * 10 + 1,
    y index=79,
  ]{infsup_constant_0.txt};
  \addplot table[
    x expr=\coordindex * 10 + 1,
    y index=83,
  ]{infsup_constant_0.txt};
  \addplot table[
    x expr=\coordindex * 10 + 1,
    y index=89,
  ]{infsup_constant_0.txt};
  \legend{170 MHz, 340 MHz, 700 MHz, 800 MHz, 840 MHz, 900 MHz}
\end{semilogyaxis}
\end{tikzpicture}
\caption[inf-sup constant of \exc{} vs. basis size.]
{Comparison of maximum error over all frequencies with inf-sup constant of reduced system at selected frequencies for geometry 1. Basis generated by global solves. Increased error and reduced inf-sup constant at basis size of ca.~900 (\exc{}).
(reproduction: \cref{repro:fig:infsup_drop})
}
\label{fig:infsup_drop}
\end{figure}
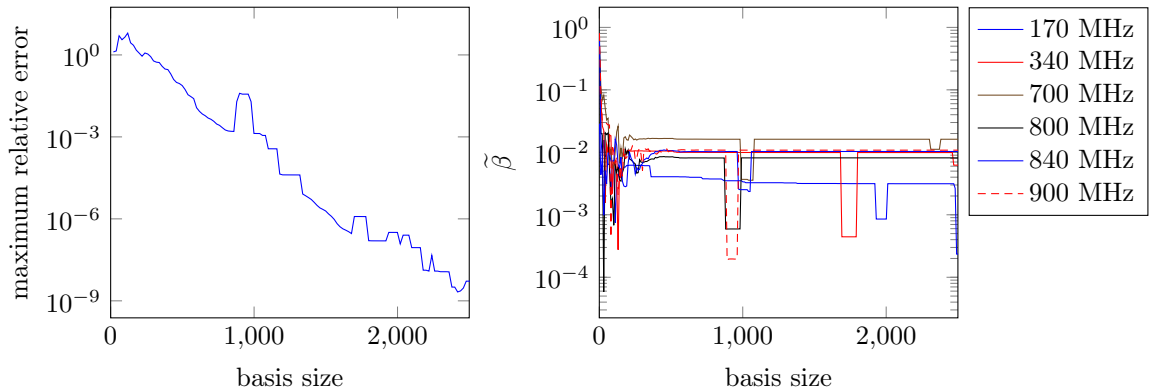

In \cref{fig:localized_globalsolve} the error is observed to jump occasionally.
This is due to the instability of a Galerkin projection of an inf-sup stable problem.
While the inf-sup constant of the reduced system is observed to be the same as the inf-sup
constant of the full system most of the time, sometimes it drops. This is depicted in \cref{fig:infsup_drop}.
For a stable reduction, a different test space is necessary. 
However, the application of the known approaches such as
\cite{carlberg2011efficient,Dahmen2014} 
to the localized setting is not straightforward.
\subsubsection{Properties of Training}
Local basis vectors should be generated using the localized training described in \cref{sec:codim_n_training}.
To judge on the quality of these basis vectors, we compare the error obtained using these basis vectors with the error obtained with local basis vectors generated by global solves. The local basis vectors generated by global solves are the reference: These are the best localized basis we can generate. The results for both geometries are depicted in \cref{fig:training_error}.
\begin{figure}
\begin{tikzpicture}
\begin{semilogyaxis}[
    xlabel=basis size,
    ylabel=error,
    width=0.48\textwidth,
    height=0.45\textwidth,
    xmin=0,
    xmax=3000,
    ymin=1e-10,
    ymax=1e3,
    no markers,
    title=geometry 1,
  ]
  \addplot table[x index=0, y index=4]{localized0.txt};
  \addplot table[x index=0, y index=4]{maxwell_training_benchmark0.txt};
  \legend{global solves, local training}
\end{semilogyaxis}
\end{tikzpicture}
\begin{tikzpicture}
\begin{semilogyaxis}[
    xlabel=basis size,
    ylabel=error,
    width=0.48\textwidth,
    height=0.45\textwidth,
    xmin=0,
    xmax=3000,
    ymin=1e-10,
    ymax=1e3,
    no markers,
    title=geometry 2,
  ]
  \addplot table[x index=0, y index=4]{localized1.txt};
  \addplot table[x index=0, y index=4]{maxwell_training_benchmark1.txt};
  \legend{global solves, local training}
\end{semilogyaxis}
\end{tikzpicture}
\caption[Maximum error over all frequencies for both geometries of \exc{}.]
{Maximum error over all frequencies for both geometries of \exc{}. Basis generated by global solves vs.\ basis generated by local training. 
(reproduction: \cref{repro:fig:training_error})
}
\label{fig:training_error}
\end{figure}
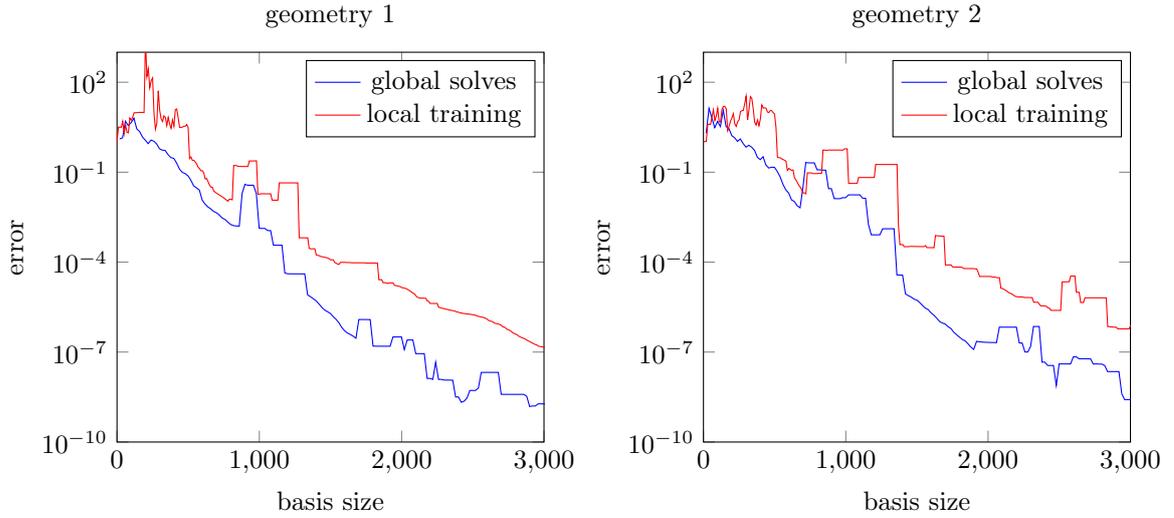
While the error decreases more slowly, we still have reasonable basis sizes with training. For a relative error of $10^{-4}$, 1280 basis vectors are necessary for geometry 1 and 1380 are necessary for geometry 2.
\subsubsection{Application to Local Geometry Change}
If we work with a relative error of 5\%, a basis of size 650 is sufficient for the first geometry and size 675 for the second.
After the geometry change, the local reduced spaces which have no change in their training domain can be reused.
Instead of solving the full system with 60200 degrees of freedom, 
the following effort is necessary per frequency point (see also \cref{fig:maxwell_example_change}).
Because the runtime is dominated by matrix factorizations, we focus on these. 
\begin{figure}
\begin{center}
\includegraphics[width=0.75\textwidth]{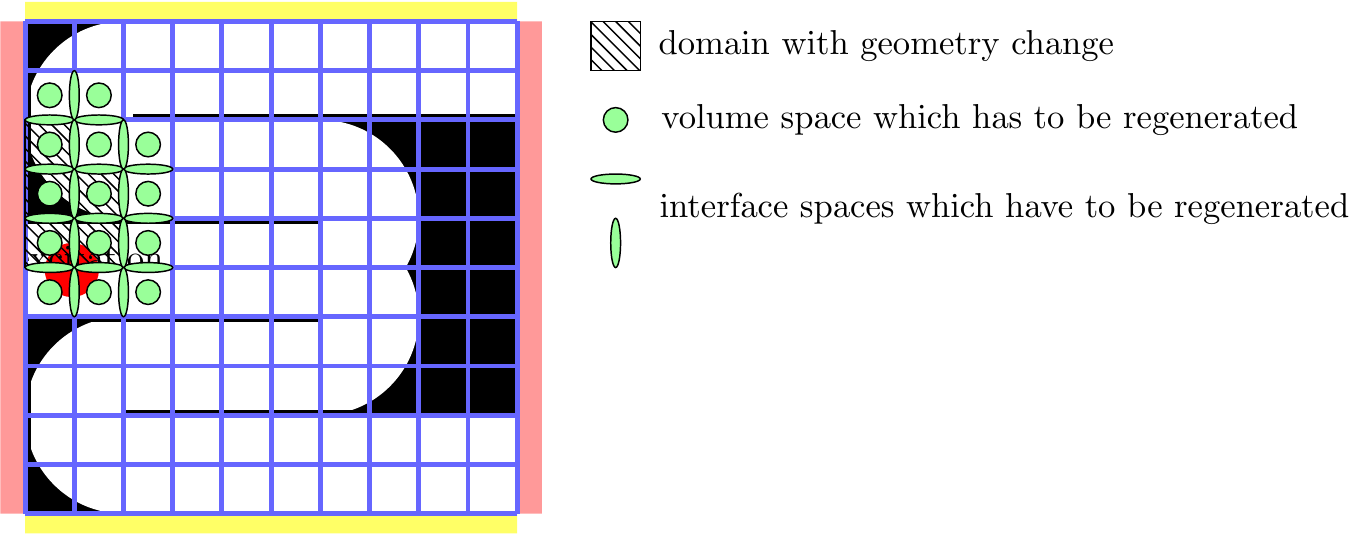}
\end{center}
\caption[Impact of geometry change in \exc{}]
{Impact of geometry change of \exc{}: 5 domains contain changes, 14 domain spaces and 20 interface spaces have to be regenerated.}
\label{fig:maxwell_example_change}
\end{figure}
\begin{itemize}
\item
14 factorizations of local problems with 5340 \gls{dof}s (volume training)
\item
20 factorizations of local problems with 3550 \gls{dof}s (interface training)
\item
1 factorization of global reduced problem with 675 \gls{dof}s (global solve)
\end{itemize}
The error between the reduced solution and the full solution in this case is 4.3\%. 
For reproduction, see \cref{repro:fig:maxwell_basis_sizes}.
The spacial distribution of basis sizes is depicted in \cref{fig:maxwell_basis_sizes}.
\begin{figure}
\begin{center}
geo 1:
\begin{minipage}[c]{0.3\textwidth}
\begin{center}
\includegraphics[width=\textwidth]{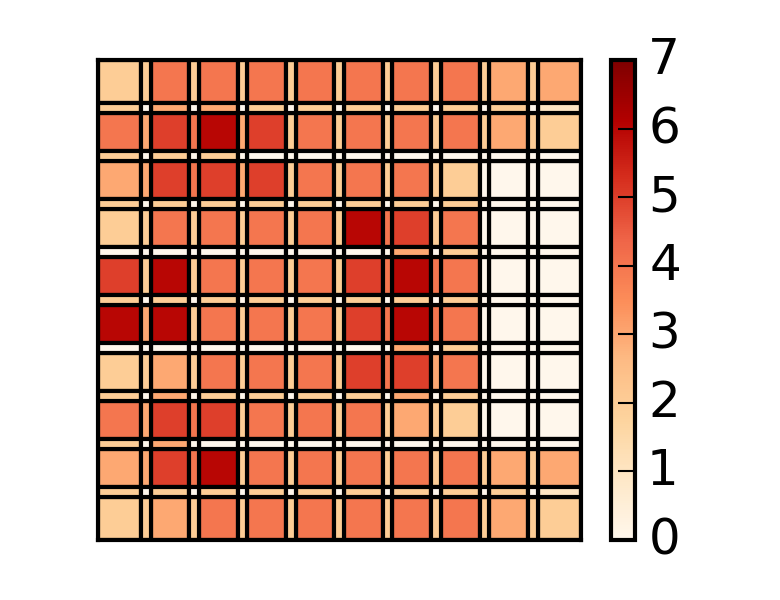}
\end{center}
\end{minipage}
\hspace{20pt}
geo 2:
\begin{minipage}[c]{0.3\textwidth}
\begin{center}
\includegraphics[width=\textwidth]{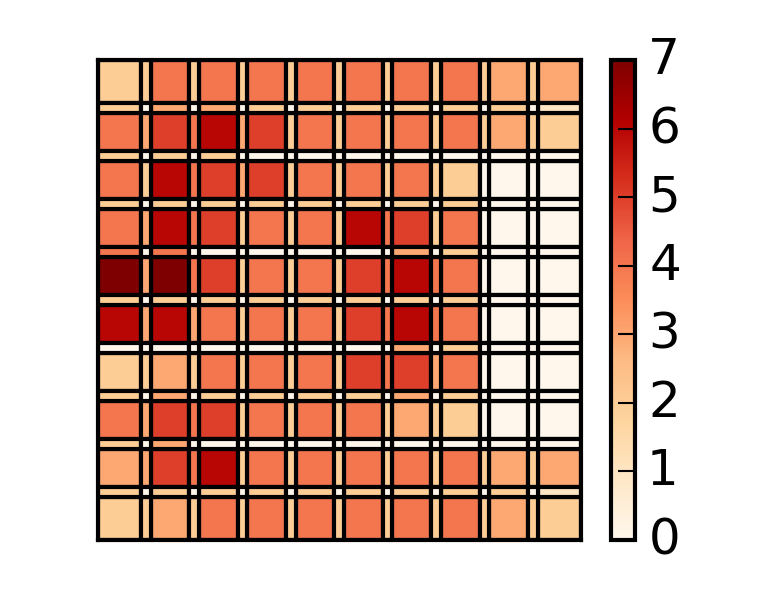}
\end{center}
\end{minipage}
\end{center}
\caption[Basis size distribution in \exc{}]
{Basis size distribution in \exc{}.
(reproduction: \cref{repro:fig:basis_sizes})
}
\label{fig:maxwell_basis_sizes}
\end{figure}

The application of \gls{arbilomod} the inf-sup stable problem of the 2D Maxwell's equation in \gls{hcurl}
shows that localized training generates bases of good quality.
A reduced model with little error for the full problem can be generated using only local solves, which can easily be parallelized.
After localized changes to the model, only in the changed region the localized bases have to be regenerated.
All other bases can be reused, which results in large
computational savings compared to a simulation from scratch.
However, the Galerkin projection leads to an unstable global reduced problem, as expected.

\clearpage
\futurebox{
\section{Ideas For Future Extension of ArbiLoMod}
\myline{}
\begin{itemize}
\item
\textbfit{Stable inf-sup reduction}\\
Using the same spaces for ansatz and test space
does not lead to a stable reduction of inf-sup stable
problems. The usual approach to 
enrich the test space with supremizers is not
feasible in the localized setting, as the
computation of supremizers is a global operation.
A construction procedure for test spaces which lead
to a stable Petrov-Galerkin projection
should be devised.
\item
\textbfit{Better Steering of Local Greedy}\\
The local greedy to construct the
codim-0 spaces in \gls{arbilomod} treats
all interface functions as being equally
important. Probably better spaces can
be generated when assigning
some form of importance factor to each
interface function during training
and considering this in the greedy.
As an alternative, also the codim-0
space could be generated by local training,
as it was done in the experiments for \exc{}.
\item
\textbfit{Systematic Comparison of Space Decompositions}\\
No systematic comparisons of different space decompositions
have been done so far, to the author's knowledge.
Benchmark examples should be defined
and solved with different space decompositions, in order
to compare their properties.
\item
\textbfit{Edge Problems in Wirebasket Space Decomposition}\\
The $\gls{extend}$ operator for the construction of
$\gls{lfspacewb}, i \in \gls{upsilon}_2$ extends
linear to zero on the edges first. Probably smaller basis
sizes can be obtained by solving a 1D problem,
is it was already suggested in \cite[Equation 2.7]{Hou1997}.
\item
\textbfit{$\gls{extend}$ Operator at Domain Boundaries}\\
Close to the domain boundaries, the
resulting basis sizes could probably be reduced
if the $\gls{extend}$ operator incorporated
the boundary conditions in the construction
of the extensions on edges.
\end{itemize}
}

\chapter{Dual Norm Localization -- A Posteriori Error Estimation}
\label{chap:a posteriori}
In this chapter, we
are concerned with localized a posteriori error estimation.
We devise two variants of an a posteriori
error estimator which can be calculated
using localized quantities only.
First we will state two propositions
in \cref{sec:abstract estimates}
in an abstract setting,
which bound
the norm of any continuous linear functional
.
We apply these to the residual to obtain
localized a posteriori error estimates.
Then we derive bounds for the constants
involved in \cref{sec:choosing spaces}.
Finally, we state a scheme for
an offline/online decomposition of 
the calculation of the dual norms involved in \cref{sec:improved splitting},
which overcomes numerical issues in traditional
offline/online splitting schemes.

\section{Requirements on Localized a Posteriori Error Estimator}
\label{sec:a_posteriori}
The model reduction error in the \gls{arbilomod} has to be controlled.
To this end, an a posteriori error estimator is used
which should have the following properties: 
\begin{enumerate}
\item It is robust and efficient.
\item It is offline/online decomposable.
\item It is parallelizable with little amount of communication.
\item After a localized geometry change, the offline computed
data in unaffected regions
can be reused.
\item
It can be used to steer adaptive enrichment of the reduced local subspaces.
\end{enumerate}
These requirements are fulfilled by the estimator presented in
the following. We develop localized bounds for the standard \gls{rb} error estimator,
\begin{equation}
\label{eq:standard rb errest}
\gls{errest} := \frac{1}{\gls{coercconst}_{\gls{parameter}}} \norm{\gls{residual}_{\gls{parameter}}(\gls{rsol}_{\gls{parameter}})}_{\gls{fspace}'}
\end{equation}
where $\gls{residual}_{\gls{parameter}}(\gls{rsol}_{\gls{parameter}}) \in \gls{fspace}^\prime$ is the global residual given as
$
	\dualpair{\gls{residual}_{\gls{parameter}}(\gls{rsol}_{\gls{parameter}})}{\varphi} = \dualpair{f_{\gls{parameter}}}{\varphi } - a_{\gls{parameter}}(\gls{rsol}_{\gls{parameter}}, \varphi) \quad \forall \varphi \in \gls{fspace}
$.
This error estimator is known to be robust and efficient (e.g.~\cite[Proposition 4.4]{Hesthaven2016}):
\begin{equation}\label{eq:global_estimator}
\norm{u_{\gls{parameter}} - \gls{rsol}_{\gls{parameter}}}_{\gls{fspace}} \leq \gls{errest} 
\leq \frac{\gls{contconst}_{\gls{parameter}}}{\gls{coercconst}_{\gls{parameter}}} \norm{u_{\gls{parameter}} - \gls{rsol}_{\gls{parameter}}}_{\gls{fspace}}.
\end{equation}
\section{Abstract Estimates for Dual Norm Localization}
\label{sec:abstract estimates}
In this section, we show two different estimates, holding 
for any continuous linear functional $F \in \gls{fspace}'$.
In later sections, we will apply these estimates to the residual
to obtain localized a posteriori error estimates.

\subsubsection{Setting}
Throughout this section, let \gls{fspace}
be any ansatz space (continuous or discrete) and
let $\{\gls{lfspace} \}_{i=1}^{\gls{numspaces}}$,
$\{ \gls{lfspacemap} \}_{i=1}^{\gls{numspaces}}$
be local spaces and mappings of a localizing space decomposition as
defined in \cref{sec:localizing space decomposition}.
No further assumptions about the space decomposition
are required here.
Let furthermore
\begin{equation}
\dot{\bigcup}_{k=1}^{\gls{orthogonal_classes}} \Upsilon_{C,k} = \left\{1, \dots, \gls{numspaces}\right\}
\end{equation}
be a partition of $\left\{1, \dots, \gls{numspaces}\right\}$
such that
\begin{equation}
       \forall 1\leq k \leq \gls{orthogonal_classes}\ : \ \forall i,j \in \Upsilon_{C,k}, \ i\ne j \ :\ V_i \perp V_j
\end{equation}
i.e.~we can partition the local subspaces into \gls{orthogonal_classes} sets
so that in each set, all spaces are orthogonal.

In this setting, we show the estimates.
To do so, we need following lemma.
\begin{lemma}[Efficiency of dual element localization]
\label{thm:efficiency}
With the assumptions from above, for any continuous linear functional $F$ on \gls{fspace} it holds that
\begin{equation}
	\label{eq:new_abstract_efficiency_estimate}
	\Big( \sum_{i=1}^{\gls{numspaces}} \norm{F}^2_{\gls{lfspace}'} \Big)^{\frac{1}{2}}
	\leq \sqrt{\gls{orthogonal_classes}}  \norm{F}_{\gls{fspace}'}.
\end{equation}
\end{lemma}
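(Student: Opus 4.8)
The plan is to reduce the claim to a coloured Pythagoras inequality by passing to Riesz representatives. Since \gls{fspace} is a Hilbert space, any continuous linear functional $F \in \gls{fspace}'$ has a unique Riesz representative $r_F \in \gls{fspace}$ with $F(v) = (r_F, v)$ for all $v \in \gls{fspace}$ and $\norm{F}_{\gls{fspace}'} = \norm{r_F}$. First I would identify each local dual norm with the norm of an orthogonal projection of $r_F$: writing $\Pi_i$ for the orthogonal projection of \gls{fspace} onto \gls{lfspace}, every $v \in \gls{lfspace}$ satisfies $F(v) = (r_F, v) = (\Pi_i r_F, v)$, so $\Pi_i r_F$ is the Riesz representative of the restriction $F|_{\gls{lfspace}}$ and hence $\norm{F}_{\gls{lfspace}'} = \norm{\Pi_i r_F}$. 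Note that this step uses only the subspaces themselves, not the partition-of-unity mappings $\gls{lfspacemap}$.

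Next I would exploit the orthogonality within a single colour class. Fix $1 \le k \le \gls{orthogonal_classes}$ and let $W_k := \bigoplus_{i \in \Upsilon_{C,k}} \gls{lfspace}$ be the orthogonal direct sum of the mutually orthogonal subspaces indexed by $\Upsilon_{C,k}$. Because the projections $\Pi_i$ for $i \in \Upsilon_{C,k}$ have pairwise orthogonal ranges, one checks that their sum is exactly the orthogonal projection $\Pi_{W_k}$ onto $W_k$, and Pythagoras then gives
\begin{equation*}
\sum_{i \in \Upsilon_{C,k}} \norm{F}_{\gls{lfspace}'}^2 = \sum_{i \in \Upsilon_{C,k}} \norm{\Pi_i r_F}^2 = \norm{\Pi_{W_k} r_F}^2 \le \norm{r_F}^2 = \norm{F}_{\gls{fspace}'}^2,
\end{equation*}
where the inequality is simply that an orthogonal projection does not increase the norm.

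Finally I would sum over the \gls{orthogonal_classes} classes. Since the sets $\Upsilon_{C,k}$ partition $\{1,\dots,\gls{numspaces}\}$,
\begin{equation*}
\sum_{i=1}^{\gls{numspaces}} \norm{F}_{\gls{lfspace}'}^2 = \sum_{k=1}^{\gls{orthogonal_classes}} \sum_{i \in \Upsilon_{C,k}} \norm{F}_{\gls{lfspace}'}^2 \le \sum_{k=1}^{\gls{orthogonal_classes}} \norm{F}_{\gls{fspace}'}^2 = \gls{orthogonal_classes}\, \norm{F}_{\gls{fspace}'}^2,
\end{equation*}
and taking square roots yields \eqref{eq:new_abstract_efficiency_estimate}. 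There is no serious obstacle here: the whole estimate is a coloured Pythagoras argument, and the only place that demands any care is the single clean identification of $\sum_{i \in \Upsilon_{C,k}} \Pi_i$ with a genuine orthogonal projection $\Pi_{W_k}$, which rests on the defining property $\Pi_i \Pi_j = 0$ for distinct orthogonal subspaces in the same class.
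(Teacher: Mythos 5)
Your proposal is correct and is essentially the paper's own argument in different clothing: since $\Pi_i r_F$ is exactly the local Riesz representative $v_{F,i}$ of $F$ restricted to the $i$-th subspace, your three ingredients (identifying local dual norms with norms of projections of the global Riesz representative, the identity $\sum_{i\in\Upsilon_{C,k}}\Pi_i = \Pi_{W_k}$, and non-expansiveness of $\Pi_{W_k}$) correspond one-to-one to the paper's ingredients (local Riesz representatives, their sum being the Riesz representative on the orthogonal direct sum, and the bound of a subspace dual norm by the global dual norm). There is no gap; the single step you flagged as needing care is the same step the paper also proves via orthogonality.
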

\begin{proof}
Let $V_i \perp V_j$ be some subspaces of \gls{fspace}, and 
let $F$ be a continuous linear functional on 
$V_i \oplus V_j$.
If $v_{F,1} \in V_i$ and $v_{F,2} \in V_j$ are the 
Riesz representatives of $F$ in 
$V_i$ and $V_j$,
then due to the orthogonality of $V_i$ and $V_j$,
$v_{F,1} + v_{F,2}$ is the Riesz representative
of $F$ on $V_i \oplus V_j$. Thus,
\begin{align}
\norm{F}_{(\gls{lfspace} \oplus V_j)^\prime}^2 &= \norm{v_{F,1} + v_{F,2}}_{\gls{fspace}}^2 \nonumber \\
                                  &= \norm{v_{F,1}}_{\gls{fspace}}^2 + \norm{v_{F,2}}_{\gls{fspace}}^2 = \norm{F}_{\gls{lfspace}^\prime}^2 + \norm{F}_{V_j^\prime}^2,
\end{align}
where we have used the orthogonality of the spaces again. The same is 
true for a larger orthogonal sum of spaces.
We therefore obtain:
\begin{align}
\sum_{i=1}^{\gls{numspaces}}
\norm{F}_{\gls{lfspace}^\prime}^2 &= \sum_{k=1}^{\gls{orthogonal_classes}} \sum_{i\in \Upsilon_{C,k}} \norm{F}_{\gls{lfspace}^\prime}^2 \nonumber \\
    &= \sum_{k=1}^{\gls{orthogonal_classes}} \norm{F}_{(\bigoplus_{i \in \Upsilon_{C,k}}\gls{lfspace})^\prime}^2 \nonumber \\
    &\leq \gls{orthogonal_classes} \norm{F}_{\gls{fspace}^\prime}^2.
\end{align}
\end{proof}
Using this lemma, we can state the following two localizing estimates for continuous linear functionals.
In the following, we assume there is a subspace
$\gls{rfspace} \subset \gls{fspaceh}$ such
that $F(\varphi) = 0$ for all $\varphi$ in all $\gls{rfspace}$.

\begin{proposition}[Localization of dual norm with local stability constants]
\label{thm:localization_l}
With the local stability constants defined as
\begin{equation}
\gls{mapboundvr} :=
\sup_{\varphi \in \gls{fspace} \nnull}
\frac{ \norm{\left( 1 - \gls{orthogonalp}_{\gls{rfspace} \cap \gls{lfspace}} \right)\gls{lfspacemap}(\varphi)}}{\norm{\varphi}},
\end{equation}
where $\gls{orthogonalp}_{\gls{rfspace} \cap \gls{lfspace}}$ is the orthogonal projection
on $\gls{rfspace} \cap \gls{lfspace}$,
the dual norm of any continuous linear functional $F$ on \gls{fspace}
for which it holds
\begin{equation}
F(\varphi) = 0 \qquad \forall \varphi \in \gls{rfspace}
\end{equation}
can be bounded as
\begin{equation}
\norm{F}_{\gls{fspace}'} \leq \sum_{i=1}^{\gls{numspaces}} \gls{mapboundvr} \norm{F}_{\gls{lfspace}'}
\leq
\left( \sum_{i=1}^{\gls{numspaces}} \gls{mapboundvr}^2 \right)^{\frac 1 2} \norm{F}_{\gls{fspace}'}
.
\end{equation}
\end{proposition}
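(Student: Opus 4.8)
The statement packages a reliability bound (the left inequality) together with an efficiency bound (the right inequality), and the plan is to establish the two separately: the first by a direct testing argument, and the second by Cauchy--Schwarz combined with the already-proven \cref{thm:efficiency}.

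For the left inequality I would test $F$ against an arbitrary $\varphi \in \gls{fspace}\nnull$ and use the defining property $\varphi = \sum_{i} \gls{lfspacemap}(\varphi)$ of the localizing space decomposition, giving $F(\varphi) = \sum_{i=1}^{\gls{numspaces}} F\big(\gls{lfspacemap}(\varphi)\big)$. The decisive observation is that $\gls{orthogonalp}_{\gls{rfspace}\cap\gls{lfspace}}\gls{lfspacemap}(\varphi)$ lies in $\gls{rfspace}\cap\gls{lfspace}\subseteq\gls{rfspace}$, so $F$ annihilates it; I may therefore subtract it for free and write $F\big(\gls{lfspacemap}(\varphi)\big) = F\big((1-\gls{orthogonalp}_{\gls{rfspace}\cap\gls{lfspace}})\gls{lfspacemap}(\varphi)\big)$. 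Since $\gls{lfspacemap}(\varphi)$ and its projection both lie in $\gls{lfspace}$, the corrected vector stays in $\gls{lfspace}$, so it can be estimated through the restricted dual norm and the definition of $\gls{mapboundvr}$:
\begin{equation}
|F(\varphi)| \leq \sum_{i=1}^{\gls{numspaces}} \norm{F}_{\gls{lfspace}'}\,\norm{\big(1-\gls{orthogonalp}_{\gls{rfspace}\cap\gls{lfspace}}\big)\gls{lfspacemap}(\varphi)} \leq \sum_{i=1}^{\gls{numspaces}} \gls{mapboundvr}\,\norm{F}_{\gls{lfspace}'}\,\norm{\varphi}.
\end{equation}
Dividing by $\norm{\varphi}$ and taking the supremum over $\varphi$ yields the claimed bound on $\norm{F}_{\gls{fspace}'}$.

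For the right inequality I would split the summand into the factors $\gls{mapboundvr}$ and $\norm{F}_{\gls{lfspace}'}$ and apply the Cauchy--Schwarz inequality,
\begin{equation}
\sum_{i=1}^{\gls{numspaces}} \gls{mapboundvr}\,\norm{F}_{\gls{lfspace}'} \leq \Big(\sum_{i=1}^{\gls{numspaces}}\gls{mapboundvr}^2\Big)^{\frac12}\Big(\sum_{i=1}^{\gls{numspaces}}\norm{F}_{\gls{lfspace}'}^2\Big)^{\frac12},
\end{equation}
after which the second factor is exactly the quantity that \cref{thm:efficiency} bounds in terms of $\norm{F}_{\gls{fspace}'}$ through the orthogonality-class partition, completing the efficiency estimate.

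The routine ingredients are the Cauchy--Schwarz step and the elementary norm estimate; the load-bearing step is the free subtraction of $\gls{orthogonalp}_{\gls{rfspace}\cap\gls{lfspace}}\gls{lfspacemap}(\varphi)$. The hardest part will be verifying cleanly that this correction leaves the vector inside $\gls{lfspace}$ (so that $\norm{F}_{\gls{lfspace}'}$ is the legitimate dual norm to use) and that the annihilation $F|_{\gls{rfspace}}=0$ transfers to the local correction via the inclusion $\gls{rfspace}\cap\gls{lfspace}\subseteq\gls{rfspace}$. This is precisely where building the projector into the definition of $\gls{mapboundvr}$ pays off, since $\gls{mapboundvr}$ then measures only the portion of $\gls{lfspacemap}(\varphi)$ that $F$ can still detect.
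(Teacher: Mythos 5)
Your proof is correct and is essentially the paper's own argument: the load-bearing step in both is that $F$ vanishes on $\gls{rfspace}$, so $\gls{orthogonalp}_{\gls{rfspace}\cap\gls{lfspace}}\gls{lfspacemap}(\varphi)$ can be subtracted for free, after which the local dual norm and the definition of $\gls{mapboundvr}$ yield the first inequality, and Cauchy--Schwarz together with \cref{thm:efficiency} yields the second. The only cosmetic difference is that you bound $\abs{F(\varphi)}$ directly and take the supremum at the end, while the paper first exchanges supremum and sum; note that both arguments in fact produce the extra factor $\sqrt{\gls{orthogonal_classes}}$ from \cref{thm:efficiency} in the rightmost bound, consistent with \cref{thm:abstract_error_estimate}.
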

\begin{proof}
By definition, it holds that
\begin{equation}
\norm{F}_{\gls{fspace}'} = \sup_{\varphi \in \gls{fspace} \nnull} \frac{F(\varphi)}{\norm{\varphi}}.
\end{equation}
Applying the space decomposition $\gls{lfspacemap}$ on $\varphi$ and using the linearity
of the functional, it follows that
\begin{equation}
\norm{F}_{\gls{fspace}'} = \sup_{\varphi \in \gls{fspace} \nnull}
\sum_{i=1}^{\gls{numspaces}} \frac{F(\gls{lfspacemap}(\varphi))}{\norm{\varphi}}
.
\end{equation}
Exchanging the supremum and the sum can only increase the value of the expression.
We have
\begin{equation}
\norm{\varphi} \geq \frac{1}{\gls{mapboundvr}} \norm{\left( 1 - \gls{orthogonalp}_{\gls{rfspace} \cap \gls{lfspace}} \right)\gls{lfspacemap}(\varphi)}
\end{equation}
and it follows that
\begin{equation}
\norm{F}_{\gls{fspace}'} \leq 
\sum_{i=1}^{\gls{numspaces}}
\sup_{\varphi \in \gls{fspace} \setminus \gls{kernel}\left\{\left( 1 - \gls{orthogonalp}_{\gls{rfspace} \cap \gls{lfspace}} \right)\gls{lfspacemap}\right\}}
\gls{mapboundvr} \frac{F(\gls{lfspacemap}(\varphi))}{\norm{\left( 1 - \gls{orthogonalp}_{\gls{rfspace} \cap \gls{lfspace}} \right)\gls{lfspacemap}(\varphi)}}
.
\end{equation}
Since the image of $\gls{orthogonalp}_{\gls{rfspace} \cap \gls{lfspace}}$ is in the kernel of $F$,
it holds that
\begin{align}
&\sup_{\varphi \in \gls{fspace}  \setminus \gls{kernel}\left\{\left( 1 - \gls{orthogonalp}_{\gls{rfspace} \cap \gls{lfspace}} \right)\gls{lfspacemap}\right\}}
\frac{F(\gls{lfspacemap}(\varphi))}{\norm{\left( 1 - \gls{orthogonalp}_{\gls{rfspace} \cap \gls{lfspace}} \right)\gls{lfspacemap}(\varphi)}}
\nonumber
\\
&\qquad =
\sup_{\varphi \in \gls{fspace}  \setminus \gls{kernel}\left\{\left( 1 - \gls{orthogonalp}_{\gls{rfspace} \cap \gls{lfspace}} \right)\gls{lfspacemap}\right\}}
\frac{F\left(\gls{lfspacemap}(\varphi) - \gls{orthogonalp}_{\gls{rfspace} \cap \gls{lfspace}}\gls{lfspacemap}(\varphi)\right)}
{\norm{\left( 1 - \gls{orthogonalp}_{\gls{rfspace} \cap \gls{lfspace}} \right)\gls{lfspacemap}(\varphi)}}
\end{align}
and further
\begin{align}
&
\hspace{-40pt}
\sup_{\varphi \in \gls{fspace}  \setminus \gls{kernel}\left\{\left( 1 - \gls{orthogonalp}_{\gls{rfspace} \cap \gls{lfspace}} \right)\gls{lfspacemap}\right\}}
\frac{F(\gls{lfspacemap}(\varphi))}{\norm{\left( 1 - \gls{orthogonalp}_{\gls{rfspace} \cap \gls{lfspace}} \right)\gls{lfspacemap}(\varphi)}}
\nonumber
\\
&=
\sup_{\varphi \in \gls{fspace}  \setminus \gls{kernel}\left\{\left( 1 - \gls{orthogonalp}_{\gls{rfspace} \cap \gls{lfspace}} \right)\gls{lfspacemap}\right\}}
\frac{F\left(\left( 1 - \gls{orthogonalp}_{\gls{rfspace} \cap \gls{lfspace}} \right)\gls{lfspacemap}(\varphi)\right)}
{\norm{\left( 1 - \gls{orthogonalp}_{\gls{rfspace} \cap \gls{lfspace}} \right)\gls{lfspacemap}(\varphi)}} \nonumber \\
&=
\sup_{\varphi \in \gls{image}\left\{\left( 1 - \gls{orthogonalp}_{\gls{rfspace} \cap \gls{lfspace}} \right)\gls{lfspacemap}\right\} \nnull}
\frac{F\left(\varphi \right)}
{\norm{\varphi}} \nonumber \\
&\leq 
\sup_{\varphi \in \gls{lfspace} \nnull}
\frac{F(\varphi)}{\norm{\varphi}}
=
\norm{F}_{\gls{lfspace}'}
\end{align}
the first inequality follows.

For the second inequality we apply Cauchy-Schwarz
\begin{equation}
\sum_{i=1}^{\gls{numspaces}} \gls{mapboundvr} \norm{F}_{\gls{lfspace}'}
\leq
\left(\sum_{i=1}^{\gls{numspaces}} \gls{mapboundvr} \right)^{\frac 1 2}
\left(\sum_{i=1}^{\gls{numspaces}} \norm{F}_{\gls{lfspace}'} \right)^{\frac 1 2}
\end{equation}
and then use \cref{thm:efficiency}
to obtain the claim.
\end{proof}

\begin{proposition}[Localization of dual norm with global stability constant]
\label{thm:localization_g}
With the global stability constant defined as
\begin{equation}
\gls{decompositionboundvr} := 
\sup_{\varphi \in \gls{fspace} \nnull}
\frac{\left(\sum_{i=1}^{\gls{numspaces}} \norm{\left( 1 - \gls{orthogonalp}_{\gls{rfspace} \cap \gls{lfspace}} \right)\gls{lfspacemap}(\varphi)}^2\right)^{\frac 1 2}}
 {\norm{\varphi}},
\end{equation}
the dual norm of any continuous linear functional $F$ on \gls{fspace}
for which it holds
\begin{equation}
F(\varphi) = 0 \qquad \forall \varphi \in \gls{rfspace}
\end{equation}
can be bounded as
\begin{equation}
\norm{F}_{\gls{fspace}'}
\leq
\gls{decompositionboundvr} 
\left(
\sum_{i=1}^{\gls{numspaces}} \norm{F}_{\gls{lfspace}'}^2
\right) ^{\frac 1 2}
\leq
\gls{decompositionboundvr} \sqrt{\gls{orthogonal_classes}}
\norm{F}_{\gls{fspace}'}
.
\end{equation}
\end{proposition}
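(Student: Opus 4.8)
The plan is to mirror the proof of \cref{thm:localization_l} almost step for step, the one essential change being that the sum over the local contributions is split with the Cauchy--Schwarz inequality in its $\ell^2$ form instead of its $\ell^1$ form; this is exactly what causes the local dual norms to enter aggregated as $\left(\sum_i \norm{F}_{\gls{lfspace}'}^2\right)^{1/2}$ and makes the single global constant $\gls{decompositionboundvr}$ (rather than the collection of local constants $\gls{mapboundvr}$) the natural quantity to appear.

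First I would write the dual norm as a supremum,
\begin{equation}
\norm{F}_{\gls{fspace}'} = \sup_{\varphi \in \gls{fspace} \nnull} \frac{F(\varphi)}{\norm{\varphi}},
\end{equation}
and use the decomposition property $\sum_i \gls{lfspacemap}(\varphi) = \varphi$ together with the linearity of $F$ to obtain $F(\varphi) = \sum_{i=1}^{\gls{numspaces}} F(\gls{lfspacemap}(\varphi))$. Next I would insert the projection: since the image of $\gls{orthogonalp}_{\gls{rfspace} \cap \gls{lfspace}}$ lies in $\gls{rfspace} \cap \gls{lfspace} \subseteq \gls{rfspace}$ and $F$ vanishes on $\gls{rfspace}$, each summand is unchanged under subtracting that projection, so that $F(\gls{lfspacemap}(\varphi)) = F\left((1 - \gls{orthogonalp}_{\gls{rfspace} \cap \gls{lfspace}})\gls{lfspacemap}(\varphi)\right)$. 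Because $(1 - \gls{orthogonalp}_{\gls{rfspace} \cap \gls{lfspace}})\gls{lfspacemap}(\varphi) \in \gls{lfspace}$, I would then bound each term by the local dual norm, $F\left((1 - \gls{orthogonalp}_{\gls{rfspace} \cap \gls{lfspace}})\gls{lfspacemap}(\varphi)\right) \leq \norm{F}_{\gls{lfspace}'}\,\norm{(1 - \gls{orthogonalp}_{\gls{rfspace} \cap \gls{lfspace}})\gls{lfspacemap}(\varphi)}$.

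At this stage the Cauchy--Schwarz inequality in $\ell^2$ gives
\begin{equation}
\sum_{i=1}^{\gls{numspaces}} \norm{F}_{\gls{lfspace}'}\,\norm{(1 - \gls{orthogonalp}_{\gls{rfspace} \cap \gls{lfspace}})\gls{lfspacemap}(\varphi)}
\leq
\left(\sum_{i=1}^{\gls{numspaces}} \norm{F}_{\gls{lfspace}'}^2\right)^{\frac 1 2}
\left(\sum_{i=1}^{\gls{numspaces}} \norm{(1 - \gls{orthogonalp}_{\gls{rfspace} \cap \gls{lfspace}})\gls{lfspacemap}(\varphi)}^2\right)^{\frac 1 2}.
\end{equation}
Dividing by $\norm{\varphi}$ and taking the supremum over $\varphi$, the first factor is independent of $\varphi$ and factors out, while the supremum of the second factor over $\varphi$ equals $\gls{decompositionboundvr}$ by its very definition; this yields the first claimed inequality. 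The second inequality then follows directly from \cref{thm:efficiency}, which bounds $\left(\sum_i \norm{F}_{\gls{lfspace}'}^2\right)^{1/2}$ by $\sqrt{\gls{orthogonal_classes}}\,\norm{F}_{\gls{fspace}'}$.

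I do not expect a genuine obstacle here, since the argument is structurally identical to the preceding proposition. The single point that requires care is the projection step: I must check that subtracting $\gls{orthogonalp}_{\gls{rfspace} \cap \gls{lfspace}}\gls{lfspacemap}(\varphi)$ really leaves $F$ invariant, which rests precisely on the hypothesis $F(\varphi) = 0$ for all $\varphi \in \gls{rfspace}$ and on the containment of the projection's range in $\gls{rfspace}$. Everything else is routine bookkeeping, and no assumptions on the space decomposition beyond those of \cref{sec:localizing space decomposition} are needed.
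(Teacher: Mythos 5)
Your proposal is correct and follows essentially the same route as the paper's own proof: decomposing $\varphi$ via $\gls{lfspacemap}$, using that $F$ vanishes on the range of $\gls{orthogonalp}_{\gls{rfspace} \cap \gls{lfspace}}$ to insert the projection, bounding each term by the local dual norm, applying Cauchy--Schwarz in its $\ell^2$ form, and invoking \cref{thm:efficiency} for the second inequality. The only cosmetic difference is that you factor the $F$-norm sum out of the supremum explicitly, whereas the paper keeps everything under the supremum until the final step; the content is identical.
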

\begin{proof}
By definition, it holds that
\begin{equation}
\norm{F}_{\gls{fspace}'} = \sup_{\varphi \in \gls{fspace} \nnull} \frac{F(\varphi)}{\norm{\varphi}}.
\end{equation}
Applying the space decomposition $\gls{lfspacemap}$ on $\varphi$ and using the linearity
of the functional, it follows that
\begin{equation}
\norm{F}_{\gls{fspace}'} = \sup_{\varphi \in \gls{fspace} \nnull}
\frac{\sum_{i=1}^{\gls{numspaces}} F(\gls{lfspacemap}(\varphi))}{\norm{\varphi}}
.
\end{equation}
Since the image of $\gls{orthogonalp}_{\gls{rfspace} \cap \gls{lfspace}}$ is in the kernel of $F$,
it holds that
\begin{equation}
\norm{F}_{\gls{fspace}'} = \sup_{\varphi \in \gls{fspace} \nnull}
\frac{\sum_{i=1}^{\gls{numspaces}} F\left(\left( 1 - \gls{orthogonalp}_{\gls{rfspace} \cap \gls{lfspace}} \right)\gls{lfspacemap}(\varphi)\right)}{\norm{\varphi}}
.
\end{equation}
Using the dual norm of $F$ in
\begin{equation}
F\left(\left( 1 - \gls{orthogonalp}_{\gls{rfspace} \cap \gls{lfspace}} \right)\gls{lfspacemap}(\varphi)\right)
\leq \norm{F}_{\gls{lfspace}'} \norm{\left( 1 - \gls{orthogonalp}_{\gls{rfspace} \cap \gls{lfspace}} \right)\gls{lfspacemap}(\varphi)},
\end{equation}
applying Cauchy-Schwarz, and rearranging yields
\begin{align}
\norm{F}_{\gls{fspace}'} &= \sup_{\varphi \in \gls{fspace} \nnull}
\frac{\sum_{i=1}^{\gls{numspaces}} F\left(\left( 1 - \gls{orthogonalp}_{\gls{rfspace} \cap \gls{lfspace}} \right)\gls{lfspacemap}(\varphi)\right)}{\norm{\varphi}}
\\
&\leq 
\sup_{\varphi \in \gls{fspace} \nnull}
\frac{\sum_{i=1}^{\gls{numspaces}} \norm{F}_{\gls{lfspace}'} \norm{\left( 1 - \gls{orthogonalp}_{\gls{rfspace} \cap \gls{lfspace}} \right)\gls{lfspacemap}(\varphi)}}{\norm{\varphi}}
\\
&\leq
\sup_{\varphi \in \gls{fspace} \nnull}
\frac{
\left(\sum_{i=1}^{\gls{numspaces}} \norm{\left( 1 - \gls{orthogonalp}_{\gls{rfspace} \cap \gls{lfspace}} \right)\gls{lfspacemap}(\varphi)}^2 \right) ^{\frac 1 2}
\left(\sum_{i=1}^{\gls{numspaces}} \norm{F}_{\gls{lfspace}'}^2 \right) ^{\frac 1 2}
}{\norm{\varphi}}
\\
&=
\gls{decompositionboundvr} 
\left(
\sum_{i=1}^{\gls{numspaces}} \norm{F}_{\gls{lfspace}'}^2
\right) ^{\frac 1 2}
\end{align}
which is the first inequality.

The second inequality again follows directly from \cref{thm:efficiency}.
\end{proof}

There are some obvious relations between these stability constants,
namely
\begin{align}
\gls{decompositionboundvr} &\leq \left( \sum_{i=1}^{\gls{numspaces}} \gls{mapboundvr}^2 \right) ^{\frac 1 2}\\
\gls{mapboundvr} &\leq \gls{decompositionboundvr} \qquad \forall i \in \{1, \dots, \gls{numspaces} \}
.
\end{align}

A stability constant very similar to $\gls{decompositionboundvr}$ appears in the
analysis of overlapping domain decomposition methods
(e.g. \cite[Assumption 2.2]{Toselli2005}, \cite{Spillane2013})
and in localization of error estimators on stars (e.g. \cite{Cohen2012}).
Applying these two estimates to the residual, we obtain two efficient, localized error estimators:
\begin{corollary}[Localization of a posteriori error estimator]
\label{thm:abstract_error_estimate}
The error estimators
\gls{locerrest_l} and
\gls{locerrest_g},
defined by
\begin{align}
\gls{locerrest_l} :=&
\frac{1}{\gls{coercconst}_{\gls{parameter}}}
\sum_{i=1}^{\gls{numspaces}} \gls{mapboundvr} \norm{\gls{residual}_{\gls{parameter}}(\gls{rsol}_{\gls{parameter}})}_{\gls{lfspace}'}
\\
\gls{locerrest_g} :=&
\frac{1}{\gls{coercconst}_{\gls{parameter}}}
\gls{decompositionboundvr} 
\left(
\sum_{i=1}^{\gls{numspaces}} \norm{\gls{residual}_{\gls{parameter}}(\gls{rsol}_{\gls{parameter}})}_{\gls{lfspace}'}^2
\right) ^{\frac 1 2}
\end{align}
are robust and efficient. It holds
\begin{align}
\label{eq:efficiency_estimate 3}
\norm{ \gls{sol}_{\gls{parameter}} - \gls{rsol}_{\gls{parameter}} }_{\gls{fspace}} \leq &
    \gls{locerrest_l}
\leq \frac{\gls{contconst}_{\gls{parameter}}}{\gls{coercconst}_{\gls{parameter}}}
        \left( \sum_{i=1}^{\gls{numspaces}} \gls{mapboundvr}^2 \right)^{\frac 1 2} \sqrt{\gls{orthogonal_classes}}
        \norm{ u_{\gls{parameter}} - \gls{rsol}_{\gls{parameter}} }_{\gls{fspace}}
\\
\label{eq:efficiency_estimate 4}
\norm{ \gls{sol}_{\gls{parameter}} - \gls{rsol}_{\gls{parameter}} }_{\gls{fspace}} \leq &
    \gls{locerrest_g}
\leq \frac{\gls{contconst}_{\gls{parameter}}}{\gls{coercconst}_{\gls{parameter}}}
        \gls{decompositionboundvr} \sqrt{\gls{orthogonal_classes}}
        \norm{ u_{\gls{parameter}} - \gls{rsol}_{\gls{parameter}} }_{\gls{fspace}}
        .
\end{align}
\end{corollary}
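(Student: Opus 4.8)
The plan is to obtain the corollary as a direct specialization of the two abstract localization estimates, \cref{thm:localization_l} and \cref{thm:localization_g}, to the residual functional $F := \gls{residual}_{\gls{parameter}}(\gls{rsol}_{\gls{parameter}})$, sandwiched between the two sides of the standard reduced-basis estimate \cref{eq:global_estimator}. The first thing I would verify is that $F$ satisfies the hypothesis shared by both propositions, namely that it annihilates $\gls{rfspace}$. This is exactly Galerkin orthogonality: by \cref{def:reduced variational problem} we have $\dualpair{\gls{residual}_{\gls{parameter}}(\gls{rsol}_{\gls{parameter}})}{\varphi} = \dualpair{f_{\gls{parameter}}}{\varphi} - a_{\gls{parameter}}(\gls{rsol}_{\gls{parameter}}, \varphi) = 0$ for every $\varphi \in \gls{rfspace}$, so $F$ is an admissible continuous linear functional on $\gls{fspace}$ and both propositions apply verbatim.

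For the robustness (lower) bounds I would start from the left inequality of \cref{eq:global_estimator}, $\norm{\gls{sol}_{\gls{parameter}} - \gls{rsol}_{\gls{parameter}}}_{\gls{fspace}} \le \tfrac{1}{\gls{coercconst}_{\gls{parameter}}} \norm{\gls{residual}_{\gls{parameter}}(\gls{rsol}_{\gls{parameter}})}_{\gls{fspace}'}$, and then bound the global dual norm from above by its localization. The first inequality of \cref{thm:localization_l} replaces $\norm{F}_{\gls{fspace}'}$ by $\sum_{i=1}^{\gls{numspaces}} \gls{mapboundvr} \norm{F}_{\gls{lfspace}'}$, which after dividing by $\gls{coercconst}_{\gls{parameter}}$ is precisely $\gls{locerrest_l}$; the first inequality of \cref{thm:localization_g} replaces it by $\gls{decompositionboundvr} \bigl( \sum_i \norm{F}_{\gls{lfspace}'}^2 \bigr)^{1/2}$, giving $\gls{locerrest_g}$. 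This settles both left-hand inequalities of the corollary.

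For the efficiency (upper) bounds I would run the chain the other way. From the definition of $\gls{locerrest_l}$, Cauchy--Schwarz bounds $\sum_i \gls{mapboundvr} \norm{F}_{\gls{lfspace}'}$ by $\bigl(\sum_i \gls{mapboundvr}^2\bigr)^{1/2} \bigl(\sum_i \norm{F}_{\gls{lfspace}'}^2\bigr)^{1/2}$, and \cref{thm:efficiency} then controls the second factor by $\sqrt{\gls{orthogonal_classes}}\,\norm{F}_{\gls{fspace}'}$; for $\gls{locerrest_g}$, \cref{thm:efficiency} is applied directly to $\bigl(\sum_i \norm{F}_{\gls{lfspace}'}^2\bigr)^{1/2}$. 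In both cases the right inequality of \cref{eq:global_estimator}, $\norm{F}_{\gls{fspace}'} \le \gls{contconst}_{\gls{parameter}} \norm{\gls{sol}_{\gls{parameter}} - \gls{rsol}_{\gls{parameter}}}_{\gls{fspace}}$, converts the residual dual norm into the energy error, yielding the prefactors $\tfrac{\gls{contconst}_{\gls{parameter}}}{\gls{coercconst}_{\gls{parameter}}} \bigl(\sum_i \gls{mapboundvr}^2\bigr)^{1/2} \sqrt{\gls{orthogonal_classes}}$ and $\tfrac{\gls{contconst}_{\gls{parameter}}}{\gls{coercconst}_{\gls{parameter}}} \gls{decompositionboundvr} \sqrt{\gls{orthogonal_classes}}$ respectively.

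I do not anticipate a genuine obstacle: the corollary is essentially the two abstract propositions read with $F = \gls{residual}_{\gls{parameter}}(\gls{rsol}_{\gls{parameter}})$ and wrapped in \cref{eq:global_estimator}. The only points requiring care are the initial Galerkin-orthogonality check that licenses the propositions, and keeping track of where the factor $\sqrt{\gls{orthogonal_classes}}$ enters --- it appears only in the efficiency direction, via \cref{thm:efficiency}, and not in the robustness direction, where the first (sharper) inequalities of the propositions are used.
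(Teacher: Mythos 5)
Your proposal is correct and follows essentially the same route as the paper's proof: apply \cref{thm:localization_l} and \cref{thm:localization_g} to the residual $F = \gls{residual}_{\gls{parameter}}(\gls{rsol}_{\gls{parameter}})$ and sandwich the result between the two inequalities of \cref{eq:global_estimator}. Your explicit verification of Galerkin orthogonality (which licenses the propositions) and your tracking of where $\sqrt{\gls{orthogonal_classes}}$ enters are details the paper leaves implicit, but they do not constitute a different argument.
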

\begin{proof}
Applying
\cref{thm:localization_l} or \cref{thm:localization_g} to
the error estimator
\begin{equation}
\gls{errest} = \frac{1}{\gls{coercconst}_{\gls{parameter}}} \norm{\gls{residual}_{\gls{parameter}}(\gls{rsol}_{\gls{parameter}})}_{\gls{fspace}'}
\end{equation}
yields,
together with \cref{eq:global_estimator}, the proposition.
\end{proof}

\subsubsection{Discussion of the two variants}
Depending on the distribution of the
residual in space, the variant with
the localized constants
\gls{locerrest_l} might deliver better
or worse
results than the variant with
globally computed constants 
\gls{locerrest_g}.
Whenever both the residual
and the constant
\gls{mapboundvr}
are distributed heterogeneously
over the domain, we expect that
the localized variant
\gls{locerrest_l}
might be superior.
When the residual and the constants
are homogeneously distributed
over the domain, we expect
the variant with globally 
computed constants 
\gls{locerrest_g}
to be superior.
As most of the computational
effort is in the computation of
the localized residuals,
it might be the best strategy 
to always compute both
\gls{locerrest_l} and \gls{locerrest_g}
and use the better one.
In our experiments in \cref{sec:numerical_example}
we used \gls{locerrest_g}.

Offline/online decomposition of this error estimator can be done by 
applying the usual strategy for offline/online decomposition
used with the standard \gls{rb} error estimator (see e.g. \cite[Sec.  4.2.5]{Hesthaven2016})
or the numerically more stable approach shown below in \cref{sec:improved splitting}
to every dual norm in the localized error estimators.
\section{Choosing Spaces and Bounding Constants}
\label{sec:choosing spaces}
The error estimators defined in \cref{thm:abstract_error_estimate}
work for any space decomposition fulfilling the assumptions in
\cref{sec:localizing space decomposition}.
It is important to realize that there is no requirement
to use the same space decomposition
for the decomposition of the ansatz space
and for the a posteriori error estimator.
In order to obtain good constants, the space decomposition needs to be chosen carefully.
For the space decomposition used in the a posteriori error estimator,
there are different requirements than for the space decomposition
used in constructing the ansatz space.
While the main goal in the selection of the localization
of the ansatz space for the construction of the reduced
space was to achieve a small dimension of the reduced space
and especially a small dimension of the reduced spaces
coupling the domains, for the space decomposition used
for the a posteriori error estimator, the following two
requirements are dominant.
First, the subspaces should be spanned by FE ansatz functions, allowing the
residual to be easily evaluated on these spaces. 
If we used a wirebasket space decomposition for the a posteriori
error estimator, as it was done in \cite{Smetana2015},
it would be necessary to construct a basis for every
local space, in turn needing the construction
of extensions of every fine grid \gls{dof} on the interfaces.
This leads to very high computational cost in the offline phase,
which is not a problem in the \gls{scrbe} with its complete
offline/online decomposition, but is a problem in the
\gls{arbilomod} setting.
Second, the inner product
matrices on the subspaces should be sparse, as the inner product matrix
has to be solved in the computation of the dual norms.

In order to achieve these two goals,
we use the partition of unity decomposition 
given in \cref{sec:introduction space decompositions}.
We define the overlapping domains \gls{subdomainol}
in terms of the non overlapping domains
\gls{subdomainnol}.
\begin{definition}[Overlapping space decomposition]
Let $\mathcal{V}_i$, $i \in \{1, \dots, N_{\mathcal{V}_i}\}$
be the inner vertices of the coarse mesh.
We define the overlapping subdomains as
\begin{equation}
\gls{subdomainol} := \overset{\circ}{\overline{\bigcup_{j \in \gls{domains}_{\mathcal{V}_i}} \omega_j^\mathrm{nol}}}
\qquad i \in \{1, \dots, N_{\mathcal{V}_i}\}
\end{equation}
and we define linear interpolation operator \gls{feinterpolation}
to be the nodal Lagrange interpolation.
It holds $\gls{numdomainsol} =  N_{\mathcal{V}_i}$.
\end{definition}

Recall the definition of $\gls{domains}_{\mathcal{V}_i}$
from \cref{eq:domains in mesh}:
$\gls{domains}_{\mathcal{V}_i}$ is
the set of indices of subdomains that contain the coarse mesh vertex $\mathcal{V}_i$, i.e.
\begin{equation}
\gls{domains}_{\mathcal{V}_j} :=
\Big\{i \in \{1, \dots, \gls{numdomainsnol}\} \ \Big| \ \mathcal{V}_j \subseteq \overline{\gls{subdomainnol}} \Big\}.
\end{equation}

With these definitions, the local space \gls{lfspacepou} is
the span of all \gls{fe} basis functions having support only on the corresponding overlapping domain.
With $\gls{domains}_\psi$ defined as in \cref{eq:febasisdomains}, we have
\begin{equation}
\gls{lfspacepou} = \spanset\Big\{\psi \in \gls{febasis} \ \Big| \ \gls{domains}_\psi \subseteq \gls{domains}_{\mathcal{V}_i} \Big\}.
\label{eq:overlapping_space_def}
\end{equation}
Note that we have $\gls{lfspacepou} \subseteq H^1_0(\gls{subdomainol})$
for the inner domains with $\partial \gls{subdomainol} \cap \partial \gls{domain} = \emptyset$.
Contrary to $\gls{lfspacewb}$ or $\gls{lfspacebasic}$, these spaces do not form a direct sum decomposition of $\gls{fspaceh}$.
We next state a first estimate on the partition of unity constant \gls{decompositionboundvr} of \cref{thm:abstract_error_estimate}.
The resulting estimate thus 
depends on $\gls{H}^{-1}$, where $\gls{H}$ is the minimum diameter of the subdomains of the macro partition.
Typically, the size of the macro partition is moderate such that $\gls{H}^{-1}$ is small.
However, in the following \cref{thm:pu_bound} we will show that the constant $\gls{decompositionboundvr}$ can be actually bounded 
independent of $\gls{H}$, when we choose a partition of unity that is contained in the reduced space $\gls{rfspace}$.
\begin{proposition}[Estimate of localization constant]
\label{thm:pu_bound_0}
Let \gls{fspace} be a subspace of \gls{h1} equipped with the usual $H^1$-norm.
Let $\gls{overlappingspaces} := \max_{x \in \gls{domain}} \#\{i \in \{1, \dots, \gls{numdomainsol}\} | \ x \in \gls{subdomainol}\}$ be the maximum 
number of overlapping domains \gls{subdomainol} overlapping in any point $x$ of $\gls{domain}$
and let $H_i:= \diam (\gls{subdomainol})$, $i \in  \{1, \dots, \gls{numdomainsol}\}$ and $\gls{H} := \min_{ \{1, \dots, \gls{numdomainsol}\}} H_i$.
Furthermore, assume that there exist partition of unity functions
$\gls{pouf} \in H^{1,\infty}(\gls{domain})$, $i \in  \{1, \dots, \gls{numdomainsol}\}$
and a linear interpolation operator $\gls{feinterpolation}: \gls{fspace} \longrightarrow \gls{fspaceh}$ such that
\begin{enumerate}[label=(\roman*)]
	\item $\sum_{i=1}^{\gls{numdomainsol}} \gls{pouf}(x) = 1$ for all $x \in \gls{domain}$,
	\item $\gls{pouf}(x) \geq 0$ for all $x$ in \gls{domain} and all $i$ in $\{1, \dots, \gls{numdomainsol}\}$.
	\item $\norm{\nabla \gls{pouf}}_\infty \leq \gls{pufuncconstant}  H_i^{-1}$ for all $i \in \{1, \dots, \gls{numdomainsol}\}$,
	\item $\gls{feinterpolation}(\varphi) = \varphi$ for all $\varphi \in \gls{fspaceh}$,
	\item $\norm{\gls{feinterpolation}(\gls{pouf} v_h) - \gls{pouf} v_h}_{\gls{fspace}} \leq \gls{interpolationbound} \norm{v_h}_{H^1(\gls{subdomainol})}$ for all $i \in \{1, \dots, \gls{numdomainsol}\}$ and all $v_h \in \gls{fspaceh}$. 
\end{enumerate}
Then we have:
\begin{equation}
	\gls{decompositionboundvr} \leq \sqrt{4 + 2\gls{interpolationbound}^2 + 4\left(\gls{pufuncconstant}  \gls{H}^{-1}\right)^2}\cdot \sqrt{\gls{overlappingspaces}}.
\end{equation}
\end{proposition}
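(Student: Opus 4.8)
The plan is to reduce the statement to the version \emph{without} the orthogonal projection and then estimate each localized component by a weighted $H^1(\gls{subdomainol})$-norm, finishing with a finite-overlap argument. Since $\gls{orthogonalp}_{\gls{rfspace} \cap \gls{lfspace}}$ is an orthogonal projection, we have $\norm{(1 - \gls{orthogonalp}_{\gls{rfspace} \cap \gls{lfspace}})\gls{lfspacemap}(\varphi)} \leq \norm{\gls{lfspacemap}(\varphi)}$, so it suffices to bound $\sum_i \norm{\gls{lfspacemap}(\varphi)}^2$, where here the mapping of the partition of unity decomposition is $\gls{lfspacemap}(\varphi) = \gls{feinterpolation}(\gls{pouf}\varphi)$. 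Dropping the projection is exactly the reason this first estimate is independent of $\gls{rfspace}$ but still carries the $\gls{H}^{-1}$-factor, which the later \cref{thm:pu_bound} removes by keeping the projection.

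First I would estimate each term by the triangle inequality,
\begin{equation}
\norm{\gls{feinterpolation}(\gls{pouf}\varphi)}_{\gls{fspace}} \leq \norm{\gls{pouf}\varphi}_{\gls{fspace}} + \norm{\gls{feinterpolation}(\gls{pouf}\varphi) - \gls{pouf}\varphi}_{\gls{fspace}},
\end{equation}
bounding the interpolation defect by assumption (v) as $\gls{interpolationbound}\norm{\varphi}_{H^1(\gls{subdomainol})}$ and then using $(a+b)^2 \leq 2a^2 + 2b^2$. For the remaining term I would write $\norm{\gls{pouf}\varphi}_{\gls{fspace}}^2 = \norm{\gls{pouf}\varphi}_{L^2(\gls{subdomainol})}^2 + \norm{\nabla(\gls{pouf}\varphi)}_{L^2(\gls{subdomainol})}^2$ (the support of $\gls{pouf}$ lies in $\gls{subdomainol}$), bound the $L^2$-part using $0 \leq \gls{pouf} \leq 1$ (which follows from (i) and (ii)), and treat the gradient via the product rule $\nabla(\gls{pouf}\varphi) = \varphi\nabla\gls{pouf} + \gls{pouf}\nabla\varphi$, applying (iii) with $\norm{\nabla\gls{pouf}}_\infty \leq \gls{pufuncconstant} H_i^{-1} \leq \gls{pufuncconstant}\gls{H}^{-1}$ and again splitting the square with $(a+b)^2 \leq 2a^2 + 2b^2$.

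Collecting the pieces and absorbing all $L^2(\gls{subdomainol})$ and gradient contributions into the full local norm should give the per-domain estimate
\begin{equation}
\norm{\gls{feinterpolation}(\gls{pouf}\varphi)}_{\gls{fspace}}^2 \leq \left(4 + 2\gls{interpolationbound}^2 + 4(\gls{pufuncconstant}\gls{H}^{-1})^2\right) \norm{\varphi}_{H^1(\gls{subdomainol})}^2.
\end{equation}
The concluding step is the finite-overlap argument: since every point of $\gls{domain}$ lies in at most $\gls{overlappingspaces}$ of the $\gls{subdomainol}$, summing the local norms yields $\sum_{i=1}^{\gls{numdomainsol}} \norm{\varphi}_{H^1(\gls{subdomainol})}^2 \leq \gls{overlappingspaces}\norm{\varphi}_{\gls{fspace}}^2$. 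Combining this with the per-domain bound, taking the square root, dividing by $\norm{\varphi}$ and passing to the supremum gives the claim. I expect no deep analytic obstacle here; the only real care is constant bookkeeping, i.e.\ arranging the two uses of $(a+b)^2 \leq 2a^2 + 2b^2$ and the seminorm inclusions so that the factors combine \emph{exactly} into $4 + 2\gls{interpolationbound}^2 + 4(\gls{pufuncconstant}\gls{H}^{-1})^2$ rather than a slightly larger constant.
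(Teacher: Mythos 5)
Your proposal is correct and follows essentially the same route as the paper's proof: split off the interpolation defect via assumption (v) with $\norm{a+b}^2 \leq 2\norm{a}^2+2\norm{b}^2$, bound $\norm{\gls{pouf}\varphi}_{\gls{fspace}}^2$ by the product rule together with (iii) and $0\leq\gls{pouf}\leq 1$, and conclude with the finite-overlap summation; your constant bookkeeping reproduces the factor $4+2\gls{interpolationbound}^2+4(\gls{pufuncconstant}\gls{H}^{-1})^2$ exactly. The only difference is cosmetic: you state explicitly that the orthogonal projection $1-\gls{orthogonalp}_{\gls{rfspace}\cap\gls{lfspace}}$ can be dropped since it is a contraction, a step the paper performs silently.
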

\begin{proof}
We compute the bound for $\gls{decompositionboundvr}$ using the partition of unity and the interpolation operator.
Recall the definition
\begin{equation}
	\gls{lfspacemappou}(\varphi) := \gls{feinterpolation}(\gls{pouf}  \varphi), \qquad i \in \{1, \dots, \gls{numdomainsol}\}.	
\end{equation}
Using this definition and \textit{(v)} we have for any $\varphi \in \gls{fspaceh}$
\begin{align}
   \sum_{i=1}^{\gls{numdomainsol}} \norm{\gls{lfspacemappou}(\varphi)}_{\gls{fspace}}^2 
   &= \sum_{i=1}^{\gls{numdomainsol}} \norm{\gls{feinterpolation}(\gls{pouf}  \varphi)}_{\gls{fspace}}^2 
   \nonumber
   \\
   &= \sum_{i=1}^{\gls{numdomainsol}} \norm{\gls{feinterpolation}(\gls{pouf}  \varphi) - \gls{pouf}\varphi + \gls{pouf}\varphi}_{\gls{fspace}}^2 
   \nonumber
   \\
   &\leq 2 \sum_{i=1}^{\gls{numdomainsol}} 
     \norm{\gls{feinterpolation}(\gls{pouf}  \varphi) - \gls{pouf}\varphi}_{\gls{fspace}}^2  + \norm{\gls{pouf}\varphi}_{\gls{fspace}}^2 
     \nonumber
   \\
   &\leq 2 \sum_{i=1}^{\gls{numdomainsol}}  \gls{interpolationbound}^2 \norm{\varphi}_{H^1(\gls{subdomainol})}^2 + \norm{\gls{pouf}  \varphi}_{\gls{fspace}}^2 .
   \label{eq:constant est 11}
\end{align}
Furthermore, using the properties of the partition of unity, we have
\begin{align}
\norm{\gls{pouf}  \varphi}_{\gls{fspace}}^2
&= \int_{\gls{subdomainol}} |\nabla (\gls{pouf} \varphi) |^2 + |\gls{pouf} \varphi |^2 \dx
\nonumber
\\
&= \int_{\gls{subdomainol}} |\nabla \gls{pouf} \varphi + \gls{pouf} \nabla \varphi |^2 + |\gls{pouf} \varphi |^2 \dx
\nonumber
\\
&\leq \int_{\gls{subdomainol}} 2|\nabla \gls{pouf} \varphi|^2 + 2|\gls{pouf} \nabla \varphi |^2 + |\gls{pouf} \varphi |^2 \dx
\nonumber
\\
&\leq \int_{\gls{subdomainol}} 2\left(\gls{pufuncconstant}  H_i^{-1}\right)^2|\varphi|^2 + 2| \nabla \varphi |^2 + |\varphi |^2 \dx
\nonumber
\\
&= \left( 1 + 2\left(\gls{pufuncconstant}  H_i^{-1}\right)^2 \right) \norm{\varphi}_{L^2(\gls{subdomainol})}^2
    + 2 \norm{\nabla \varphi}_{L^2(\gls{subdomainol})}^2
\nonumber
\\
&\leq \left( 2 + 2\left(\gls{pufuncconstant}  H_i^{-1}\right)^2 \right) \norm{\varphi}_{H^1(\gls{subdomainol})}^2
.
   \label{eq:constant est 12}
\end{align}
Inserting \cref{eq:constant est 12} into \cref{eq:constant est 11} yields
\begin{align}
\sum_{i=1}^{\gls{numdomainsol}} \norm{\gls{lfspacemappou}(\varphi)}_{\gls{fspace}}^2 
&\leq 2 \sum_{i=1}^{\gls{numdomainsol}}  \gls{interpolationbound}^2 \norm{\varphi}_{H^1(\gls{subdomainol})}^2 + \left( 2 + 2\left(\gls{pufuncconstant}  H_i^{-1}\right)^2 \right) \norm{\varphi}_{H^1(\gls{subdomainol})}^2
\nonumber
\\
&\leq \left( 4 + 2 \gls{interpolationbound}^2 + 4\left(\gls{pufuncconstant}  H_i^{-1}\right)^2 \right)
\sum_{i=1}^{\gls{numdomainsol}}  \norm{\varphi}_{H^1(\gls{subdomainol})}^2
\nonumber
\\
&\leq \left( 4 + 2 \gls{interpolationbound}^2 + 4\left(\gls{pufuncconstant}  H_i^{-1}\right)^2 \right)
\gls{overlappingspaces} \norm{\varphi}_{\gls{fspace}}^2
.
\end{align}
This gives us the estimate.
\end{proof}
A similar estimate for the energy norm is given in \cref{thm:upper bound for cpu in energy norm}.

For the rectangular domain decomposition used in 
the numerical example below, the constant $\gls{overlappingspaces}$ is $\gls{overlappingspaces} = 2^{\gls{dimension}} = 4$:
it is possible to divide the overlapping domains into four classes, so that
within each class, no domain overlaps with any other
(cf.\ \cite[Sec. 5]{Chung2015a}).

Furthermore, 
the 
coercivity constant $\gls{coercconst}_{\gls{parameter}}$ and the stability constant $\gls{contconst}_{\gls{parameter}}$,
or estimates, are required.
For the numerical example \exb{}
used in \cref{sec:numerical_example}, those
can be calculated analytically.
In general this is not possible.
The numerical computation of a lower bound for the coercivity constant
was subject of extensive research in the \gls{rb} community
(see e.g. \cite{Huynh2007,Chen2008})%
, but these methods require the calculation of the
coercivity constant at some parameter values and thus
require the solution of a global, fine scale problem.
To the authors' knowledge, there are no publications
on localization of these methods so far.

The upper bound on the constant $\gls{decompositionboundvr}$ in \cref{thm:pu_bound_0} depends on the domain size $\gls{H}$ approximately
like $\gls{H}^{-1}$. As the domain size is considered a constant in the
context of \gls{arbilomod}, the error estimator is already considered efficient with this bound. In the next proposition, we however 
show that the constant \gls{decompositionboundvr} can indeed be bounded independent of $\gls{H}$, if we exploit that the residual vanishes on the 
reduced space $\gls{rfspace}$. 
\begin{proposition}[$\gls{H}$ independent localization constant estimate]
\label{thm:pu_bound}
Let $\gls{pouf}$, $i \in \{1, \dots, \gls{numdomainsol}\}$ be a partition of unity and $\gls{feinterpolation}$
an interpolation operator satisfying the prerequisites of \cref{thm:pu_bound_0}.
Furthermore, assume $V = H^1_0(\Omega)$ and that $\gls{pouf} \in \gls{rfspace} \cap \gls{lfspacepou}$ for $i \in
D^{\rm int} := \{i \in \{1, \dots, \gls{numdomainsol}\} | \ \overline{\gls{subdomainol}} \cap \partial \gls{domain} = \emptyset\}$,
e.g. $\gls{pouf}$ is included in the reduced space (see \cref{sec:decomposition,sec:codim_2_spaces}
above).
Then the following estimate holds:
\begin{equation}
\gls{decompositionboundvr} \leq \sqrt{4 + 2\gls{interpolationbound}^2 + 4(\gls{pufuncconstant} \gls{scaledpoincareconstant})^2} \cdot \sqrt{\gls{overlappingspaces}},
\end{equation}
with a Poincaré-inequality constant $\gls{scaledpoincareconstant}$ (see proof below) that does not depend 
on the fine or coarse mesh sizes ($\gls{h}, \gls{H}$). 
\end{proposition}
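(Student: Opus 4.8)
The plan is to reuse the computation of \cref{thm:pu_bound_0} almost verbatim, but to exploit the orthogonal projection appearing in the definition of $\gls{decompositionboundvr}$, which in \cref{thm:pu_bound_0} was simply discarded. Instantiating the abstract decomposition with the partition of unity decomposition $\gls{lfspacemappou}, \gls{lfspacepou}$, the quantity to bound is $\sum_i \norm{(1 - \gls{orthogonalp}_{\gls{rfspace} \cap \gls{lfspacepou}})\gls{lfspacemappou}(\varphi)}_{\gls{fspace}}^2$. Since $\gls{orthogonalp}_{\gls{rfspace} \cap \gls{lfspacepou}}$ realizes the best approximation in $\gls{rfspace} \cap \gls{lfspacepou}$, for every $\varphi \in \gls{fspaceh}$ and every $w \in \gls{rfspace} \cap \gls{lfspacepou}$ we have $\norm{(1 - \gls{orthogonalp}_{\gls{rfspace} \cap \gls{lfspacepou}}) \gls{lfspacemappou}(\varphi)}_{\gls{fspace}} \leq \norm{\gls{lfspacemappou}(\varphi) - w}_{\gls{fspace}}$. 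The idea is to choose $w$ so that the function hit by $\nabla \gls{pouf}$ has vanishing mean, so that a Poincaré/Friedrichs inequality can absorb the factor $H_i^{-1}$ stemming from $\norm{\nabla \gls{pouf}}_\infty \leq \gls{pufuncconstant} H_i^{-1}$.

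Concretely, for an interior index $i \in D^{\rm int}$ I would set $\bar\varphi := \frac{1}{\abs{\gls{subdomainol}}}\int_{\gls{subdomainol}} \varphi \dx$ and take $w := \bar\varphi\, \gls{pouf}$, which lies in $\gls{rfspace} \cap \gls{lfspacepou}$ by the hypothesis $\gls{pouf} \in \gls{rfspace} \cap \gls{lfspacepou}$. Using $\gls{feinterpolation}(\gls{pouf}) = \gls{pouf}$ (property (iv)) and linearity of $\gls{feinterpolation}$ gives $\gls{lfspacemappou}(\varphi) - w = \gls{feinterpolation}(\gls{pouf}(\varphi - \bar\varphi))$, and the interpolation error is unchanged by the shift, $\gls{feinterpolation}(\gls{pouf}(\varphi-\bar\varphi)) - \gls{pouf}(\varphi-\bar\varphi) = \gls{feinterpolation}(\gls{pouf}\varphi) - \gls{pouf}\varphi$, so property (v) still bounds it by $\gls{interpolationbound}\norm{\varphi}_{H^1(\gls{subdomainol})}$. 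I would then repeat the splitting of \cref{thm:pu_bound_0}, replacing the product-rule estimate \cref{eq:constant est 12} by its analogue for $\gls{pouf}(\varphi-\bar\varphi)$: bound $\norm{\nabla \gls{pouf}\,(\varphi-\bar\varphi)}_{L^2(\gls{subdomainol})}^2 \leq (\gls{pufuncconstant} H_i^{-1})^2\norm{\varphi-\bar\varphi}_{L^2(\gls{subdomainol})}^2$ and invoke the scaled Poincaré inequality $\norm{\varphi - \bar\varphi}_{L^2(\gls{subdomainol})} \leq \gls{scaledpoincareconstant}\, H_i \norm{\nabla\varphi}_{L^2(\gls{subdomainol})}$. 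The crucial point is that $H_i^{-1}$ and $H_i$ cancel, turning $(\gls{pufuncconstant} H_i^{-1})^2$ into $(\gls{pufuncconstant}\gls{scaledpoincareconstant})^2$. The harmless zeroth-order term I would treat by the plain estimate $\norm{\gls{pouf}(\varphi-\bar\varphi)}_{L^2(\gls{subdomainol})} \leq \norm{\varphi}_{L^2(\gls{subdomainol})}$ (mean subtraction only decreases the $L^2$ norm), so that \emph{no} $H_i$-growing contribution is produced there. Collecting terms reproduces the per-subdomain bound $(4 + 2\gls{interpolationbound}^2 + 4(\gls{pufuncconstant}\gls{scaledpoincareconstant})^2)\norm{\varphi}_{H^1(\gls{subdomainol})}^2$ in place of its $\gls{H}^{-1}$ counterpart.

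For the remaining boundary indices $i \notin D^{\rm int}$, where $\gls{pouf}$ need not lie in the reduced space, I would take $w = 0$ and use $V = H^1_0(\gls{domain})$: since such $\gls{subdomainol}$ meets $\partial\gls{domain}$ on a set of positive measure and $\varphi$ vanishes there, the Friedrichs inequality $\norm{\varphi}_{L^2(\gls{subdomainol})} \leq \gls{scaledpoincareconstant}\, H_i\norm{\nabla\varphi}_{L^2(\gls{subdomainol})}$ holds with the same scaled constant, and the identical cancellation goes through with $\varphi$ in place of $\varphi-\bar\varphi$. Summing both cases over $i$ and using, exactly as in \cref{thm:pu_bound_0}, that each point of $\gls{domain}$ lies in at most $\gls{overlappingspaces}$ of the $\gls{subdomainol}$, gives $\sum_i \norm{(1 - \gls{orthogonalp}_{\gls{rfspace} \cap \gls{lfspacepou}})\gls{lfspacemappou}(\varphi)}_{\gls{fspace}}^2 \leq (4 + 2\gls{interpolationbound}^2 + 4(\gls{pufuncconstant}\gls{scaledpoincareconstant})^2)\,\gls{overlappingspaces}\,\norm{\varphi}_{\gls{fspace}}^2$, and taking the supremum over $\varphi$ yields the claim.

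I expect the main obstacle to lie in the justification of the scaled constant $\gls{scaledpoincareconstant}$: one must argue that a single constant serves uniformly for every subdomain $\gls{subdomainol}$ and is independent of both $\gls{h}$ and $\gls{H}$. This follows from a scaling argument to a fixed reference configuration, legitimate here because the rectangular subdomains are dilated copies of finitely many reference shapes. The accompanying delicate bookkeeping is to ensure the Poincaré/Friedrichs inequality is applied \emph{only} to the first-order ($\nabla\gls{pouf}$) term and never to the zeroth-order term; otherwise a spurious factor $(\gls{scaledpoincareconstant} H_i)^2$ that grows with the subdomain size would survive and destroy the claimed $\gls{H}$-independence.
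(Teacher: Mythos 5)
Your proposal is correct and follows essentially the same route as the paper's own proof: for $i \in D^{\rm int}$ you exploit $\bar\varphi_i\,\gls{pouf} \in \gls{rfspace} \cap \gls{lfspacepou}$ to replace the projection error by the distance to $\bar\varphi_i\,\gls{pouf}$, cancel $H_i^{-1}$ against $H_i$ via the rescaled Poincar\'e inequality on the $\nabla\gls{pouf}$ term only, bound the zeroth-order term trivially, handle $i \notin D^{\rm int}$ with $w=0$ and the Friedrichs-type inequality from $V = H^1_0(\gls{domain})$, and finish with the overlap count $\gls{overlappingspaces}$. Your identification of the delicate points (shift invariance of the interpolation error, and keeping Poincar\'e away from the zeroth-order term) matches, and in places even makes explicit, what the paper does.
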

\begin{proof}
For arbitrary $\varphi \in \gls{fspaceh}$ let $\bar \varphi_i := \frac{1}{|\gls{subdomainol}|}\int_{\gls{subdomainol}} \varphi$.
We then have with $D^{\rm ext}:=
\{1, \dots, \gls{numdomainsol}\} \setminus D^{\rm int}$
\begin{align}
\gls{decompositionboundvr} &:= 
\sup_{\varphi \in \gls{fspaceh} \nnull}
\frac{\left(\sum_{i=1}^{\gls{numdomainsol}} \norm{\left( 1 - \gls{orthogonalp}_{\gls{rfspace} \cap \gls{lfspacepou}} \right)\gls{lfspacemappou}(\varphi)}^2\right)^{\frac 1 2}}
 {\norm{\varphi}}
 \nonumber
 \\
 &= 
\sup_{\varphi \in \gls{fspaceh} \nnull}
\frac{\left(
\sum_{i\in D^\mathrm{int}} \norm{\left( 1 - \gls{orthogonalp}_{\gls{rfspace} \cap \gls{lfspacepou}} \right)\gls{lfspacemappou}(\varphi)}^2
+
\sum_{i\in D^\mathrm{ext}} \norm{\left( 1 - \gls{orthogonalp}_{\gls{rfspace} \cap \gls{lfspacepou}} \right)\gls{lfspacemappou}(\varphi)}^2
\right)^{\frac 1 2}}
 {\norm{\varphi}}
 \nonumber
\\
 &\leq
\sup_{\varphi \in \gls{fspaceh} \nnull}
\frac{\left(
\sum_{i\in D^\mathrm{int}} \norm{\gls{lfspacemappou}(\varphi) - \bar \varphi_i \gls{pouf}}^2
+
\sum_{i\in D^\mathrm{ext}} \norm{\gls{lfspacemappou}(\varphi)}^2
\right)^{\frac 1 2}}
 {\norm{\varphi}},
\end{align}
where we have used that by construction $\bar \varphi_i \gls{pouf} \in { \gls{rfspace} \cap \gls{lfspacepou}}$ for all $i \in
D^\mathrm{int}$.
For any $\varphi \in \gls{fspaceh}$ and $i \in D^\mathrm{int}$ we then have 
$\norm{\gls{lfspacemappou}(\varphi) - \bar \varphi_i \gls{pouf}}_{\gls{fspace}}^2 \leq 2\gls{interpolationbound}^2 \norm{\nabla \varphi}_{L^2(\gls{subdomainol})}^2 + 2
    \norm{(\varphi - \bar \varphi_i) \gls{pouf}}_{\gls{fspace}}^2 $, where
\begin{align}
    \norm{(\varphi - \bar \varphi_i) \gls{pouf}}_{\gls{fspace}}^2 
    &\leq \int_{\Omega_\xi} 
      2|\nabla (\varphi - \bar \varphi_i) \gls{pouf} |^2(x) + 2| (\varphi - \bar \varphi_i) \nabla \gls{pouf} |^2(x) \dx \nonumber \\
      &\hspace*{5em} + \norm{(\varphi - \bar \varphi_i) \gls{pouf}}^2_{L^2(\gls{subdomainol})}.
 \end{align}
With a rescaled Poincaré-type inequality 
$$
	\norm{\varphi - \bar \varphi_i}_{L^2(\gls{subdomainol})} \leq \gls{scaledpoincareconstant} H_i \norm{\nabla \varphi}_{L^2(\gls{subdomainol})},
$$
and
$
	\norm{\varphi - \bar \varphi_i}_{L^2(\gls{subdomainol})} \leq \norm{\varphi}_{L^2(\gls{subdomainol})},
$
we get
\begin{align}
   \int_{\Omega_\xi} 
   2|\nabla (\varphi - \bar{\varphi}_\xi) \gls{pouf} |^2(x) &+ 2| (\varphi - \bar \varphi_i) \nabla \gls{pouf} |^2(x) \dx + \norm{(\varphi - \bar \varphi_i) \gls{pouf}}^2_{L^2(\gls{subdomainol})} \nonumber \\
       & \qquad\qquad\leq (2 + 2(\gls{pufuncconstant} \gls{scaledpoincareconstant})^2 )  \norm{\nabla \varphi}^2_{L^2(\gls{subdomainol})} +
   \norm{\varphi}^2_{L^2(\gls{subdomainol})}.
 \end{align}
In analogy we obtain for the boundary terms, i.e. $i \in D^\mathrm{ext}$, the estimates
\begin{equation}
\norm{\gls{lfspacemappou}(\varphi)}_{\gls{fspace}}^2 \leq 2\gls{interpolationbound}^2 \norm{\nabla \varphi}_{L^2(\gls{subdomainol})} + 2
    \norm{\varphi \gls{pouf}}_{\gls{fspace}}^2 ,
\end{equation}
and
\begin{align}
    \norm{ \varphi \gls{pouf} }_{\gls{fspace}}^2 
    &\leq \int_{\Omega_\xi} 
    2|\nabla \varphi \gls{pouf}|^2(x) + 2| \varphi \nabla \gls{pouf} |^2(x) \dx + \norm{ \varphi  \gls{pouf} }^2_{L^2(\gls{subdomainol})} \nonumber \\
    & \leq (2 + 2(\gls{pufuncconstant} \gls{scaledpoincareconstant})^2) \norm{\nabla \varphi}^2_{L^2(\gls{subdomainol})} +
      \norm{\varphi}^2_{L^2(\gls{subdomainol})}
 \end{align}
using a rescaled Poincaré-type inequality which holds for $i \in D^{\rm ext}$ as 
$\varphi \in \gls{fspaceh}$ has zero boundary values, i.e.
$$
	\norm{\varphi}_{L^2(\gls{subdomainol})} \leq \gls{scaledpoincareconstant} H_i \norm{\nabla \varphi}_{L^2(\gls{subdomainol})}.
$$
Summing up all contributions we then have

\begin{align}
  \sum_{i \in D^\mathrm{int}} & \norm{\gls{lfspacemappou}(\varphi) - \bar \varphi_i \gls{pouf}}_{\gls{fspace}}^2
    + \sum_{i \in D^\mathrm{ext}}  \norm{\gls{lfspacemappou}(\varphi)}_{\gls{fspace}}^2 \nonumber \\
	      &\leq \sum_{i \in \{1, \dots, \gls{numdomainsol}\}} 2\gls{interpolationbound}^2 \norm{\nabla \varphi}_{L^2(\gls{subdomainol})}^2 + 2\left[ (2 + 2(\gls{pufuncconstant} \gls{scaledpoincareconstant})^2) \norm{\nabla
      \varphi}^2_{L^2(\gls{subdomainol})} + \norm{\varphi}^2_{L^2(\gls{subdomainol})} \right] \nonumber\\
	    & \leq (4 + 2\gls{interpolationbound}^2 + 4(\gls{pufuncconstant} \gls{scaledpoincareconstant})^2) \gls{overlappingspaces}  \norm{\varphi}^2_{V}.
\end{align}	      
This gives us the estimate.
\end{proof}

\cref{thm:pu_bound} gives a bound on $\gls{decompositionboundvr}$ that depends on the contrast of the underlying 
diffusion coefficient if $\gls{pouf} \in \gls{rfspace}$, $i \in D^\mathrm{int}$ is chosen as the \gls{msfem} type hat functions 
as suggested in \cref{sec:decomposition} above. However, it is independent on the mesh sizes $\gls{h}, \gls{H}$. 
A crucial ingredient to obtain this bound is the fact that we 
included this macroscopic partition of unity in our reduced approximation space $\gls{rfspace}$. 
If alternatively we would chose $\gls{pouf} \in \gls{rfspace}$ to be the traditional Lagrange hat functions, 
the bound on $\gls{decompositionboundvr}$ in  \cref{thm:pu_bound} would be independent of the contrast.
In fact, we might expect that $\gls{decompositionboundvr}$ behaves much better then the upper bound due to 
the approximation properties of the reduced space. It would actually be possible to compute $\gls{decompositionboundvr}$
for given $\gls{fspace}, \gls{rfspace}$ which would however be computationally expensive and thus not of any use 
in practical applications. \cref{thm:pu_bound}, however shows that the localized a posteriori 
error estimator in \cref{thm:abstract_error_estimate} in the context of \gls{arbilomod} is indeed robust and efficient, 
even with respect to $\gls{H} \to 0$.

Comparing with other localized \gls{rb} and multiscale methods,
one observes a difference in the scaling of the efficiency constants.
While in our case, $c_{pu}$ is independent of both $\gls{h}$ and $\gls{H}$,
the a posteriori error estimator published for \gls{lrbms} has a
$\gls{H}/\gls{h}$ dependency
\cite[Theorem 4.6]{Ohlberger2015}
and in the certification framework for \gls{scrbe}, a $\gls{h}^{-1/2}$ scaling appears
\cite[Proposition 4.5]{Smetana2015}, which, however,
can be removed by using $H^{1/2}$-orthogonal port modes
\cite[Corollary A.1]{Smetana2015}.
The (a priori) error estimators published for \gls{gmsfem} in \cite{Chung2014b} also 
have no dependency on $\gls{H}$ or $\gls{h}$. However, they also rely on specific
properties of the basis generation. 
The error estimator presented
here is independent of the method the basis is generated with, which
is advantageous as basis generation algorithms and the error
estimator can be developed further independently.
Also in the analysis of the \gls{dgrbe}
Pacciarini et.al.~have
a factor of $\gls{h}^{-1/2}$ in the
a priori analysis \cite{Antonietti2016}
and in the a posteriori error estimator \cite{Pacciarini2016}.

So far we did not use properties of the bilinear form other than
coercivity and continuity. Assuming locality of the bilinear form 
as in \eqref{eq:weak heat equation}, we get a local efficiency
estimate and an improved global efficiency estimate.
\begin{proposition}[Localized efficiency estimate]
\label{thm:local_efficiency}
Let the bilinear form $\gls{a}_{\gls{parameter}}$ be given by \eqref{eq:weak heat equation}.
Then we have the localized efficiency estimate
\begin{equation}
\norm{\gls{residual}_{\gls{parameter}}(\gls{rsol}_{\gls{parameter}})}_{\gls{lfspacepou}'} \leq
\gls{contconst}_{\gls{parameter}} \norm{\nabla \left(u_{\gls{parameter}} - \gls{rsol}_{\gls{parameter}} \right)}_{L^2(\gls{subdomainol})}.
\end{equation}
\end{proposition}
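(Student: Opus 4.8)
The plan is to reduce everything to the defining property of the residual together with the locality of the heat-conduction bilinear form, and then to apply a single Cauchy--Schwarz step on the overlapping subdomain. First I would rewrite the residual as the error measured in the bilinear form. Since $\gls{sol}_{\gls{parameter}}$ solves the full variational problem in \cref{def:variational problem}, for every $\varphi \in \gls{fspace}$ we have $\dualpair{f_{\gls{parameter}}}{\varphi} = a_{\gls{parameter}}(\gls{sol}_{\gls{parameter}}, \varphi)$, so the definition of the residual in \cref{sec:a_posteriori} gives
\[
\dualpair{\gls{residual}_{\gls{parameter}}(\gls{rsol}_{\gls{parameter}})}{\varphi}
= \dualpair{f_{\gls{parameter}}}{\varphi} - a_{\gls{parameter}}(\gls{rsol}_{\gls{parameter}}, \varphi)
= a_{\gls{parameter}}(\gls{sol}_{\gls{parameter}} - \gls{rsol}_{\gls{parameter}}, \varphi).
\]
It then suffices to bound $\abs{a_{\gls{parameter}}(\gls{sol}_{\gls{parameter}} - \gls{rsol}_{\gls{parameter}}, \varphi)}$ for $\varphi \in \gls{lfspacepou}$ by $\gls{contconst}_{\gls{parameter}}\, \norm{\nabla(\gls{sol}_{\gls{parameter}} - \gls{rsol}_{\gls{parameter}})}_{L^2(\gls{subdomainol})}\,\norm{\varphi}_{\gls{fspace}}$ and divide by $\norm{\varphi}_{\gls{fspace}}$ before taking the supremum that defines $\norm{\cdot}_{\gls{lfspacepou}'}$.

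Next I would exploit locality. By construction every $\varphi \in \gls{lfspacepou}$ satisfies $\supp(\varphi)\subseteq\gls{subdomainol}$, and the boundary integral in the weak heat form \eqref{eq:weak heat equation} is inactive for the spaces and boundary conditions under consideration (it vanishes on $\gls{boundary}_D$ because $\gls{lfspacepou}\subseteq\gls{fspace}$, and is the natural condition on $\gls{boundary}_N$), leaving the pure volume form $a_{\gls{parameter}}(w,\varphi) = \int_{\gls{domain}} \gls{heat_conductivity}_{\gls{parameter}}\,\nabla w\cdot\nabla\varphi\dx$. Because the integrand is supported in $\gls{subdomainol}$, this collapses to an integral over the single overlapping subdomain,
\[
a_{\gls{parameter}}(\gls{sol}_{\gls{parameter}} - \gls{rsol}_{\gls{parameter}}, \varphi) = \int_{\gls{subdomainol}} \gls{heat_conductivity}_{\gls{parameter}}\,\nabla(\gls{sol}_{\gls{parameter}} - \gls{rsol}_{\gls{parameter}})\cdot\nabla\varphi\dx.
\]
Applying Cauchy--Schwarz to this local integral and using the pointwise bound on the conductivity then yields
\[
\abs{a_{\gls{parameter}}(\gls{sol}_{\gls{parameter}} - \gls{rsol}_{\gls{parameter}}, \varphi)} \le \gls{contconst}_{\gls{parameter}}\,\norm{\nabla(\gls{sol}_{\gls{parameter}} - \gls{rsol}_{\gls{parameter}})}_{L^2(\gls{subdomainol})}\,\norm{\nabla\varphi}_{L^2(\gls{subdomainol})}.
\]
Since $\supp(\varphi)\subseteq\gls{subdomainol}$ we have $\norm{\nabla\varphi}_{L^2(\gls{subdomainol})} = \norm{\nabla\varphi}_{L^2(\gls{domain})} \le \norm{\varphi}_{\gls{fspace}}$ in the $H^1$-norm, and dividing by $\norm{\varphi}_{\gls{fspace}}$ and passing to the supremum over $\gls{lfspacepou}$ gives the asserted estimate.

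The step that needs the most care is the third one: the naive local Cauchy--Schwarz bound produces the constant $\norm{\gls{heat_conductivity}_{\gls{parameter}}}_{L^\infty}$, and I must argue that this is in fact \emph{equal} to the global continuity constant $\gls{contconst}_{\gls{parameter}}$ in the $H^1$-norm, not merely an upper bound as recorded after \eqref{eq:fspace heat}. For the pure gradient form this identification holds, because the supremum defining $\gls{contconst}_{\gls{parameter}}$ is approached along rapidly oscillating functions concentrated where $\gls{heat_conductivity}_{\gls{parameter}}$ is maximal, for which the $L^2$ contribution to the $H^1$-norm becomes negligible; I would make this precise so that exactly the stated constant $\gls{contconst}_{\gls{parameter}}$, and nothing larger, appears. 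The remaining bookkeeping -- the inactivity of the boundary term and the fact that extension by zero keeps $\varphi$ in $\gls{fspace}$ -- is routine.
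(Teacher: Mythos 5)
Your proof is correct and follows essentially the same route as the paper's: the error identity expressing the residual as the bilinear form applied to the error $u_{\mu}-\widetilde u_{\mu}$, restriction of the volume integral to the overlapping subdomain via the support of the test function, and a local Cauchy--Schwarz step followed by bounding the local gradient norm of the test function by its full $V$-norm. The only difference is that the paper's proof passes from the local weighted integral directly to the constant $\gamma_{\mu}$ in a single unremarked inequality, i.e.\ it silently performs exactly the identification of the local $L^{\infty}$ bound on the conductivity with the continuity constant that you flag as the delicate step, so your write-up is, if anything, the more careful of the two.
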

\begin{proof}
Using the error identity 
\begin{equation}
a_{\gls{parameter}}(u_{\gls{parameter}} - \gls{rsol}_{\gls{parameter}}, \varphi) = \dualpair{\gls{residual}_{\gls{parameter}}(\gls{rsol}_{\gls{parameter}}) }{ \varphi },
\nonumber
\end{equation}
we obtain for any $\varphi \in \gls{lfspacepou}$
\begin{align}
	\dualpair{\gls{residual}_{\gls{parameter}}(\gls{rsol}_{\gls{parameter}})}{\varphi} &= \int_{\gls{domain}} \sigma_{\gls{parameter}}(x)\nabla(u_{\gls{parameter}} - \gls{rsol}_{\gls{parameter}})(x) \nabla \varphi(x) \dx \nonumber \\
	               &= \int_{\gls{subdomainol}} \gls{heat_conductivity}_{\gls{parameter}}(x)\nabla(u_{\gls{parameter}} - \gls{rsol}_{\gls{parameter}})(x) \nabla \varphi(x) \dx \nonumber \\
		       &\leq \gls{contconst}_{\gls{parameter}} \norm{\nabla \left(u_{\gls{parameter}} - \gls{rsol}_{\gls{parameter}} \right)}_{L^2(\gls{subdomainol})} \norm{\varphi}_{\gls{fspace}},
\end{align}
from which the statement follows.
\end{proof}
\begin{remark}[Improved efficiency estimates]
Under the assumptions of \cref{thm:local_efficiency},
we have the improved efficiency estimate
\begin{equation}
\gls{locerrest_g}
	\leq \frac{\gls{contconst}_{\gls{parameter}}  \sqrt{\gls{overlappingspaces}}  \gls{decompositionboundvr}}{\gls{coercconst}_{\gls{parameter}}}   \norm{ u_{\gls{parameter}} - \gls{rsol}_{\gls{parameter}} }_{\gls{fspace}}.
	\nonumber
\end{equation}
In many cases, a better constant can be found. Finite Element ansatz functions
are usually not orthogonal if they share support. So if 
$\gls{overlappingspaces}$ spaces have support in one point in space, they have
to be placed in different groups when designing a partition
for \cref{thm:efficiency}, so $\gls{overlappingspaces} \leq \gls{orthogonal_classes}$
 (cf.\ \cite[p. 67]{Toselli2005}).
\end{remark}
Reviewing the five desired properties of an a posteriori error estimator
at the beginning of this section, we see that the presented
error estimator is robust and efficient (1) and is offline/online
decomposable (2). Parallelization can be done over the spaces
$\gls{lfspacepou}$. Only online data has to be transferred, so there is 
little communication (3). The offline/online decomposition
only has to be repeated for a space $\gls{lfspacepou}$, if a new basis
function with support in $\gls{subdomainol}$ was added. So reuse in unchanged
regions is possible (4). How the adaptive enrichment is steered (5)
was already shown in \cref{sec:enrichment}.

\section{Relative Error Estimators}
\label{sec:relative error estimators}
From the error estimators for the absolute error, we can construct 
error estimators for the relative error. Estimates for the relative error
are given for example in \cite[Proposition 4.4]{Hesthaven2016},
but the estimates used here are slightly sharper.
The estimates presented in the following were already published in \cite[Lemma 3.2.2]{hessphdthesis},
but efficiency was not shown there.
\begin{proposition}[Relative error estimators]
Assuming $\norm{\gls{rsol}_{\gls{parameter}}}_{\gls{fspace}} > \gls{errest}$
and  $\norm{\gls{rsol}_{\gls{parameter}}}_{\gls{fspace}} > \gls{locerrest_g}$,
the error estimators defined by
\begin{align}
\gls{rerrest} 
    &:= \frac{\gls{errest}}{\norm{\gls{rsol}_{\gls{parameter}}}_{\gls{fspace}} - \gls{errest}}
\nonumber \\
\gls{rerrest4} 
    &:= \frac{\gls{locerrest_g}}{\norm{\gls{rsol}_{\gls{parameter}}}_{\gls{fspace}} - \gls{locerrest_g}}
\end{align}
are robust and efficient:
\begin{eqnarray}
\frac{\norm{\gls{sol}_{\gls{parameter}} - \gls{rsol}_{\gls{parameter}}}_{\gls{fspace}}}{\norm{\gls{sol}_{\gls{parameter}}}_{\gls{fspace}}}
&\leq \gls{rerrest}
&\leq \left( 1 + 2 \gls{rerrest} \right) \frac{\gls{contconst}_{\gls{parameter}}}{\gls{coercconst}_{\gls{parameter}}}
\frac{\norm{\gls{sol}_{\gls{parameter}} - \gls{rsol}_{\gls{parameter}}}_{\gls{fspace}}}{\norm{\gls{sol}_{\gls{parameter}}}_{\gls{fspace}}}
\nonumber \\
\frac{\norm{\gls{sol}_{\gls{parameter}} - \gls{rsol}_{\gls{parameter}}}_{\gls{fspace}}}{\norm{\gls{sol}_{\gls{parameter}}}_{\gls{fspace}}}
&\leq \gls{rerrest4}
&\leq \left( 1 + 2 \gls{rerrest4} \right) 
\frac{\gls{contconst}_{\gls{parameter}}}{\gls{coercconst}_{\gls{parameter}}}
        \gls{decompositionboundvr} \sqrt{\gls{orthogonal_classes}}
\frac{\norm{\gls{sol}_{\gls{parameter}} - \gls{rsol}_{\gls{parameter}}}_{\gls{fspace}}}{\norm{\gls{sol}_{\gls{parameter}}}_{\gls{fspace}}}
\end{eqnarray}
\end{proposition}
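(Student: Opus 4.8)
The plan is to reduce both the reliability (left) and efficiency (right) halves to the already established bounds for the \emph{absolute} estimators in \cref{eq:global_estimator} and \cref{eq:efficiency_estimate 4}. Throughout I would abbreviate the absolute error by $e := \norm{\gls{sol}_{\gls{parameter}} - \gls{rsol}_{\gls{parameter}}}_{\gls{fspace}}$ and let $\Delta$ denote either $\gls{errest}$ or $\gls{locerrest_g}$, writing $C$ for the associated effectivity constant (namely $\tfrac{\gls{contconst}_{\gls{parameter}}}{\gls{coercconst}_{\gls{parameter}}}$ in the first case and $\tfrac{\gls{contconst}_{\gls{parameter}}}{\gls{coercconst}_{\gls{parameter}}}\gls{decompositionboundvr}\sqrt{\gls{orthogonal_classes}}$ in the second), so that in both cases $e \le \Delta \le C\,e$ holds. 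A first fact I would record is that $C \ge 1$: chaining the two known bounds gives $e \le C\,e$, hence $C \ge 1$ whenever $e > 0$ (and the claim is trivial when $e = 0$). I also abbreviate the relative estimator by $\Delta_{\mathrm{rel}} := \tfrac{\Delta}{\norm{\gls{rsol}_{\gls{parameter}}}_{\gls{fspace}} - \Delta}$, which is well defined and positive by the standing assumption $\norm{\gls{rsol}_{\gls{parameter}}}_{\gls{fspace}} > \Delta$.

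For the reliability bound I would combine the reverse triangle inequality $\norm{\gls{sol}_{\gls{parameter}}}_{\gls{fspace}} \ge \norm{\gls{rsol}_{\gls{parameter}}}_{\gls{fspace}} - e$ with the fact that $x \mapsto \tfrac{x}{\norm{\gls{rsol}_{\gls{parameter}}}_{\gls{fspace}} - x}$ is increasing on $[0,\norm{\gls{rsol}_{\gls{parameter}}}_{\gls{fspace}})$. Since $e \le \Delta < \norm{\gls{rsol}_{\gls{parameter}}}_{\gls{fspace}}$, monotonicity gives
\[
\frac{e}{\norm{\gls{sol}_{\gls{parameter}}}_{\gls{fspace}}} \le \frac{e}{\norm{\gls{rsol}_{\gls{parameter}}}_{\gls{fspace}} - e} \le \frac{\Delta}{\norm{\gls{rsol}_{\gls{parameter}}}_{\gls{fspace}} - \Delta} = \Delta_{\mathrm{rel}},
\]
which is exactly the left inequality for both $\gls{rerrest}$ and $\gls{rerrest4}$.

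The efficiency bound is the more delicate half, and the key is the algebraic identity $1 + 2\Delta_{\mathrm{rel}} = \tfrac{\norm{\gls{rsol}_{\gls{parameter}}}_{\gls{fspace}} + \Delta}{\norm{\gls{rsol}_{\gls{parameter}}}_{\gls{fspace}} - \Delta}$, which turns the estimator into the self-referential relation
\[
\Big(1 + 2\Delta_{\mathrm{rel}}\Big)\frac{\Delta}{\norm{\gls{rsol}_{\gls{parameter}}}_{\gls{fspace}} + \Delta} = \Delta_{\mathrm{rel}}.
\]
I would then lower-bound $C\,\tfrac{e}{\norm{\gls{sol}_{\gls{parameter}}}_{\gls{fspace}}}$ by $\tfrac{\Delta}{\norm{\gls{rsol}_{\gls{parameter}}}_{\gls{fspace}} + \Delta}$ in three elementary steps: the triangle inequality $\norm{\gls{sol}_{\gls{parameter}}}_{\gls{fspace}} \le \norm{\gls{rsol}_{\gls{parameter}}}_{\gls{fspace}} + e$ (so $\tfrac{e}{\norm{\gls{sol}_{\gls{parameter}}}_{\gls{fspace}}} \ge \tfrac{e}{\norm{\gls{rsol}_{\gls{parameter}}}_{\gls{fspace}} + e}$); monotonicity of $e \mapsto \tfrac{e}{\norm{\gls{rsol}_{\gls{parameter}}}_{\gls{fspace}} + e}$ together with $e \ge \Delta/C$; and finally $C \ge 1$ to replace $\Delta/C$ by $\Delta$ in the denominator. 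Multiplying the resulting inequality by the positive factor $(1 + 2\Delta_{\mathrm{rel}})$ and invoking the identity above collapses the right-hand side exactly to $\Delta_{\mathrm{rel}}$, giving $\Delta_{\mathrm{rel}} \le (1 + 2\Delta_{\mathrm{rel}})\,C\,\tfrac{e}{\norm{\gls{sol}_{\gls{parameter}}}_{\gls{fspace}}}$, which is the claimed efficiency inequality once $C$ is specialized. The main obstacle I anticipate is purely one of bookkeeping: keeping the directions of the three monotonicity steps consistent, justifying that lower-bounding the right-hand side is legitimate because $(1 + 2\Delta_{\mathrm{rel}}) > 0$, and confirming $C \ge 1$ so that the final denominator replacement is valid; both estimator versions then follow verbatim by substituting the respective $\Delta$ and $C$.
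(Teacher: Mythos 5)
Your proposal is correct and follows essentially the same route as the paper's own proof (given in the appendix chapter on relative a posteriori error estimation for the generic pair $e \le \Delta \le C\,e$): reliability via the reverse triangle inequality, and efficiency via the key identity $1 + 2\Delta_{\mathrm{rel}} = \bigl(\norm{\gls{rsol}_{\gls{parameter}}} + \Delta\bigr)/\bigl(\norm{\gls{rsol}_{\gls{parameter}}} - \Delta\bigr)$ combined with the absolute estimator bounds. The only cosmetic difference is in the denominator step of the efficiency half, where you invoke $e \ge \Delta/C$ and $C \ge 1$ (which you correctly justify), whereas the paper gets $\norm{\gls{sol}_{\gls{parameter}}} \le \norm{\gls{rsol}_{\gls{parameter}}} + \Delta$ directly from the triangle inequality and rigor $e \le \Delta$, making its chain one step shorter.
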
See \cref{chap:relative error estimators} for
an extensive analysis of these relative error estimators.

\section{Improved Offline/Online Splitting}
\label{sec:improved splitting}
One crucial property of an a posteriori error estimator
for model order reduction is the
possibility of an offline/online splitting
of its evaluation.
An offline/online splitting is a scheme to evaluate
the a posteriori error estimator where all
computations in the online phase are independent of the dimension 
of the unreduced model.
For the standard residual based \gls{rb}
error estimator
\cref{eq:standard rb errest}
a splitting scheme based on the gram matrix
of all contributions to the
residual is widely used (see e.g. \cite[Sec.  4.2.5]{Hesthaven2016}).
We will call this splitting
the \oldsplit{}
in the following.
An offline/online splitting for the localized
a posteriori error estimators defined in 
\cref{thm:abstract_error_estimate}
can be obtained by applying the 
\oldsplit{}
to all localized dual space norms
$\norm{\gls{residual}_{\gls{parameter}}(\gls{sol}_{\gls{parameter}})}_{\gls{lfspace}'}$.
However, as observed by several authors
\cite[pp.\ 148--149]{Patera2007}\cite{Yano2014b}\cite{Benner2012}, 
the implementation of this offline/online splitting shows poor numerical
accuracy due to round-off errors which can render the estimator unusable
when the given problem is badly conditioned and high accuracy
is required.
Observations suggest that the estimator typically 
stagnates at a relative error of order $\sqrt{\varepsilon}$,
where $\varepsilon$ is the machine accuracy of the
floating point hardware used.

In the following, we propose a new algorithm to evaluate
the norm of the residual which does not
suffer the severe numerical problems of the traditional approach,
is free of approximations, has only small computational overhead
and is easy to implement.
It is based on representing
the residual with respect to a dedicated orthonormal basis.
The new approach is denoted by
\newsplit{} in the following.
It was published in \cite{Buhr2014}.

To our knowledge, at that time
there was only one other contribution in which a numerically stable algorithm
for evaluation of the estimator is presented
\cite{Casenave2012,Casenave2014}.
This approach however comes at the price of
a computationally more expensive ``online phase'' (in \cite{Casenave2012})
or increased complexity of
offline computations (in \cite{Casenave2014}) by application
of the empirical interpolation method, which in turn requires additional stabilization.
Later, two more approaches were published.
In \cite{Sommer2015} a splitting is proposed which is equivalent
to the \newsplit{} presented below. Only
a Gram-Schmidt algorithm is replaced by a singular value decomposition.
Later, in \cite{Chen2017,Chen2018a}, a splitting
is proposed which is also equivalent, 
but it was proposed to use a rank revealing QR decomposition
instead of the Gram-Schmidt algorithm.
In the following, we present
both the \oldsplit{}
as well as the \newsplit{}
for the computation of the dual norm of the residual
$\norm{\gls{residual}_{\gls{parameter}}(\gls{sol}_{\gls{parameter}})}_{\gls{lfspace}'}$.
We assume that $\gls{sol}_{\gls{parameter}}$
is the solution of a parametric variation problem
as defined in \cref{def:variational problem,sec:parameterized problem}
and that the bilinear form $\gls{a}_{\gls{parameter}}$
and the linear form $\gls{f}_{\gls{parameter}}$
have an affine decomposition.
As the splitting scheme is the same for the global
a posteriori error estimator and the localized setting,
we only consider the simpler global case here.

To calculate the dual norm of the residual $\gls{residual}_{\gls{parameter}}(\gls{rsol}_{\gls{parameter}})$ we make use of the
fact that the norm of an element of $\gls{rfspace}^\prime$ is equal to the norm
of its Riesz representative. Denoting by $\gls{riesz} : \gls{fspaceh}' \rightarrow \gls{fspaceh}$
the Riesz isomorphism and assuming the existence of a computable lower bound $\gls{coercconst}_{{\gls{parameter}},LB} \leq \gls{coercconst}_{\gls{parameter}}$ for the coercivity constant,
we obtain a bound for the error containing only computable quantities:
\begin{equation}
\label{eq:computable_bound}
\norm{\gls{sol}_{\gls{parameter}} - \gls{rsol}_{\gls{parameter}}}_{\gls{fspaceh}} \leq \frac{1}{\gls{coercconst}_{{\gls{parameter}},LB}}
\norm{\gls{riesz}(\gls{residual}_{\gls{parameter}}(\gls{rsol}_{\gls{parameter}}))}_{\gls{fspaceh}}
\end{equation}
\subsubsection{Traditional Offline/Online Splitting}
In order to avoid high-dimensional calculations
during the online phase, the residual $\gls{residual}_{\gls{parameter}}(\gls{rsol}_{\gls{parameter}})$ can be
rewritten using the affine decompositions \cref{eq:affineaf} and
a basis representation of $\gls{rsol}_{\gls{parameter}}$. Let
$\{\widetilde{\psi}_1, \dots, \widetilde{\psi}_\rbdim\}$
be a basis of $\gls{rfspace}$ and let  
$
\gls{rsol}_{\gls{parameter}} = \sum_{i=1}^{\rbdim} \coef{\gls{rsol}_{\gls{parameter}}}_i \widetilde{\psi}_i
$,
then the Riesz representative of the residual is given as
\begin{equation}
	\gls{riesz}(\gls{residual}_{\gls{parameter}}(\gls{rsol}_{\gls{parameter}})) = 
\sum_{q=1}^{Q_f} \theta_f^q({\gls{parameter}}) \gls{riesz}(f^q) - \sum_{q=1}^{Q_a} \sum_{i=1}^{\rbdim}
\theta_a^q({\gls{parameter}}) \coef{\gls{rsol}_{\gls{parameter}}}_i \gls{riesz}(a^q(\widetilde{\psi}_i,\, \cdot\,))\,.
\label{eq:residual}
\end{equation}
To simplify notation, we rename the $\etadim := Q_f$ + $Q_a  \rbdim$ linear 
coefficients $\theta_f^q({\gls{parameter}})$ and $\theta_a^q({\gls{parameter}}) \coef{\gls{rsol}_{\gls{parameter}}}_i$ to $\alpha_k$ 
and the vectors $\gls{riesz}(f^q)$ and $\gls{riesz}(a^q(\widetilde{\psi}_l,\cdot))$ to
$\eta_k$, i.e.~$\gls{riesz}(\gls{residual}_{\gls{parameter}}(\gls{rsol}_{\gls{parameter}})) = \sum_{k=1}^{\etadim} \alpha_k \eta_k$.
The space $\spn\{\eta_1, \dots, \eta_\etadim\}$ is denoted by $\etaspace$.
For the norm of the residual we obtain
\begin{equation}
\label{eq:traditional_offline_online}
\norm{\gls{riesz}(\gls{residual}_{\gls{parameter}}(\gls{rsol}_{\gls{parameter}}))}_{\gls{fspaceh}} = \left( \sum_{k=1}^{\etadim}
	\sum_{l=1}^{\etadim} \alpha_k \alpha_l \left( \eta_k , \eta_l
	\right)_{\gls{fspaceh}} \right)^\frac{1}{2}.
\end{equation}
Using this representation, an offline/online decomposition of the
error bound is possible by pre-computing the inner products $(\eta_k, \eta_l)_{\gls{fspaceh}}$
during the offline stage. In the
online stage, only the sum in \cref{eq:traditional_offline_online} has to be evaluated. As the number of summands is independent of the dimension of $\gls{fspaceh}$,
an online run-time independent of the dimension of $\gls{fspaceh}$ is achieved.

While this approach leads to an efficient computation of the residual
norm, it shows poor numerical stability: in the sum
\cref{eq:traditional_offline_online}, terms with a relative error of order of machine
accuracy $\varepsilon$ are added. Therefore, the sum shows an absolute error of at least $\varepsilon$ times the largest value of $|\alpha_k \alpha_l (\eta_k, \eta_l)_{\gls{fspaceh}}|$,
and the error in the norm of the residual 
is thus at least of order $\sqrt{\varepsilon}\cdot\sqrt{\max_{k,l}(|\alpha_k \alpha_l (\eta_k,
	\eta_l)_{\gls{fspaceh}}|)}$. This is in agreement with the observation that this
algorithm stops converging at relative errors of order
$\sqrt{\varepsilon}$ (see \cref{sec:thermal block numericalexample}).
\subsubsection{Improved Offline/Online Splitting}
While the floating point evaluation of \cref{eq:traditional_offline_online} shows poor
numerical accuracy, note that the evaluation of 
\begin{equation}
\label{eq:hdevaluation}
\norm{\gls{riesz}(\gls{residual}_{\gls{parameter}}(\gls{rsol}_{\gls{parameter}}))}_{\gls{fspaceh}} = 
\left(\sum_{k=1}^{\etadim} \alpha_k \eta_k,\sum_{k=1}^{\etadim} \alpha_k \eta_k\right)_{\gls{fspaceh}}^\frac{1}{2}
\end{equation}
is numerically stable.
Based on this observation, we propose a new algorithm to evaluate
$\norm{\gls{riesz}(\gls{residual}_{\gls{parameter}}(\gls{rsol}_{\gls{parameter}}))}_{\gls{fspaceh}}$ which is offline/online decomposable while maintaining the algorithmic
structure of \cref{eq:hdevaluation} to ensure stability.

The algorithm we propose evaluates \cref{eq:hdevaluation} in the subspace
$\etaspace$ 
using an orthonormal basis for this space. It comprises three steps:
1. The construction of an orthonormal basis $\Psi^\eta = \{ \etabase_1, \dots,
\etabase_\etadim \}$ of $\etaspace$,
2. the evaluation of the basis coefficients of $\eta_k$ w.r.t.~the basis $\Psi^\eta$ and
3. the evaluation of \cref{eq:hdevaluation} using this basis representation.
Note that this approach is offline/online decomposable: Steps 1 and 2 can be 
done offline, without knowing the parameter, while step 3 can be performed online. 
The size of the basis $\Psi^\eta$ does not depend on the dimension of $\gls{fspaceh}$.

In principle, any orthonormalization algorithm 
applied to $\{\eta_k\}_{k=1}^{\etadim}$ can be used for the computation of
the basis $\Psi^\eta$. Note, however, that the algorithm has to compute the basis
with very high numerical accuracy. As an example, the standard modified
Gram-Schmidt algorithm usually fails to deliver the required accuracy. For the
numerical example in \cref{sec:thermal block numericalexample}, we have chosen an
improved variant of the modified Gram-Schmidt algorithm, where vectors are
re-orthonormalized until a sufficient accuracy is achieved (\cref{alg:gram-schmidt adaptive}).

After the basis $\Psi^\eta$ has been constructed using an appropriate orthonormalization algorithm, 
we can compute for each $\eta_k$ ($1 \leq k \leq \etadim)$ basis representations
$\eta_k = \sum_{i=1}^{\etadim} \overline{\eta}_{k,i} \etabase_i$, where
$
\overline{\eta}_{k,i} = \left( \etabase_i , \eta_k \right) _{\gls{fspaceh}}
$ due to the orthonormality of $\Psi^\eta$.
The right-hand side of \cref{eq:hdevaluation} can then be evaluated as
\begin{equation}
\label{eq:newestimator}
\norm{\gls{riesz}(\gls{residual}_{\gls{parameter}}(\gls{rsol}_{\gls{parameter}}))}_{\gls{fspaceh}}
= \left( \sum_{i=1}^\etadim \left( \sum_{k=1}^\etadim \alpha_k \overline \eta_{k,i} \right)^2 \right)^\frac{1}{2}
\end{equation}
which executes in time independent of the dimension of $\gls{fspaceh}$ and is observed to be numerically stable.
\subsubsection{Run-Time Complexities}
During the offline phase, both the traditional and the new algorithm have to calculate
all Riesz representatives appearing in \cref{eq:residual}. This requires the application of the inverse 
of the inner product matrix for $\gls{fspaceh}$, which 
can be computed in complexity $\mathcal{O}(\hdim \log(\hdim))$ 
with appropriate preconditioners.
As there are $\etadim$ Riesz representatives to be calculated, the overall run-time
of this step is of order $\mathcal{O}(\etadim \hdim \log(\hdim))$.
The traditional algorithm proceeds with calculating all
inner products $(\eta_k,\eta_l)_{\gls{fspace}}$ in \cref{eq:traditional_offline_online}, having a complexity of $\mathcal{O}(\etadim^2 \hdim)$.
Thus the overall complexity of the offline phase for the traditional algorithm is
$\mathcal{O}(\etadim^2 \hdim + \etadim \hdim \log(\hdim))$.

After computing the Riesz representatives in \cref{eq:residual},
the improved algorithm generates the orthonormal basis $\Psi^\eta$.
In practice it was observed that at most four re-iterations per vector are required during
orthonormalization with \cref{alg:gram-schmidt adaptive}. Thus, 
choosing this algorithm for the generation of $\Psi^\eta$ leads
to a run-time complexity of $\mathcal{O}(\etadim^2 \hdim)$ for this step.
The calculation of the $\etadim^2$ basis coefficients
$\overline \eta_{k,i} = \left( \eta_k , \etabase_i \right) _{\gls{fspaceh}}$
has again complexity $\mathcal{O}(\etadim^2 \hdim)$,
resulting in a total complexity of the offline phase for the new algorithm of
$\mathcal{O}(\etadim^2 \hdim + \etadim \hdim \log(\hdim))$, as for the traditional algorithm.

During the online phase, the right-hand sides of \cref{eq:traditional_offline_online}, resp.~\cref{eq:newestimator},
are evaluated using the pre-computed quantities $(\eta_k,\eta_l)_{\gls{fspace}}$, resp.~$\overline \eta_{k,i}$.
In both cases, a run-time of $ \mathcal{O}(\etadim^2)$ is required.

All in all, both algorithms for evaluating \cref{eq:computable_bound} show the same
run-time complexity, in the online phase as well as during the offline phase (\cref{tab:complexities}).
Note that 
$\mathcal{O}(\etadim^2) = \mathcal{O}(Q_f^2 + Q_a^2 \rbdim^2) = \mathcal{O}(\rbdim^2)$
for increasing reduced space dimensions.
\subsection*{Numerical Example}
\label{sec:thermal block numericalexample}
To demonstrate the performance of the
\newsplit{},
we apply it to the \gls{rb} approximation of \exa{}.
Since the splitting is the same in the
standard \gls{rb} and in the localized setting,
we apply it to the simpler case of standard \gls{rb}.
For basis generation, we weak greedy,
steered by the a posteriori error estimator
under consideration is used:
The reduced space $\gls{rfspace}$ is constructed from the linear span 
of solutions
to \cref{def:variational problem} for parameters selected by the following greedy search procedure:
Starting with $\gls{rfspace}_0 := \{ 0 \} \subset \gls{fspaceh}$,
in each iteration step the reduced problem \cref{def:reduced variational problem}
is solved and an error estimator is evaluated at all parameters
${\gls{parameter}}$ of a given discrete training set $\gls{trainingset} \subset \gls{parameterspace}$.
If the maximum estimated error is below a prescribed tolerance $tol$,
the algorithm stops. Otherwise, the high-dimensional problem \cref{def:variational problem}
is solved for the parameter ${\gls{parameter}}^*_n$ maximizing the estimated error
and the reduced space is extended by the obtained solution snapshot: 
$\gls{rfspace}_{n+1} := \gls{rfspace}_{n} \oplus \spn\{ u_{{\gls{parameter}}^*_n}\}$.

A reduced space of dimension 40 was generated with the weak greedy algorithm
using our new algorithm for the evaluation of the error estimator.
An equidistant training set of $6^4$ parameters was used.
Finally, for each $n$-dimensional reduced subspace $\tilde{V}_n$ ($0\leq n \leq
40)$ produced by the greedy algorithm we computed the maximum reduction error and the maximum estimated reduction errors
using both the traditional and our improved algorithm on 30 randomly selected new parameters in
$\gls{parameterspace}$ (\cref{fig:errors}).
Our results clearly indicate the breakdown
of the traditional algorithm for more than 25 basis vectors at a relative error of about $10^{-7} \approx
\sqrt{\varepsilon}$ whereas our new algorithm remains efficient for all tested basis sizes.

To underline the need for accurate error estimation in order to obtain reduced spaces of high 
approximation quality, we repeated the same experiment using the traditional algorithm for error
estimation during basis generation (\cref{fig:traderrors}). While the maximum model reduction error
still improves from $10^{-7}$ to $10^{-8}$ after the breakdown of the error estimator,
the final reduced space approximates the solution manifold 3 orders of magnitude worse than
the space obtained with our improved algorithm.

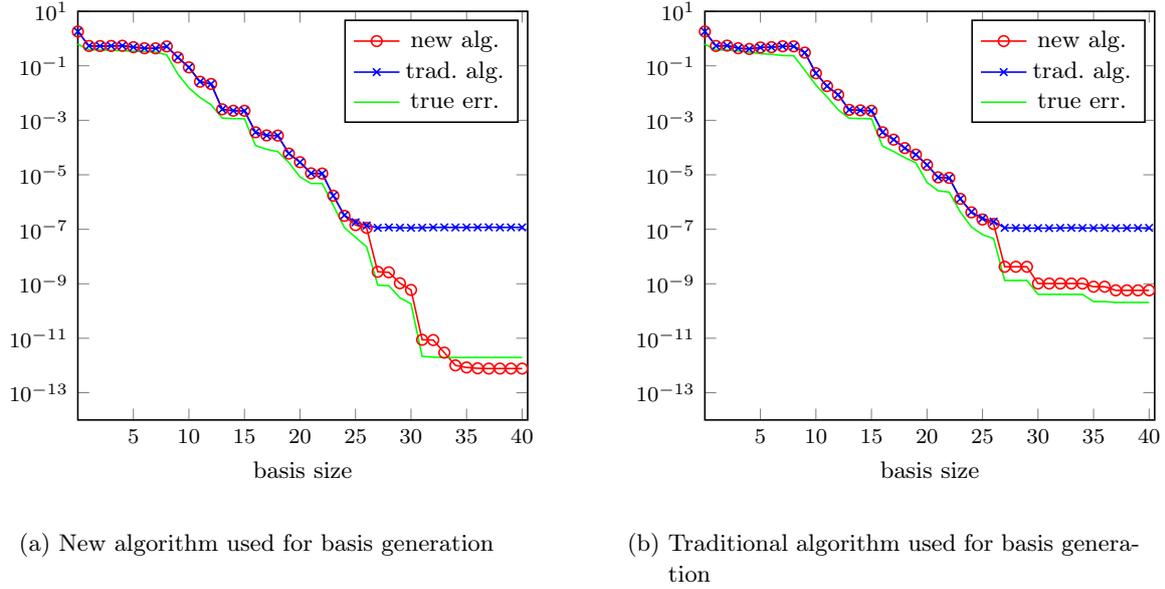
\begin{figure}
\begin{subfigure}[t]{0.45\textwidth}
\begin{center}
	\begin{tikzpicture}
		\begin{semilogyaxis}[xmin=0, xmax=40.5, ymin=1e-14, ymax=10, xtick={5, 10, 15, 20, 25, 30, 35, 40},
			semithick, small, width=7.5cm, height=7.0cm, %
			minor tick style={draw=none},
                                     xlabel={basis size}, legend style={font=\small}]
			\addplot [mark=o, red] table[x=N, y=estnew] {code/thermalblock/residual_basis.txt};
			\addplot [mark=x, blue] table[x=N, y=esttrad] {code/thermalblock/residual_basis.txt};
			\addplot [mark=none, green] table[x=N, y=err] {code/thermalblock/residual_basis.txt};
                        \legend{new alg., trad.~alg., true err.}
		\end{semilogyaxis}
	\end{tikzpicture}
\end{center}
\caption{New algorithm used for basis generation}
\label{fig:errors}
\end{subfigure}
\hfill
\begin{subfigure}[t]{0.45\textwidth}
\begin{center}
	\begin{tikzpicture}
		\begin{semilogyaxis}[xmin=0, xmax=40.5, ymin=1e-14, ymax=10, xtick={5, 10, 15, 20, 25, 30, 35, 40},
			semithick, small, width=7.5cm, height=7.0cm, %
			minor tick style={draw=none},
                                     xlabel={basis size}, legend style={font=\small}]
			\addplot [mark=o, red] table[x=N, y=estnew] {code/thermalblock/traditional.txt};
			\addplot [mark=x, blue] table[x=N, y=esttrad] {code/thermalblock/traditional.txt};
			\addplot [mark=none, green] table[x=N, y=err] {code/thermalblock/traditional.txt};
                        \legend{new alg., trad.~alg., true err.}
		\end{semilogyaxis}
	\end{tikzpicture}
\end{center}
\caption{Traditional algorithm used for basis generation}
\label{fig:traderrors}
\end{subfigure}
\caption[Maximum relative reduction errors and estimated errors in \exa{}.]
{Evaluation of proposed offline/online splitting: maximum relative reduction errors and estimated reduction
		errors ($H^1$-norm) for numerical example \exa{}.
                (reproduction: \cref{repro:fig:staberrest})
}
\label{fig:staberrest}
\end{figure}

\begin{table}
\caption[Run-time complexities of traditional and new algorithm for error estimator.]
{Run-time complexities of traditional and new algorithm for evaluation of the error estimator.}
\label{tab:complexities}
\begin{center}
\setlength{\tabcolsep}{4.5mm}
\begin{tabular}{rcc}
\toprule
stage & offline & online \\
\midrule
traditional  & $\mathcal{O}(\etadim^2 \hdim) + \mathcal{O}(\etadim \hdim \log(\hdim))$ & $\mathcal{O}(\etadim^2)$ \\
new  & $\mathcal{O}(\etadim^2 \hdim) + \mathcal{O}(\etadim^2 \hdim) + \mathcal{O}(\etadim \hdim \log(\hdim))$ & $\mathcal{O}(\etadim^2)$ \\
\bottomrule
\end{tabular}
\end{center}
\end{table}

\futurebox{
\section{Future Research on Localized a Posteriori Error Estimation}
\myline{}
\begin{itemize}
\item
\textbfit{Localization of randomized a posteriori error estimators}\\
Recently, a residual based, randomized a posteriori error
estimator has been proposed in \cite{Smetana2019},
having favorable properties.
Especially, it circumvents the explicit computation of the
inf-sup constant and it shows a very good efficiency.
It should be investigated if and how this approach
could be adapted to the localized setting.
\item
\textbfit{Online computation of estimator constants}\\
While we present a priori estimates for some of the
constants involved in the localized a posteriori
error estimators, it should be possible
to compute the localized constants
\gls{mapboundvr} on the fly using
our randomized norm estimator 
shown in \cref{sec:random a posteriori}.
It should also be possible to update
the estimates during basis adaption and
online enrichment, when the reduced
ansatz space grows and thus the constants
\gls{mapboundvr} shrink.
\end{itemize}
}

\chapter{Improved Local Training}
\label{chap:training}
The training algorithm introduced in
\cref{sec:codim_n_training},
\cref{algo:training},
is a heuristic approach.
It has the drawbacks of
first having the parameter $M$, the number of random samples,
which has to be chosen by the user,
and second it lacks provable convergence results.
In this chapter, we introduce an improved training algorithm
which chooses the number of random samples
adaptively, based on a prescribed accuracy
and for which probabilistic a priori convergence bounds can be shown.
The probabilistic a priori bounds hold with high (selectable) probability.
First we define an abstract training configuration,
corresponding to an abstract localizing space decomposition.
Then we show an a priori error estimate in this abstract setting.
We proceed introducing the randomized range recovery algorithm,
which generates local approximation spaces and controls
one of the terms in the abstract estimate.
We verify our results on two simple test examples,
\exrangea{} and \exrangeb{},
and evaluate the behavior of the range recovery algorithm on the \exd{}.
\section{Abstract Training Configuration}
Approximating the image of a suitably defined transfer operator
to create approximation spaces in localized model order
reduction has been done in \cite{bablip11} for the \gls{gfem}
and in \cite{Smetana2016} for the \gls{prscrbe}.
This formulation has been combined with a randomized approach
by the authors in \cite{Buhr2018} for the \gls{gfem}.
All of these approaches were specific to one space decomposition.
In the following, we introduce a ``localized training
configuration'' which provides an abstract framework
for the generation of localized approximation spaces,
based on an abstract localizing space decomposition.
\begin{definition}[Localized training configuration]
\label{def:localized training configuration}
Let 
$\left\{
\gls{domain}, \gls{fspace},
\{\gls{subdomain}\}_{i=1}^{\gls{numspaces}},
\{\gls{lfspace}\}_{i=1}^{\gls{numspaces}},
\{\gls{lfspacemap}\}_{i=1}^{\gls{numspaces}}
\right\}$
be a localizing space decomposition as defined in
\cref{def:localizing space decomposition}
and let \gls{sol} be the solution of a non parametric variational problem 
as defined in \cref{def:variational problem}.
We define a ``localized training configuration''
to be a set of
\begin{enumerate}
\item
\gls{numspaces} training domains $\gls{traindomain} \subset \gls{domain}$
with the maximum 
number overlapping in any point $x$ of $\gls{domain}$
denoted by
\begin{equation}
\gls{overlappingspacestrain} := \max_{x \in \gls{domain}} \#\{i \in \{1, \dots, \gls{numspaces}\} | \ x \in \gls{traindomain}\}
,
\end{equation}
\item
\gls{numspaces} Hilbert spaces
$\gls{lfspacetrain}$
equipped with a norm satisfying
\begin{equation}
\sum_{i=1}^{\gls{numspaces}} 
\norm{\gls{sol}|_{\gls{traindomain}}}^2
\leq
\gls{overlappingspacestrain}
\norm{\gls{sol}}^2,
\end{equation}
\item
\gls{numspaces} Hilbert spaces \gls{tracespace},
\item
\gls{numspaces} bounded linear operators $\gls{tracespacemap} : \gls{lfspacetrain} \rightarrow \gls{tracespace}$,
\item
\gls{numspaces} affine linear and compact operators $\gls{transferop} : \gls{tracespace} \rightarrow \gls{lfspace}$
\end{enumerate}
for which it holds 
\begin{equation}
\label{eq:abstractapriori1}
\gls{sol} = \sum_{i=1}^{\gls{numspaces}} \gls{transferop}\gls{tracespacemap}\gls{sol}|_{\gls{traindomain}} .
\end{equation}
In the case of a parameterized problem as introduced in 
\cref{sec:parameterized problem}, the operators \gls{transferop}
are replaced by parameterized operators \gls{transferopmu} for which it holds
\begin{equation}
\gls{sol}_{\gls{parameter}}
= \sum_{i=1}^{\gls{numspaces}} \gls{transferopmu}\gls{tracespacemap}\gls{sol}_{\gls{parameter}}|_{\gls{traindomain}}
\qquad \forall \gls{parameter} \in \gls{parameterspace}
.
\end{equation}
\end{definition}
In the \gls{arbilomod} setting,
the training domains \gls{traindomain} are
the support of the spaces $\training{\gls{lfspacewb}}$
defined in \cref{eq:arbilomod training}.
These domains are usually
oversampling domains of the domains \gls{subdomain},
but this is not required.
The spaces \gls{lfspacetrain} can usually be
defined as 
$\gls{lfspacetrain} := \left\{ \varphi|_{\gls{traindomain}} \ \Big| \ \varphi \in \gls{fspace} \right\}$
with the norm and inner product of \gls{fspace} where the integrals are restricted to \gls{traindomain}.
Some norms might however not be a norm when restricted to a subdomain, 
as for example the $H^1$ semi-norm on $\gls{h1}$ or the energy norm of a Laplace problem.
In these cases, the definition has to be modified.
In the \gls{arbilomod} setting, the simple restriction can be used.
In the choice of the spaces \gls{tracespace}, there is a lot of freedom
when designing a localized training configuration.
Their inner product can be chosen to obtain good constants in the
estimates presented below or to make constants easily computable.
The spaces \gls{tracespace} usually represent some form of boundary
values, e.g.~Dirichlet or Robin boundary values on
$\partial \gls{traindomain}$.
In the \gls{arbilomod} setting, \gls{tracespace}
are the spaces 
$\coupling{\training{\gls{lfspacewb}}}$
defined in \cref{eq:arbilomod training coupling},
which are spanned by all \gls{fe} functions
having support on $\partial \gls{traindomain}$.
The operators $\gls{tracespacemap}$
are usually the corresponding trace operators, 
or the extraction of the \gls{fe}-coefficients
of $\coupling{\training{\gls{lfspacewb}}}$
in the \gls{arbilomod}.
The operators \gls{transferop}, which we denote
by the name ``transfer operators'',
are usually defined by solving a local 
version of the underlying problem with the boundary
values in \gls{tracespace}.
In \gls{arbilomod}, this corresponds
to the local solve in \cref{eq:trainingproblem}.

In the following, we show an a priori error estimate
in this abstract setting.
We denote the linear part of \gls{transferop} by \gls{transferopl} and the
affine part by $\gls{transferopa} \in \gls{lfspace}$, it holds
\begin{equation}
\gls{transferop}\varphi = \gls{transferopl}\varphi + \gls{transferopa}
\qquad \forall \varphi \in \gls{tracespace}
.
\end{equation}
As in \cref{sec:abstract estimates}
let
\begin{equation}
\label{eq:classes in training}
\dot{\bigcup}_{k=1}^{\gls{orthogonal_classes}} \Upsilon_{C,k} = \left\{1, \dots, \gls{numspaces}\right\}
\end{equation}
be a partition of $\left\{1, \dots, \gls{numspaces}\right\}$
such that
\begin{equation}
       \forall 1\leq k \leq \gls{orthogonal_classes}\ : \ \forall i,j \in \Upsilon_{C,k}, \ i\ne j \ :\ V_i \perp V_j
\end{equation}
i.e.~we can partition the local subspaces into \gls{orthogonal_classes} sets
so that in each set, all spaces are orthogonal.
Using these classes, we can formulate the following lemma.

\begin{lemma}[Norm of sum of localized functions]
\label{thm:better than triangle}
For functions $\{\varphi_i\}_{i=1}^{\gls{numspaces}}$
with
\begin{equation}
\varphi_i \in \gls{lfspace} \qquad \forall i \in \{1, \dots, \gls{numspaces} \}
\end{equation}
it holds
\begin{equation}
\norm{\sum_{i=1}^{\gls{numspaces}} \varphi_i}^2 \leq \gls{orthogonal_classes} \sum_{i=1}^{\gls{numspaces}} \norm{\varphi_i}^2
.
\end{equation}
\end{lemma}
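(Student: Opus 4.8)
The plan is to exploit the partition of $\{1,\dots,\gls{numspaces}\}$ into the $\gls{orthogonal_classes}$ orthogonal classes $\Upsilon_{C,k}$ introduced in \eqref{eq:classes in training}. First I would group the terms of the sum according to these classes, setting $w_k := \sum_{i \in \Upsilon_{C,k}} \varphi_i$ for $1 \le k \le \gls{orthogonal_classes}$, so that $\sum_{i=1}^{\gls{numspaces}} \varphi_i = \sum_{k=1}^{\gls{orthogonal_classes}} w_k$. The idea is that within a single class there is no loss from the triangle inequality, while the factor $\gls{orthogonal_classes}$ will only arise from combining the (few) classes.

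Within one class all spaces $\gls{lfspace}$, $i \in \Upsilon_{C,k}$, are mutually orthogonal by the defining property of the partition. Hence the Pythagorean theorem gives $\norm{w_k}^2 = \sum_{i \in \Upsilon_{C,k}} \norm{\varphi_i}^2$. Summing over $k$ and using that the classes partition $\{1,\dots,\gls{numspaces}\}$, I obtain the identity
\begin{equation}
\sum_{k=1}^{\gls{orthogonal_classes}} \norm{w_k}^2 = \sum_{i=1}^{\gls{numspaces}} \norm{\varphi_i}^2 .
\nonumber
\end{equation}

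It then remains to bound $\norm{\sum_{k=1}^{\gls{orthogonal_classes}} w_k}^2$ by $\gls{orthogonal_classes} \sum_{k=1}^{\gls{orthogonal_classes}} \norm{w_k}^2$. This is where the factor $\gls{orthogonal_classes}$ enters: applying the triangle inequality yields $\norm{\sum_{k} w_k} \le \sum_{k} \norm{w_k}$, and then Cauchy--Schwarz (equivalently the power-mean inequality) applied to the $\gls{orthogonal_classes}$ nonnegative summands gives $\bigl(\sum_{k} \norm{w_k}\bigr)^2 \le \gls{orthogonal_classes} \sum_{k} \norm{w_k}^2$. Chaining these two estimates and substituting the identity from the previous paragraph completes the argument.

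I do not expect a genuine obstacle here; the only point requiring care is making sure the constant $\gls{orthogonal_classes}$ originates solely from the cross-class step (via Cauchy--Schwarz over the $\gls{orthogonal_classes}$ classes) and that orthogonality within each class is used precisely to avoid any additional loss. The structure mirrors \cref{thm:efficiency}, where the same orthogonal-classes decomposition already produced the factor $\gls{orthogonal_classes}$, so the proof is essentially a dual companion to that lemma.
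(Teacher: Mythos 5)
Your proposal is correct and is essentially the paper's own proof: both group the sum by the orthogonal classes $\Upsilon_{C,k}$, use Pythagoras within each class, and obtain the factor $\gls{orthogonal_classes}$ from the triangle inequality followed by Cauchy--Schwarz across the classes. The only difference is the immaterial order of the steps (you apply the within-class orthogonality before the cross-class estimate, the paper after).
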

\begin{proof}
Using the classes described in \cref{eq:classes in training}, we have the identity
\begin{equation}
\norm{\sum_{i=1}^{\gls{numspaces}} \varphi_i}
=
\norm{\sum_{k=1}^{\gls{orthogonal_classes}} \sum_{i\in \Upsilon_{C,k}} \varphi_i}
.
\end{equation}
Using the triangle inequality and Cauchy-Schwarz, it follows
\begin{align}
\norm{\sum_{i=1}^{\gls{numspaces}} \varphi_i}
&\leq
\sum_{k=1}^{\gls{orthogonal_classes}} \norm{\sum_{i\in \Upsilon_{C,k}} \varphi_i}
\\
&\leq
\sqrt{\gls{orthogonal_classes}}
\left(\sum_{k=1}^{\gls{orthogonal_classes}} \norm{\sum_{i\in \Upsilon_{C,k}} \varphi_i}^2\right)^{\frac 1 2}
.
\end{align}
Due to the orthogonality of the spaces in each class $\Upsilon_{C,k}$
it holds 
\begin{equation}
\norm{\sum_{i\in \Upsilon_{C,k}} \varphi_i}^2
=
\sum_{i\in \Upsilon_{C,k}} \norm{\varphi_i}^2
\end{equation}
and it follows
\begin{align}
\norm{\sum_{i=1}^{\gls{numspaces}} \varphi_i}
&\leq
\sqrt{\gls{orthogonal_classes}}
\left(\sum_{k=1}^{\gls{orthogonal_classes}} \sum_{i\in \Upsilon_{C,k}} \norm{\varphi_i}^2\right)^{\frac 1 2}
\\
&=
\sqrt{\gls{orthogonal_classes}}
\left(\sum_{i=1}^{\gls{numspaces}} \norm{\varphi_i}^2\right)^{\frac 1 2}
.
\end{align}
\end{proof}
The following proposition is inspired by \cite[Theorem 4.6]{EickhornThesis} but
improves on this, as the error bound derived \cite[Theorem 4.6]{EickhornThesis}
scales in $\sqrt{\gls{numspaces}}$ while the error bound
presented in the following is independent of the number of local spaces
and local domains.
\begin{proposition}[Training a priori estimate]
\label{thm:training a priori}
Let 
$\left\{
\gls{domain}, \gls{fspace},
\{\gls{subdomain}\}_{i=1}^{\gls{numspaces}},
\{\gls{lfspace}\}_{i=1}^{\gls{numspaces}},
\{\gls{lfspacemap}\}_{i=1}^{\gls{numspaces}}
\right\}$
be a localizing space decomposition as defined in
\cref{def:localizing space decomposition}.
Let further\\
$\left\{
\{\gls{traindomain}\}_{i=1}^{\gls{numspaces}},
\{\gls{lfspacetrain}\}_{i=1}^{\gls{numspaces}},
\{\gls{tracespace}\}_{i=1}^{\gls{numspaces}},
\{\gls{tracespacemap}\}_{i=1}^{\gls{numspaces}},
\{\gls{transferop}\}_{i=1}^{\gls{numspaces}}
\right\}$
be a localized training configuration
as defined in \cref{def:localized training configuration}.
Let \gls{sol} be the solution of a non parametric, symmetric, coercive variational problem 
as defined in \cref{def:variational problem}
and let \gls{rsol} be the reduced solution of a 
localized Galerkin projection onto $\gls{rfspace} = \sum_{i=1}^{\gls{numspaces}} \gls{rlfspace}$ as introduced in \cref{sec:lmor}.
Further assume
\begin{equation}
\gls{transferopa} \in \gls{rlfspace} \qquad \forall i \in \{1, \dots, \gls{numspaces} \}.
\end{equation}
Then it holds
\begin{equation}
\frac{\norm{\gls{sol} - \gls{rsol}}}{\norm{\gls{sol}}}
\leq
\sqrt{\frac{\gls{contconst}}{\gls{coercconst}}}
\sqrt{\gls{orthogonal_classes} \gls{overlappingspacestrain}}
\max_{i \in \{1, \dots, \gls{numspaces}\}}
\norm{\left(1 - \gls{orthogonalp}_{\gls{rlfspace}} \right) \gls{transferopl}}
\norm{\gls{tracespacemap}}
.
\end{equation}
\end{proposition}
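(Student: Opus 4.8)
The plan is to bound the reduction error by the best-approximation error via Céa's Lemma and then estimate that best-approximation error by feeding a carefully chosen, \emph{explicitly constructed} candidate from $\gls{rfspace}$ through the localization lemma \cref{thm:better than triangle} together with the overlap bound of the training configuration.

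First I would apply \cref{thm:coercive bestapproximation}, which for the symmetric coercive problem gives
\[
\norm{\gls{sol} - \gls{rsol}} \leq \sqrt{\frac{\gls{contconst}}{\gls{coercconst}}}\, \inf_{\wt v \in \gls{rfspace}} \norm{\gls{sol} - \wt v},
\]
so it remains to produce one $\wt v \in \gls{rfspace}$ realizing the claimed rate. Starting from the reconstruction identity $\gls{sol} = \sum_{i=1}^{\gls{numspaces}} \gls{transferop}\gls{tracespacemap}\gls{sol}|_{\gls{traindomain}}$ of \cref{def:localized training configuration} and splitting each transfer operator as $\gls{transferop}\varphi = \gls{transferopl}\varphi + \gls{transferopa}$, the natural candidate is
\[
\wt v := \sum_{i=1}^{\gls{numspaces}} \Big( \gls{orthogonalp}_{\gls{rlfspace}}\, \gls{transferopl}\, \gls{tracespacemap}\, \gls{sol}|_{\gls{traindomain}} + \gls{transferopa} \Big).
\]
Because $\gls{orthogonalp}_{\gls{rlfspace}}\, \gls{transferopl}\, \gls{tracespacemap}\, \gls{sol}|_{\gls{traindomain}} \in \gls{rlfspace}$ and $\gls{transferopa} \in \gls{rlfspace}$ by hypothesis, every summand lies in $\gls{rlfspace}$ and hence $\wt v \in \gls{rfspace}$.

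The decisive observation is that the affine parts cancel in the difference, so that
\[
\gls{sol} - \wt v = \sum_{i=1}^{\gls{numspaces}} \big(1 - \gls{orthogonalp}_{\gls{rlfspace}}\big)\, \gls{transferopl}\, \gls{tracespacemap}\, \gls{sol}|_{\gls{traindomain}},
\]
a sum whose $i$-th term lies in $\gls{lfspace}$ (since $\gls{transferopl}$ maps into $\gls{lfspace}$ and $\gls{orthogonalp}_{\gls{rlfspace}}$ projects within $\gls{rlfspace} \subseteq \gls{lfspace}$). I would then invoke \cref{thm:better than triangle} to obtain $\norm{\gls{sol} - \wt v}^2 \leq \gls{orthogonal_classes} \sum_{i} \norm{(1 - \gls{orthogonalp}_{\gls{rlfspace}}) \gls{transferopl} \gls{tracespacemap} \gls{sol}|_{\gls{traindomain}}}^2$, peel off the operator norms $\norm{(1 - \gls{orthogonalp}_{\gls{rlfspace}}) \gls{transferopl}}\,\norm{\gls{tracespacemap}}$ on each summand by submultiplicativity and bound them by their maximum over $i$, and finally control the leftover $\sum_{i} \norm{\gls{sol}|_{\gls{traindomain}}}^2$ by $\gls{overlappingspacestrain} \norm{\gls{sol}}^2$ using the norm condition built into \cref{def:localized training configuration}. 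Taking square roots, combining with Céa's Lemma, and dividing by $\norm{\gls{sol}}$ then yields exactly the stated bound.

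Analytically this is routine bookkeeping: compactness of $\gls{transferop}$, well-posedness of the reduced problem, and the reconstruction identity are all absorbed into the hypotheses, and no limiting or density argument is needed. The one genuinely load-bearing step — and the precise reason the assumption $\gls{transferopa} \in \gls{rlfspace}$ is imposed — is the construction of $\wt v$: if the affine offsets were not already contained in the reduced local spaces, the difference $\gls{sol} - \wt v$ would retain uncontrolled affine remainders, the cancellation would fail, and the clean per-subspace localization through \cref{thm:better than triangle} would no longer be available. The only secondary care needed is to confirm the membership of each summand in $\gls{lfspace}$, which is what legitimizes applying the lemma.
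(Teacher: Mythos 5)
Your proposal is correct and follows essentially the same route as the paper's proof: Céa's Lemma, the candidate $\wt v = \sum_i \gls{orthogonalp}_{\gls{rlfspace}}\gls{transferop}\gls{tracespacemap}\gls{sol}|_{\gls{traindomain}}$ (your explicit form with the affine parts pulled out coincides with this under the hypothesis $\gls{transferopa} \in \gls{rlfspace}$), cancellation of the affine parts, \cref{thm:better than triangle}, submultiplicativity of the operator norms, and the overlap bound from \cref{def:localized training configuration}. Your explicit verification that each summand of $\gls{sol} - \wt v$ lies in $\gls{lfspace}$, which the paper leaves implicit, is a small but welcome addition.
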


\begin{proof}
Due to the coercivity and symmetry of the problem, the best approximation theorem
(\cref{thm:coercive bestapproximation}) holds. We have
\begin{equation}
\norm{\gls{sol} - \gls{rsol}} \leq \sqrt{\frac{\gls{contconst}}{\gls{coercconst}}} \inf_{\widetilde v \in \gls{rfspace}} \norm{\gls{sol} - \widetilde v}
\end{equation}
and thereby
\begin{equation}
\label{eq:abstractapriorisum}
\norm{\gls{sol} - \gls{rsol}} \leq \sqrt{\frac{\gls{contconst}}{\gls{coercconst}}} \norm{\gls{sol} - u_c}
\end{equation}
with
\begin{equation}
\label{eq:abstractapriori2}
u_c := 
\sum_{i=1}^{\gls{numspaces}} \gls{orthogonalp}_{\gls{rlfspace}}\gls{transferop}\gls{tracespacemap}\gls{sol}|_{\gls{traindomain}}
.
\end{equation}
Plugging \cref{eq:abstractapriori1} and \cref{eq:abstractapriori2} into \cref{eq:abstractapriorisum},
we obtain
\begin{align}
\norm{\gls{sol} - \gls{rsol}} &\leq \sqrt{\frac{\gls{contconst}}{\gls{coercconst}}} \norm{
\sum_{i=1}^{\gls{numspaces}}                                   \gls{transferop}\gls{tracespacemap}\gls{sol}|_{\gls{traindomain}} - 
\sum_{i=1}^{\gls{numspaces}} \gls{orthogonalp}_{\gls{rlfspace}}\gls{transferop}\gls{tracespacemap}\gls{sol}|_{\gls{traindomain}}}\nonumber \\
&=
\sqrt{\frac{\gls{contconst}}{\gls{coercconst}}} \norm{
\sum_{i=1}^{\gls{numspaces}} \left(1 - \gls{orthogonalp}_{\gls{rlfspace}} \right)                             \gls{transferop}\gls{tracespacemap}\gls{sol}|_{\gls{traindomain}}}
.
\end{align}
Splitting the operator \gls{transferop} into its linear and its affine part, it follows that
\begin{align}
\norm{\gls{sol} - \gls{rsol}}
&\leq
\sqrt{\frac{\gls{contconst}}{\gls{coercconst}}}
\norm{
\sum_{i=1}^{\gls{numspaces}} \left(1 - \gls{orthogonalp}_{\gls{rlfspace}} \right) \gls{transferopl}\gls{tracespacemap}\gls{sol}|_{\gls{traindomain}}
+
\left(1 - \gls{orthogonalp}_{\gls{rlfspace}} \right) \gls{transferopa}
}
\nonumber \\
&=
\sqrt{\frac{\gls{contconst}}{\gls{coercconst}}}
\norm{
\sum_{i=1}^{\gls{numspaces}} \left(1 - \gls{orthogonalp}_{\gls{rlfspace}} \right) \gls{transferopl}\gls{tracespacemap}\gls{sol}|_{\gls{traindomain}}
}
.
\end{align}
In the last step, we used the assumption that 
$\gls{transferopa} \in \gls{rlfspace}$.
Using \cref{thm:better than triangle}
we see that
\begin{align}
\norm{\gls{sol} - \gls{rsol}}^2
\leq
\frac{\gls{contconst}}{\gls{coercconst}}
\gls{orthogonal_classes}
\sum_{i=1}^{\gls{numspaces}} \norm{\left(1 - \gls{orthogonalp}_{\gls{rlfspace}} \right) \gls{transferopl}\gls{tracespacemap}\gls{sol}|_{\gls{traindomain}}
}^2
.
\end{align}
Using the operator norms
\begin{equation}
\norm{\left(1 - \gls{orthogonalp}_{\gls{rlfspace}} \right) \gls{transferopl}\gls{tracespacemap}\gls{sol}|_{\gls{traindomain}}}
\leq
\norm{\left(1 - \gls{orthogonalp}_{\gls{rlfspace}} \right) \gls{transferopl}}
\norm{\gls{tracespacemap}}
\norm{\gls{sol}|_{\gls{traindomain}}}
\end{equation}
it follows that
\begin{align}
\norm{\gls{sol} - \gls{rsol}}^2
&\leq
\frac{\gls{contconst}}{\gls{coercconst}}
\gls{orthogonal_classes}
\sum_{i=1}^{\gls{numspaces}}
\norm{\left(1 - \gls{orthogonalp}_{\gls{rlfspace}} \right) \gls{transferopl}}^2
\norm{\gls{tracespacemap}}^2
\norm{\gls{sol}|_{\gls{traindomain}}}^2
\\
&\leq
\frac{\gls{contconst}}{\gls{coercconst}}
\gls{orthogonal_classes}
\max_{i \in \{1, \dots, \gls{numspaces}\}}
\norm{\left(1 - \gls{orthogonalp}_{\gls{rlfspace}} \right) \gls{transferopl}}^2
\norm{\gls{tracespacemap}}^2
\quad
\sum_{i=1}^{\gls{numspaces}} 
\norm{\gls{sol}|_{\gls{traindomain}}}^2
\\
&\leq
\frac{\gls{contconst}}{\gls{coercconst}}
\gls{orthogonal_classes}
\max_{i \in \{1, \dots, \gls{numspaces}\}}
\norm{\left(1 - \gls{orthogonalp}_{\gls{rlfspace}} \right) \gls{transferopl}}^2
\norm{\gls{tracespacemap}}^2
\quad
\gls{overlappingspacestrain}
\norm{\gls{sol}}^2
\end{align}
and thereby the claim.
\end{proof}
\cref{thm:training a priori} gives guidance when designing localized
training configurations. While the term 
$\norm{\left(1 - \gls{orthogonalp}_{\gls{rlfspace}} \right) \gls{transferopl}}$
will be controlled by the algorithm given below,
the operator norms $\norm{\gls{tracespacemap}}$ have to be
controlled otherwise. 
\section{Randomized Range Finder Algorithm}
\label{sec:randomized_range_finder}
In this section we present the randomized range finder algorithm, which, given a compact linear
operator between two finite dimensional Hilbert spaces, generates a space to approximate
the image of the given operator.
It is intended to be used to generate spaces \gls{rlfspace} 
and control the term 
$\norm{\left(1 - \gls{orthogonalp}_{\gls{rlfspace}} \right) \gls{transferopl}}$
in \cref{thm:training a priori}.
While this algorithm is used to create local approximation spaces in this thesis,
it is presented in an abstract setting in this section in order to 
facilitate reuse in other contexts.
We drop the index $i$ and denote the space the transfer operator
maps to by $\gls{transfer_range}$ instead of $\gls{lfspace}$.
The reduced approximation space we denote by $\gls{transfer_rrange}$
instead of $\gls{rlfspace}$ where $n$ is the dimension
of $\gls{transfer_rrange}$.
We only consider the non parametric, real valued case in this chapter.
The parametric case and the complex valued case are subject to future
research, see also
\cref{sec:future of randomized range finder}.

We restrict ourselves to affine linear operators between finite dimensional Hilbert spaces.
The restriction to finite dimensional spaces originates in the way we draw random vectors.
An extension to compact operators between Hilbert spaces of infinite dimension
would be possible but would require a new way of drawing random vectors.
An extension to nonlinear operators would be a larger project.
The restriction to operators between finite dimensional spaces is hardly a 
restriction in practice, as one usually uses finite dimensional approximations
of infinite dimensional spaces anyway in numerical codes.

The algorithm is an adaption of the range finder algorithm
from randomized numerical linear algebra (RandNLA) to the setting
of linear compact operators between Hilbert spaces. A version of this algorithm for matrices
is given in \cite[Algorithm 4.2]{Halko2011}.
The results of \cite{Halko2011} are recovered if we
discretize the linear operator in an orthonormal
basis.

\subsection{Randomized Numerical Linear Algebra}
Randomized algorithms have gained considerable attention
in the field of numerical linear algebra during the last decade.
In many cases, randomized algorithms have better performance
and better parallelization behavior
than classical algorithms while delivering comparable accuracy.
The interest of the artificial intelligence (AI) and 
machine learning (ML) communities in approximate matrix
factorizations of large matrices has accelerated this
development.

Some of the early publications by Drineas, 
Kannan, and Mahoney
in 2006 used the term 
``Fast Monte Carlo Algorithms for Matrices''
\cite{randmcla1,randmcla2,DrKaMa06b}.
They published algorithms for randomized
matrix multiplication, randomized low-rank approximation
of matrices and randomized compressed matrix decompositions.
Per-Gunnar Martinsson and coworkers
published a series of articles
\cite{CGMR05,liberty2007randomized,MaRoTy11}
and the highly cited SIAM review article \cite{Halko2011}.
Recently, also open-source software was released \cite{VorMar15}.

The content of this section has been developed in collaboration with
Kathrin Smetana and has been published in \cite{Buhr2018}.
See references therein for an extensive literature review.

\subsection{Setting}
Let $\gls{transferoperator}$ be an affine linear operator between the finite dimensional 
Hilbert spaces $\gls{transfer_source}$ and $\gls{transfer_range}$
\begin{equation}
\gls{transferoperator}: \gls{transfer_source} \rightarrow \gls{transfer_range}
\end{equation}
where $\gls{transfer_source}$ is of dimension $\gls{dimension_transfer_source}$ and $\gls{transfer_range}$ is of dimension $\gls{dimension_transfer_range}$.
We denote the linear part of \gls{transferoperator} by \gls{transferoperatorl} and the
affine part by $\gls{transferoperatora} \in \gls{transfer_range}$, it holds
\begin{equation}
\gls{transferoperator}\varphi = \gls{transferoperatorl}\varphi + \gls{transferoperatora}
\qquad \forall \varphi \in \gls{transfer_source}
.
\end{equation}
Let further $\psi_1^S, \dots, \psi_{\gls{dimension_transfer_source}}^S$ be a basis of $\gls{transfer_source}$
and
$\psi_1^R, \dots, \psi_{\gls{dimension_transfer_range}}^R$ be a basis of $\gls{transfer_range}$.

\gls{transferoperator}he target quantity in the approximation of the image of $\gls{transferoperator}$
is the operator norm
\begin{equation}
\norm{\left(1 - \gls{orthogonalp}_{\gls{transfer_rrange}} \right) \gls{transferoperatorl}} = \sup_{\varphi \in \gls{transfer_source} \setminus \{0\}}
\frac{\norm{\left(1 - \gls{orthogonalp}_{\gls{transfer_rrange}} \right) \gls{transferoperatorl}\varphi}_R}{\norm{\varphi}_S}
\end{equation}
where $\gls{orthogonalp}_{\gls{transfer_rrange}}$ is the orthogonal projection on
the approximation space $\gls{transfer_rrange} \subset \gls{transfer_range}$, which is
to be generated.

To distinguish clearly between vectors in Hilbert spaces and their
coordinate representation, 
we mark all coordinate vectors and matrices with an underline and
introduce Ritz isomorphisms 
$\gls{ritzds}: \gls{transfer_source} \rightarrow \gls{R}^{\gls{dimension_transfer_source}}$ and $D_R: \gls{transfer_range} \rightarrow \gls{R}^{\gls{dimension_transfer_range}}$ which map elements 
from $\gls{transfer_source}$ or $\gls{transfer_range}$ to a vector containing their basis coefficients in $\gls{R}^{\gls{dimension_transfer_source}}$ or $\gls{R}^{\gls{dimension_transfer_range}}$, 
respectively. For instance, $D_{S}$ maps a function $\xi = \sum_{i=1}^{\gls{dimension_transfer_source}}\underline{\xi}_{i} \psi_i^S \in \gls{transfer_source}$ 
to $\underline{\xi} \in \gls{R}^{\gls{dimension_transfer_source}}$. As a result we have the matrix of the operator $\gls{transferoperator}$ 
as $\underline{\gls{transferoperator}} = D_{R} \gls{transferoperator} D_{S}^{-1}$.

Finally, we denote by $\gls{source_inner}$ the inner product matrix of $\gls{transfer_source}$ and by $\gls{range_inner}$ the inner product matrix of $\gls{transfer_range}$.
\subsection{Optimal Spaces}
The best possible space of given dimension $n$ to minimize
$\norm{\left(1 - \gls{orthogonalp}_{\gls{transfer_rrange}} \right) \gls{transferoperatorl}}$
is the space spanned by the first $n$ left singular vectors of  \gls{transferoperatorl}.
The \gls{svd} exists due to the compactness of \gls{transferoperatorl}.
Denoting the \gls{svd} of \gls{transferoperatorl} by
\begin{equation}
\gls{transferoperatorl} \varphi = \sum_{i=1}^{N_T} \gls{lsv}_i \gls{singular_value}_i (\gls{rsv}_i, \varphi)_{\gls{transfer_source}} \qquad \forall \varphi \in \gls{transfer_source},
\end{equation}
where $N_T$ is the rank of \gls{transferoperatorl} and it holds
$(\gls{lsv}_i, \gls{lsv}_j)_{\gls{transfer_range}} = \delta_{ij}$,
$(\gls{rsv}_i, \gls{rsv}_j)_{\gls{transfer_source}} = \delta_{ij}$
and
$\gls{singular_value}_i \in \gls{R}^+$.
The optimal spaces defined as
\begin{equation}
\widetilde R^n_{\mathrm{opt}} := \mathrm{span} \{\gls{lsv}_1, \dots, \gls{lsv}_n\}
\end{equation}
minimize
$\norm{\left(1 - \gls{orthogonalp}_{\gls{transfer_rrange}} \right) \gls{transferoperatorl}}$
among all spaces of dimension $n$. It holds
\begin{align}
\norm{\left(1 - \gls{orthogonalp}_{\widetilde R^n_{\mathrm{opt}}} \right) \gls{transferoperatorl}}
&=
\sup_{\varphi \in \gls{transfer_source} \nnull}
\frac{
\norm{\left(1 - \gls{orthogonalp}_{\widetilde R^n_{\mathrm{opt}}} \right) \gls{transferoperatorl}\varphi}
}{
\norm{\varphi}
}
\nonumber
\\
&=
\sup_{\varphi \in \gls{transfer_source} \nnull}
\frac{
\norm{\left(1 - \gls{orthogonalp}_{\widetilde R^n_{\mathrm{opt}}} \right)
\sum_{i=1}^{N_T} \gls{lsv}_i \gls{singular_value}_i (\gls{rsv}_i, \varphi)_{\gls{transfer_source}}
}
}{
\norm{\varphi}
}
\nonumber
\\
&=
\sup_{\varphi \in \gls{transfer_source} \nnull}
\frac{
\norm{
\sum_{i=n+1}^{N_T} \gls{lsv}_i \gls{singular_value}_i (\gls{rsv}_i, \varphi)_{\gls{transfer_source}}
}
}{
\norm{\varphi}
}
\nonumber
\\
&=\gls{singular_value}_{n+1}
.
\end{align}
These optimal spaces were used in 
\cite{bablip11} and \cite{Smetana2016}.
The randomized algorithm presented in the following
approximates these spaces.
In the numerical experiments below, we use these
spaces as a benchmark.
\subsection{Generation of Random Vectors}
\label{sec:generation of vectors}
One of two central parts in the randomized range finder algorithm is the
generation of random vectors in the space $\gls{transfer_source}$.
As we will see later in the analysis, the only
important property of the generation of random vectors is
the distribution of the inner product with an arbitrary normed vector.
For the analysis which follows later, it is
necessary that the distribution of the inner product
with a normed vector is Gauss distributed with zero
mean and a standard deviation $s$ for which limits 
$s_\mathrm{min} \leq s \leq s_\mathrm{max}$ are known.

The easiest way of drawing random vectors is to use
independent Gauss-normal distributed coefficients to the
basis vectors $\psi_i^S$, i.e.~use $\gls{ritzdsinv}\underline r$
with a vector of independent, Gauss-normal distributed entries $\underline r$.
This drawing strategy
has the required properties, as we show in the following lemma.

\begin{lemma}[Distribution of inner product]
The inner product of a normed vector $v$ in $\gls{transfer_source}$ with a random normal vector
$(v, \gls{ritzdsinv}\underline r)_S$ 
is a Gaussian distributed random variable with mean zero and variance $s^2$, where
\begin{equation}
\lambda^{\gls{source_inner}}_{min} \leq s^2 \leq \lambda^{\gls{source_inner}}_{max}.
\end{equation}
Where $\lambda^{\gls{source_inner}}_{min}$ and $\lambda^{\gls{source_inner}}_{max}$
are the smallest and largest eigenvalue of the matrix of the inner product in $\gls{transfer_source}$.
\end{lemma}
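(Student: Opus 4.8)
The plan is to unwind the inner product $(v, D_S^{-1}\underline{r})_S$ into its coordinate representation and exploit the rotational invariance of the standard Gaussian. First I would write $v = D_S^{-1}\underline{v}$ for its coefficient vector $\underline{v} \in \gls{R}^{\gls{dimension_transfer_source}}$, so that the inner product becomes
\begin{equation}
(v, D_S^{-1}\underline{r})_S = \underline{v}^T \gls{source_inner}\, \underline{r},
\end{equation}
using that the inner product of two functions equals the $\gls{source_inner}$-weighted product of their coordinate vectors. Since $\underline{r}$ has independent standard-normal entries, $\underline{v}^T \gls{source_inner}\, \underline{r}$ is a linear combination of independent Gaussians, hence itself Gaussian with mean zero. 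This handles the mean and the Gaussianity immediately.

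**Computing the variance.**

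The variance of a linear form $\underline{c}^T \underline{r}$ with $\underline{r} \sim \normal{0}{I}$ is simply $\norm{\underline{c}}_2^2$. Here $\underline{c} = \gls{source_inner}\, \underline{v}$ (using symmetry of the inner product matrix), so
\begin{equation}
s^2 = \norm{\gls{source_inner}\, \underline{v}}_2^2 = \underline{v}^T \gls{source_inner}^2\, \underline{v}.
\end{equation}
The normalization hypothesis $\norm{v}_S = 1$ translates to $\underline{v}^T \gls{source_inner}\, \underline{v} = 1$. So I would reduce the claim to bounding the generalized Rayleigh quotient $\underline{v}^T \gls{source_inner}^2 \underline{v}$ subject to the constraint $\underline{v}^T \gls{source_inner}\, \underline{v} = 1$.

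**Bounding via eigenvalues.**

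The cleanest route is to substitute $\underline{w} := \gls{source_inner}^{1/2}\underline{v}$, which is legitimate since $\gls{source_inner}$ is symmetric positive definite (it is an inner product matrix). Then the constraint reads $\norm{\underline{w}}_2^2 = 1$ and the target becomes $s^2 = \underline{w}^T \gls{source_inner}\, \underline{w}$, an ordinary Rayleigh quotient of $\gls{source_inner}$. By the Courant--Fischer min-max characterization, this is bounded between the smallest and largest eigenvalues of $\gls{source_inner}$:
\begin{equation}
\lambda^{\gls{source_inner}}_{min} \leq s^2 \leq \lambda^{\gls{source_inner}}_{max}.
\end{equation}
I do not anticipate a genuine obstacle here; the only point requiring a little care is the bookkeeping between the three quadratic forms ($\gls{source_inner}$, $\gls{source_inner}^2$, and the constraint), which the substitution $\underline{w} = \gls{source_inner}^{1/2}\underline{v}$ resolves by converting the generalized eigenvalue problem into a standard one. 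The statement as written appears to have a minor inconsistency — the displayed standard deviation $s$ versus the variance $s^2$ being bracketed by eigenvalues of $\gls{source_inner}$ rather than $\gls{source_inner}^2$ — but under the natural reading that $s^2$ denotes the variance, the eigenvalue bounds on $\gls{source_inner}$ are exactly what the Rayleigh quotient argument delivers.
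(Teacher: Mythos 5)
Your proposal is correct and takes essentially the same approach as the paper: both express the inner product as a Gaussian linear form in $\underline r$ with mean zero and bound its variance $s^2 = \underline v^T \gls{source_inner}^2\, \underline v$, subject to the normalization $\underline v^T \gls{source_inner}\, \underline v = 1$, by the extreme eigenvalues of $\gls{source_inner}$. The paper carries out this bound by factoring $\max_i \lambda^{\gls{source_inner}}_i$ (resp.\ $\min_i \lambda^{\gls{source_inner}}_i$) out of the spectral-decomposition sum, which is exactly the weighted-average estimate that your substitution $\underline w = \gls{source_inner}^{1/2}\underline v$ followed by the Courant--Fischer characterization delivers in different packaging.
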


\begin{proof}
We use the spectral decomposition of the inner product matrix \\
$\gls{source_inner} = \sum_{i=1}^{\gls{dimension_transfer_source}} \gls{source_inner_eigenvector} \lambda^{\gls{source_inner}}_i \gls{source_inner_eigenvector}^T$
with eigenvalues $\lambda^{\gls{source_inner}}_i$ and eigenvectors $\gls{source_inner_eigenvector}$.
It holds
\begin{align}
(v, \gls{ritzdsinv} \underline r)_S
&= \sum_{i=1}^{\gls{dimension_transfer_source}} (\gls{ritzds} v)^T \gls{source_inner_eigenvector} \lambda^{\gls{source_inner}}_i \gls{source_inner_eigenvector}^T \underline r . 
\end{align}
As $\gls{source_inner_eigenvector}$ is normed with respect to the euclidean inner product, the
term $\gls{source_inner_eigenvector}^T \underline r$ is a normal distributed random variable.
Using the rules for addition and scalar multiplication of Gaussian random variables,
one sees that the inner product $(v, \gls{ritzdsinv}\underline r)_S$ is a Gaussian random variable with
variance
\begin{equation}
s^2 = \sum_{i=1}^{\gls{dimension_transfer_source}} \left( (\gls{ritzds} v)^T \gls{source_inner_eigenvector} \lambda^{\gls{source_inner}}_i  \right)^2 .
\end{equation}
The variance $s^2$ can easily be bounded as follows:
\begin{eqnarray}
s^2 
&= \sum_{i=1}^{\gls{dimension_transfer_source}} \left( (\gls{ritzds} v) ^T \gls{source_inner_eigenvector} \lambda^{\gls{source_inner}}_i  \right) ^2 
\leq& \sum_{i=1}^{\gls{dimension_transfer_source}} \left( (\gls{ritzds} v) ^T \gls{source_inner_eigenvector} \right) ^2 \lambda^{\gls{source_inner}}_i \max_i (\lambda^{\gls{source_inner}}_i)
= \lambda^{\gls{source_inner}}_{max}\\
s^2 
&= \sum_{i=1}^{\gls{dimension_transfer_source}} \left( (\gls{ritzds} v) ^T \gls{source_inner_eigenvector} \lambda^{\gls{source_inner}}_i  \right) ^2
\geq& \sum_{i=1}^{\gls{dimension_transfer_source}} \left( (\gls{ritzds} v) ^T \gls{source_inner_eigenvector} \right) ^2 \lambda^{\gls{source_inner}}_i \min_i (\lambda^{\gls{source_inner}}_i)
= \lambda^{\gls{source_inner}}_{min}
.
\end{eqnarray}
It holds
\begin{equation}
\sum_{i=1}^{\gls{dimension_transfer_source}} \left( (\gls{ritzds} v) ^T \gls{source_inner_eigenvector} \right) ^2 \lambda^{\gls{source_inner}}_i = 1
\end{equation}
because of the spectral decomposition of \gls{source_inner} and
the assumption that $v$ is normed.
It holds
\begin{align}
\sum_{i=1}^{\gls{dimension_transfer_source}} \left( (\gls{ritzds} v) ^T \gls{source_inner_eigenvector} \right) ^2 \lambda^{\gls{source_inner}}_i
=&~\sum_{i=1}^{\gls{dimension_transfer_source}} \left( (\gls{ritzds} v) ^T \gls{source_inner_eigenvector} \right) \lambda^{\gls{source_inner}}_i
\left( \gls{source_inner_eigenvector}^T (\gls{ritzds} v) \right)
\nonumber \\
=&~
(\gls{ritzds} v) ^T
\left(
\sum_{i=1}^{\gls{dimension_transfer_source}}
\gls{source_inner_eigenvector}
\lambda^{\gls{source_inner}}_i
\gls{source_inner_eigenvector}^T
\right)
(\gls{ritzds} v)\nonumber \\
=&~ (\gls{ritzds} v) ^T \gls{source_inner} (\gls{ritzds} v)
\nonumber \\
=&~ \norm{v}_{\gls{transfer_source}}^2 = 1
.
\end{align}
\end{proof}
\subsection{A Probabilistic a Posteriori Norm Estimator}
\label{sec:random a posteriori}
The second central part to build the
adaptive algorithm is a convergence criterion.
To this end, we use a probabilistic norm estimator, 
giving an upper bound for the operator
norm of the given operator and apply this norm estimator 
on $\left(1 - \gls{orthogonalp}_{\gls{transfer_rrange}} \right) \gls{transferoperatorl}$.
The norm estimator is simple to compute. 
Given the number of test vectors $\gls{number_testvectors}$,
it applies the
operator $\gls{number_testvectors}$ times to a random vector and uses the maximum
of the $\gls{number_testvectors}$ norms of the results.
The number $\gls{number_testvectors}$ is a user supplied parameter.
\begin{definition}[Probabilistic a posteriori norm estimator]\label{def:norm estimator}
To estimate the operator norm of a linear operator $\gls{some_operator}: \gls{transfer_source} \rightarrow \gls{transfer_range}$ of rank smaller or equal to $N_O$, we define the a posteriori norm estimator $\gls{randomized_a_posteriori}(\gls{some_operator}, \gls{number_testvectors}, \gls{epstestfail}) $ for $\gls{number_testvectors}$ test vectors as
\begin{equation}\label{eq:a posteriori error estimator}
\gls{randomized_a_posteriori}(\gls{some_operator}, \gls{number_testvectors}, \gls{epstestfail}) 
:= \cest \max_{i \in 1, \dots, \gls{number_testvectors}} \norm{ \gls{some_operator} \ \gls{ritzdsinv} \ \underline r_i }_R.
\end{equation}
Here, $\cest$ is defined as
\begin{equation}
\cest := 
\left( \sqrt{2 \lambda^{\gls{source_inner}}_{min}} \ \mathrm{erf}^{-1} ( \sqrt[\gls{number_testvectors}]{\gls{epstestfail} } ) \right)^{-1}.
,
\end{equation}
$\underline r_i$ are random normal vectors,
and $\lambda^{\gls{source_inner}}_{min}$ is the smallest eigenvalue of the matrix of the inner product in $\gls{transfer_source}$.
\label{definition:test}
\end{definition}
This norm estimator gives an upper bound to the operator norm with high probability,
as we show in the following proposition.
\begin{proposition}[Norm estimator failure probability]
\label{thm:errest_reliable}
The norm estimator 
$
\gls{randomized_a_posteriori}(\gls{some_operator}, \gls{number_testvectors}, \gls{epstestfail})
$
is an upper bound of the operator norm $\norm{\gls{some_operator}}$ with probability greater or equal than $(1 - \gls{epstestfail})$:
\begin{equation}
\gls{probof}\Big( \norm{\gls{some_operator}} \leq \gls{randomized_a_posteriori}(\gls{some_operator}, \gls{number_testvectors}, \gls{epstestfail}) \Big) \geq 1 - \gls{epstestfail}
.
\end{equation}
\end{proposition}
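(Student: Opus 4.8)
The plan is to control the failure event---the event that the estimator strictly underestimates $\norm{\gls{some_operator}}$---and to show it occurs with probability at most \gls{epstestfail}. First I would extract a deterministic lower bound for $\norm{\gls{some_operator}\,\gls{ritzdsinv}\underline r}_R$ in terms of the component of the random vector along the dominant singular direction. Writing the singular value decomposition of \gls{some_operator} as $\gls{some_operator} = \sum_j \sigma_j u_j (v_j, \cdot)_{\gls{transfer_source}}$, with $\sigma_1 = \norm{\gls{some_operator}}$ and orthonormal families $\{v_j\} \subset \gls{transfer_source}$ and $\{u_j\} \subset \gls{transfer_range}$, the orthonormality gives, for any $w \in \gls{transfer_source}$,
\begin{equation}
\norm{\gls{some_operator} w}_R^2 = \sum_j \sigma_j^2 \abs{(v_j, w)_{\gls{transfer_source}}}^2 \geq \sigma_1^2 \abs{(v_1, w)_{\gls{transfer_source}}}^2,
\end{equation}
hence $\norm{\gls{some_operator} w}_R \geq \norm{\gls{some_operator}}\,\abs{(v_1, w)_{\gls{transfer_source}}}$. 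The key consequence is that whenever the coefficient $c_i := (v_1, \gls{ritzdsinv}\underline r_i)_{\gls{transfer_source}}$ satisfies $\abs{c_i} \geq 1/\cest$ for at least one index $i$, the estimator already reaches or exceeds $\norm{\gls{some_operator}}$ and thus succeeds.

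Consequently, I would note that the failure event is contained in $\{\abs{c_i} < 1/\cest \text{ for all } i\}$. The next step is to identify the law of the $c_i$: since $v_1$ is a fixed normed vector, the lemma on the distribution of the inner product shows each $c_i$ is a centered Gaussian with some variance $s^2$ satisfying $\lambda^{\gls{source_inner}}_{min} \leq s^2 \leq \lambda^{\gls{source_inner}}_{max}$. Because the $\underline r_i$ are drawn independently, the $c_i$ are independent, so
\begin{equation}
\gls{probof}(\text{failure}) \leq \prod_{i=1}^{\gls{number_testvectors}} \gls{probof}\Big( \abs{c_i} < 1/\cest \Big).
\end{equation}

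The remaining step is to bound each factor. For a centered Gaussian of variance $s^2$ one has $\gls{probof}(\abs{c_i} < t) = \mathrm{erf}\big(t/(s\sqrt 2)\big)$, and substituting $t = 1/\cest = \sqrt{2\lambda^{\gls{source_inner}}_{min}}\,\mathrm{erf}^{-1}(\sqrt[\gls{number_testvectors}]{\gls{epstestfail}})$ produces the argument $\tfrac{\sqrt{\lambda^{\gls{source_inner}}_{min}}}{s}\,\mathrm{erf}^{-1}(\sqrt[\gls{number_testvectors}]{\gls{epstestfail}})$. I would then use $s^2 \geq \lambda^{\gls{source_inner}}_{min}$ together with the monotonicity of $\mathrm{erf}$ to conclude $\gls{probof}(\abs{c_i} < 1/\cest) \leq \sqrt[\gls{number_testvectors}]{\gls{epstestfail}}$, so that the product of the \gls{number_testvectors} independent factors is bounded by \gls{epstestfail}, which gives the claim after passing to the complement.

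I expect the main obstacle to be the worst-case treatment of the variance in this last step: the lemma only confines $s^2$ to the interval $[\lambda^{\gls{source_inner}}_{min}, \lambda^{\gls{source_inner}}_{max}]$, so the constant $\cest$ must be calibrated against the smallest admissible variance $\lambda^{\gls{source_inner}}_{min}$ for the bound to hold uniformly, and verifying that the stated $\cest$ is precisely the value making the $\mathrm{erf}$ argument collapse to $\mathrm{erf}^{-1}(\sqrt[\gls{number_testvectors}]{\gls{epstestfail}})$ is the delicate point. The reduction to the single direction $v_1$ and the independence across trials, by contrast, should be routine.
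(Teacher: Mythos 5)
Your proposal is correct and follows essentially the same route as the paper's proof: reduction to the dominant singular direction via the SVD, factorization of the failure probability using independence of the test vectors, and the Gaussian tail bound calibrated through $\lambda^{\gls{source_inner}}_{min}$ so that the $\mathrm{erf}$ argument collapses to $\mathrm{erf}^{-1}(\sqrt[\gls{number_testvectors}]{\gls{epstestfail}})$. The only difference is cosmetic: you state the deterministic bound $\norm{\gls{some_operator} w}_R \geq \norm{\gls{some_operator}}\,\abs{(v_1,w)_{\gls{transfer_source}}}$ explicitly before invoking independence, whereas the paper factors the probability first and truncates the SVD inside each factor.
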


\begin{proof}
We analyze the probability for the event that the norm estimator
$
\gls{randomized_a_posteriori}(\gls{some_operator}, \gls{number_testvectors}, \gls{epstestfail})
$
is smaller than the operator norm $\norm{\gls{some_operator}}$:
\begin{eqnarray}
\gls{probof}\Big( \gls{randomized_a_posteriori}(\gls{some_operator}, \gls{number_testvectors}, \gls{epstestfail}) < \norm{\gls{some_operator}} \Big)
&=&
\gls{probof}\Big( 
\cest
\max_{i \in 1, \dots, \gls{number_testvectors}} \norm{ \gls{some_operator} \ \gls{ritzdsinv} \ \underline r_i }_R 
< \norm{\gls{some_operator}}
\Big)
.
\end{eqnarray}
The probability that all test vector norms are smaller than a 
certain value is the 
the product of 
the probabilities 
that each test vector is smaller than that value.
So with a new random normal vector $\underline r$ it holds
\begin{eqnarray}
\gls{probof}\Big( \gls{randomized_a_posteriori}(\gls{some_operator}, \gls{number_testvectors}, \gls{epstestfail}) < \norm{\gls{some_operator}} \Big)&=&
\gls{probof}\Big( 
\cest
\norm{ \gls{some_operator} \ \gls{ritzdsinv} \ \underline r }_R 
< \norm{\gls{some_operator}}
\Big)^{\gls{number_testvectors}}.
\end{eqnarray}
Using the singular value decomposition of the operator $\gls{some_operator}$: $\gls{some_operator} \varphi = \sum_i u_i \gls{singular_value}_i (v_i, \varphi)_{\gls{source_inner}}$ we obtain
\begin{eqnarray}
\gls{probof}\Big( \gls{randomized_a_posteriori}(\gls{some_operator}, \gls{number_testvectors}, \gls{epstestfail}) < \norm{\gls{some_operator}} \Big)&\leq&
\gls{probof}\Big( 
\cest
\norm{ u_1 \gls{singular_value}_1 \ \left(v_1, \gls{ritzdsinv} \ \underline r\right)_S }_R 
< \norm{\gls{some_operator}}
\Big)^{\gls{number_testvectors}} \nonumber \\
&=&
\gls{probof}\Big( 
\cest
\gls{singular_value}_1 \ \left|\left(v_1, \gls{ritzdsinv} \ \underline r\right)_S\right|
< \norm{\gls{some_operator}}
\Big)^{\gls{number_testvectors}} \nonumber \\
&=&
\gls{probof}\Big( 
\cest
\left|\left(v_1, \gls{ritzdsinv} \ \underline r\right)_S\right|
< 1
\Big)^{\gls{number_testvectors}}.
\end{eqnarray}
The inner product $\left|\left(v_1, \gls{ritzdsinv} \ \underline r\right)_S\right|$ is a Gaussian distributed 
random variable with variance greater $\lambda^{\gls{source_inner}}_{min}$, so
with a new normal distributed random variable $r'$ it holds
\begin{eqnarray}
\gls{probof}\Big( \gls{randomized_a_posteriori}(\gls{some_operator}, \gls{number_testvectors}, \gls{epstestfail}) < \norm{\gls{some_operator}} \Big)&\leq&
\gls{probof}\Big( 
\sqrt{\lambda^{\gls{source_inner}}_{min}} |r'|
<
\sqrt{2 \lambda^{\gls{source_inner}}_{min}} \cdot \mathrm{erf}^{-1}\left(\sqrt[\gls{number_testvectors}]{\gls{epstestfail}}\right)
\Big)^{\gls{number_testvectors}} \nonumber \\
&=&
\mathrm{erf} \left( \frac{\sqrt{2} \mathrm{erf}^{-1}\left(\sqrt[\gls{number_testvectors}]{\gls{epstestfail}}\right) }{\sqrt{2}} \right)^{\gls{number_testvectors}}
 = \gls{epstestfail}.
\end{eqnarray}
\end{proof}

In case a good upper bound for the (numerical) rank of the operator $\gls{some_operator}$ is known,
this estimator is also effective with high probability, as the following proposition shows.

\begin{proposition}[Norm estimator effectivity]
\label{thm:errest_efficient}
Let the effectivity $\eta$ of the norm estimator $
\gls{randomized_a_posteriori}(\gls{some_operator}, \gls{number_testvectors}, \gls{epstestfail})
$ be defined as
\begin{equation}
\label{eq:effectivity}
\eta(\gls{some_operator}, \gls{number_testvectors}, \gls{epstestfail}) := 
\frac{
\gls{randomized_a_posteriori}(\gls{some_operator}, \gls{number_testvectors}, \gls{epstestfail}) 
}{
\norm{\gls{some_operator}}
}.
\end{equation}
Then, it holds
\begin{equation}
\gls{probof}\Big( \eta \leq \ceff \Big) \geq 1 - \gls{epstestfail}
,
\end{equation}
where the constant $\ceff$ is defined as
\begin{equation}
\ceff := 
\left[
Q^{-1}\left(\frac{N_O}{2}, \frac{\gls{epstestfail}}{\gls{number_testvectors}}\right)
\frac{\lambda^{\gls{source_inner}}_{max}}{\lambda^{\gls{source_inner}}_{min}}
\left( \mathrm{erf}^{-1} \left( \sqrt[\gls{number_testvectors}]{\gls{epstestfail}} \right) \right)^{-2}
\right]^{1/2}
\end{equation}
and $Q^{-1}$ is the inverse of the upper normalized incomplete gamma function;
i.e.
\begin{equation}
Q^{-1}(a,y) = x\quad \text{ when }\quad Q(a,x) = y.
\end{equation}\footnote{
Recall that the definition of the upper normalized incomplete gamma function is
$$
Q(a,x) = 
\dfrac{\int_x^\infty t^{a-1} e^{-t} \mathrm{d} t
}{
\int_0^\infty t^{a-1} e^{-t} \mathrm{d} t
}.
$$
}
\end{proposition}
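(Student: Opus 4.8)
The plan is to control the failure probability $\gls{probof}(\eta > \ceff)$ and show it is at most $\gls{epstestfail}$. Since $\gls{randomized_a_posteriori}(\gls{some_operator},\gls{number_testvectors},\gls{epstestfail}) = \cest \max_{i} \norm{\gls{some_operator}\,\gls{ritzdsinv}\,\underline r_i}_R$, the event $\eta > \ceff$ is exactly
\[
\max_{i \in 1,\dots,\gls{number_testvectors}} \norm{\gls{some_operator}\,\gls{ritzdsinv}\,\underline r_i}_R > \tfrac{\ceff}{\cest}\norm{\gls{some_operator}}.
\]
First I would apply a union bound over the $\gls{number_testvectors}$ i.i.d.\ test vectors: the maximum exceeds a threshold only if some summand does, so
\[
\gls{probof}(\eta > \ceff) \leq \gls{number_testvectors}\,\gls{probof}\Big( \norm{\gls{some_operator}\,\gls{ritzdsinv}\,\underline r}_R > \tfrac{\ceff}{\cest}\norm{\gls{some_operator}} \Big)
\]
for a fresh random normal vector $\underline r$. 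It then suffices to bound the single-vector probability by $\gls{epstestfail}/\gls{number_testvectors}$.

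Next I would expand $\gls{some_operator}\,\gls{ritzdsinv}\,\underline r$ in the singular value decomposition $\gls{some_operator}\varphi = \sum_i u_i \gls{singular_value}_i (v_i,\varphi)_{\gls{transfer_source}}$, which has at most $N_O$ terms because the rank is bounded by $N_O$. As the left singular vectors $u_i$ are orthonormal in $\gls{transfer_range}$, Parseval gives $\norm{\gls{some_operator}\,\gls{ritzdsinv}\,\underline r}_R^2 = \sum_i \gls{singular_value}_i^2 X_i^2$ with $X_i := (v_i,\gls{ritzdsinv}\underline r)_{\gls{transfer_source}}$. Bounding every singular value by $\gls{singular_value}_1 = \norm{\gls{some_operator}}$ shows that $\{\norm{\gls{some_operator}\,\gls{ritzdsinv}\,\underline r}_R > \tfrac{\ceff}{\cest}\norm{\gls{some_operator}}\} \subseteq \{\sum_i X_i^2 > \ceff^2/\cest^2\}$, so the claim reduces to controlling the sum of squares $\sum_i X_i^2$.

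The key step, and the main obstacle, is that the $X_i$ are jointly Gaussian but \emph{not} independent, their joint law being governed by the inner product matrix $\gls{source_inner}$. I would write $\sum_i X_i^2 = \underline r^T \underline W^T \underline W \underline r$, where $\underline W = \underline V^T \gls{source_inner}$ and $\underline V = [\gls{ritzds} v_1, \gls{ritzds} v_2, \dots]$, and exploit that the right singular vectors are $\gls{transfer_source}$-orthonormal, i.e.\ $\underline V^T \gls{source_inner} \underline V = I$. A short computation then shows every eigenvalue of $\underline W^T \underline W = \gls{source_inner}\,\underline V\,\underline V^T \gls{source_inner}$ lies in $[0,\lambda^{\gls{source_inner}}_{max}]$ (its nonzero eigenvalues coincide with those of $\underline V^T \gls{source_inner}^2 \underline V$, which are bounded by $\lambda^{\gls{source_inner}}_{max}$ via $\underline V^T \gls{source_inner} \underline V = I$). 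Diagonalizing $\underline W^T \underline W = \sum_j \mu_j \underline q_j \underline q_j^T$ with Euclidean-orthonormal $\underline q_j$ and at most $N_O$ nonzero $\mu_j \leq \lambda^{\gls{source_inner}}_{max}$, and projecting the standard Gaussian $\underline r$ onto the $\underline q_j$, yields $\sum_i X_i^2 \leq \lambda^{\gls{source_inner}}_{max} Z$ with $Z \sim \chi^2$ of at most $N_O$ degrees of freedom; here the independence of the projected coordinates $\underline q_j^T \underline r$ is precisely what turns the weighted sum of squares into a chi-squared variable.

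Finally I would invoke the chi-squared tail identity $\gls{probof}(Z > x) = Q(N_O/2, x/2)$, using that $Q(a,x)$ is increasing in $a$ so that replacing the true rank by the upper bound $N_O$ is legitimate, to obtain
\[
\gls{probof}\Big(\sum_i X_i^2 > \tfrac{\ceff^2}{\cest^2}\Big) \leq Q\Big(\tfrac{N_O}{2},\, \tfrac{\ceff^2}{2\lambda^{\gls{source_inner}}_{max}\cest^2}\Big).
\]
Substituting $\cest^2 = \big(2\lambda^{\gls{source_inner}}_{min}(\mathrm{erf}^{-1}(\sqrt[\gls{number_testvectors}]{\gls{epstestfail}}))^2\big)^{-1}$ collapses the second argument to exactly $Q^{-1}(N_O/2,\gls{epstestfail}/\gls{number_testvectors})$ by the definition of $\ceff$, so the right-hand side equals $\gls{epstestfail}/\gls{number_testvectors}$ and the union bound closes the argument. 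The only delicate points left to verify are the eigenvalue bound on $\underline W^T \underline W$ and the algebraic cancellation between the definitions of $\cest$ and $\ceff$.
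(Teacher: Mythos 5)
Your proof is correct and follows essentially the same route as the paper's: a union bound over the $n_t$ test vectors, SVD expansion with every singular value bounded by $\sigma_1 = \norm{O}$, reduction to a chi-squared tail expressed through the upper incomplete gamma function $Q$, and the algebraic cancellation between the definitions of $c_\mathrm{est}$ and $c_\mathrm{eff}$. The one point where you go beyond the paper is the stochastic-domination step: where the paper silently replaces the \emph{correlated} Gaussians $(v_i, D_S^{-1}\underline{r})_S$ by $\sqrt{\smash[b]{\lambda_{\max}}}$ times fresh i.i.d.\ normals, you justify this by writing the sum of squares as the quadratic form $\underline{r}^T M_S \underline{V}\,\underline{V}^T M_S\,\underline{r}$, bounding its eigenvalues via the $M_S$-orthonormality $\underline{V}^T M_S \underline{V} = I$, and diagonalizing to recover genuine independence — a welcome tightening of a step the paper leaves implicit.
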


\begin{proof}
The constant $\cest$ is defined as in the proof of \cref{thm:errest_reliable}.
To shorten notation, we write $\cestshort$ for $\cest$ and $\ceffshort$ for $\ceff$ within this proof. 
Invoking the definition of $\gls{randomized_a_posteriori}(\gls{some_operator}, \gls{number_testvectors}, \gls{epstestfail})$ yields
\begin{eqnarray}
\gls{probof}\Big( \gls{randomized_a_posteriori}(\gls{some_operator}, \gls{number_testvectors}, \gls{epstestfail}) > \ceffshort \norm{\gls{some_operator}} \Big)
&=&
\gls{probof}\Big( 
\cestshort
\max_{i \in 1, \dots, \gls{number_testvectors}} \norm{ \gls{some_operator} \ \gls{ritzdsinv} \ \underline r_i }_R 
> \ceffshort \norm{\gls{some_operator}}
\Big)
\end{eqnarray}
and by employing a new random normal vector $\underline r$ we obtain
\begin{eqnarray}
\gls{probof}\Big( \gls{randomized_a_posteriori}(\gls{some_operator}, \gls{number_testvectors}, \gls{epstestfail}) > \ceffshort \norm{\gls{some_operator}} \Big)&\leq&
\gls{number_testvectors}
\gls{probof}\Big( 
\cestshort
\norm{ \gls{some_operator} \ \gls{ritzdsinv} \ \underline r }_R 
> \ceffshort \norm{\gls{some_operator}}
\Big).
\end{eqnarray}
Using the singular value decomposition of the operator $\gls{some_operator}$: $\gls{some_operator} \varphi = \sum_i u_i \gls{singular_value}_i (v_i, \varphi)_S$ results in
\begin{eqnarray}
\gls{probof}\Big( \gls{randomized_a_posteriori}(\gls{some_operator}, \gls{number_testvectors}, \gls{epstestfail}) > \ceffshort \norm{\gls{some_operator}} \Big)&\leq&
\gls{number_testvectors}
\gls{probof}\Big( 
\cestshort
\norm{\sum_i u_i \gls{singular_value}_1 (v_i, \gls{ritzdsinv} \underline r)_S}_R
> \ceffshort \norm{\gls{some_operator}}
\Big).
\end{eqnarray}
For a new random normal variables $r_i$ we have
\begin{eqnarray}
\gls{probof}\Big( \gls{randomized_a_posteriori}(\gls{some_operator}, \gls{number_testvectors}, \gls{epstestfail}) > \ceffshort \norm{\gls{some_operator}} \Big)&\leq&
\gls{number_testvectors}
\gls{probof}\Big( 
\cestshort
\gls{singular_value}_1 \sqrt{\lambda^{\gls{source_inner}}_{max}} \sqrt{\sum_i r_i^2}
> \ceffshort \norm{\gls{some_operator}}
\Big) \nonumber \\
&=&
\gls{number_testvectors}
\gls{probof}\Big(
\sqrt{\sum_i r_i^2}
> \frac{\ceffshort}{\cestshort} \gls{singular_value}_1^{-1}
\sqrt{\lambda^{\gls{source_inner}}_{max}}^{-1}
\norm{\gls{some_operator}}
\Big) \nonumber \\
&=&
\gls{number_testvectors}
\gls{probof}\Big(
\sqrt{\sum_i r_i^2}
> \frac{\ceffshort}{\cestshort}
\sqrt{\lambda^{\gls{source_inner}}_{max}}^{-1}
\Big) \nonumber \\
&=&
\gls{number_testvectors}
\gls{probof}\Big(
\sum_i r_i^2
> \frac{\ceffshort^2}{\cestshort^2}
\sqrt{\lambda^{\gls{source_inner}}_{max}}^{-2}
\Big).
\end{eqnarray}
The sum of squared random normal variables is a random variable with chi-squared distribution.
Its cumulative distribution function is the incomplete, normed gamma function.
As we have a ``greater than'' relation, the upper incomplete normed gamma function is used,
which we denote by $Q(\frac k 2, \frac x 2)$ here.
Therefore, we conclude
\begin{eqnarray}
\gls{probof}\Big( \gls{randomized_a_posteriori}(\gls{some_operator}, \gls{number_testvectors}, \gls{epstestfail}) > \ceff \norm{\gls{some_operator}} \Big)&\leq&
\gls{number_testvectors} Q\left(\frac{N_O}{2}, \frac{\ceff^2}{\cest^2} \frac{1}{2 \lambda^{\gls{source_inner}}_{max}}\right) \nonumber \\
&=& \gls{epstestfail}.
\end{eqnarray}
\end{proof}

\subsection{Randomized Range Finder Algorithm}
Using the random vector generation in \cref{sec:generation of vectors}
and the randomized norm estimator in \cref{sec:random a posteriori}
applied to $\gls{some_operator} = \left(1 - \gls{orthogonalp}_{\gls{transfer_rrange}} \right) \gls{transferoperatorl}$,
we can formulate the randomized range finder algorithm.
\begin{algorithm2e}[tp]
\DontPrintSemicolon
\SetAlgoVlined
\SetKwFunction{AdaptiveRandomizedRangeApproximation}{AdaptiveRandomizedRangeApproximation}
\SetKwInOut{Input}{Input}
\SetKwInOut{Output}{Output}
\Fn{\AdaptiveRandomizedRangeApproximation{$\gls{transferoperator}, \algotol, \gls{number_testvectors}, \gls{epsalgofail}$}}{
  \Input{Operator $\gls{transferoperator}$,\\target accuracy $\algotol$,\\number of test vectors $\gls{number_testvectors}$,\\maximum failure probability $\gls{epsalgofail}$}
  \Output{space $\gls{transfer_rrange}$ with property $\gls{probof}\left(\norm{\left(1 - \gls{orthogonalp}_{\gls{transfer_rrange}} \right) \gls{transferoperatorl}} \leq \algotol \right) > \left(1 - \gls{epsalgofail}\right)$}
  \tcc{initialize basis} 
  $B \leftarrow \left\{ \gls{transferoperatora} \right\}$ \label{algo:line:basis_init}\;
  \tcc{initialize test vectors} 
  $M \leftarrow \left\{\gls{transferoperator} \gls{ritzdsinv} \underline r_1, \ \dots, \  \gls{transferoperator} \gls{ritzdsinv} \underline r_{\gls{number_testvectors}} \right\} $ \label{algo:line:test_vec_init}\;
  \tcc{orthogonalize test vectors to $\mathrm{span}(B)$}
  $M \leftarrow \left\{\left(1 - P_{\mathrm{span}(B)}\right) t \ \Big| \ t \in M  \right\}$ \label{algo:line:test_vector_update1}\;
  \tcc{determine error estimator factor}
  $\gls{epstestfail} \leftarrow \gls{epsalgofail} / N_T$ \label{algo:line:varepsilon_init} \;
  $\gls{cest} \leftarrow \left[ \sqrt{2 \lambda^{\gls{source_inner}}_{min}} \ \mathrm{erf}^{-1} \left( \sqrt[\gls{number_testvectors}]{\gls{epstestfail} } \right)
\right]^{-1}$ \label{algo:line:constant_init} \;
  \tcc{basis generation loop}
  \While{$\left( \max_{t \in M} \norm{t}_R \right) \cdot \gls{cest} > \algotol$ \label{algo:line:convergence}}{
    $B \leftarrow B \cup (\gls{transferoperator} \gls{ritzdsinv} \underline r) $ \label{algo:line:basis_extension}\;
    $B \leftarrow \mathrm{orthonormalize}(B)$ \label{algo:line:basis_extension2}\;
    \tcc{orthogonalize test vectors to $\mathrm{span}(B)$}
    $M \leftarrow \left\{\left(1 - \gls{orthogonalp}_{\mathrm{span}(B)}\right) t \ \Big| \ t \in M  \right\}$ \label{algo:line:test_vector_update}\;
  }
  \Return $\gls{transfer_rrange} = \mathrm{span}(B)$\;
}
\caption{Adaptive Randomized Range Approximation.}
\label{algo:adaptive_range_approximation}
\end{algorithm2e}
We propose an adaptive randomized range approximation algorithm that constructs an approximation space $\gls{transfer_rrange}$
by iteratively extending its basis until a convergence criterion is satisfied.
In each iteration, the basis is extended by the operator $\gls{transferoperator}$ applied to
a random function.

The full algorithm is given in \cref{algo:adaptive_range_approximation} and has four input parameters, starting with the operator $\gls{transferoperator}$, whose range should be approximated. This could be represented by a matrix, 
but in the intended context it is usually an implicitly defined operator
which is computationally expensive to evaluate.
Only the evaluation of the operator on a vector is required.
The second input parameter is the target accuracy $\algotol$ such that $\norm{\left(1 - \gls{orthogonalp}_{\gls{transfer_rrange}} \right) \gls{transferoperatorl}} \leq \algotol$.
The third input parameter is the number of test vectors $\gls{number_testvectors}$
to be used in the a posteriori error estimator. 
More test vectors
lead to spaces of lower dimension but induce a higher computational cost. 
A typical $\gls{number_testvectors}$ could be 5, 10, or 20.
The forth input parameter is the maximum failure probability $\gls{epsalgofail}$.
The algorithm might fail to deliver a space with the required
approximation properties, but only with a probability which is smaller
than this parameter. Choosing it as e.g.~$10^{-15}$ leads to an algorithm
which virtually never fails.
The algorithm returns a space which has the required approximation
properties with a probability greater than $1- \gls{epsalgofail}$
i.e.~
$P\left(\norm{\left(1 - \gls{orthogonalp}_{\gls{transfer_rrange}} \right) \gls{transferoperatorl}} \leq \algotol \right) > \left(1 - \gls{epsalgofail}\right)$.

The basis $B$ of $\gls{transfer_rrange}$ is initialized with the affine part \gls{transferoperatora} in line \ref{algo:line:basis_init},
test vectors are initialized as the operator applied to random normal vectors in 
line \ref{algo:line:test_vec_init}.
The test vectors are orthogonalized to the existing basis in line \ref{algo:line:test_vector_update1}.
Recall that $\gls{transferoperator} \gls{ritzdsinv} \underline r$ is the operator $\gls{transferoperator}$ applied to a random normal vector.
We use the term ``random normal vector'' to denote a vector whose entries are independent and identically distributed random variables
with normal distribution. The main loop of the algorithm is terminated when the a posteriori norm estimator
\cref{def:norm estimator}
applied to $\left(1 - \gls{orthogonalp}_{\gls{transfer_rrange}} \right) \gls{transferoperatorl}$ is smaller than $\algotol$.
The constant $\cest$, which appears in the error estimator,
is calculated in line \ref{algo:line:varepsilon_init} and \ref{algo:line:constant_init} using
$N_T$ --- the rank of operator $\gls{transferoperator}$. In practice $N_{T}$ is unknown
and an upper bound for $N_T$ such as $\min(\gls{dimension_transfer_source}, \gls{dimension_transfer_range})$ can be used instead.
In line \ref{algo:line:convergence} the algorithm assesses if the convergence criterion is already satisfied. Note that the term
$
\left( \max_{t \in M} \norm{t}_R \right) \cdot \cest
$
is the norm estimator \cref{eq:a posteriori error estimator} applied to $\left(1 - \gls{orthogonalp}_{\gls{transfer_rrange}} \right) \gls{transferoperatorl}$.
The test vectors are reused for all iterations.
The main loop of the algorithm consists of two parts.
First, the basis is extended in line \ref{algo:line:basis_extension}
and \ref{algo:line:basis_extension2} by applying the operator $\gls{transferoperator}$ to a random normal vector and adding the result to the basis $B$. Then the basis $B$ is orthonormalized. We emphasize that the orthonormalization is numerically challenging, 
as the basis functions are nearly linear dependent when
$\gls{transfer_rrange}$ is already a good approximation of the range of $\gls{transferoperator}$.
In the numerical experiments we use the numerically stable Gram-Schmidt with re-iteration
\cref{alg:gram-schmidt adaptive}
shown in \cref{chap:stable gram schmidt}.
Second, the test vectors are updated in 
line \ref{algo:line:test_vector_update}.

In \cref{algo:adaptive_range_approximation},
the smallest eigenvalue of the matrix of the inner product in $\gls{transfer_source}$, 
$\lambda^{\gls{source_inner}}_{min}$, or at least a lower bound for it,
is required.
The orthonormalization of $B$ in line \ref{algo:line:basis_extension2}
and the update of test vectors in 
line \ref{algo:line:test_vector_update} use the inner product in
$\gls{transfer_range}$.

The presented algorithm has good performance properties for operators $\gls{transferoperator}$ which
are expensive to evaluate. To produce the space $\gls{transfer_rrange}$ of dimension $n$, 
it evaluates the operator $n$ times to generate the basis and $\gls{number_testvectors}$
times to generate the test vectors, so in total $n + \gls{number_testvectors}$ times.
Exploiting the low rank structure of $\gls{transferoperator}$, one
could calculate the eigenvectors of $\gls{transferoperator}^*\gls{transferoperator}$ using
a Lanczos type algorithm as implemented in ARPACK \cite{Lehoucq1998}, but this would require $\mathcal{O}(n)$ evaluations of
$\gls{transferoperator}$ and $\gls{transferoperator}^*$ in every iteration, 
potentially summing up to much more than $n + \gls{number_testvectors}$ evaluations,
where the number of iterations is often not foreseeable.

\subsection{A probabilistic a priori error bound}
\label{sec:convergence_rate}
In this subsection we analyze the convergence behavior
of \cref{algo:adaptive_range_approximation}.
By applying results from randomized numerical linear algebra, 
it is possible to devise a priori bounds for the projection
error $\norm{\left(1 - \gls{orthogonalp}_{\gls{transfer_rrange}} \right) \gls{transferoperatorl}}$ and its expected value.
To do so, we first
bound 
$\norm{\left(1 - \gls{orthogonalp}_{\gls{transfer_rrange}} \right) \gls{transferoperatorl}}$
in terms of 
$\norm{ \left(1 - \underline{P}_{\gls{transfer_rrange},2}\right)\underline{\gls{transferoperatorl}} }_2$
where 
$\underline P_{\gls{transfer_rrange}, 2}$ is the matrix of the orthogonal projection on $\gls{transfer_rrange}$ in the 
euclidean inner product in $\gls{R}^{\gls{dimension_transfer_range}}$ and $\norm{\cdot}_2$ is the spectral matrix norm.
This is done in \cref{thm:tomatrix}.
In a second step, we use a result from \cite{Halko2011}
reproduced as \cref{thm:halko106108} here to obtain an a priori bound for
$\norm{ \left(1 - \underline{P}_{\gls{transfer_rrange},2}\right)\underline{\gls{transferoperatorl}} }_2$
in terms of the singular values $\underline \sigma_{i}$ of the matrix of the transfer operator.
In a third step, we devise an upper bound for the singular values of the matrix of the transfer operator
$\underline \sigma_{i}$ in terms of the singular values of the transfer operator $\sigma_{i}$
in \cref{thm:svals}.
Combining these three results, we obtain our main a priori result,
\cref{thm:convergence_rate_main}.
\begin{lemma}[Projection error of operators and their matrices]
\label{thm:tomatrix}
For some given reduced space $\gls{transfer_rrange}$ it holds
\begin{equation}
\norm{\left(1 - \gls{orthogonalp}_{\gls{transfer_rrange}} \right) \gls{transferoperatorl}} = 
\sup_{\xi \in \gls{transfer_source} \nnull} \inf_{\zeta \in \gls{transfer_rrange}} \frac{\norm{ \gls{transferoperatorl}\xi - \zeta }_{R}}{\norm{\xi }_{\gls{transfer_source}}} 
\leq \sqrt{\frac{\lambda^{\gls{range_inner}}_{max}}{\lambda^{\gls{source_inner}}_{min}}} \norm{ \left(1 - \underline{P}_{\gls{transfer_rrange},2}\right)\underline{\gls{transferoperatorl}} }_2
.
\end{equation}
\end{lemma}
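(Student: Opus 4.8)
The plan is to pass to coordinates and exploit that $\gls{orthogonalp}_{\gls{transfer_rrange}}$ realizes the best approximation in the $\gls{transfer_range}$-norm, so that any \emph{other} element of $\gls{transfer_rrange}$ — in particular the one produced by the \emph{Euclidean} projection — automatically gives an upper bound. This lets us trade the Hilbert-space projection for the matrix projection at the cost of the stated condition number factor.

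First I would establish the equality. For fixed $\xi$ the $\gls{transfer_range}$-orthogonal projection $\gls{orthogonalp}_{\gls{transfer_rrange}}$ is the best approximation of $\gls{transferoperatorl}\xi$ in $\gls{transfer_rrange}$, hence $\norm{(1 - \gls{orthogonalp}_{\gls{transfer_rrange}})\gls{transferoperatorl}\xi}_R = \inf_{\zeta \in \gls{transfer_rrange}} \norm{\gls{transferoperatorl}\xi - \zeta}_R$. Dividing by $\norm{\xi}_{\gls{transfer_source}}$ and taking the supremum over $\xi \neq 0$ turns the operator norm into the stated sup–inf expression; this is merely the definition of the operator norm combined with the best-approximation property of orthogonal projections.

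For the inequality, fix $\xi \in \gls{transfer_source}\setminus\{0\}$ with coordinate vector $\underline\xi = \gls{ritzds}\xi$. I would bound the infimum from above by the specific competitor $\zeta^\ast \in \gls{transfer_rrange}$ whose coordinate vector is $\underline{P}_{\gls{transfer_rrange},2}\,\underline{\gls{transferoperatorl}}\,\underline\xi$; then $\gls{transferoperatorl}\xi - \zeta^\ast$ has coordinate vector $(1 - \underline{P}_{\gls{transfer_rrange},2})\underline{\gls{transferoperatorl}}\,\underline\xi$. The remaining three steps are Rayleigh-quotient estimates: (i) bound the $\gls{transfer_range}$-norm of this element by $\sqrt{\lambda^{\gls{range_inner}}_{max}}$ times its Euclidean norm, via $\norm{\eta}_R^2 = \underline\eta^{T}\gls{range_inner}\,\underline\eta \le \lambda^{\gls{range_inner}}_{max}\norm{\underline\eta}_2^2$; (ii) bound the Euclidean norm by $\norm{(1 - \underline{P}_{\gls{transfer_rrange},2})\underline{\gls{transferoperatorl}}}_2\,\norm{\underline\xi}_2$ from the definition of the spectral norm; and (iii) bound $\norm{\underline\xi}_2 \le \norm{\xi}_{\gls{transfer_source}}/\sqrt{\lambda^{\gls{source_inner}}_{min}}$, via $\norm{\xi}_{\gls{transfer_source}}^2 = \underline\xi^{T}\gls{source_inner}\,\underline\xi \ge \lambda^{\gls{source_inner}}_{min}\norm{\underline\xi}_2^2$. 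Chaining these, dividing by $\norm{\xi}_{\gls{transfer_source}}$ and taking the supremum produces the factor $\sqrt{\lambda^{\gls{range_inner}}_{max}/\lambda^{\gls{source_inner}}_{min}}$.

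The only genuinely delicate point is the mismatch between the two projections: $\gls{orthogonalp}_{\gls{transfer_rrange}}$ is orthogonal with respect to the $\gls{transfer_range}$-inner product, whereas $\underline{P}_{\gls{transfer_rrange},2}$ is orthogonal with respect to the Euclidean inner product on coordinate vectors, and in general these differ. The resolution, which is really the crux of the argument, is that we never require them to coincide: since $\gls{orthogonalp}_{\gls{transfer_rrange}}$ delivers the $\gls{transfer_range}$-best approximation, the suboptimal competitor $\zeta^\ast$ obtained from the Euclidean projection can only overestimate $\norm{(1-\gls{orthogonalp}_{\gls{transfer_rrange}})\gls{transferoperatorl}\xi}_R$, which is exactly the direction of inequality we need. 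Everything past that choice is routine eigenvalue bookkeeping.
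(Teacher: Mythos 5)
Your proposal is correct and follows essentially the same route as the paper: both arguments rewrite the sup--inf as the error of the $\gls{transfer_range}$-orthogonal projection, then use the Euclidean-coordinate projection $\underline{P}_{\gls{transfer_rrange},2}$ as a (generally suboptimal) competitor in the infimum, and finish with the Rayleigh-quotient bounds $\underline\eta^{T}\gls{range_inner}\underline\eta \le \lambda^{\gls{range_inner}}_{max}\norm{\underline\eta}_2^2$ and $\underline\xi^{T}\gls{source_inner}\underline\xi \ge \lambda^{\gls{source_inner}}_{min}\norm{\underline\xi}_2^2$. Your observation that the projection mismatch is harmless precisely because the Hilbert-space projection is the best approximation is exactly the inequality step in the paper's chain, so there is no gap.
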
 
\begin{proof}
\begin{align}
\sup_{\xi \in \gls{transfer_source} \nnull} \inf_{\zeta \in \gls{transfer_rrange}} \frac{\norm{ \gls{transferoperatorl}\xi - \zeta }_{R}}{\norm{\xi }_{\gls{transfer_source}}} &= \sup_{\xi \in \gls{transfer_source} \nnull} \frac{\norm{ \gls{transferoperatorl}\xi - \gls{orthogonalp}_{\gls{transfer_rrange}}\gls{transferoperatorl}\xi }_{R}}{\norm{\xi }_{\gls{transfer_source}}} \nonumber \\
& = \sup_{\underline{\xi} \in \gls{R}^{\gls{dimension_transfer_source}} \nnull} \frac{\left((\underline{\gls{transferoperatorl}}\underline{\xi} - \underline{P}_{\gls{transfer_rrange}}\underline{\gls{transferoperatorl}}\underline{\xi})^{T}\gls{range_inner}(\underline{\gls{transferoperatorl}}\underline{\xi} - \underline{P}_{\gls{transfer_rrange}}\underline{\gls{transferoperatorl}}\underline{\xi})\right)^{1/2}}{\sqrt{\underline \xi^{T}\gls{source_inner}\underline{\xi}}} \nonumber \\
& \leq \sup_{\underline{\xi} \in \gls{R}^{\gls{dimension_transfer_source}} \nnull} \frac{\left((\underline{\gls{transferoperatorl}}\underline{\xi} - \underline{P}_{\gls{transfer_rrange},2}\underline{\gls{transferoperatorl}}\underline{\xi})^{T}\gls{range_inner}(\underline{\gls{transferoperatorl}}\underline{\xi} - \underline{P}_{\gls{transfer_rrange},2}\underline{\gls{transferoperatorl}}\underline{\xi})\right)^{1/2}}{\sqrt{\underline \xi^{T}\gls{source_inner}\underline{\xi}}} \nonumber \\
& \leq \sqrt{\frac{\lambda^{\gls{range_inner}}_{max}}{\lambda^{\gls{source_inner}}_{min}}} \sup_{\underline{\xi} \in \gls{R}^{\gls{dimension_transfer_source}} \nnull} \frac{\norm{ \underline{\gls{transferoperatorl}}\underline{\xi} - \underline{P}_{\gls{transfer_rrange},2}\underline{\gls{transferoperatorl}}\underline{\xi} }_{2} }{\norm{\underline{\xi}}_{2}}
.
\end{align}
\end{proof}

\begin{theorem}[A priori convergence of randomized range finder for matrices]\cite[Theorem 10.6]{Halko2011}
\label{thm:halko106108}
Let $\underline{\gls{transferoperator}}\in \gls{R}^{\gls{dimension_transfer_range}\times \gls{dimension_transfer_source}}$.
Then for $n\geq 4$ it holds
\begin{equation}
\gls{expectedvalue}
\left( \norm{\underline{\gls{transferoperator}} - \underline{P}_{\gls{transfer_rrange},2}\underline{\gls{transferoperator}}}_2 \right)
\leq
\min_{\overset{k+p=n}{k\geq 2, p\geq 2}}
\left[
\left( 1 + \sqrt{  \frac{k}{p-1}  } \right) \underline \sigma_{k+1} +
\frac{ e \sqrt{n}}{p} \left( \sum_{j > k} \underline{\gls{singular_value}}^2_j \right) ^{\frac{1}{2}}
\right].
\end{equation}
\end{theorem}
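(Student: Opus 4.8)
The plan is to reproduce the Halko--Martinsson--Tropp argument, which combines a purely deterministic linear-algebra estimate with two probabilistic bounds on Gaussian matrices. First I would fix notation: set $\underline A := \underline{\gls{transferoperator}}$, take a singular value decomposition $\underline A = \underline U\,\underline \Sigma\,\underline V^\top$, and introduce the standard Gaussian test matrix $\underline \Omega \in \gls{R}^{\gls{dimension_transfer_source}\times n}$ whose $n$ columns are the random normal vectors $\underline r_1,\dots,\underline r_n$, so that $\gls{transfer_rrange}$ is the column space of $\underline A\,\underline \Omega$. For a splitting index $k$ with $p := n-k \geq 2$, I would partition $\underline \Sigma = \operatorname{diag}(\underline \Sigma_1,\underline \Sigma_2)$ with $\underline \Sigma_1$ carrying the $k$ largest singular values, partition $\underline V = [\,\underline V_1\ \underline V_2\,]$ accordingly, and set $\underline \Omega_1 := \underline V_1^\top \underline \Omega$ and $\underline \Omega_2 := \underline V_2^\top \underline \Omega$. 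The key structural observation is that, since $\underline V$ is orthogonal, $\underline \Omega_1$ and $\underline \Omega_2$ are independent standard Gaussian matrices of sizes $k\times n$ and $(\gls{dimension_transfer_source}-k)\times n$.

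The first substantive step is the deterministic bound
\begin{equation}
\norm{\underline A - \underline{P}_{\gls{transfer_rrange},2}\,\underline A}_2^2
\leq
\norm{\underline \Sigma_2}_2^2 + \norm{\underline \Sigma_2\,\underline \Omega_2\,\underline \Omega_1^\dagger}_2^2 ,
\end{equation}
valid whenever $\underline \Omega_1$ has full row rank, which holds almost surely. I would obtain this by exhibiting one concrete matrix lying in the column space of $\underline A\,\underline \Omega$ whose distance to $\underline A$ can be written explicitly in terms of $\underline \Omega_1^\dagger$; projecting onto the true range can only decrease the error, which yields the inequality. Taking square roots and using $\sqrt{a^2+b^2}\le a+b$ then gives a bound linear in the two terms.

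Next I would pass to expectations. Conditioning on $\underline \Omega_1$ and treating $\underline \Omega_2$ as a fresh Gaussian factor independent of it, the Gaussian norm identity $\gls{expectedvalue}\,\norm{\underline S\,\underline G\,\underline T}_2 \leq \norm{\underline S}_2\norm{\underline T}_F + \norm{\underline S}_F\norm{\underline T}_2$, applied with $\underline S = \underline \Sigma_2$ and $\underline T = \underline \Omega_1^\dagger$, gives
\begin{equation}
\gls{expectedvalue}\!\left[\,\norm{\underline \Sigma_2\,\underline \Omega_2\,\underline \Omega_1^\dagger}_2 \,\middle|\, \underline \Omega_1\right]
\leq
\norm{\underline \Sigma_2}_2\,\norm{\underline \Omega_1^\dagger}_F
+ \norm{\underline \Sigma_2}_F\,\norm{\underline \Omega_1^\dagger}_2 .
\end{equation}
Taking the outer expectation over $\underline \Omega_1$, applying H\"older's inequality to the first summand, and inserting the Gaussian pseudoinverse moment bounds $\gls{expectedvalue}\,\norm{\underline \Omega_1^\dagger}_F^2 = k/(p-1)$ and $\gls{expectedvalue}\,\norm{\underline \Omega_1^\dagger}_2 \leq e\sqrt{k+p}/p$, together with $\norm{\underline \Sigma_2}_2 = \underline{\gls{singular_value}}_{k+1}$ and $\norm{\underline \Sigma_2}_F = (\sum_{j>k}\underline{\gls{singular_value}}_j^2)^{1/2}$, I would arrive at
\begin{equation}
\gls{expectedvalue}\,\norm{\underline A - \underline{P}_{\gls{transfer_rrange},2}\,\underline A}_2
\leq
\left(1 + \sqrt{\tfrac{k}{p-1}}\right)\underline{\gls{singular_value}}_{k+1}
+ \frac{e\sqrt{n}}{p}\left(\sum_{j>k}\underline{\gls{singular_value}}_j^2\right)^{1/2},
\end{equation}
where $k+p=n$ was used; the leading $1$ comes from the deterministic term $\norm{\underline \Sigma_2}_2$. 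Since $k$ was an arbitrary admissible index, minimizing over all $k\geq 2$, $p\geq 2$ with $k+p=n$ yields the claim.

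The hard part, and what I expect to be the main obstacle, is the two probabilistic inputs on the pseudoinverse of a rectangular Gaussian matrix, especially the spectral-norm estimate $\gls{expectedvalue}\,\norm{\underline \Omega_1^\dagger}_2 \leq e\sqrt{k+p}/p$. This rests on nonasymptotic lower-tail bounds for the smallest singular value of a Gaussian matrix, proved through Gaussian concentration of measure combined with Davidson--Szarek-type estimates, and it is precisely here that the restriction $p\geq 2$ (hence $n\geq 4$) enters. The Frobenius identity is comparatively elementary, following from the fact that $\norm{\underline \Omega_1^\dagger}_F^2$ is the trace of an inverse-Wishart matrix whose expectation reduces to chi-squared moments; the deterministic bound of the first step, while routine in spirit, is the other place requiring genuine care in the pseudoinverse construction.
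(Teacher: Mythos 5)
Your proposal is correct: it faithfully reconstructs the Halko--Martinsson--Tropp proof of their Theorem 10.6 --- the deterministic error bound via the SVD partition and the pseudoinverse construction, conditioning on $\underline \Omega_1$ together with the Gaussian norm bound $\gls{expectedvalue}\norm{\underline S\,\underline G\,\underline T}_2 \leq \norm{\underline S}_2\norm{\underline T}_F + \norm{\underline S}_F\norm{\underline T}_2$, the pseudoinverse moment estimates with the $p \geq 2$ restriction, and finally minimization over admissible splittings $k+p=n$. The paper itself gives no proof of this statement; it simply cites \cite[Theorem 10.6]{Halko2011}, so your argument is precisely the one the paper defers to.
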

\begin{proof}
\cite[Theorem 10.6]{Halko2011}
\end{proof}

\begin{lemma}[Singular values of operators and their matrices]
\label{thm:svals}
Let the singular values $\underline{\gls{singular_value}}_{j}$ of the matrix $\underline{\gls{transferoperator}}$ be sorted in non-increasing order, i.e. $\underline{\gls{singular_value}}_{1} \geq \hdots \geq \underline{\gls{singular_value}}_{\gls{dimension_transfer_range}}$ and $\sigma_{j}$ be the singular values of the operator $\gls{transferoperator}$, also sorted non-increasing. Then it holds 
\begin{equation}
\underline{\gls{singular_value}}_{j} \leq
(\lambda^{\gls{source_inner}}_{max}/\lambda^{\gls{range_inner}}_{min})^{1/2}
\sigma_{j} \qquad \forall j \in \{1,\hdots,N_T\}.
\end{equation}
\end{lemma}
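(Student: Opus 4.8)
The plan is to prove the bound through the Courant--Fischer min--max characterization of singular values, combined with the spectral norm equivalences induced by the inner product matrices $\gls{source_inner}$ and $\gls{range_inner}$. The singular values $\sigma_j$ of the operator $\gls{transferoperator}$ are taken with respect to the Hilbert space inner products of $\gls{transfer_source}$ and $\gls{transfer_range}$, while the matrix singular values $\underline{\gls{singular_value}}_j$ of $\underline{\gls{transferoperator}} = \gls{ritzds[R]} \gls{transferoperator} \gls{ritzdsinv}$ are the usual Euclidean ones; the whole difficulty is one of changing metrics.

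First I would record the norm equivalences. For any $\xi \in \gls{transfer_source}$ with coordinate vector $\underline\xi = \gls{ritzds}\xi$ we have $\norm{\xi}_{\gls{transfer_source}}^2 = \underline\xi^T \gls{source_inner} \underline\xi$, so $\lambda^{\gls{source_inner}}_{min}\norm{\underline\xi}_2^2 \leq \norm{\xi}_{\gls{transfer_source}}^2 \leq \lambda^{\gls{source_inner}}_{max}\norm{\underline\xi}_2^2$, and analogously for $\gls{transfer_range}$ with $\gls{range_inner}$. Since $\gls{ritzds[R]}(\gls{transferoperator}\xi) = \underline{\gls{transferoperator}}\underline\xi$, these give, for every $\underline\xi \neq 0$, the pointwise Rayleigh-quotient estimate
\begin{equation}
\frac{\norm{\gls{transferoperator}\xi}_{\gls{transfer_range}}}{\norm{\xi}_{\gls{transfer_source}}}
= \frac{\left((\underline{\gls{transferoperator}}\underline\xi)^T \gls{range_inner} (\underline{\gls{transferoperator}}\underline\xi)\right)^{1/2}}{\left(\underline\xi^T \gls{source_inner}\underline\xi\right)^{1/2}}
\geq \sqrt{\frac{\lambda^{\gls{range_inner}}_{min}}{\lambda^{\gls{source_inner}}_{max}}}\,
\frac{\norm{\underline{\gls{transferoperator}}\underline\xi}_2}{\norm{\underline\xi}_2}.
\end{equation}

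Next I would invoke Courant--Fischer in the form $\sigma_j = \min_{\dim V = \gls{dimension_transfer_source}-j+1}\ \max_{\xi \in V \nnull} \norm{\gls{transferoperator}\xi}_{\gls{transfer_range}}/\norm{\xi}_{\gls{transfer_source}}$ for the operator, and the matching identity for $\underline{\gls{singular_value}}_j$ with the Euclidean quotient over subspaces of $\gls{R}^{\gls{dimension_transfer_source}}$. The key bookkeeping point is that $\gls{ritzds}$ is a linear bijection, hence it maps $(\gls{dimension_transfer_source}-j+1)$-dimensional subspaces $V \subset \gls{transfer_source}$ bijectively onto $(\gls{dimension_transfer_source}-j+1)$-dimensional subspaces of $\gls{R}^{\gls{dimension_transfer_source}}$, preserving dimension. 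Taking the maximum of the displayed inequality over any fixed subspace and then the minimum over all such subspaces (the constant factor pulls out of both operations) yields $\sigma_j \geq \sqrt{\lambda^{\gls{range_inner}}_{min}/\lambda^{\gls{source_inner}}_{max}}\,\underline{\gls{singular_value}}_j$, which rearranges to the claim $\underline{\gls{singular_value}}_j \leq (\lambda^{\gls{source_inner}}_{max}/\lambda^{\gls{range_inner}}_{min})^{1/2}\sigma_j$.

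The step I expect to be the main obstacle is getting the direction of every inequality aligned so that exactly $\lambda^{\gls{source_inner}}_{max}$ (the \emph{largest} source eigenvalue) and $\lambda^{\gls{range_inner}}_{min}$ (the \emph{smallest} range eigenvalue) end up as the controlling constants: the source norm must be bounded from above to enlarge the quotient's denominator and the range norm from below to shrink its numerator, so that the operator singular value becomes a \emph{lower} bound for the matrix singular value up to this factor. A secondary point requiring care is confirming that the min--max (rather than max--min) characterization is used consistently for both operator and matrix, and that the subspace correspondence under $\gls{ritzds}$ genuinely makes the two minimizations range over identical index sets; once these are pinned down the estimate is immediate.
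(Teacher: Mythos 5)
Your proof is correct, and it takes a genuinely different route from the paper's. The paper argues entirely at the matrix level: it identifies $\underline{\sigma}_j^2 = \lambda_j(\underline{T}^t\underline{T})$ and $\sigma_j^2 = \lambda_j(\gls{source_inner}^{-1}\underline{T}^t\gls{range_inner}\underline{T})$ (the latter being the matrix of $T^*T$), then combines the fact that the nonzero eigenvalues of $\underline{A}\,\underline{B}$ and $\underline{B}\,\underline{A}$ coincide with Courant-minimax comparisons for products with the symmetric positive definite matrices $\gls{range_inner}$ and $\gls{source_inner}^{-1}$, chaining
$\lambda_{j}(\underline{T}^{t}\underline{T})
\leq \lambda_{j}(\underline{T}^{t}\gls{range_inner}\underline{T})\,\frac{1}{\lambda^{\gls{range_inner}}_{min}}
\leq \lambda_{j}(\gls{source_inner}^{-1}\underline{T}^{t}\gls{range_inner}\underline{T})\,\frac{\lambda^{\gls{source_inner}}_{max}}{\lambda^{\gls{range_inner}}_{min}}$.
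You instead work variationally with the singular values themselves: the two-sided norm equivalences $\lambda_{min}\|\underline{\xi}\|_2^2 \leq \|\xi\|^2 \leq \lambda_{max}\|\underline{\xi}\|_2^2$ give a pointwise comparison of Rayleigh quotients, and the coordinate isomorphism transports $(\gls{dimension_transfer_source}-j+1)$-dimensional subspaces so that the min-max characterizations of $\sigma_j$ and $\underline{\sigma}_j$ can be compared term by term (note you only need that $\gls{ritzds}$ preserves dimension, not the full bijection: each subspace of $\gls{transfer_source}$ maps to \emph{some} admissible subspace of $\gls{R}^{\gls{dimension_transfer_source}}$, which suffices for the one inequality you need). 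What your approach buys is transparency and self-containedness: the provenance of each constant ($\lambda^{\gls{source_inner}}_{max}$ enlarging the denominator, $\lambda^{\gls{range_inner}}_{min}$ shrinking the numerator) is visible at the level of a single Rayleigh quotient, and you never need the matrix representation of the adjoint $T^*$ or the cyclic-permutation-of-eigenvalues fact. What the paper's route buys is compactness and reusable intermediate identities, in particular the explicit formula $\sigma_j^2 = \lambda_j(\gls{source_inner}^{-1}\underline{T}^t\gls{range_inner}\underline{T})$, which ties the operator singular values to generalized matrix eigenvalue problems that can be computed in practice.
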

\begin{proof}
For notational convenience we denote within this proof the $j$-th eigenvalue of a matrix 
$\underline A$ by $\lambda_{j}(\underline A)$. All singular values for $j \in \{ 1, \dots, N_T \}$ are different from zero. Therefore, it holds
\begin{equation}
\underline \sigma_j^2 = \lambda_j(\underline{\gls{transferoperator}}^t \underline{\gls{transferoperator}})
\qquad \text{and} \qquad
\sigma_j^2 = \lambda_j(\gls{source_inner}^{-1} \underline{\gls{transferoperator}}^t \gls{range_inner} \underline{\gls{transferoperator}})
.
\end{equation}
Recall that $\underline{\gls{transferoperator}}$ is the matrix representation of $\gls{transferoperator}$ and note that 
$\gls{source_inner}^{-1} \underline{\gls{transferoperator}}^t \gls{range_inner}$
is the matrix representation of the adjoint operator $\gls{transferoperator}^*$. The non-zero eigenvalues of a product of matrices $\underline A \underline B$ 
are identical to the non-zero eigenvalues of the product $\underline B \underline A$
(see e.g.~\cite[Theorem 1.3.22]{Horn2012}), hence
\begin{equation}
\lambda_j(\gls{source_inner}^{-1} \underline{\gls{transferoperator}}^t \gls{range_inner} \underline{\gls{transferoperator}})
=
\lambda_j(
\underline{\gls{transferoperator}}^t \gls{range_inner} \underline{\gls{transferoperator}} \gls{source_inner}^{-1} )
.
\end{equation}
We may then apply the Courant minimax principle to infer 
\begin{equation}%
\lambda_{j}(\underline{\gls{transferoperator}}^{t}\gls{range_inner}\underline{\gls{transferoperator}}) (\lambda^{\gls{source_inner}}_{max})^{-1} \leq \lambda_{j}(\gls{source_inner}^{-1}\underline{\gls{transferoperator}}^{t}\gls{range_inner}\underline{\gls{transferoperator}}).
\end{equation}
Employing once again cyclic permutation and the Courant minimax principle yields
\begin{equation}\label{est2:proof eigenvalue est}
\lambda_{j}(\underline{\gls{transferoperator}}^{t}\underline{\gls{transferoperator}}) 
=\lambda_{j}(\underline{\gls{transferoperator}} \underline{\gls{transferoperator}}^t) 
\leq \lambda_{j}(\underline{\gls{transferoperator}} \underline{\gls{transferoperator}}^t \gls{range_inner}) \frac{1}{\lambda^{\gls{range_inner}}_{min}} 
\leq \lambda_{j}(\underline{\gls{transferoperator}}^{t}\gls{range_inner}\underline{\gls{transferoperator}}) \frac{1}{\lambda^{\gls{range_inner}}_{min}} \leq \lambda_{j}(\gls{source_inner}^{-1}\underline{\gls{transferoperator}}^{t}\gls{range_inner}\underline{\gls{transferoperator}})\frac{\lambda^{\gls{source_inner}}_{max}}{\lambda^{\gls{range_inner}}_{min}}
\end{equation}
and thus the claim.
\end{proof}

\begin{proposition}[A priori convergence of randomized range finder]
\label{thm:convergence_rate_main}
Let $\lambda^{\gls{source_inner}}_{max}$,
$\lambda^{\gls{source_inner}}_{min}$,
$\lambda^{\gls{range_inner}}_{max}$,
and
$\lambda^{\gls{range_inner}}_{min}$ denote the largest and smallest eigenvalues
of the inner product matrices
$\gls{source_inner}$ and
$\gls{range_inner}$, respectively and let $\gls{transfer_rrange}$ be the outcome of \cref{algo:adaptive_range_approximation}. Then, for $n\geq 4$ it holds
\begin{equation}\label{eq:a priori mean}
\gls{expectedvalue}
\norm{\left(1 - \gls{orthogonalp}_{\gls{transfer_rrange}} \right) \gls{transferoperatorl}}
\leq
\sqrt{\frac{\lambda^{\gls{range_inner}}_{max}}{\lambda^{\gls{range_inner}}_{min}}
\frac{\lambda^{\gls{source_inner}}_{max}}{\lambda^{\gls{source_inner}}_{min}}}
\min_{\overset{k+p=n}{k\geq 2, p\geq 2}}
\left[
\left( 1 + \sqrt{  \frac{k}{p-1}  } \right) \sigma_{k+1} +
\frac{ e \sqrt{n}}{p} \left( \sum_{j > k} \gls{singular_value}^2_j \right) ^{\frac{1}{2}}
\right]
.
\end{equation}
\end{proposition}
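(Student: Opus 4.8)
The plan is to derive \cref{thm:convergence_rate_main} by simply chaining the three preceding results in the order \cref{thm:tomatrix}, \cref{thm:halko106108}, \cref{thm:svals}: the two inner-product-to-matrix conversion estimates supply the geometric prefactors, while Halko's matrix bound supplies the dependence on the singular values. The conceptual point is that all the genuine randomized-linear-algebra content is already contained in \cref{thm:halko106108}; the remaining work is to transport it from the Euclidean matrix world back to the weighted Hilbert-space setting of \cref{algo:adaptive_range_approximation}.

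First I would take expectations on both sides of \cref{thm:tomatrix}, which, since the prefactor is deterministic, gives
\[
\gls{expectedvalue}\,\norm{\left(1 - \gls{orthogonalp}_{\gls{transfer_rrange}}\right)\gls{transferoperatorl}}
\leq
\sqrt{\frac{\lambda^{\gls{range_inner}}_{max}}{\lambda^{\gls{source_inner}}_{min}}}\;
\gls{expectedvalue}\,\norm{\left(1 - \underline{P}_{\gls{transfer_rrange},2}\right)\underline{\gls{transferoperatorl}}}_2 .
\]
This reduces the claim to bounding the expected Euclidean residual of the matrix $\underline{\gls{transferoperatorl}}$ projected onto $\gls{transfer_rrange}$.

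Second, I would argue that this Euclidean residual is controlled by \cref{thm:halko106108} applied to $\underline{\gls{transferoperatorl}}$. The key observation is that, read in coordinates, a random sample $\gls{transferoperator}\gls{ritzdsinv}\underline r$ has coordinate vector $\underline{\gls{transferoperatorl}}\,\underline r$ plus the (fixed) affine contribution, with $\underline r$ a standard Gaussian vector; collecting the samples drawn in the while loop thus produces the span of $\underline{\gls{transferoperatorl}}\,\underline\Omega$ for a standard Gaussian matrix $\underline\Omega$, which is exactly the sketch analysed by Halko. Because the affine part $\gls{transferoperatora}$ is inserted into the basis in line \ref{algo:line:basis_init} and orthogonalised out of every later sample, the space $\gls{transfer_rrange}=\mathrm{span}(B)$ only contains \emph{more} than this Gaussian range; enlarging the space to which one projects can only decrease the Euclidean residual, so $\gls{expectedvalue}\,\norm{(1-\underline{P}_{\gls{transfer_rrange},2})\underline{\gls{transferoperatorl}}}_2$ is bounded above by the expectation in \cref{thm:halko106108}. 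This yields the $\min_{k+p=n}$ bracket in terms of the matrix singular values $\underline{\gls{singular_value}}_{k+1}$ and $\underline{\gls{singular_value}}_j$, and it is here that the hypothesis $n\geq 4$ enters, inherited verbatim from that theorem.

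Third, I would convert the matrix singular values into operator singular values by applying \cref{thm:svals}, $\underline{\gls{singular_value}}_j \leq (\lambda^{\gls{source_inner}}_{max}/\lambda^{\gls{range_inner}}_{min})^{1/2}\,\gls{singular_value}_j$, to the leading term $\underline{\gls{singular_value}}_{k+1}$ and to every summand of $\sum_{j>k}\underline{\gls{singular_value}}_j^2$. Pulling this common factor out of the bracket and multiplying it with the prefactor from the first step produces $\sqrt{\frac{\lambda^{\gls{range_inner}}_{max}}{\lambda^{\gls{range_inner}}_{min}}\frac{\lambda^{\gls{source_inner}}_{max}}{\lambda^{\gls{source_inner}}_{min}}}$, which is precisely the constant claimed. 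The main obstacle is the second step: one must verify that the abstract, $\gls{transfer_source}$-inner-product sampling and the $\gls{transfer_range}$-inner-product orthonormalisation used inside the algorithm reduce, at the coordinate level, to the \emph{unweighted} Gaussian sketch and \emph{Euclidean} projection $\underline{P}_{\gls{transfer_rrange},2}$ assumed in \cref{thm:halko106108}, and that inclusion of the affine part is harmless. The subtlety is that the $\gls{transfer_range}$-orthonormalisation and the Euclidean projection are different maps, so the monotonicity argument must be phrased purely in terms of the \emph{subspace} $\gls{transfer_rrange}$ (which is the same set in either inner product), not the projections themselves; everything downstream is then a mechanical substitution.
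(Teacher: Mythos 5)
Your proposal follows exactly the paper's own proof, which consists of the single sentence ``Combining \cref{thm:tomatrix}, \cref{thm:halko106108} and \cref{thm:svals}, the statement follows,'' i.e.\ the same three-lemma chain in the same order with the same bookkeeping of the eigenvalue prefactors. The additional care you take in the second step --- checking that the algorithm's samples are, in coordinates, a standard Gaussian sketch of $\underline{\gls{transferoperatorl}}$, and that including the affine part $\gls{transferoperatora}$ in the basis only enlarges the subspace and hence can only decrease the Euclidean residual --- is a correct and genuinely useful elaboration of details the paper leaves implicit.
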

\begin{proof}
Combining \cref{thm:tomatrix}, \cref{thm:halko106108} and \cref{thm:svals}, the statement follows.
\end{proof}

The \cref{thm:convergence_rate_main} extends the results in Theorem 10.6
in \cite{Halko2011} to the case of finite dimensional linear operators. The terms consisting of the square root of the conditions of the inner product matrices $\gls{source_inner}$ and $\gls{range_inner}$ in \cref{eq:a priori mean}
are due to our generalization from the spectral matrix norm as considered in \cite{Halko2011} to inner products associated with finite dimensional Hilbert spaces.

\begin{remark}[Improved constants for a priori convergence estimate]
The result of \cref{algo:adaptive_range_approximation},
when interpreted as functions and not as coefficient vectors,
is independent of the choice of the basis in $\gls{transfer_range}$.
Disregarding numerical errors,
the result would be the same if the algorithm was executed in an orthonormal basis in $\gls{transfer_range}$.
Thus, we would expect \cref{thm:convergence_rate_main} to hold
also without the factor
$(\lambda^{\gls{range_inner}}_{max}/\lambda^{\gls{range_inner}}_{min})^{1/2}$.
\end{remark}

\section{Numerical Experiments for Isolated Range Finder}
\label{sec:rangefinder artificial test}
\label{sect:numerical_experiments}
Here we provide numerical examples which only show
the performance of the range finder algorithm, without
any model reduction.
We introduce two artificial examples, designed to
test the range finder algorithm, namely
\exrangea{} and \exrangeb{}.

In this section we demonstrate first that the reduced local spaces generated by \cref{algo:adaptive_range_approximation}
yield an approximation that converges at a nearly optimal rate. Moreover, we validate the a priori error bound in
\cref{eq:a priori mean} and the a posteriori error estimator \cref{eq:a posteriori error estimator}.
To this end, we consider two test cases
for which the singular values of the transfer operator are known (see \cite[Supplementary Materials, Section SM2]{Buhr2018} for details).

The main focus of this subsection is a thorough validation of the theoretical findings in \cref{sec:randomized_range_finder}, including a comprehensive testing on how the results depend on various parameters such as the basis size $n$, the number of test vectors $n_{t}$, and the mesh size.
In addition, CPU time measurements are given.
The second numerical example in \cref{subsect:helmholtz}
examines the behavior of the proposed algorithm in the more challenging case of the Helmholtz equation.
For the implementation of the test cases,
only NumPy and SciPy, but no FEM software library was used.

\subsubsection{Analytic interface problem}\label{subsect:analytic}
\begin{figure}
\begin{center}
%%Created by jPicEdt 1.6-pre1 (revision 2039): mixed JPIC-XML/LaTeX format
%%Sun Feb 17 13:30:38 CET 2019
%%Begin JPIC-XML
%<?xml version="1.0" standalone="yes"?>
%<jpic x-min="15" x-max="78" y-min="15" y-max="53" auto-bounding="true">
%<multicurve stroke-width="0.7"
%	 points="(20,20);(20,20);(60,20);(60,20)"
%	 />
%<multicurve stroke-width="0.7"
%	 points="(20,40);(20,40);(60,40);(60,40)"
%	 />
%<multicurve stroke-width="0.7"
%	 stroke-color="#6666ff"
%	 points="(20,20);(20,20);(20,40);(20,40)"
%	 />
%<multicurve stroke-width="0.7"
%	 stroke-color="#ff3333"
%	 points="(40,20);(40,20);(40,40);(40,40)"
%	 />
%<multicurve stroke-width="0.7"
%	 stroke-color="#6666ff"
%	 points="(60,20);(60,20);(60,40);(60,40)"
%	 />
%<text anchor-point="(15,30)"
%	 >
%$\gls{boundary}_{out}$
%</text>
%<text anchor-point="(65,30)"
%	 >
%$\gls{boundary}_{out}$
%</text>
%<text anchor-point="(45,30)"
%	 >
%$\gls{boundary}_{in}$
%</text>
%<text anchor-point="(30,45)"
%	 >
%$\gls{boundary}_{N}$
%</text>
%<text anchor-point="(50,15)"
%	 >
%$\gls{boundary}_{N}$
%</text>
%<multicurve stroke-width="0.4"
%	 right-arrow="t-bar-centered"
%	 points="(40,50);(40,50);(60,50);(60,50)"
%	 left-arrow="t-bar-centered"
%	 />
%<text anchor-point="(50,53)"
%	 >
%\gls{L}
%</text>
%<multicurve stroke-width="0.4"
%	 right-arrow="t-bar-centered"
%	 points="(75,40);(75,40);(75,20);(75,20)"
%	 left-arrow="t-bar-centered"
%	 />
%<text anchor-point="(78,30)"
%	 >
%\gls{W}
%</text>
%</jpic>
%%End JPIC-XML
\ifx\JPicScale\undefined\def\JPicScale{1}\fi
\unitlength \JPicScale mm
\begin{tikzpicture}[x=\unitlength,y=\unitlength,inner sep=0pt]
\draw [line width=0.7mm](20,20) -- (60,20);
\draw [line width=0.7mm](20,40) -- (60,40);
\definecolor{userLineColour}{rgb}{0.4,0.4,1}
\draw [line width=0.7mm,color=userLineColour](20,20) -- (20,40);
\definecolor{userLineColour}{rgb}{1,0.2,0.2}
\draw [line width=0.7mm,color=userLineColour](40,20) -- (40,40);
\definecolor{userLineColour}{rgb}{0.4,0.4,1}
\draw [line width=0.7mm,color=userLineColour](60,20) -- (60,40);
\draw (15,30) node {$\gls{boundary}_{out}$};
\draw (65,30) node {$\gls{boundary}_{out}$};
\draw (45,30) node {$\gls{boundary}_{in}$};
\draw (30,45) node {$\gls{boundary}_{N}$};
\draw (50,15) node {$\gls{boundary}_{N}$};
\draw [line width=0.4mm,<->,>=|](40,50) -- (60,50);
\draw (50,53) node {\gls{L}};
\draw [line width=0.4mm,<->,>=|](75,40) -- (75,20);
\draw (78,30) node {\gls{W}};
\end{tikzpicture}
\end{center}
\caption[Geometry of \exrangea]{Geometry of \exrangea{} and \exrangeb}
\label{fig:illustration geometry}
\end{figure}
To analyze the behavior of the proposed algorithm, we first
apply it to an analytic problem where the singular values of the transfer operator are known. 
We refer to this numerical example as \exrangea{}. We consider the problem $\mathcal{A}=-\Delta$, 
$f =0$, and assume that $\gls{domain} = (-\gls{L},\gls{L})\times (0,\gls{W})$, 
$\gls{boundary}_{out}=\{-\gls{L},\gls{L}\}\times (0,\gls{W})$, and $\gls{boundary}_{in} = \{0\}\times (0,\gls{W})$. 
Moreover, we prescribe homogeneous Neumann boundary conditions on $\partial \gls{domain} \setminus \gls{boundary}_{out}$ 
and arbitrary Dirichlet boundary conditions on $\gls{boundary}_{out}$,
see also \cref{fig:illustration geometry} (left).
The analytic solution 
is known to be
\begin{equation}\label{eq:separation of variables}
u(x_{1},x_{2}) = a_{0} + b_{0}x_{1} + \sum_{n=1}^{\infty} \cos (n\pi\frac{x_{2}}{\gls{W}}) \left[a_{n}\cosh(n\pi \frac{x_{1}}{\gls{W}}) + b_{n}\sinh(n\pi \frac{x_{1}}{\gls{W}})\right].
\end{equation}
For further discussion see \cite{Buhr2018}. This example was introduced in 
\cite[Remark 3.3]{Smetana2016}.
We define \gls{transfer_source} to be the \gls{fe} approximation of the space $L^2(\gls{boundary}_{out})$
and \gls{transfer_range} to be the \gls{fe} approximation of the $L^2(\gls{boundary}_{in})$
with the usual $L^2$-inner 
product on the respective interfaces. 
The transfer operator maps the Dirichlet data to the inner interface,
i.e.~with
$H$ as the space of all discrete solutions, we define
\begin{equation}\label{eq:mod transfer operator}
\gls{transferoperator}(v|_{\gls{boundary}_{out}}) := v|_{\gls{boundary}_{in}} \qquad \forall v \in H.
\end{equation}
As this transfer operator is linear, it holds $\gls{transferoperator} = \gls{transferoperatorl}$.
The singular values of the transfer operator are
$
\sigma_{i}=1/ \left(\sqrt{2}\cosh((i-1)\pi \gls{L} / \gls{W})\right).
$ 

For the experiments, we use $\gls{L} = \gls{W} = 1$, unless stated otherwise. We discretize the problem by meshing it with a regular mesh of squares of size $\gls{h} \cdot \gls{h}$, 
where $1/\gls{h}$ ranges from 20 to 320 in the experiments. On each square, we use bilinear 
Q1 ansatz functions,
which results in e.g.~51,681 \gls{dof}s, $\gls{dimension_transfer_source} = 322$ and $\gls{dimension_transfer_range}= 161$ for $1/\gls{h}=160$.

In \cref{fig:random modes} the first five basis vectors
as generated by \cref{algo:adaptive_range_approximation}
in one particular run
are shown side by side with the first five basis vectors
of the optimal space, i.e.~the optimal modes in
\cref{fig:optimal modes}.
While not identical, the basis functions
generated using the randomized approach are
smooth and have strong similarity with the optimal ones.
Unless stated otherwise, we
present statistics over 100,000 evaluations,
use a maximum failure probability of $\gls{epsalgofail} = 10^{-15}$,
and use $\min(\gls{dimension_transfer_source}, \gls{dimension_transfer_range})$ as an upper bound for 
the rank of $\gls{transferoperatorl}$, $N_T$.

\begin{figure}
\centering
\begin{subfigure}[t]{0.4\textwidth}
\begin{tikzpicture}
\begin{axis}[
    width=5.5cm,
    height=4.5cm,
    xlabel=$x_2$,
    ylabel=$\phi_i^{sp}(x_2)$,
    legend pos=outer north east,
    ymin=-2.2,
    ymax=2.2,
    grid=both,
    grid style={line width=.1pt, draw=gray!20},
    major grid style={line width=.2pt,draw=gray!50},
  ]
  \addplot+[blue,  mark=x,        mark repeat=40, thick] table[x expr=\coordindex / 160, y index=0] {omodes.dat};
  \addplot+[oran,  mark=triangle, mark repeat=40, thick] table[x expr=\coordindex / 160, y index=1] {omodes.dat};
  \addplot+[lila,  mark=diamond,  mark repeat=40, thick] table[x expr=\coordindex / 160, y index=2] {omodes.dat};
  \addplot+[gruen,  mark=o,        mark repeat=40, thick] table[x expr=\coordindex / 160, y index=3] {omodes.dat};
  \addplot+[grau,  mark=square,   mark repeat=40, thick] table[x expr=\coordindex / 160, y index=4] {omodes.dat};
\end{axis}
\end{tikzpicture}
\subcaption{Optimal basis of $\widetilde R^5$}
\label{fig:optimal modes}
\end{subfigure}\quad
\begin{subfigure}[t]{0.45\textwidth}
\centering
\begin{tikzpicture}
\begin{axis}[
    width=5.5cm,
    height=4.5cm,
    xlabel=$x_2$,
    ylabel=$\phi_i^{rnd}(x_2)$,
    ymin=-2.2,
    ymax=2.2,
    legend pos=outer north east,
    grid=both,
    grid style={line width=.1pt, draw=gray!20},
    major grid style={line width=.2pt,draw=gray!50},
  ]
  \addplot+[blue,  mark=x,        mark repeat=40, thick] table[x expr=\coordindex / 160, y index=0] {modes.dat};
  \addplot+[oran,  mark=triangle, mark repeat=40, thick] table[x expr=\coordindex / 160, y index=1] {modes.dat};
  \addplot+[lila,  mark=diamond,  mark repeat=40, thick] table[x expr=\coordindex / 160, y index=2] {modes.dat};
  \addplot+[gruen,  mark=o,        mark repeat=40, thick] table[x expr=\coordindex / 160, y index=3] {modes.dat};
  \addplot+[grau,  mark=square,   mark repeat=40, thick] table[x expr=\coordindex / 160, y index=4] {modes.dat};
  \legend{1,2,3,4,5}
\end{axis}
\end{tikzpicture}
\subcaption{Example basis of $\widetilde R^5$ generated by \cref{algo:adaptive_range_approximation}}
\label{fig:random modes}
\end{subfigure}
\caption[Comparison of optimal with randomized basis functions for \exrangea{}.]
{
Comparison of optimal basis functions with the basis functions generated by 
\cref{algo:adaptive_range_approximation} for \exrangea{}.
Basis functions are normalized to an $L^2(\gls{boundary}_{in})$ norm of one.
(reproduction: \cref{repro:img:modes})
}
\label{img:modes}
\end{figure}

\begin{figure}
\centering
\begin{subfigure}[t]{0.55\textwidth}
\centering
\begin{tikzpicture}
\begin{semilogyaxis}[
    width=5cm,
    height=5cm,
    xmin=-1,
    xmax=13,
    ymin=1e-17,
    ymax=1e2,
    xlabel=basis size $n$,
    ylabel=$\norm{\left(1 - \gls{orthogonalp}_{\gls{transfer_rrange}} \right) \gls{transferoperatorl}}$,
    grid=both,
    grid style={line width=.1pt, draw=gray!20},
    major grid style={line width=.2pt,draw=gray!50},
    minor xtick={1,2,3,4,5,6,7,8,9,10,11,12,13,14,15},
    minor ytick={1e-15, 1e-14, 1e-13, 1e-12, 1e-11, 1e-10, 1e-9, 1e-8, 1e-7, 1e-6, 1e-5, 1e-4, 1e-3, 1e-2, 1e-1, 1e0, 1e1, 1e2, 1e3},
    ytick={1e0, 1e-5, 1e-10, 1e-15},
    max space between ticks=25,
    legend style={at={(1.4,1.35)},anchor=north east},
  ]
  \addplot+[densely dotted, blue,  mark=o, mark options={solid}] table[x expr=\coordindex,y index=4] {twosquares_percentiles.dat};
  \addplot+[densely dashed, blue,  mark=x, mark options={solid}] table[x expr=\coordindex,y index=3] {twosquares_percentiles.dat};
  \addplot+[solid,          black, thick, mark=x] table[x expr=\coordindex,y index=2] {twosquares_percentiles.dat};
  \addplot+[densely dashed, gruen, mark=x, mark options={solid}] table[x expr=\coordindex,y index=1] {twosquares_percentiles.dat};
  \addplot+[densely dotted, gruen, mark=o, mark options={solid}] table[x expr=\coordindex,y index=0] {twosquares_percentiles.dat};
  \addplot+[solid,          red,   mark=0, thick] table[x index=0, y index=1] {experiment_twosquares_svddecay_8.dat};
  \legend{max, 75 percentile, 50 percentile, 25 percentile, min, $\sigma_{i+1}$};
\end{semilogyaxis}
\end{tikzpicture}
\subcaption{Percentiles, worst case, and best case}
\label{fig:deviation percentiles}
\end{subfigure}
\begin{subfigure}[t]{0.4\textwidth}
\centering
\begin{tikzpicture}
\begin{semilogyaxis}[
    width=5cm,
    height=5cm,
    xmin=-1,
    xmax=13,
    ymin=1e-17,
    ymax=1e2,
    xlabel=basis size $n$,
    yticklabels={,,},
    grid=both,
    grid style={line width=.1pt, draw=gray!20},
    major grid style={line width=.2pt,draw=gray!50},
    minor xtick={1,2,3,4,5,6,7,8,9,10,11,12,13,14,15},
    minor ytick={1e-16, 1e-15, 1e-14, 1e-13, 1e-12, 1e-11, 1e-10, 1e-9, 1e-8, 1e-7, 1e-6, 1e-5, 1e-4, 1e-3, 1e-2, 1e-1, 1e0, 1e1, 1e2, 1e3},
    ytick={1e0, 1e-5, 1e-10, 1e-15},
    max space between ticks=25,
    legend style={at={(1.1,1.35)},anchor=north east},
  ]
  \addplot+[] table[x index=0, y index=1]{twosquares_expectation.dat};
  \addplot+[solid,          black, thick, mark=x] table[x expr=\coordindex,y index=0] {twosquares_means.dat};
  \legend{a priori limit for mean, mean};
\end{semilogyaxis}
\end{tikzpicture}
\subcaption{Mean of deviation and a priori limit}
\label{fig:deviation mean}
\end{subfigure}
\caption[Projection error over basis size for \exrangea{}.]
{Projection 
error $\sup_{\xi \in \gls{transfer_source} \nnull} \inf_{\zeta \in \gls{transfer_rrange}} \frac{\norm{ \gls{transferoperator}\xi - \zeta }_{\gls{transfer_range}}}{\norm{\xi }_{\gls{transfer_source}}} = \norm{\left(1 - \gls{orthogonalp}_{\gls{transfer_rrange}} \right) \gls{transferoperatorl}}$
over basis size $n$ for \exrangea{}
with mesh size $\gls{h}=1/160$.
(reproduction: \cref{repro:fig:interface_a_priori})
}
\label{fig:interface_a_priori}
\end{figure}

We first quantify the approximation quality of the spaces
$\gls{transfer_rrange}$ in dependence of the basis size $n$, disregarding
the adaptive nature of \cref{algo:adaptive_range_approximation}.
In \cref{fig:deviation percentiles}, statistics
over the achieved projection error $\norm{\left(1 - \gls{orthogonalp}_{\gls{transfer_rrange}} \right) \gls{transferoperatorl}}$
are shown along with the singular values 
$\gls{singular_value}_{n+1}$ of the transfer operator $\gls{transferoperator}$.
$\gls{singular_value}_{n+1}$ is a lower bound for the
projection error and it is the projection error
that is achieved using an optimal basis.
It shows that while the algorithm most of the time
produces a basis nearly as good as the optimal basis, sometimes it
needs two or three basis vectors more. 
This is in line with the predictions by theory, see the discussion
after \cref{thm:convergence_rate_main}.
The mean value of the projection error 
converges with the same rate as the a priori error bound
given in
\cref{thm:convergence_rate_main}
with increasing basis size. The a priori error bound is consistently
around three orders of magnitude larger than the actual error, 
until the actual error hits the numerical noise between
$10^{-14}$ and $10^{-15}$, see \cref{fig:deviation mean}. This is mainly due to the fact that the singular 
values decay very fast for the present example and an index shift in the singular values by $p\geq 2$ as 
required by the a priori error bound \cref{eq:a priori mean} therefore results in a much smaller error 
than predicted by the a priori error bound.  
Note that we have 
$(\lambda_{max}^{\gls{range_inner}}/\lambda_{min}^{\gls{range_inner}})^{1/2}\approx (\lambda_{max}^{\gls{source_inner}}/\lambda_{min}^{\gls{source_inner}})^{1/2}\approx 2$. 

The adaptive behavior of \cref{algo:adaptive_range_approximation}
is analyzed in \cref{fig:interface_adaptive_quartiles}.
\cref{fig:adaptive performance} shows that
for $\gls{number_testvectors} = 10$
the algorithm succeeded to generate a space with the
requested approximation quality every single time in 
the 100,000 test runs and most of the time,
the approximation quality is about one or two
orders of magnitude better than required.
\cref{fig:number of testvecs} shows
the influence of the number of test vectors $\gls{number_testvectors}$:
With a low number of test vectors like 3 or 5, 
the algorithm produces spaces with an approximation
quality much better than requested, which 
is unfavorable as the basis sizes are larger than 
necessary. 10 or 20 test vectors seem to be a 
good compromise, as enlarging $\gls{number_testvectors}$
to 40 or 80 results in only little improvements
while increasing computational cost. This different behavior of \cref{algo:adaptive_range_approximation} for various numbers of test vectors $n_{t}$ is due to the scaling of the effectivity of the a posteriori error estimator $\eta\left(\left(1 - \gls{orthogonalp}_{\gls{transfer_rrange}} \right) \gls{transferoperatorl}, \gls{number_testvectors}, \gls{epstestfail}\right)$ as defined in \cref{eq:effectivity} in the number of test vectors $n_{t}$: The median effectivity $\eta\left(\left(1 - \gls{orthogonalp}_{\gls{transfer_rrange}} \right) \gls{transferoperatorl}, \gls{number_testvectors}, \gls{epstestfail}\right)$
is 29.2 for $\gls{number_testvectors} = 10$, 10.4 for $\gls{number_testvectors} = 20$, and 6.1 for $\gls{number_testvectors} = 40$. We may thus also  conclude that the a posteriori error estimator \cref{eq:a posteriori error estimator} is a sharp bound for the present test case.

The quality of the produced spaces $\gls{transfer_rrange}$
should be independent of the mesh size $\gls{h}$.
\cref{fig:h dependency deviation}
confirms this.
After a preasymptotic regime, the deviation $\norm{\left(1 - \gls{orthogonalp}_{\gls{transfer_rrange}} \right) \gls{transferoperatorl}}$
is independent of the mesh size.
In the preasymptotic regime, the finite element
space is not capable of approximating the
corresponding modes.
But while the deviation $\norm{\left(1 - \gls{orthogonalp}_{\gls{transfer_rrange}} \right) \gls{transferoperatorl}}$
is independent of the mesh size, the norm of the test
vectors used in the a posteriori error estimator
in \cref{algo:adaptive_range_approximation}
is not (see \cref{fig:h dependency testvecnorm}).
The maximum norm of test vectors scales with the deviation and with $\sqrt{\gls{h}}$.
In the adaptive algorithm, the scaling with $\sqrt{\gls{h}}$ is compensated by the factor 
$({\lambda^{\gls{source_inner}}_{min}})^{-1/2}$ in $\cest$.
To analyze the behavior in $\gls{h}$, the geometry parameters were chosen as $\gls{L}=0.5$ and $\gls{W}=1$
to have a slower decay of the singular values of the transfer operator.

To examine CPU times we use \exrangea{}
in a larger configuration
with $\gls{L}=1$, $\gls{W}=8$ and $1/\gls{h} = 200$.
This results in 638.799 unknowns,
$\gls{dimension_transfer_source} = 3202$, and $\gls{dimension_transfer_range} = 1601$.
The
measured CPU times
for a simple, single threaded implementation
are given in 
\cref{tab:cpu_times}.
The transfer operator is implemented
implicitly. Its matrix is not
assembled. Instead, the corresponding problem
is solved using the sparse direct solver 
SuperLU \cite{superlu_ug99,superlu99}
each
time the operator is applied.
For \cref{algo:adaptive_range_approximation},
a target accuracy $\algotol$ of $10^{-4}$,
the number of testvectors $\gls{number_testvectors} = 20$,
and a maximum failure probability $\gls{epsalgofail} = 10^{-15}$
is used. In one test run, it resulted in an approximation space $\gls{transfer_rrange}$
of dimension $39$. It only evaluated the operator $n + \gls{number_testvectors} = 59$ times.
Each operator evaluation was measured to take $0.301$ seconds,
so a runtime of approximately $(n + \gls{number_testvectors}) * 0.301\mathrm{s} \approx 17.8$s is expected.
The measured runtime of 20.4 seconds is slightly higher, due to the
orthonormalization of the basis vectors and the projection of the test vectors.

CPU times for the calculation of the optimal space of same size
are given for comparison. 
The ``eigs'' function in ``scipy.sparse.linalg'', which is based on ARPACK, is used
to find the eigensystem of
$\gls{transferoperator} \gls{transferoperator}^*$.
However, the calculation using ARPACK is not adaptive. 
To employ ARPACK, the required number of vectors
has to be known in advance,
which is why we expect that in general,
the comparison would be even more in favor of the adaptive randomized
algorithm.

\begin{table}
\begin{center}
\begin{tabular}{r|c}
\multicolumn{2}{c}{\emph{Properties of transfer operator}}\\
\hline
unknowns of corresponding problem& 638,799\\
LU factorization time & 14.1 s\\
operator evaluation time & 0.301 s \\
adjoint operator evaluation time & 0.301 s
\end{tabular}\\
\vspace{10pt}
\begin{tabular}{r|c|c}
\multicolumn{3}{c}{\emph{Properties of basis generation}}\\
\cline{2-3}
& \cref{algo:adaptive_range_approximation} & Scipy/ARPACK \\
\hline
(resulting) basis size $n$ & 39 & 39 \\
operator evaluations & 59 & 79 \\
adjoint operator evaluations & 0 & 79 \\
\hline
execution time (w/o factorization) & 20.4 s & 47.9 s
\end{tabular}
\end{center}
\caption[CPU times for \exrangea{}.]
{
CPU times for \exrangea{}
with $\gls{L}=1$, $\gls{W}=8$ and $1/\gls{h} = 200$
in single threaded implementation.
(reproduction: \cref{repro:tab:cpu_times})
}
\label{tab:cpu_times}
\end{table}

\begin{figure}
\centering
\begin{subfigure}[t]{0.54\textwidth}
\centering
\begin{tikzpicture}
\begin{loglogaxis}[
    width=6cm,
    height=4cm,
    xmin=1e-13,
    xmax=1e4,
    x dir=reverse,
    ymin=1e-15,
    ymax=1e1,
    xlabel=target error $\algotol$,
    ylabel=$\norm{\left(1 - \gls{orthogonalp}_{\gls{transfer_rrange}} \right) \gls{transferoperatorl}}$,
    grid=both,
    grid style={line width=.1pt, draw=gray!20},
    major grid style={line width=.2pt,draw=gray!50},
    minor xtick={1e-15, 1e-14, 1e-13, 1e-12, 1e-11, 1e-10, 1e-9, 1e-8, 1e-7, 1e-6, 1e-5, 1e-4, 1e-3, 1e-2, 1e-1, 1e0, 1e1, 1e2, 1e3},
    minor ytick={1e-15, 1e-14, 1e-13, 1e-12, 1e-11, 1e-10, 1e-9, 1e-8, 1e-7, 1e-6, 1e-5, 1e-4, 1e-3, 1e-2, 1e-1, 1e0, 1e1, 1e2, 1e3},
    xtick={1e3, 1e0, 1e-3, 1e-6, 1e-9, 1e-12},
    ytick={1e3, 1e0, 1e-3, 1e-6, 1e-9, 1e-12},
    max space between ticks=25,
    legend style={at={(1.2,1.75)},anchor=north east},
  ]
  \addplot+[densely dotted, blue,  thick, mark=none] table[x index=0,y index=5] {twosquares_adaptive_convergence.dat};
  \addplot+[densely dashed, blue,  thick, mark=none] table[x index=0,y index=4] {twosquares_adaptive_convergence.dat};
  \addplot+[solid,          black, thick, mark=none] table[x index=0,y index=3] {twosquares_adaptive_convergence.dat};
  \addplot+[densely dashed, red,   thick, mark=none] table[x index=0,y index=2] {twosquares_adaptive_convergence.dat};
  \addplot+[densely dotted, red,   thick, mark=none] table[x index=0,y index=1] {twosquares_adaptive_convergence.dat};
    \addplot+[solid, grau, thick, mark=o,mark repeat=10] table[x index=0,y index=0] {twosquares_adaptive_convergence.dat};
  \legend{max, 75 percentile, 50 percentile, 25 percentile, min,$y=x$};
\end{loglogaxis}
\end{tikzpicture}
\subcaption{Quartiles for 10 test vectors.}
\label{fig:adaptive performance}
\end{subfigure}
\begin{subfigure}[t]{0.44\textwidth}
\centering
\begin{tikzpicture}
\begin{loglogaxis}[
    width=6cm,
    height=4cm,
    xmin=1e-13,
    xmax=1e4,
    x dir=reverse,
    ymin=1e-15,
    ymax=1e1,
    xlabel=target error $\algotol$,
    yticklabels={,,},
    grid=both,
    grid style={line width=.1pt, draw=gray!20},
    major grid style={line width=.2pt,draw=gray!50},
    minor xtick={1e-15, 1e-14, 1e-13, 1e-12, 1e-11, 1e-10, 1e-9, 1e-8, 1e-7, 1e-6, 1e-5, 1e-4, 1e-3, 1e-2, 1e-1, 1e0, 1e1, 1e2, 1e3},
    minor ytick={1e-15, 1e-14, 1e-13, 1e-12, 1e-11, 1e-10, 1e-9, 1e-8, 1e-7, 1e-6, 1e-5, 1e-4, 1e-3, 1e-2, 1e-1, 1e0, 1e1, 1e2, 1e3},
    xtick={1e3, 1e0, 1e-3, 1e-6, 1e-9, 1e-12},
    ytick={1e3, 1e0, 1e-3, 1e-6, 1e-9, 1e-12},
    legend style={at={(1.2,1.75)},anchor=north east},
  ]
  \addplot+[mark=square,      mark repeat=10, thick] table[x index=0,y index=1] {twosquares_adaptive_num_testvecs.dat};
  \addplot+[mark=o     , mark repeat=12, thick,solid] table[x index=0,y index=2] {twosquares_adaptive_num_testvecs.dat};
  \addplot+[mark=none,   mark repeat=14, thick,solid] table[x index=0,y index=3] {twosquares_adaptive_num_testvecs.dat};
  \addplot+[mark=triangle,  mark repeat=16, thick,solid] table[x index=0,y index=4] {twosquares_adaptive_num_testvecs.dat};
  \addplot+[mark=diamond, mark repeat=18, thick,solid] table[x index=0,y index=5] {twosquares_adaptive_num_testvecs.dat};
  \addplot+[mark=x,      mark repeat=20, thick,solid] table[x index=0,y index=6] {twosquares_adaptive_num_testvecs.dat};
    \addplot+[densely dashed,grau,thick,mark=none] table[x index=0,y index=0] {twosquares_adaptive_num_testvecs.dat};
  \legend{
    $\gls{number_testvectors} = 3$,
    $\gls{number_testvectors} = 5$,
    $\gls{number_testvectors} = 10$,
    $\gls{number_testvectors} = 20$,
    $\gls{number_testvectors} = 40$,
    $\gls{number_testvectors} = 80$,
    $y=x$,
  };
\end{loglogaxis}
\end{tikzpicture}
\subcaption{Maximum error for given number of test vectors.}
\label{fig:number of testvecs}
\end{subfigure}
\caption[Projection error over target projection error for \exrangea{}.]
{Projection
error $\sup_{\xi \in \gls{transfer_source} \nnull} \inf_{\zeta \in \gls{transfer_rrange}} \frac{\norm{ \gls{transferoperator}\xi - \zeta }_{\gls{transfer_range}}}{\norm{\xi }_{\gls{transfer_source}}} = \norm{\left(1 - \gls{orthogonalp}_{\gls{transfer_rrange}} \right) \gls{transferoperatorl}}$
over target projection error for \exrangea{}
with mesh size $\gls{h}=1/160$.
(reproduction: \cref{repro:fig:interface_adaptive_quartiles})
}
\label{fig:interface_adaptive_quartiles}
\end{figure}

\begin{figure}
\centering
\begin{subfigure}[t]{0.4\textwidth}
\centering
\begin{tikzpicture}
\begin{loglogaxis}[
    width=4.5cm,
    height=4.5cm,
    xtick={20,40,80,160,320},
    xticklabels={20,40,80,160,320},
    ytick={1e-10,1e-8, 1e-6, 1e-4, 1e-2, 1e0},
    xlabel=1/\gls{h},
    ylabel=median$\left(\norm{\left(1 - \gls{orthogonalp}_{\gls{transfer_rrange}} \right) \gls{transferoperatorl}}\right)$,
    legend pos=outer north east,
    grid=both,
    major grid style={line width=.2pt,draw=gray!70},
  ]
  \addplot+[] table[x index=0, y index=1] {experiment_h_deviation.dat};
  \addplot+[] table[x index=0, y index=2] {experiment_h_deviation.dat};
  \addplot+[] table[x index=0, y index=3] {experiment_h_deviation.dat};
  \addplot+[] table[x index=0, y index=4] {experiment_h_deviation.dat};
  \addplot+[] table[x index=0, y index=5] {experiment_h_deviation.dat};
  \addplot+[] table[x index=0, y index=6] {experiment_h_deviation.dat};
\end{loglogaxis}
\end{tikzpicture}
\subcaption{\gls{h} dependency of deviation}
\label{fig:h dependency deviation}
\end{subfigure}
\begin{subfigure}[t]{0.55\textwidth}
\centering
\begin{tikzpicture}
\begin{loglogaxis}[
    width=4.5cm,
    height=4.5cm,
    xtick={20,40,80,160,320},
    xticklabels={20,40,80,160,320},
    ytick={0.1,0.2,0.3,0.4},
    yticklabels={0.1,0.2,0.3,0.4},
    xlabel=1/\gls{h},
    ylabel=median$\left(\frac{\max_i \norm{\left(1 - \gls{orthogonalp}_{\gls{transfer_rrange}} \right) \gls{transferoperatorl}\gls{ritzdsinv} \underline r_i}}{\norm{\left(1 - \gls{orthogonalp}_{\gls{transfer_rrange}} \right) \gls{transferoperatorl}}}\right)$,
    legend pos=outer north east,
    grid=both,
    major grid style={line width=.2pt,draw=gray!70},
  ]
  \addplot+[] table[x index=0, y index=1] {experiment_h_testvecs.dat};
  \addplot+[] table[x index=0, y index=2] {experiment_h_testvecs.dat};
  \addplot+[] table[x index=0, y index=3] {experiment_h_testvecs.dat};
  \addplot+[] table[x index=0, y index=4] {experiment_h_testvecs.dat};
  \addplot+[] table[x index=0, y index=5] {experiment_h_testvecs.dat};
  \addplot+[] table[x index=0, y index=6] {experiment_h_testvecs.dat};
  \addplot+[mark=none, domain=20:320] {2*x^(-0.5)};
  \legend{n=2, n=4, n=6, n=8, n=10, n=12, $c \sqrt{\gls{h}}$};
\end{loglogaxis}
\end{tikzpicture}
\subcaption{\gls{h} dependency of testvector norm}
\label{fig:h dependency testvecnorm}
\end{subfigure}
\caption[\gls{h} dependency in \exrangea{}]
{\gls{h} dependency in \exrangea{};
Statistics over 10,000 samples.
(reproduction: \cref{repro:fig:h_dependency})
}
\label{fig:h_dependency}
\end{figure}

\subsubsection{Helmholtz Equation}\label{subsect:helmholtz}
In this subsection we analyze the behavior of the proposed algorithm
in a numerical test case approximating the solution of
the Helmholtz equation. The domain $\gls{domain}$, the boundaries $\gls{boundary}_{in}$
and $\gls{boundary}_{out}$ and the boundary conditions are the same as in \cref{subsect:analytic},
only the operator $\mathcal{A}$ differs and is defined as
$\mathcal{A} := -\Delta - \gls{helmholtz_wavenumber}^2$ in this subsection where \gls{helmholtz_wavenumber}
is the wave number.
As for \exrangea{}, it has 51,681 \gls{dof}s, $\gls{dimension_transfer_source} = 322$ and $\gls{dimension_transfer_range}= 161$ for $1/\gls{h}=160$.
We refer to this numerical example as \exrangeb{}.
We assume the problem to be inf-sup stable and thus uniquely solvable,
which is the case as long as it is not in a resonant configuration.

\input{figure_helmholtz_1.tex}
For $\gls{helmholtz_wavenumber}=0$ we obtain \exrangea{}.
We observe that the singular values of the transfer operator
first have a plateau and then decay exponentially,
see \cref{fig:helmholtz_svd}.
The longer the plateau, the faster is the exponential decay.
The length of the plateau is observed to be very close
to the length of the inner interface divided by 
a half wavelength, i.e.~ $1/(\lambda / 2) = \gls{helmholtz_wavenumber} / \pi$.
Comparing this with the
analysis of Finite Element methods for
the Helmholtz equation
(cf.~\cite{ihlenburg2006finite}),
one finds this similar to the
``minimal resolution condition''
$1/\gls{h} \geq \sqrt{12}/\gls{helmholtz_wavenumber}$.
However, we do not observe an equivalent of
the ``pollution effect'':
When doubling the frequency, 
doubling the number of degrees of freedom
is more than sufficient to maintain 
the same approximation quality.
For example, at $\gls{helmholtz_wavenumber}=10$ the
optimal space of size $5$ reaches an
approximation quality of about $10^{-6.7}$.
For $\gls{helmholtz_wavenumber}=20$, the optimal space of size $10$
reaches an approximation quality of
about $10^{-10.5}$ (\cref{fig:helmholtz_svd}).
Of course, both the discretization used
to compute the transfer operator as well
as the approximation scheme used to
solve the global problem using the 
locally computed approximation spaces
might still suffer from the pollution effect.

\cref{algo:adaptive_range_approximation}
succeeds to generate reduced spaces $\gls{transfer_rrange}$
which achieve a projection error {\color{white}{aaa}}
$\norm{\left(1 - \gls{orthogonalp}_{\gls{transfer_rrange}} \right) \gls{transferoperatorl}}$
which is close to the optimal projection error
given by the singular values of the transfer operator.
We show results for $\gls{helmholtz_wavenumber}=30$ in \cref{fig:helmholtz_a_priori}.
Also in the adaptive case, we observe the expected behavior, see \cref{fig:helmholtz adaptive performance}
and \cref{fig:helmholtz number of testvecs}.
The plateaus which can be observed in \cref{fig:helmholtz adaptive performance}
are due to the very fast decay of the singular values.
For example the first plateau is at an error of about $10^{-3}$, which
is the error usually achieved at a basis size of 10 (cf.~\cref{fig:helmholtz deviation percentiles}).
The next plateau at an error of about $10^{-7}$ corresponds to a basis size of 11.

\section{Olimex A64 Example}
\label{sec:olimex decay}
\begin{figure}
\begin{center}
\includegraphics[width=0.45\textwidth]{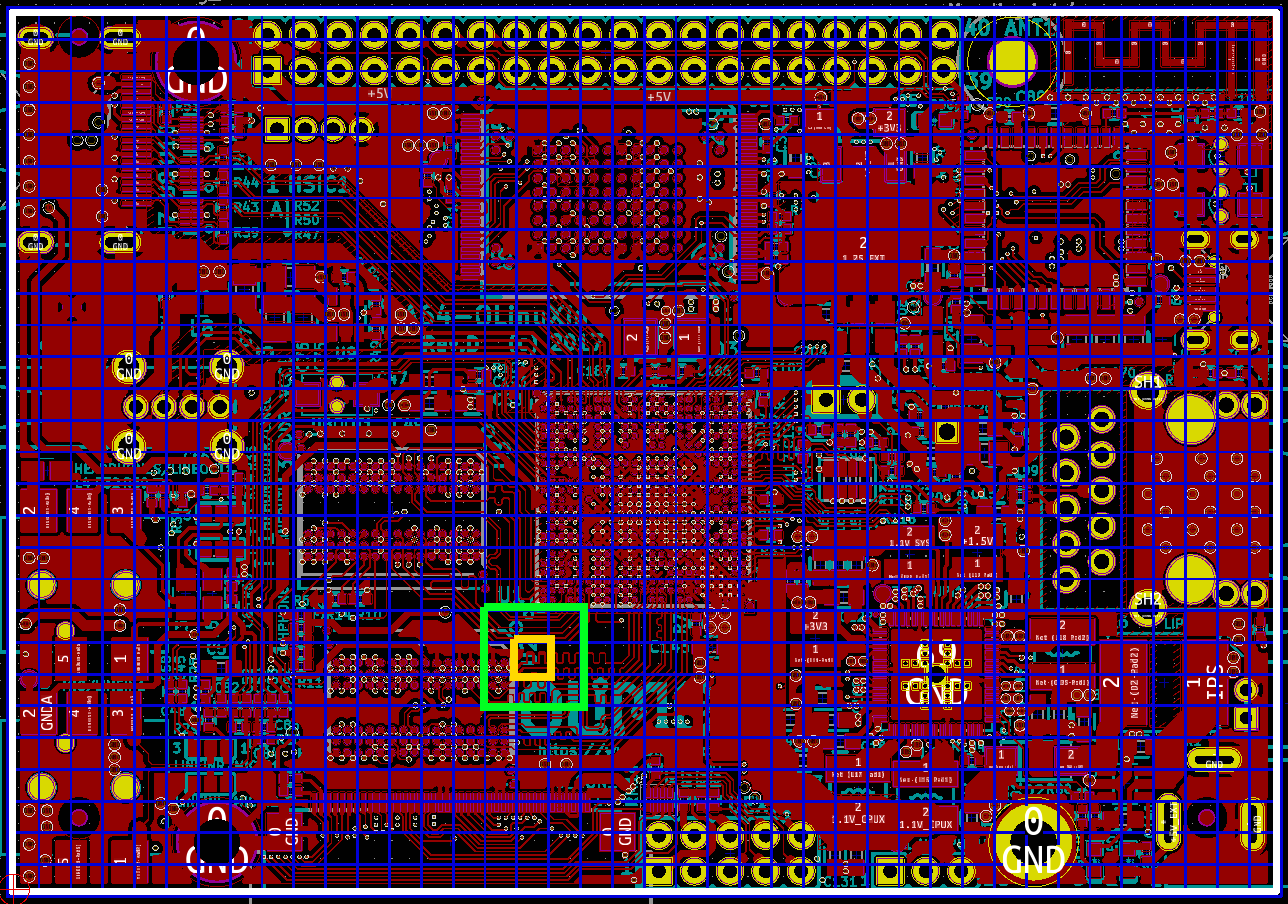}
\hspace{10pt}
\includegraphics[width=0.4\textwidth]{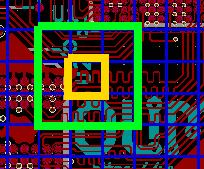}
\end{center}
\caption[Domain decomposition used for \exd{}.]{
Domain decomposition used for \exd{}
with 40 $\times$ 28 domains.
For each domain $\gls{subdomain}$ (e.g. $\omega_{816}$, {\color{yellow} yellow})
we introduce an oversampling domain $\omega^*_i$ (e.g. $\omega_{816}^*$, {\color{green} green}).
}
\label{fig:olimex dd}
\end{figure}

\cref{thm:training a priori},
our results in \cref{chap:arbilomod}
as well as the results in \cite{Buhr2018}
suggest that good reduction errors can be achieved
if the local approximation spaces capture
the most important parts of the image of
the transfer operators and thus 
$\norm{\left(1 - \gls{orthogonalp}_{\gls{rlfspace}} \right) \gls{transferopl}}$
is small.
Both the theoretical results about
the randomized range finder in \cref{sec:convergence_rate}
as well as the numerical results in \cref{sec:rangefinder artificial test}
show that good approximation spaces can be generated
in reasonable time if the singular value decay
of the transfer operators is strong enough.
In this section, we compute the singular value decay
of transfer operators defined in \exd{}
to judge on the applicability of
our methodology to this example.

We analyze \exd{} described in \cref{sec:olimex description}
for a fixed frequency of $\gls{angular_frequency} = 2 \pi \cdot 10^5 \mathrm{Hz}$.
We use a two dimensional non overlapping domain decomposition
$\left\{ \gls{subdomainnol} \right\}_{i=1}^{\gls{numdomainsnol}}$
with $\gls{numdomainsnol} = 1120$
depicted in \cref{fig:olimex dd}.
The size of each domain is approximately $(2mm)^3$.
Due to its simplicity, we use the
basic space decomposition defined in \cref{def:basic_space_def},
where \gls{fe} ansatz functions are grouped by the domains they
have support in.
We assume that one could obtain better results using the
wirebasket space decomposition.
We focus on domain $\omega_{816}$,
which is shown along with its oversampling
domain $\omega_{816}^*$ in
\cref{fig:olimex dd,fig:olimex example solution}.
It contains a part of the traces connecting
the CPU to the RAM and has
exceptional high geometric complexity.

We define the transfer operator as in
\cref{sec:codim_n_training},
except that we project to \gls{lfspacebasic}
instead of \gls{lfspacewb} at the end.
So let $\gls{coarsemeshentity}_{816}$ be $\omega_{816}$,
then we define
\begin{align}
T_{816}^l :\qquad& \coupling{\training{V_{816}^\mathrm{basic}}} \rightarrow V_{816}^\mathrm{basic} \nonumber \\
& \varphi \mapsto \mathcal{P}_{V_{816}^\mathrm{basic}} ( u_l )
\end{align}
where $u_l \in \training{V_{816}^\mathrm{basic}}$ solves
\begin{align}
\gls{a}(u_l + \varphi, \psi) = 0 \qquad \forall \psi \in \training{V_{816}^\mathrm{basic}}
.
\end{align}
For simplicity, we use the euclidean norm
of the coefficient vectors in $V_{816}^\mathrm{basic}$ and compare
with the singular values of the matrix of $T_{816}^l$.
In \cref{fig:olimexdecay}, the singular values
of the matrix of $T_{816}^l$ are shown.
Already at a basis size of 50,
a relative reduction of almost three orders of magnitude can be seen.
At a basis size of 100, a relative reduction of about 4.5
orders of magnitude can be seen. 
For comparison, the value of the a posteriori
norm estimator used in the randomized range finder
is plotted.
The randomized norm estimator behaves
approximately like
$\norm{\left(1 - \gls{orthogonalp}_{\gls{transfer_rrange}} \right) \gls{transferoperatorl}}$
but is much faster to compute.
As expected, it behaves like
$
\underline{\gls{singular_value}}_{n+1} \sqrt{n}
$.
Runtimes for the randomized range finder applied to 40 different domains
can be seen in \cref{fig:olimex rangefinder runtimes}.
For most domains, the runtime was between 5 and 10 minutes,
in an unoptimized, sequential implementation.

While these preliminary results are not conclusive,
they are encouraging.
The singular value decay suggests that good approximation
spaces of dimension less than 100 per domain exist and
can be generated in reasonable time. In a domain
decomposition with about 1000 domains, this would
result in a global reduced model of around
100 000 unknowns, which could probably be solved quickly,
given that the matrices of a localized reduction
scheme are block-sparse.
The generation of the reduced spaces
can easily be parallelized and executed in a cloud environment.

\begin{figure}
\begin{center}
\includegraphics[width=0.45\textwidth]{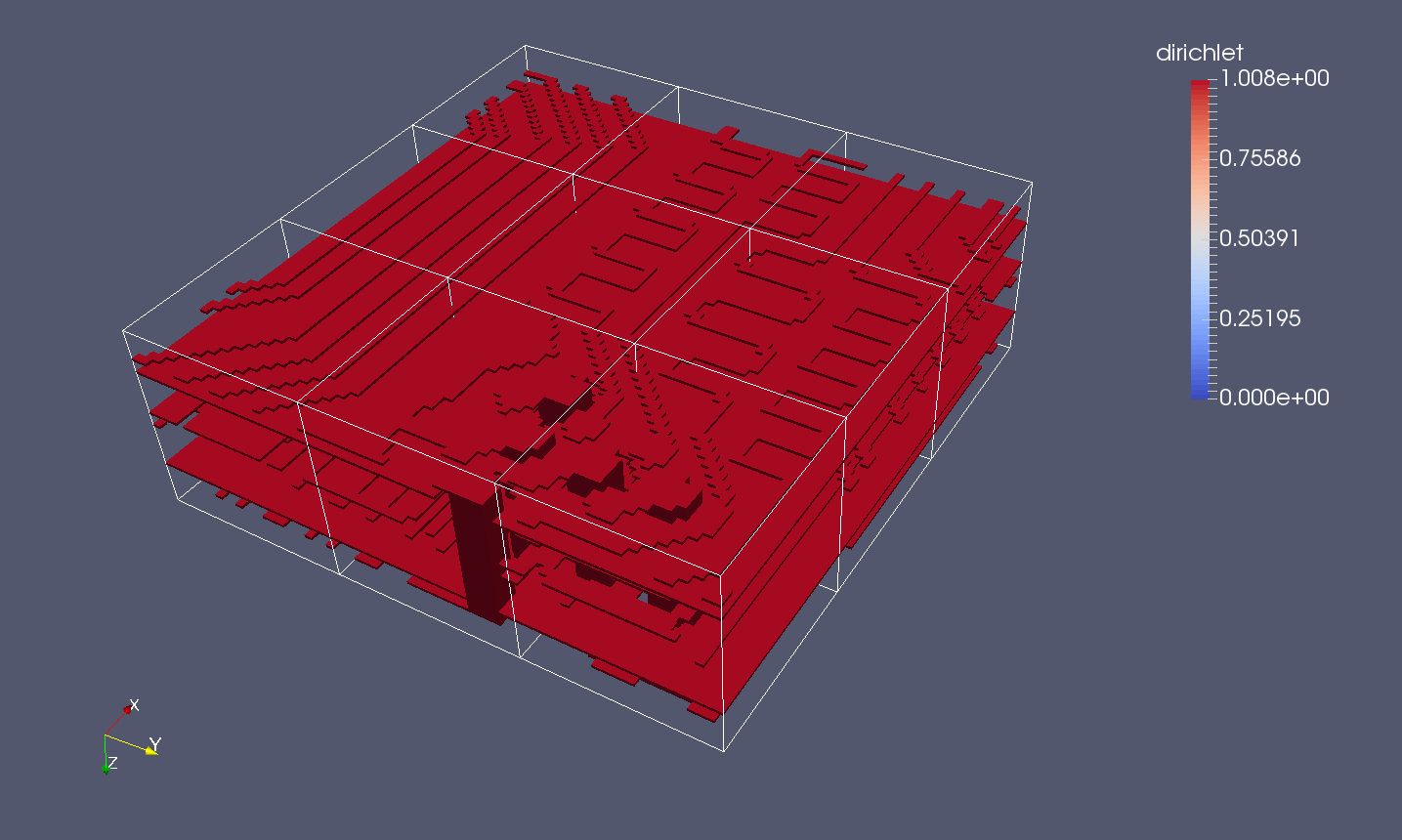}
\hspace{10pt}
\includegraphics[width=0.45\textwidth]{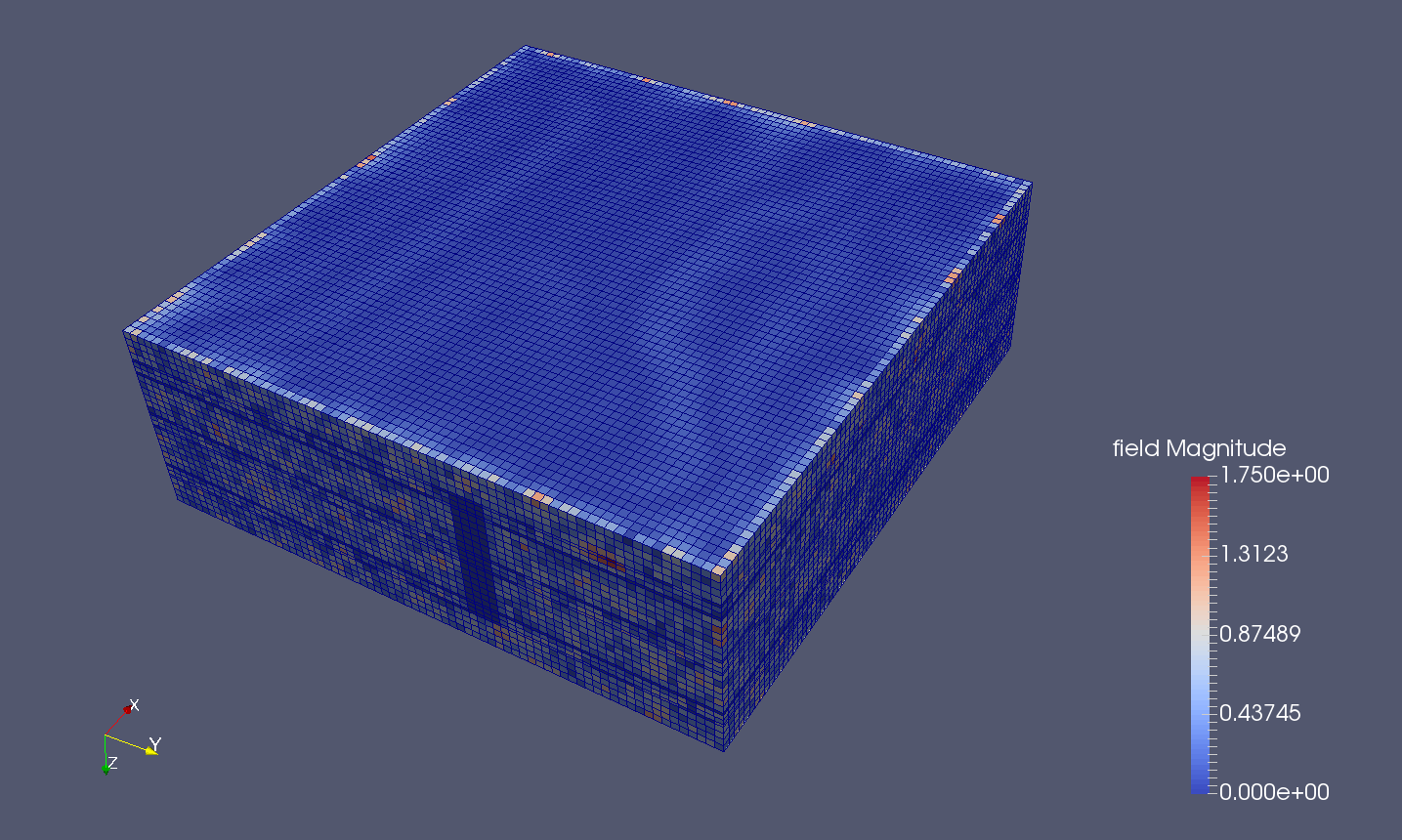}
\end{center}
\caption[Example solution in oversampling domain in \exd{}.]
{Example randomized solution in oversampling domain of $\omega_{816}$ of \exd{}
(approx.~350.000 unknowns).}
\label{fig:olimex example solution}
\end{figure}
\clearpage
\begin{figure}
\begin{center}
\begin{tikzpicture}
\begin{semilogyaxis}[
    width=8cm,
    xlabel=$n$,
]
\addplot [mark=none, thick, red] table [x expr=\coordindex, y index=0, header=false] {svals_816.dat};
\addplot [mark=none, thick, red, densely dashed] table [x expr=\coordindex, y index=1, header=false] {svals_816.dat};
\addplot [mark=none, thick, blue] table [x expr=\coordindex, y index=0, header=false] {maxnorms_2norm_816.dat};
\legend{$\underline{\gls{singular_value}}_{n+1}$, $\underline{\gls{singular_value}}_{n+1} \sqrt{n}$, $\Delta(\widetilde R_n)$}
\end{semilogyaxis}
\end{tikzpicture}
\end{center}
\caption[Singular value decay of $\gls{transferoperator}_{816}$ for \exd{}.]
{Singular value decay of $\gls{transferoperator}_{816}$
and decay of randomized norm estimator
for \exd{}.
(reproduction: \cref{repro:fig:olimexdecay})
}
\label{fig:olimexdecay}
\end{figure}

\begin{figure}
\begin{center}
\begin{tikzpicture}
\begin{axis}[
    width=14cm,
    height=6.5cm,
    ybar stacked,
    bar width=2mm,
    ymin=0,
    legend style={at={(1.1,1.1)},anchor=north east},
    xlabel=domain,
    ylabel=time/$s$,
]

\addplot+[ybar] table [x=domainnum, y=assemblytime] {timings.dat} ;
\addplot+[ybar] table [x=domainnum, y=factorizationtime] {timings.dat} ;
\addplot+[ybar] table [x=domainnum, y=testvector_generation_time] {timings.dat} ;
\addplot+[ybar] table [x=domainnum, y=basis_generation_time] {timings.dat} ;
\legend{assembly, factorization, testvector generation, basis generation}
\end{axis}
\end{tikzpicture}
\end{center}
\caption[Runtimes for randomized range finder on \exd{}.]
{Runtimes for randomized range finder on \exd{}.\\
(reproduction: \cref{repro:fig:olimexdecay})
}
\label{fig:olimex rangefinder runtimes}
\end{figure}

\clearpage
\futurebox{
\section{Future Research on Randomized Range Finder}
\label{sec:future of randomized range finder}
\myline{}
While a fast and efficient algorithm was devised
for approximating the image of a linear operator,
there is lots of room for improvement and future
research. In the following, we state
what we consider the most interesting or important
improvements which could be tackled in future
research projects.
\begin{itemize}
\item
\textbfit{Parameterized problems}\\
The presented algorithm approximates the image
of a linear operator. In the context of model
reduction, one often deals with parameterized problems, which results in 
parameterized transfer operators \gls{transferopmu}.
The only published ways the authors are aware of to approximate the image
of a parameterized operator are the ``spectral greedy''
in \cite[Algorithm 4.1]{Smetana2016}
and the \gls{pod} strategy in 
\cite{Taddei2018}.
Both approaches require the execution of the randomized range finder
at every training parameter and are thus very expensive
for large training sets \gls{trainingset}.
It would be desirable to have an algorithm to approximate the image of a
parameterized operator without evaluating it at each training parameter.
\item
\textbfit{Complex numbers}\\
The results presented here are only valid for the real case.
An extension to complex numbers should be done. This, however, would
require reevaluation of lot of results from RandNLA.
Michael Mahoney and his coworkers only deal with the real case 
in their publications. Per-Gunnar Martinsson and his coworkers
mention the complex case in their publications. The
relevant proofs are, however, only done for the real case.
Most results in RandNLA use estimates for eigenvalues
of random matrices like the condition number
estimates in \cite{Chen2005}, which are published
also for complex matrices.
Recreating the results from RandNLA for complex matrices,
starting at the results from \cite{Chen2005}
to finally devise a complex randomized range finder algorithm
would be an interesting project.
\item
\textbfit{Better random vectors}\\
The drawing of random vectors presented here is simple
and the quality of the generated spaces deteriorates when
the condition of the matrix of the inner product in $\gls{transfer_source}$
becomes high. A better way of drawing random vectors should
be developed, taking into account the inner product in $\gls{transfer_source}$,
removing the dependency on the condition of the matrix
of the inner product in $\gls{transfer_source}$.
\item
\textbfit{Better a posteriori error estimator}\\
The presented a posteriori error estimator
only uses the maximum of all norms of all test vectors
and thus looses information.
Probably, a better a posteriori error estimator can be devised
using information from all test vector norms.
\item
\textbfit{Better bases through postprocessing}\\
In the presented version of the randomized range finder,
all basis vectors produced are used.
It would probably lead to better basis vectors if one generated
some more vectors than necessary and reduce the number
using an \gls{svd} afterwards.
\item
\textbfit{Infinite dimensional case}\\
It would be interesting to formulate a randomized range finder
which works on compact operators in Hilbert spaces.
This would require a new way of drawing random vectors. 
\end{itemize}
}
\clearpage
\futurebox{
\section{Future Research for Transferoperator Based Training}
While the previous items were concerned with 
improving the randomized range finder algorithm itself,
the training procedure can also be further improved.
\myline{}
\begin{itemize}
\item
\textbfit{Better Transfer Operators}\\
While we can approximate the image of the presented
transfer operators in an optimal way, there might
be better transfer operators with faster decay
of singular values.
Recently, slightly better results have been demonstrated
by using Robin- instead of Dirichlet boundary values
to formulate the transfer operator
\cite{EickhornThesis}.
\item
\textbfit{Application to \gls{arbilomod}}\\
While the application of \cref{algo:adaptive_range_approximation}
in the \gls{arbilomod} context is straightforward,
application of \cref{thm:training a priori}
needs estimates for $\norm{\gls{tracespacemap}}$,
which are not available so far.
\end{itemize}
}

\chapter{Online Enrichment}
\label{chap:enrichment}
In this chapter, we introduce two
algorithms for localized online enrichment
in an abstract localized model order reduction setting.
We show a priori convergence rates and verify the results
in a numerical example.
\section{Introduction}
By the term ``online enrichment'', we denote
the extension of our localized approximation spaces $\gls{rlfspace}$,
based on a previous solution of the global, reduced problem.
In contrast to the trainings, where only local information is used,
online enrichment uses global information, by using the previous global, reduced solution.
Online enrichment serves two purposes.
First, it can be used in a situation where 
local approximation spaces have been generated using a training procedure,
but the reduced solution is not good enough according to the
a posteriori error estimator.
Second, it provides a means to generate local approximation 
spaces in settings where a training procedure fails to generate
good approximation spaces.
This could be for instance in problems with many channels,
of which only very few are used.
Enrichment is done by solving the original problem
in a local space, using the residual of the last
solution as right hand side.

Online enrichment is a widely used technique in localized model order reduction methods.
\gls{arbilomod}, introduced in \cite{Buhr2017}, employs online enrichment.
For the \gls{lrbms} which was introduced in \cite{Albrecht2012},
online enrichment was presented in \cite{Albrecht2013} and in \cite{Ohlberger2015}.
For the \gls{gmsfem} which was introduced in \cite{Efendiev2013},
online enrichment was discussed in \cite{Chung2015a}.
For the \gls{cemgmsfem} which was introduced in \cite{Chung2017a}, 
online enrichment was discussed in \cite{Chung2017c}.
However, no a priori convergence analysis is available for the \gls{arbilomod} and the \gls{lrbms}.
For the \gls{gmsfem} and \gls{cemgmsfem}, exponential convergence was shown in \cite{Chung2015a} and \cite{Chung2017c}.

\section{Enrichment Algorithms}
\IncMargin{1em}
\begin{algorithm2e}
\DontPrintSemicolon%
\SetAlgoVlined%
$n \leftarrow 0$\;
$\gls{rfspace}_n \leftarrow \spanset{\{0\}}$\;
\While{
  not converged
}%
{
  \tcc{solve reduced system}
  \Find{$\gls{rsol}_n \in \gls{rfspace}_n \st$}{
    $\gls{a}(\gls{rsol}_n, \varphi) = \gls{f}(\varphi) \qquad \forall \varphi \in \gls{rfspace}_n$}
  \tcc{find maximum local residual}
  $k \leftarrow \argmax\limits_{i=1, \dots, \gls{numspaces}} \norm{\gls{residual}(\gls{rsol}_n)}_{\gls{lfspace}'}$\;
  \tcc{solve local enrichment problem}
  \Find{$\localenrichment \in V_k \st$}{
    $\gls{a}(\localenrichment, \varphi) = \gls{residual}(\gls{rsol}_n)(\varphi) \qquad \forall \varphi \in V_k$}
  \tcc{form enriched space}
  $\gls{rfspace}_{n+1} \leftarrow \gls{rfspace}_n \oplus \spanset{\{\localenrichment\}}$\;
  $n \leftarrow n+1$\;
}
\caption{Residual Based Online Enrichment.}
\label{algo:online_enrichment}
\end{algorithm2e}
\DecMargin{1em}

\IncMargin{1em}
\begin{algorithm2e}
\DontPrintSemicolon%
\SetAlgoVlined%
$n \leftarrow 0$\;
$\gls{rfspace}_n \leftarrow \spanset{\{0\}}$\;
\While{
  not converged
}%
{
  \tcc{solve reduced system}
  \Find{$\gls{rsol}_n \in \gls{rfspace}_n \ \st$}{
    $\gls{a}(\gls{rsol}_n, \varphi) = \gls{f}(\varphi) \qquad \forall \varphi \in \gls{rfspace}_n$}
  \tcc{solve local enriched problems}
  \For{$i = 1, \dots, \gls{numspaces}$}{
    \Find{$\localenrichedsolution{i} \in \gls{rfspace}_n \oplus \gls{lfspace} \ \st$}{
      $\gls{a}(\localenrichedsolution{i}, \varphi) = \gls{f}(\varphi) \qquad \forall \varphi \in \gls{rfspace}_n \oplus \gls{lfspace}$}
  }
  \tcc{find maximum solution shift}
  $k \leftarrow \argmax\limits_{i=1, \dots, \gls{numospaces}} \norm{\gls{rsol}_n - \localenrichedsolution{i}}_a$\;
  \tcc{form enriched space}
  $\gls{rfspace}_{n+1} \leftarrow \gls{rfspace}_n \oplus \spanset{\{\localenrichedsolution{k}\}}$\;
  $n \leftarrow n+1$\;
}
\caption{Globally Coupled Online Enrichment.}
\label{algo:globally_coupled_online_enrichment}
\end{algorithm2e}
\DecMargin{1em}

In \cref{sec:enrichment}, an enrichment
procedure was introduced. This is, however, only
a heuristic approach for which no a priori convergence bounds can be shown.
In this section, we introduce two enrichment procedures in
an abstract setting as introduced in
\cref{sec:localized model order reduction},
for which we can show a priori convergence rates in the following section.

\subsection*{Setting}
Let 
$\left\{
\gls{domain}, \gls{fspace},
\{\gls{subdomain}\}_{i=1}^{\gls{numspaces}},
\{\gls{lfspace}\}_{i=1}^{\gls{numspaces}},
\{\gls{lfspacemap}\}_{i=1}^{\gls{numspaces}}
\right\}$
be a localizing space decomposition as defined in
\cref{def:localizing space decomposition}
and let \gls{sol} be the solution of a non parametric variational problem 
as defined in \cref{def:variational problem}.
Let \gls{a} be symmetric and coercive
and let the full space \gls{fspace} be equipped with
the energy norm induced by \gls{a}.
Starting with the nullspace $\gls{rfspace}_0$, we construct
a sequence of subspaces of $V$ which we denote by
$\gls{rfspace}_n$.
The full problem is reduced by Galerkin projection on these
reduced spaces.
We denote the solutions of the reduced problems by $\gls{rsol}_n$.
Each reduced space $\gls{rfspace}_n$ is constructed by enriching the previous
reduced space with an additional basis function $\localenrichment$, 
which lies in one of the localized spaces $\gls{lfspace}$.

\subsection*{Residual Based Enrichment}
The residual based enrichment algorithm 
(given as \cref{algo:online_enrichment})
first selects the local enrichment space from which the 
enrichment function $\localenrichment$ is taken.
The local space $V_k$ which maximizes the dual norm
of the residual $\norm{\gls{residual}(\gls{rsol}_n)}_{\gls{lfspace}'} = \sup_{\varphi \in \gls{lfspace} \nnull} \frac{\gls{residual}(\gls{rsol}_n)(\varphi)}{\norm{\varphi}_a}$ is chosen,
i.e.
\begin{equation}
k:= \argmax\limits_{i\in \{1, \dots, \gls{numspaces}\}} \norm{\gls{residual}(\gls{rsol}_n)}_{\gls{lfspace}'}.
\end{equation}
The residual $\gls{residual}(\gls{rsol}_n) \in \gls{fspace}'$ is defined as
$\gls{residual}(\gls{rsol}_n)(\cdot) := \gls{f}(\cdot) - \gls{a}(\gls{rsol}_n, \cdot)$.
Then a localized problem is formed by 
a Galerkin projection
of the original problem onto this local space
$V_k$,
and replacing the right hand side $\gls{f}$ by the last residual $\gls{residual}(\gls{rsol}_n)$.
The solution of the localized problem is the enrichment function $\localenrichment$.
\subsection*{Globally Coupled Local Enrichment}
The globally coupled local enrichment algorithm
(given as \cref{algo:globally_coupled_online_enrichment})
couples the global reduced space with the full local space.
First it iterates over all local spaces $\gls{lfspace}$
and solves the coupled problem: It solves
the original problem projected on the space
$\gls{rfspace}_n \oplus \gls{lfspace}$, the solution of
this coupled problem is called $\localenrichedsolution{i}$.
Then the local space $V_k$ is selected which maximizes
the change in the solution $\norm{\gls{rsol}_n - \localenrichedsolution{k}}_a$, i.e.~
\begin{equation}
k:= \argmax\limits_{i\in \{1, \dots, \gls{numspaces}\}}
\norm{\gls{rsol}_n - \localenrichedsolution{i}}_a
.
\end{equation}
The function $\localenrichedsolution{k}$ is used to enrich the space $\gls{rfspace}_n$.
Note that this is an enrichment in $V_k$, even though $\localenrichedsolution{k}$ has global support.

\FloatBarrier
\section{A Priori Convergence Estimates}
\label{sec:enrichment convergence}
First we prove exponential convergence for the residual based enrichment.
\begin{theorem}[Exponential convergence of residual based online enrichment]
\label{maintheorem}
With the assumptions from above,
for the reduced solutions $\gls{rsol}_{n+1}$ in \cref{algo:online_enrichment} it holds
\begin{equation}
\norm{\gls{sol} - \gls{rsol}_{n+1}}_a
\leq c_\mathrm{rbe} \cdot 
\norm{\gls{sol} - \gls{rsol}_{n}}_a
\end{equation}
with
\begin{equation}
\label{defc}
c_\mathrm{rbe} := \sqrt{1 - \frac{1}{\gls{numspaces}} \frac{1}{\gls{decompositionboundvr}^2}}
\end{equation}
with the constant $\gls{decompositionboundvr}$ as defined in \cref{thm:localization_g}.
\end{theorem}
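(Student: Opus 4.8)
The plan is to combine the standard greedy/Galerkin energy-decrease argument with the localization estimate from \cref{thm:localization_g}. Throughout I would work in the energy inner product $(\cdot,\cdot)_a := \gls{a}(\cdot,\cdot)$, for which the Galerkin solution $\gls{rsol}_n$ is exactly the $\gls{a}$-orthogonal projection of $\gls{sol}$ onto $\gls{rfspace}_n$; this is \cref{thm:coercive bestapproximation} specialized to the energy norm, where the best-approximation constant equals one.

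First I would record two elementary identities. By Galerkin orthogonality the error $e_n := \gls{sol} - \gls{rsol}_n$ satisfies $\gls{a}(e_n, \varphi) = \dualpair{\gls{residual}(\gls{rsol}_n)}{\varphi}$ for all $\varphi \in \gls{fspace}$, so Riesz representation in the energy norm gives $\norm{\gls{residual}(\gls{rsol}_n)}_{\gls{fspace}'} = \norm{e_n}_a$. Moreover, since $\localenrichment \in V_k$ is defined by $\gls{a}(\localenrichment, \varphi) = \dualpair{\gls{residual}(\gls{rsol}_n)}{\varphi}$ for all $\varphi \in V_k$, it is precisely the Riesz representative of the residual restricted to $V_k$, whence $\norm{\localenrichment}_a = \norm{\gls{residual}(\gls{rsol}_n)}_{V_k'}$ and, in particular, $\gls{a}(e_n,\localenrichment) = \norm{\localenrichment}_a^2$.

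Next I would quantify the energy gained by the enrichment. Since $\localenrichment \in \gls{rfspace}_{n+1}$ and $\gls{rsol}_{n+1}$ is the best energy approximation in $\gls{rfspace}_{n+1}$, I would test against the candidate $\gls{rsol}_n + t\localenrichment$ and minimize the quadratic $\norm{e_n - t\localenrichment}_a^2 = \norm{e_n}_a^2 - 2t\norm{\localenrichment}_a^2 + t^2\norm{\localenrichment}_a^2$ in $t$; the minimizer is $t=1$, yielding the Pythagorean-type decrease $\norm{\gls{sol} - \gls{rsol}_{n+1}}_a^2 \le \norm{e_n}_a^2 - \norm{\localenrichment}_a^2$. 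By the maximizing choice of $k$ in \cref{algo:online_enrichment} one has $\norm{\localenrichment}_a^2 = \max_{i} \norm{\gls{residual}(\gls{rsol}_n)}_{V_i'}^2$.

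The crucial step, and the one I expect to be the main obstacle, is to bound this maximum from below by the global error. Because the residual vanishes on $\gls{rfspace}_n$ (again Galerkin orthogonality), I may apply \cref{thm:localization_g} with $F = \gls{residual}(\gls{rsol}_n)$ and reduced space $\gls{rfspace}_n$ to get $\norm{\gls{residual}(\gls{rsol}_n)}_{\gls{fspace}'}^2 \le \gls{decompositionboundvr}^2 \sum_{i} \norm{\gls{residual}(\gls{rsol}_n)}_{V_i'}^2 \le \gls{decompositionboundvr}^2\, \gls{numspaces}\, \max_{i} \norm{\gls{residual}(\gls{rsol}_n)}_{V_i'}^2$, where the last step bounds the sum of $\gls{numspaces}$ terms by $\gls{numspaces}$ times the maximum. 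Combining this with $\norm{\gls{residual}(\gls{rsol}_n)}_{\gls{fspace}'} = \norm{e_n}_a$ gives $\norm{\localenrichment}_a^2 \ge \norm{e_n}_a^2 \big/ (\gls{numspaces}\,\gls{decompositionboundvr}^2)$; substituting into the decrease inequality produces exactly the factor $1 - 1/(\gls{numspaces}\,\gls{decompositionboundvr}^2)$ and the claim follows after taking square roots. The subtle point to address is that $\gls{decompositionboundvr}$ in \cref{thm:localization_g} depends on the current space $\gls{rfspace}_n$ through the projections $\gls{orthogonalp}_{\gls{rfspace} \cap \gls{lfspace}}$ in its definition; since $\gls{rfspace}_n \cap V_i$ only grows as the iteration proceeds, the constant is non-increasing in $n$, so the value attached to the initial space furnishes a uniform worst-case bound valid at every step.
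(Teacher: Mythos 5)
Your proof is correct and takes essentially the same route as the paper's: the Pythagorean energy decrease obtained by minimizing over the one-dimensional update $\gls{rsol}_n + t\localenrichment$, the bound of the maximal local residual norm by the average over all $\gls{numspaces}$ spaces, and the combination of \cref{thm:localization_g} with the identity $\norm{\gls{residual}(\gls{rsol}_n)}_{\gls{fspace}'} = \norm{\gls{sol}-\gls{rsol}_n}_a$ valid in the energy norm. Your closing observation that $\gls{decompositionboundvr}$ depends on $\gls{rfspace}_n$ but is non-increasing along the iteration, so the initial (projection-free) value gives a uniform bound, is a point the paper leaves implicit and is a correct and worthwhile clarification.
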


\begin{proof}
As we use the energy norm, the solution is the best approximation
\begin{equation}
\norm{\gls{rsol}_{n+1} - \gls{sol}}_a \leq \norm{\varphi - \gls{sol}}_a \qquad \forall \varphi \in \gls{rfspace}_{n+1} .
\end{equation}
This holds for $\varphi = \gls{rsol}_n + \alpha \localenrichment$ for all $\alpha$ in $\gls{R}$.
Because of the symmetry of $\gls{a}$ it holds
\begin{equation}
\nonumber
\norm{\gls{rsol}_n + \alpha \localenrichment - \gls{sol}}_a^2 = \norm{\gls{rsol}_n - \gls{sol}}_a^2 - 2 \alpha \gls{residual}(\gls{rsol}_n)(\localenrichment) + \alpha^2 \norm{\localenrichment}_a^2 .
\end{equation}
This term is minimized by choosing $\alpha = \gls{residual}(\gls{rsol}_n)(\localenrichment) / \norm{\localenrichment}_a^2$. We use this $\alpha$ and 
realize that
$\gls{residual}(\gls{rsol}_n)(\localenrichment) = \norm{\localenrichment}_a^2 = \norm{\gls{residual}(\gls{rsol}_n)}_{V_k'}^2$,
because $\localenrichment$ is the Riesz representative of $\gls{residual}(\gls{rsol}_n)$ in $V_k$ in the energy norm.
It follows that
\begin{equation}
\label{eq:chungend}
\norm{\gls{rsol}_{n+1} - \gls{sol}}_a^2 \leq \norm{\gls{rsol}_n - \gls{sol}}_a^2 - \norm{\gls{residual}(\gls{rsol}_n)}_{V_k'}^2 .
\end{equation}
Till this point, the proof followed the structure given in \cite[Section 4]{Chung2015a}. 
We defined $k$ to select the largest local residual, so it holds
\begin{equation}
\label{eq:r1}
\norm{\gls{residual}(\gls{rsol}_n)}_{V_k'}^2 \geq \frac{1}{\gls{numspaces}} \sum_{i=1}^{\gls{numspaces}} \norm{\gls{residual}(\gls{rsol}_n)}_{\gls{lfspace}'}^2 .
\end{equation}
Furthermore, from \cref{thm:localization_g} we know
\begin{equation}
\label{eq:r2}
\norm{\gls{residual}(\gls{rsol}_n)}_{V'} \leq \gls{decompositionboundvr} \left(\sum_{i=1}^{\gls{numspaces}} \norm{\gls{residual}(\gls{rsol}_n)}_{\gls{lfspace}'}^2 \right) ^{\frac 1 2}
\end{equation}
As we use the energy norm, we have 
\begin{equation}
\label{eq:r3}
\norm{\gls{residual}(\gls{rsol}_n)}_{V'}^2 = \norm{\gls{rsol}_n - \gls{sol}}_a^2 .
\end{equation}
Combining \eqref{eq:r1}, \eqref{eq:r2}, and \eqref{eq:r3} we obtain
\begin{equation}
\label{eq:rall}
\norm{\gls{residual}(\gls{rsol}_n)}_{V_k'}^2 \geq \frac{1}{\gls{numspaces}} \frac{1}{\gls{decompositionboundvr}^2} \norm{\gls{rsol}_n - \gls{sol}}_a^2 .
\end{equation}
Combining \eqref{eq:rall} with \eqref{eq:chungend} yields
\begin{equation}
\norm{\gls{rsol}_{n+1} - \gls{sol}}_a^2 \leq \left( 1 - \frac{1}{\gls{numspaces}} \frac{1}{\gls{decompositionboundvr}^2} \right) \norm{\gls{rsol}_n - \gls{sol}}_a^2
\end{equation}
and thus the claim.
\end{proof}

\begin{corollary}[Exponential convergence from iteration 0]
\label{cor1}
For the reduced solutions $\gls{rsol}_n$ in \cref{algo:online_enrichment} it holds
\begin{equation}
\norm{\gls{rsol}_{n} - \gls{sol}}_a
\leq c_\mathrm{rbe}^n \cdot 
\norm{\gls{sol}}_a
\end{equation}
with $c_\mathrm{rbe}$ as defined in \eqref{defc}.
\end{corollary}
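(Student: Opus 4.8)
The plan is to derive Corollary~\ref{cor1} as a direct iteration of the one-step contraction established in \cref{maintheorem}. The theorem already provides the crucial estimate
\begin{equation}
\norm{\gls{sol} - \gls{rsol}_{n+1}}_a \leq c_\mathrm{rbe} \cdot \norm{\gls{sol} - \gls{rsol}_{n}}_a ,
\nonumber
\end{equation}
which holds for every $n \geq 0$. The whole corollary is therefore a straightforward induction: applying this bound $n$ times in succession telescopes the contraction factors into a single power $c_\mathrm{rbe}^n$, yielding
\begin{equation}
\norm{\gls{sol} - \gls{rsol}_{n}}_a \leq c_\mathrm{rbe}^n \cdot \norm{\gls{sol} - \gls{rsol}_{0}}_a .
\nonumber
\end{equation}

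The second ingredient is to identify the initial error $\norm{\gls{sol} - \gls{rsol}_0}_a$ with $\norm{\gls{sol}}_a$. This follows from the initialization of \cref{algo:online_enrichment}, where the reduced space is set to $\gls{rfspace}_0 = \spanset\{0\}$. The Galerkin projection onto the trivial space returns $\gls{rsol}_0 = 0$, so that $\norm{\gls{sol} - \gls{rsol}_0}_a = \norm{\gls{sol}}_a$. Substituting this into the telescoped estimate gives precisely the claimed bound.

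I do not anticipate any genuine obstacle here, as all the analytical content — the contraction factor $c_\mathrm{rbe}$, the use of the energy norm so that the reduced solution is the best approximation, and the localization constant $\gls{decompositionboundvr}$ from \cref{thm:localization_g} — has already been absorbed into \cref{maintheorem}. The only points that require a word of care are formal: making explicit that the per-step factor $c_\mathrm{rbe}$ is independent of $n$ (which it is, depending only on $\gls{numspaces}$ and $\gls{decompositionboundvr}$), so that the induction hypothesis propagates cleanly, and verifying the base case $\gls{rsol}_0 = 0$ against the algorithm's initialization. With those two observations the proof reduces to a one-line induction and the conclusion is immediate.
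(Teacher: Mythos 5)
Your proposal is correct and follows exactly the paper's own argument: the paper likewise proves \cref{cor1} by iterating the one-step contraction of \cref{maintheorem} and using that $\gls{rsol}_0 = 0$ due to the initialization $\gls{rfspace}_0 = \spanset\{0\}$. Your write-up merely makes the induction and the base case explicit, which the paper leaves to the reader.
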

\begin{proof}
This follows from \cref{maintheorem}, because 
$\gls{rsol}_0 = 0$.
\end{proof}

\subsection*{Globally coupled enrichment}
The globally coupled enrichment given in \cref{algo:globally_coupled_online_enrichment}
is the optimal enrichment: Among all enrichment functions
from all local spaces, it selects the one which minimizes the
resulting error in the energy norm.
\begin{theorem}[Optimality of globally coupled online enrichment (\cref{algo:globally_coupled_online_enrichment})]
For the reduced solutions $\gls{rsol}_{n+1}$ in \cref{algo:globally_coupled_online_enrichment} it holds
\begin{equation}
\begin{multlined}
\norm{\gls{sol} - \gls{rsol}_{n+1}}_a = %
\min_{i = 1, \dots, \gls{numspaces}}
\min_{\psi_e \in \gls{lfspace}}
\Big\{
\norm{\gls{sol} - \gls{rsol}_e}_a
\ \Big| \ 
\gls{rsol}_e \in \gls{rfspace}_n \oplus \spanset \{ \psi_e \}
\ \mathrm{solves} \qquad \qquad \qquad  \hfill \\ 
\hspace{20pt} a(\gls{rsol}_e, \varphi) = f(\varphi) \qquad \forall \varphi \in \gls{rfspace}_n  \oplus \spanset \{ \psi_e \}
\Big\} .
\end{multlined}
\end{equation}
\end{theorem}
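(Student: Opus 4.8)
The plan is to reduce the entire statement to three facts about $a$-orthogonal best approximations, using throughout that, since $V$ carries the energy norm, every Galerkin solution is exactly the $a$-orthogonal best approximation of $\gls{sol}$ in its trial space.

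First I would identify the enriching function produced by \cref{algo:globally_coupled_online_enrichment} with the next reduced solution. The function $\localenrichedsolution{k}$ solves the Galerkin problem on $\gls{rfspace}_n \oplus V_k$, so $\gls{a}(\gls{sol} - \localenrichedsolution{k}, \varphi) = 0$ for all $\varphi \in \gls{rfspace}_n \oplus V_k$. Because $\gls{rfspace}_n \subseteq \gls{rfspace}_n \oplus V_k$ and $\localenrichedsolution{k} \in \gls{rfspace}_n \oplus V_k$, the enriched space $\gls{rfspace}_{n+1} = \gls{rfspace}_n \oplus \spanset\{\localenrichedsolution{k}\}$ is contained in $\gls{rfspace}_n \oplus V_k$, so the orthogonality holds in particular for all $\varphi \in \gls{rfspace}_{n+1}$. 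Hence $\localenrichedsolution{k}$ is itself the Galerkin solution on $\gls{rfspace}_{n+1}$, i.e. $\gls{rsol}_{n+1} = \localenrichedsolution{k}$, and the left-hand side of the claim equals $\norm{\gls{sol} - \localenrichedsolution{k}}_a$.

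Second, for each fixed $i$ I would show that the inner minimization over single enrichment functions recovers the best approximation over the whole local space, $\min_{\psi_e \in \gls{lfspace}} \norm{\gls{sol} - \gls{rsol}_e}_a = \norm{\gls{sol} - \localenrichedsolution{i}}_a$. The inequality ``$\geq$'' is immediate, as each $\gls{rfspace}_n \oplus \spanset\{\psi_e\}$ is a subspace of $\gls{rfspace}_n \oplus \gls{lfspace}$ and $\gls{rsol}_e$ is the $a$-best approximation in the former. For ``$\leq$'', decompose the best approximation as $\localenrichedsolution{i} = w_0 + w_i$ with $w_0 \in \gls{rfspace}_n$ and $w_i \in \gls{lfspace}$; the choice $\psi_e = w_i$ already gives $\localenrichedsolution{i} \in \gls{rfspace}_n \oplus \spanset\{w_i\}$, so the best approximation there is at least as good. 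In the degenerate case $w_i = 0$ one has $\localenrichedsolution{i} = \gls{rsol}_n$, the corresponding solution shift vanishes, and every admissible $\psi_e$ still attains the bound, so the identity persists and this index is not selected unless the algorithm has already converged.

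Third, I would turn the algorithm's selection rule into the error-minimizing one via the Pythagorean identity. Since $\gls{rsol}_n$ is the $a$-best approximation in $\gls{rfspace}_n$ and $\localenrichedsolution{i}$ the $a$-best approximation in $\gls{rfspace}_n \oplus \gls{lfspace} \supseteq \gls{rfspace}_n$, Galerkin orthogonality yields $\gls{a}(\gls{sol} - \localenrichedsolution{i}, \localenrichedsolution{i} - \gls{rsol}_n) = 0$ and therefore
\begin{equation}
\norm{\gls{sol} - \gls{rsol}_n}_a^2 = \norm{\gls{sol} - \localenrichedsolution{i}}_a^2 + \norm{\localenrichedsolution{i} - \gls{rsol}_n}_a^2 .
\nonumber
\end{equation}
As $\norm{\gls{sol} - \gls{rsol}_n}_a$ is independent of $i$, maximizing the solution shift $\norm{\gls{rsol}_n - \localenrichedsolution{i}}_a$ is equivalent to minimizing $\norm{\gls{sol} - \localenrichedsolution{i}}_a$, so the selected index satisfies $k = \argmin_{i} \norm{\gls{sol} - \localenrichedsolution{i}}_a$. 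Chaining the three facts gives $\norm{\gls{sol} - \gls{rsol}_{n+1}}_a = \norm{\gls{sol} - \localenrichedsolution{k}}_a = \min_{i} \norm{\gls{sol} - \localenrichedsolution{i}}_a = \min_{i} \min_{\psi_e \in \gls{lfspace}} \norm{\gls{sol} - \gls{rsol}_e}_a$, which is the assertion. I expect the only genuine care to be required in the second step, namely making the single-function-versus-full-space reduction precise and cleanly disposing of the $w_i = 0$ case.
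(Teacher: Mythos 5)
Your proof is correct and follows essentially the same route as the paper's: identify $\gls{rsol}_{n+1} = \localenrichedsolution{k}$ by uniqueness of the Galerkin solution on $\gls{rfspace}_{n+1} \subset \gls{rfspace}_n \oplus V_k$, use the Pythagorean identity to turn the maximal-shift selection into the minimal-error selection, and invoke best approximation in $\gls{rfspace}_n \oplus \gls{lfspace}$. Your second step, decomposing $\localenrichedsolution{i} = w_0 + w_i$ and taking $\psi_e = w_i$ to show the single-function minimum actually attains $\norm{\gls{sol} - \localenrichedsolution{i}}_a$, is in fact slightly more careful than the paper's proof, which leaves that attainment direction of the equality implicit.
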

\begin{proof}
First we realize that the solution $\gls{rsol}_{n+1}$ is identical to 
$\localenrichedsolution{k}$, because $\localenrichedsolution{k}$ solves
$a(\localenrichedsolution{k}, \varphi) = f(\varphi)$ in
$\gls{rfspace}_{n+1}$, which is a subspace of $\gls{rfspace}_n \oplus V_k$ and the solution is unique:
\begin{equation}
\gls{rsol}_{n+1} = \localenrichedsolution{k} .
\end{equation}
Second, since $\gls{sol} - \localenrichedsolution{i}$ is $a$-orthogonal to
$\localenrichedsolution{i} - \gls{rsol}_n$, it holds
\begin{equation}
\norm{\gls{sol} - \gls{rsol}_n}_a^2 = \norm{\gls{sol} - \localenrichedsolution{i}}_a^2 + \norm{\localenrichedsolution{i} - \gls{rsol}_n}_a^2
\end{equation}
and thus 
\begin{equation}
k = \argmax\limits_{i\in \{1, \dots, \gls{numspaces}\}} \norm{\gls{rsol}_n 
- \localenrichedsolution{i}}_a
\end{equation}
implies
\begin{equation}
k = \argmin\limits_{i\in \{1, \dots, \gls{numspaces}\}} \norm{\gls{sol} - \localenrichedsolution{i}}_a.
\end{equation}
So $\localenrichedsolution{k}$ is closest to \gls{sol} among all $\localenrichedsolution{i}$.

Third, $\localenrichedsolution{i}$ is the best approximation in $\gls{rfspace}_n \oplus \gls{lfspace}$.
It is not possible to get closer to \gls{sol} with any other enrichment in $\gls{lfspace}$.
\end{proof}

\begin{corollary}[Exponential convergence of globally coupled online enrichment]
The results in \cref{maintheorem} and \cref{cor1} also hold for \cref{algo:globally_coupled_online_enrichment}.
\end{corollary}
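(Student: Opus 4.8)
The plan is to exploit the optimality statement for \cref{algo:globally_coupled_online_enrichment} that was just established, together with the one-step contraction already proved for the residual based enrichment in \cref{maintheorem}. The essential observation is that the bound in \cref{maintheorem} is in fact a pure one-step estimate: for \emph{any} reduced space $\gls{rfspace}_n$ with Galerkin solution $\gls{rsol}_n$, enriching by the residual-based function $\localenrichment \in V_k$ and solving the Galerkin problem in $\gls{rfspace}_n \oplus \spanset\{\localenrichment\}$ yields an error bounded by $c_\mathrm{rbe}\norm{\gls{sol} - \gls{rsol}_n}_a$. Its proof uses only Galerkin best approximation in the energy norm, the identity $\norm{\gls{residual}(\gls{rsol}_n)}_{V'}^2 = \norm{\gls{sol} - \gls{rsol}_n}_a^2$, the maximizing choice of $k$, and \cref{thm:localization_g}; none of these presuppose how $\gls{rfspace}_n$ was built. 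Hence the contraction may be applied along the sequence of spaces generated by \cref{algo:globally_coupled_online_enrichment} as well.

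First I would fix a step $n$ and let $\gls{rfspace}_n$ be the current reduced space, with Galerkin solution $\gls{rsol}_n$. Applying the residual based rule of \cref{algo:online_enrichment} to this very space produces an enrichment function $\localenrichment$ lying in some $V_k$, and the Galerkin solution $\gls{rsol}_{n+1}^{\mathrm{rbe}}$ in $\gls{rfspace}_n \oplus \spanset\{\localenrichment\}$ obeys $\norm{\gls{sol} - \gls{rsol}_{n+1}^{\mathrm{rbe}}}_a \leq c_\mathrm{rbe}\norm{\gls{sol} - \gls{rsol}_n}_a$ by \cref{maintheorem}. On the other hand, the pair $(k,\localenrichment)$ is one admissible candidate in the minimization characterizing the globally coupled step, so the preceding optimality theorem immediately gives
\[
\norm{\gls{sol} - \gls{rsol}_{n+1}}_a \leq \norm{\gls{sol} - \gls{rsol}_{n+1}^{\mathrm{rbe}}}_a \leq c_\mathrm{rbe}\norm{\gls{sol} - \gls{rsol}_n}_a,
\]
which is exactly the statement of \cref{maintheorem} for \cref{algo:globally_coupled_online_enrichment}. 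Iterating this contraction and using $\gls{rsol}_0 = 0$ then reproduces \cref{cor1} verbatim, with the same constant $c_\mathrm{rbe}$.

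The only point that requires care — and what I regard as the main obstacle — is to keep the comparison genuinely one-step: the auxiliary solution $\gls{rsol}_{n+1}^{\mathrm{rbe}}$ must be constructed from the \emph{same} $\gls{rfspace}_n$ that the globally coupled algorithm is enriching, rather than from the (generally different) space an independent residual-based run would have reached. Since \cref{maintheorem} is a statement about a single enrichment from an arbitrary reduced space, this causes no difficulty, but it should be stated explicitly so that the constant $\gls{decompositionboundvr}$ entering $c_\mathrm{rbe}$ is the one associated with $\gls{rfspace}_n$ in both bounds. I would therefore phrase the argument as a one-step contraction lemma valid for any reduced space, establish the globally coupled version by the feasibility-plus-optimality sandwich above, and obtain both \cref{maintheorem} and \cref{cor1} for \cref{algo:globally_coupled_online_enrichment} as direct consequences.
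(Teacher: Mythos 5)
Your proof is correct and takes essentially the same route as the paper: the paper's two-sentence argument ("the enrichment is optimal in every step, so it is not worse than the residual based enrichment; any bound for \cref{algo:online_enrichment} holds also for \cref{algo:globally_coupled_online_enrichment}") is precisely your feasibility-plus-optimality sandwich, relying on the fact that \cref{maintheorem} is a one-step contraction valid for an arbitrary current space $\gls{rfspace}_n$. Your explicit insistence that the auxiliary residual-based step be taken from the \emph{same} space $\gls{rfspace}_n$ is exactly the implicit content of the paper's terse proof, just spelled out.
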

\begin{proof}
The enrichment in \cref{algo:globally_coupled_online_enrichment} is optimal
in every step,
so it is not worse than the enrichment of \cref{algo:online_enrichment}.
Any bound for \cref{algo:online_enrichment} holds also for 
\cref{algo:globally_coupled_online_enrichment}.
\end{proof}
\section{Application to Partition of Unity Decomposition}
While the two enrichment algorithms given above can be formulated
in an abstract setting, there are some constraints in implementing them.
The residual based enrichment requires the evaluation of dual norms
of the residuals in \gls{lfspace}, which usually require a basis
of these spaces. This would be prohibitively expensive
for e.g.~the wirebasket space decomposition, as the construction
of a basis for e.g.~an edge space would require one local solve
operation for every \gls{dof} on the edge. In addition,
the storage requirements of these vectors would be high.
Furthermore, even if the basis was available, the
computation of the Riesz representative for the evaluation of the
dual norm could be computationally expensive, as the matrix
for the inner product could become dense.
A similar argument holds for the globally coupled local enrichment.
Here the solution in $\gls{rfspace}_n \oplus \gls{lfspace}$
would require a basis of \gls{lfspace}.
From the different space decompositions introduced in this thesis,
only the partition of unity decomposition introduced in
\cref{sec:poudecomposition} allows for a straightforward implementation,
as its local spaces are spanned by \gls{fe} ansatz functions.
So a basis is already available and also the computation of
Riesz representatives for the evaluation of the dual norm
is quickly possible, as the inner product matrices are sparse.
We use the partition of unity decomposition in the following numerical example.

In \cref{sec:choosing spaces}, we derived an upper bound for \gls{decompositionboundvr}
for the partition of unity decomposition and the $H^1$-norm.
As the a priori estimates derived here require the energy norm, we give
an upper bound for \gls{decompositionboundvr} in the energy norm
in the following.
\begin{proposition}[Upper bound of $\gls{decompositionboundvr}$]
\label{thm:upper bound for cpu in energy norm}
With the assumptions about the energy norm from above,
for the partition of unity decomposition as defined in \cref{sec:poudecomposition}
based on the partition of unity functions $\{\gls{pouf}\}_{i=1}^{\gls{numdomainsol}}$.
Let $\gls{overlappingspaces}$ be the maximum number of functions $\gls{pouf}$ having support
in any given point $x$ in $\gls{domain}$, let $\frac{\gls{heat_conductivity}_{max}}{\gls{heat_conductivity}_{min}}$
being the contrast of the problem, let $\gls{friedrichs}$ the constant of the
Friedrich's inequality on $\gls{domain}$ and $\norm{\cdot}_\infty$
the infinity norm.
Neglecting the interpolation operator mapping back to the \gls{fe} space, it holds that
\begin{equation}
\gls{decompositionboundvr}^2 \leq
2 \gls{overlappingspaces} \left(\gls{friedrichs}
\frac{\gls{heat_conductivity}_{max}}{\gls{heat_conductivity}_{min}}
\max_{i\in \{1, \dots, \gls{numspaces}\}} \norm{\nabla \gls{pouf}}_\infty^2 + \max_{i\in \{1, \dots, \gls{numspaces}\}} \norm{\gls{pouf}}_\infty^2
\right).
\end{equation}
\end{proposition}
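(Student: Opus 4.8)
The plan is to follow the structure of the proof of \cref{thm:pu_bound_0}, but to carry out every estimate in the energy norm $\norm{\varphi}_E^2 = \gls{a}(\varphi,\varphi) = \int_{\gls{domain}} \gls{heat_conductivity}\,|\nabla \varphi|^2$ and to track carefully where the diffusion coefficient enters with $\gls{heat_conductivity}_\mathrm{min}$ versus $\gls{heat_conductivity}_\mathrm{max}$. Starting from the definition of $\gls{decompositionboundvr}$ in \cref{thm:localization_g}, I would first discard the orthogonal projection: since $\gls{orthogonalp}_{\gls{rfspace}\cap\gls{lfspacepou}}$ is an orthogonal projection, $\norm{(1-\gls{orthogonalp}_{\gls{rfspace}\cap\gls{lfspacepou}})\gls{lfspacemappou}(\varphi)}_E \leq \norm{\gls{lfspacemappou}(\varphi)}_E$, so it suffices to bound $\sum_{i=1}^{\gls{numdomainsol}} \norm{\gls{lfspacemappou}(\varphi)}_E^2$ from above. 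Invoking the hypothesis that the interpolation operator $\gls{feinterpolation}$ is neglected, we have $\gls{lfspacemappou}(\varphi) = \gls{pouf}\,\varphi$, and the whole problem reduces to estimating $\sum_{i=1}^{\gls{numdomainsol}} \norm{\gls{pouf}\,\varphi}_E^2$ in terms of $\norm{\varphi}_E^2$.

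Next I would expand the local energy norm. On each $\gls{subdomainol}$, the product rule gives $\nabla(\gls{pouf}\,\varphi) = \varphi\,\nabla\gls{pouf} + \gls{pouf}\,\nabla\varphi$, and the elementary inequality $|a+b|^2 \leq 2|a|^2 + 2|b|^2$ splits $\norm{\gls{pouf}\varphi}_E^2$ into two contributions. The first-order contribution $2\int_{\gls{subdomainol}}\gls{heat_conductivity}\,\gls{pouf}^2|\nabla\varphi|^2$ is controlled directly by $2\norm{\gls{pouf}}_\infty^2\int_{\gls{subdomainol}}\gls{heat_conductivity}|\nabla\varphi|^2$, i.e. by a local piece of the energy norm. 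The zeroth-order contribution $2\int_{\gls{subdomainol}}\gls{heat_conductivity}\,|\nabla\gls{pouf}|^2\,\varphi^2$ is the delicate one; here I pull out $\norm{\nabla\gls{pouf}}_\infty^2$ and bound $\gls{heat_conductivity}\leq\gls{heat_conductivity}_\mathrm{max}$ to obtain $2\gls{heat_conductivity}_\mathrm{max}\norm{\nabla\gls{pouf}}_\infty^2\norm{\varphi}_{L^2(\gls{subdomainol})}^2$.

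Summing over $i$ and using the finite-overlap property is the next step: since every point of $\gls{domain}$ lies in at most $\gls{overlappingspaces}$ of the $\gls{subdomainol}$, we get $\sum_i\int_{\gls{subdomainol}}\gls{heat_conductivity}|\nabla\varphi|^2 \leq \gls{overlappingspaces}\norm{\varphi}_E^2$ and $\sum_i\norm{\varphi}_{L^2(\gls{subdomainol})}^2 \leq \gls{overlappingspaces}\norm{\varphi}_{L^2(\gls{domain})}^2$. The first-order sum then yields $2\gls{overlappingspaces}\max_i\norm{\gls{pouf}}_\infty^2\,\norm{\varphi}_E^2$. For the zeroth-order sum, I would convert the global $L^2$-norm into the energy norm in two moves: Friedrich's inequality $\norm{\varphi}_{L^2(\gls{domain})}^2\leq\gls{friedrichs}^2\norm{\nabla\varphi}_{L^2(\gls{domain})}^2$ (which holds on $V$ thanks to the Dirichlet part of the boundary), followed by the ellipticity lower bound $\norm{\nabla\varphi}_{L^2(\gls{domain})}^2\leq\gls{heat_conductivity}_\mathrm{min}^{-1}\norm{\varphi}_E^2$. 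Chaining these produces precisely the contrast factor $\gls{heat_conductivity}_\mathrm{max}/\gls{heat_conductivity}_\mathrm{min}$ together with the Friedrich constant $\gls{friedrichs}$, and adding the two sums gives the asserted bound on $\gls{decompositionboundvr}^2$.

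The main obstacle is exactly the zeroth-order term $\int_{\gls{subdomainol}}\gls{heat_conductivity}\,|\nabla\gls{pouf}|^2\,\varphi^2$: it is the only place where $\varphi$ appears undifferentiated, so it cannot be reabsorbed into the energy norm without paying both a Friedrich constant and the full contrast, because the $\varphi^2$ term is weighted by the \emph{large} coefficient $\gls{heat_conductivity}_\mathrm{max}$ while the only available lower bound on $\norm{\varphi}_E^2$ uses the \emph{small} coefficient $\gls{heat_conductivity}_\mathrm{min}$. This is the same contrast-dependence that \cref{thm:pu_bound} later eliminates by subtracting local averages $\bar\varphi_i$ and exploiting that the macroscopic partition of unity lies in the reduced space $\gls{rfspace}$; in the present crude estimate, where no such cancellation is used, the factor $\gls{heat_conductivity}_\mathrm{max}/\gls{heat_conductivity}_\mathrm{min}$ is unavoidable.
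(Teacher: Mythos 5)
Your proposal is correct and follows essentially the same route as the paper's own proof: drop the projection, expand $\nabla(\gls{pouf}\varphi)$ with the product rule and $|a+b|^2\leq 2|a|^2+2|b|^2$, absorb the $\gls{pouf}\nabla\varphi$ term via $\norm{\gls{pouf}}_\infty$ and the overlap constant, and handle the $(\nabla\gls{pouf})\varphi$ term with $\gls{heat_conductivity}_{\max}$, Friedrich's inequality, and the ellipticity lower bound $\gls{heat_conductivity}_{\min}$, which is exactly where the contrast enters. The only discrepancy is that your careful chaining yields $\gls{friedrichs}^2$ (consistent with the paper's definition $\norm{u}_{L^2}\leq\gls{friedrichs}\norm{\nabla u}_{L^2}$), whereas the stated bound and the paper's proof write $\gls{friedrichs}$ to the first power --- your bookkeeping is the more precise one here.
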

\begin{proof}
Starting from 
\begin{equation}
\gls{decompositionboundvr} \leq
\sup_{\varphi \in \gls{fspace} \nnull}
\frac{
  \sum_{i=1}^{\gls{numdomainsol}} \norm{\gls{pouf} \varphi}_a^2
}{
  \norm{\varphi}_a^2
}
\end{equation}
we estimate 
$\sum_{i=1}^{\gls{numspaces}} \norm{\gls{pouf} \varphi}_a^2$. It holds that
\begin{eqnarray}
\sum_{i=1}^{\gls{numspaces}} \norm{\gls{pouf} \varphi}_a^2
&=&
\sum_{i=1}^{\gls{numspaces}} \int_{\gls{subdomainol}} \gls{heat_conductivity} |\nabla(\gls{pouf} \varphi)|^2 \dx
\\
\label{eqlast}
&\leq&
\sum_{i=1}^{\gls{numspaces}} \int_{\gls{subdomainol}} \gls{heat_conductivity} \left(2|(\nabla \gls{pouf}) \varphi|^2 + 2|\gls{pouf} (\nabla \varphi)|^2 \right) \dx .
\end{eqnarray}
For the second term in \eqref{eqlast} it holds that
\begin{equation}
\label{part1}
\sum_{i=1}^{\gls{numspaces}} \int_{\gls{subdomainol}} 2 \gls{heat_conductivity} |\gls{pouf} (\nabla \varphi)|^2 \dx
\leq 2 \gls{overlappingspaces} \max_{i\in \{1, \dots, \gls{numspaces}\}} \norm{\gls{pouf}}_\infty^2 \norm{\varphi}_a^2
\end{equation}
and for the first term we have
\begin{equation}
\sum_{i=1}^{\gls{numspaces}} \int_{\gls{subdomainol}} 2 \gls{heat_conductivity} |(\nabla \gls{pouf}) \varphi|^2 \dx
\leq
2 \gls{overlappingspaces} \gls{heat_conductivity}_{max} \max_{i\in \{1, \dots, \gls{numspaces}\}} \norm{\nabla \gls{pouf}}_\infty^2 \int_{\gls{domain}} |\varphi|^2 \dx
\nonumber
\end{equation}
\begin{equation}
\label{part2}
\leq
2 \gls{overlappingspaces} \gls{friedrichs} \frac{\gls{heat_conductivity}_{max}}{\gls{heat_conductivity}_{min}} \max_{i\in \{1, \dots, \gls{numspaces}\}} \norm{\nabla \gls{pouf}}_\infty^2 \norm{\varphi}_a^2
.
\end{equation}
Combining these yields the claim.
\end{proof}
\section{Numerical Example}
\label{sec:enrichment numerical example}
\begin{figure}
\centering
{%
\setlength{\fboxsep}{0pt}%
\setlength{\fboxrule}{1pt}%
\fbox{\includegraphics[width=0.2\textwidth]{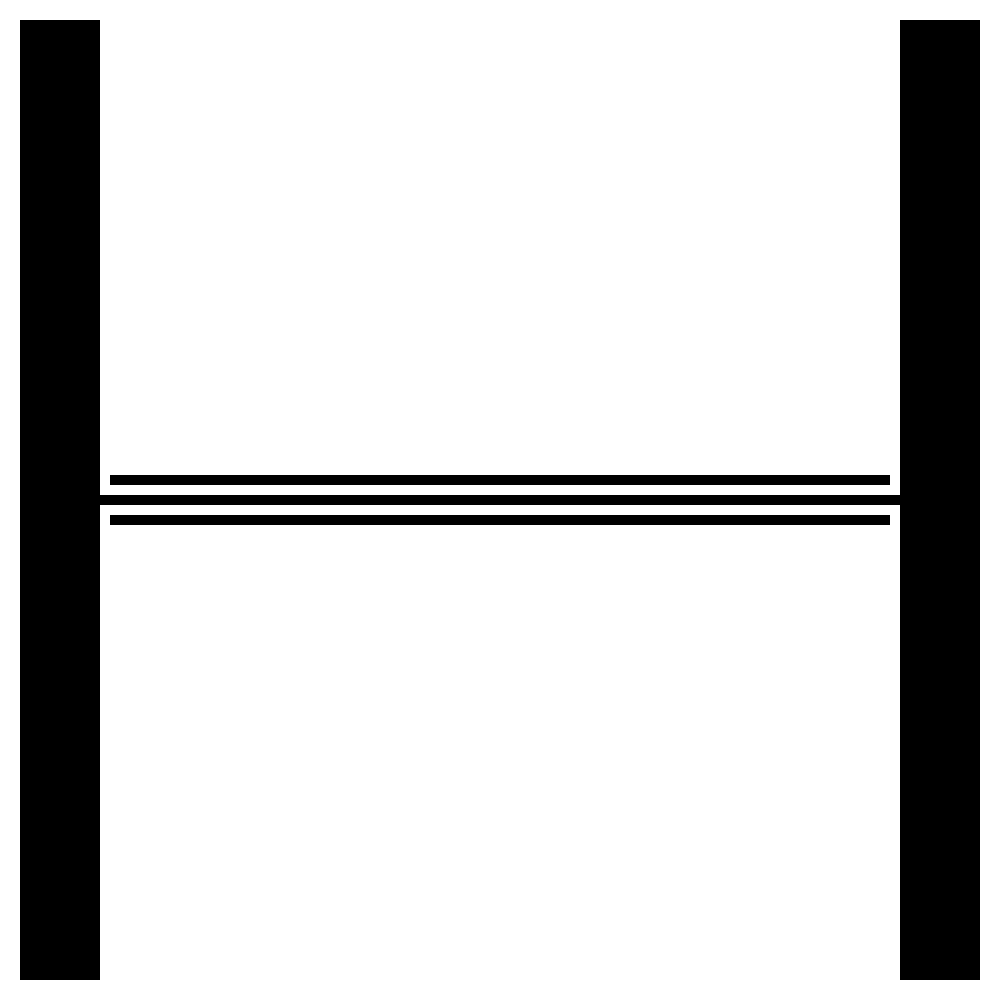}}
}%
\ \ \ 
\includegraphics[width=0.2\textwidth]{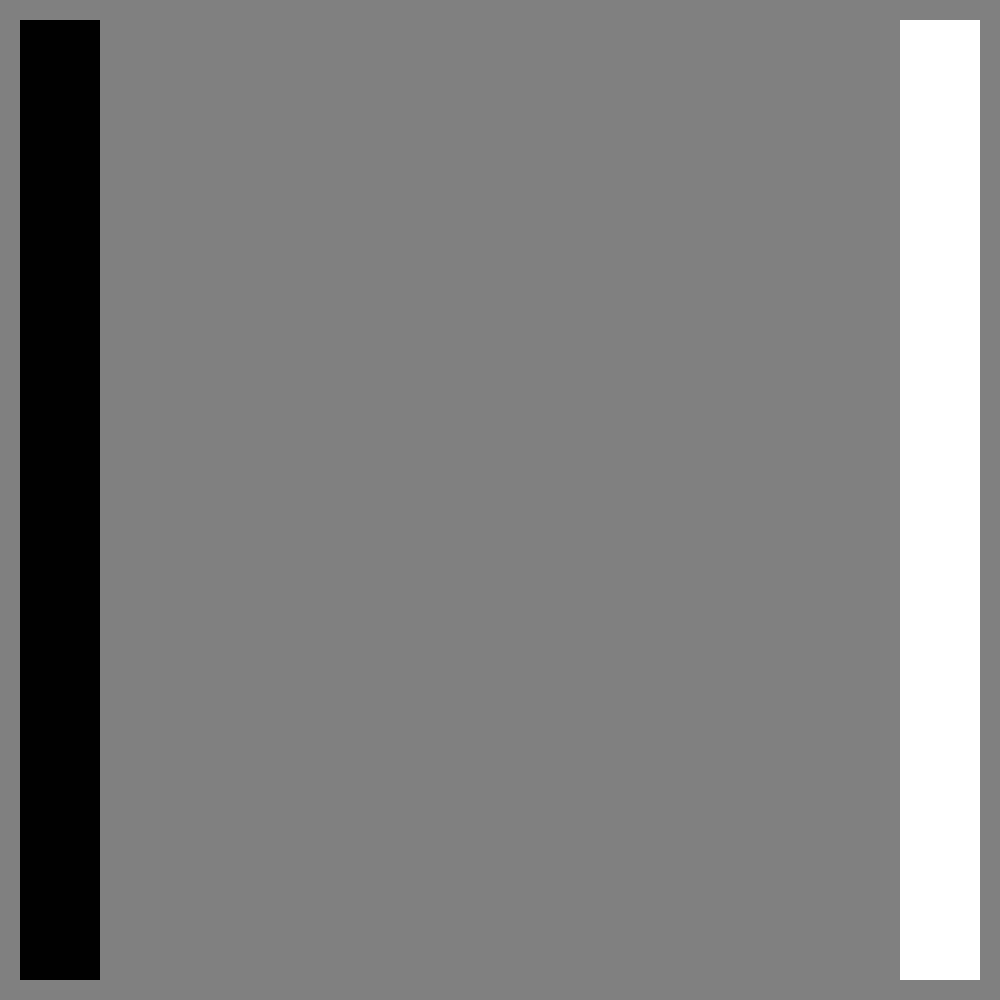}
\includegraphics[width=0.34\textwidth]{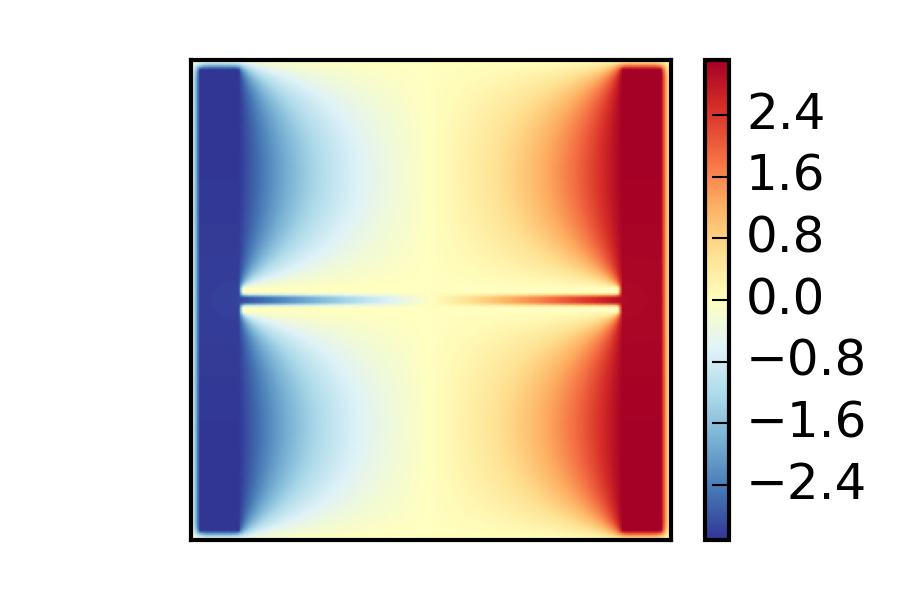}
\caption[Coefficient field $\gls{heat_conductivity}$ and right hand side $f$ of \exbv{}.]
        {Geometry of \exbv{}. Left: Coefficient field $\gls{heat_conductivity}$. White is 1, black is $10^5$. Middle: right hand side $\gls{heatsource}$. Black is $-10^5$, gray is $0$, white is $10^5$. Right: reference solution $\gls{sol}$.
(reproduction: \cref{repro:fig:enrichment geometry})
}
\label{fig:enrichment geometry}
\end{figure}

\input{fig_dd}
While the method and the proofs work both in two and three dimensions,
experiments were conducted for the two dimensional case only.
The example used here is the \exbv{}, a variant of
the \exb{} but driven by a right hand side instead
of inhomogeneous boundary conditions.
We define $\gls{domain}$ to be the unit square $(0,1)^2$.
We discretize the problem using P1 finite elements on 
a regular grid of $200\times 200$ squares, each divided
into four triangles, resulting in 80,401 degrees of freedom.
We use a coefficient field $\gls{heat_conductivity}$ with high contrast
($\gls{heat_conductivity}_{max}/\gls{heat_conductivity}_{min} = 10^5$) and high
conductivity channels to get interesting behavior
(see \cref{fig:enrichment geometry}).
As domain decomposition $\gls{subdomainol}$ we use domains of size 
$0.2\times 0.2$ with overlap $0.1$, resulting in $81$
subdomains (\cref{fig:dd}).
We use a partition of unity space decomposition
as defined in \cref{sec:poudecomposition}
with a conforming partition of unity \gls{pouf}
with
$\max_{i\in \{1, \dots, \gls{numspaces}\}} \norm{\nabla \gls{pouf}}_\infty^2 = 2 \gls{H}^{-2} = 200$ and
$\max_{i\in \{1, \dots, \gls{numspaces}\}} \norm{\gls{pouf}}_\infty^2 = 1$.
The resulting error decay is shown in \cref{fig:error decay zoom,fig:error decay,fig:convergence rate}.
To compare with the theory,
we calculate an upper bound for $\gls{decompositionboundvr}$ using
$\gls{overlappingspaces}=4$,
$\gls{friedrichs} = 1/(\sqrt{2}\pi)$,
$\gls{heat_conductivity}_{max} / \gls{heat_conductivity}_{min} = 10^5$,
and obtain $\gls{decompositionboundvr}^2 \leq 3.6013 \cdot 10 ^ 7$.
This results in an estimate of $1-c \geq 1.714 \cdot 10^{-10}$.
The rate of convergence observed in the experiment is several
orders of magnitude better than the rate guaranteed by the
a priori theory
and is close to the optimal convergence rate
(\cref{fig:convergence rate}).

To investigate the reason for this,
we plot the quotient of the larger part and the smaller part
of the estimates \cref{eq:chungend,eq:r1,eq:r2} in \cref{fig:ineq}.
It can be observed that the estimate \cref{eq:chungend}
is rather sharp, except when the error drops after
a plateau.
In estimate \cref{eq:r1}, around one order of magnitude is lost.
This could be improved by not enriching only one space but
using a marking strategy instead.
However, the main reason for the a priori theory to be so 
pessimistic seems to be the estimate in \cref{eq:r2}.
\tikzexternaldisable
\begin{figure}
\centering
\begin{tikzpicture}
\begin{semilogyaxis}[
    width=7cm,
    height=6cm,
    xmin=-5,
    xmax=105,
    ymin=3e-3,
    ymax=3e0,
    xlabel=Iteration $n$,
    ylabel=relative energy error,
    legend pos=north east,
    grid=both,
    grid style={line width=.1pt, draw=gray!20},
    major grid style={line width=.2pt,draw=gray!50},
    ytick={1e-6, 1e-5, 1e-4, 1e-3, 1e-2, 1e-1, 1e0},
    minor ytick={1e-15, 1e-14, 1e-13, 1e-12, 1e-11, 1e-10, 1e-9, 1e-8, 1e-7, 1e-6, 1e-5, 1e-4, 1e-3, 1e-2, 1e-1, 1e0, 1e1, 1e2, 1e3},
    minor xtick={10,30,50,70,90},
    legend style={at={(1.1,1.1)},anchor=north east},
  ]
\addplot [solid, thick] table [x expr=\coordindex,y index=0] {errors.dat};
\addplot [densely dashed, thick, red] table [x expr=\coordindex,y index=0] {errors_g_c.dat};
\legend{\cref{algo:online_enrichment}, \cref{algo:globally_coupled_online_enrichment}}
\end{semilogyaxis}
\end{tikzpicture}
\caption[Error decay during online enrichment for \exbv{} (zoom)]
{Decay of relative energy error during iteration for \exbv{} (zoom).
(reproduction: \cref{repro:fig:exconv})
}
\label{fig:error decay zoom}
\end{figure}
\begin{figure}
\centering
\begin{tikzpicture}
\begin{semilogyaxis}[
    width=7cm,
    height=6cm,
    xmax=610,
    xlabel=Iteration $n$,
    ylabel=relative energy error,
    legend pos=north east,
    grid=both,
    grid style={line width=.1pt, draw=gray!20},
    major grid style={line width=.2pt,draw=gray!50},
    ytick={1e-12, 1e-9, 1e-6, 1e-3, 1},
    minor ytick={1e-15, 1e-14, 1e-13, 1e-12, 1e-11, 1e-10, 1e-9, 1e-8, 1e-7, 1e-6, 1e-5, 1e-4, 1e-3, 1e-2, 1e-1, 1e0, 1e1, 1e2, 1e3},
    xtick={0,100,200,300,400,500,600},
    minor xtick={50,150,250,350,450,550,650},
    legend style={at={(1.1,1.1)},anchor=north east},
  ]
\addplot [solid, thick] table [x expr=\coordindex,y index=0] {errors.dat};
\addplot [densely dashed, thick, red] table [x expr=\coordindex,y index=0] {errors_g_c.dat};
\draw [red, thick,rounded corners] (axis cs:-5,3e-3) rectangle (axis cs:105,3e0);
\node at (axis cs:120,1e-1) [ anchor=north west] {\cref{fig:error decay zoom}};
\fill [red, opacity=0.2] (axis cs:400,1e-12) rectangle (1000, 10);
\node at (axis cs:430, 1e-9) [red, anchor=north west, rotate=90] {numerical noise};
\legend{\cref{algo:online_enrichment}, \cref{algo:globally_coupled_online_enrichment}}
\end{semilogyaxis}
\end{tikzpicture}
\caption[Decay of relative energy error during iteration in \exbv{}.]
{Decay of relative energy error during iteration.
(reproduction: \cref{repro:fig:exconv})
}
\label{fig:error decay}
\end{figure}
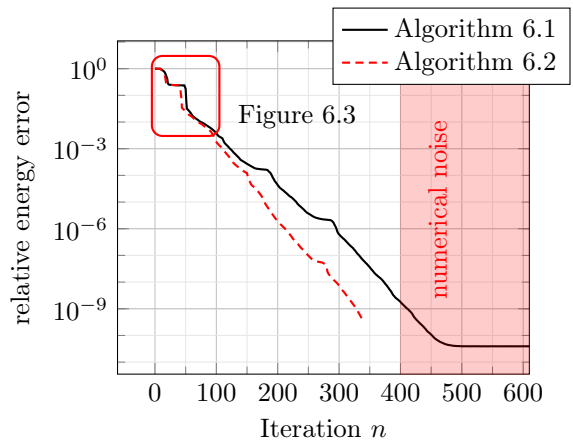
\clearpage
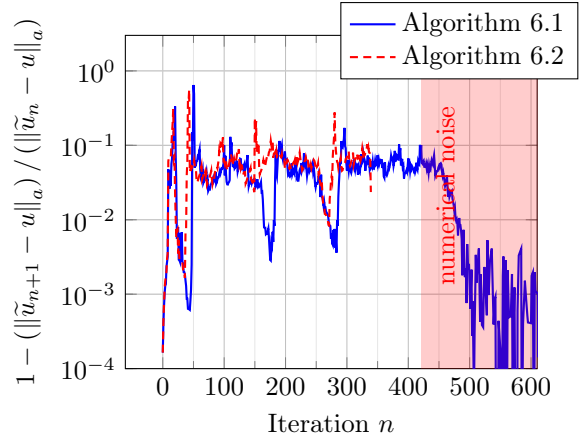
\begin{figure}
\centering
\begin{tikzpicture}
\begin{semilogyaxis}[
    width=7cm,
    height=6cm,
    xmax=610,
    ymin = 1e-4,
    ymax=3e0,
    xlabel=Iteration $n$,
    ylabel=$1 - \left(\norm{\widetilde u_{n+1} - u}_a\right)/\left(\norm{\widetilde u_n - u}_a\right)$,,
    legend pos=north east,
    grid=both,
    grid style={line width=.1pt, draw=gray!20},
    major grid style={line width=.2pt,draw=gray!50},
    ytick={1e-15, 1e-14, 1e-13, 1e-12, 1e-11, 1e-10, 1e-9, 1e-8, 1e-7, 1e-6, 1e-5, 1e-4, 1e-3, 1e-2, 1e-1, 1e0, 1e1, 1e2, 1e3},
    minor ytick={1e-15, 1e-14, 1e-13, 1e-12, 1e-11, 1e-10, 1e-9, 1e-8, 1e-7, 1e-6, 1e-5, 1e-4, 1e-3, 1e-2, 1e-1, 1e0, 1e1, 1e2, 1e3},
    xtick={0,100,200,300,400,500,600},
    minor xtick={50,150,250,350,450,550,650},
    legend style={at={(1.1,1.1)},anchor=north east},
  ]
\addplot [solid, thick, blue] table [x expr=\coordindex,y index=0] {convergence.dat};
\addplot [densely dashed, thick, red] table [x expr=\coordindex,y index=0] {convergence_g_c.dat};
\draw [red, thick] (axis cs:-100,1.714e-10) -- (axis cs:610,1.714e-10);
\node at (axis cs:-40,1e-8) [red, anchor=north west] {guaranteed rate $(1-c)$};
\fill [red, opacity=0.2] (axis cs:420,1e-20) rectangle (1000, 1000);
\node at (axis cs:430, 1e-3) [red, anchor=north west, rotate=90] {numerical noise};
\legend{\cref{algo:online_enrichment}, \cref{algo:globally_coupled_online_enrichment}}
\end{semilogyaxis}
\end{tikzpicture}
\caption[Convergence of online enrichment in \exbv{}.]
{Convergence of online enrichment in \exbv{}. 1 would be convergence within one iteration, 0 would be stagnation.
(reproduction: \cref{repro:fig:exconv})
}
\label{fig:convergence rate}
\end{figure}
\begin{figure}
\centering
\begin{tikzpicture}
\begin{semilogyaxis}[
    width=7cm,
    height=6cm,
    xmax=610,
    xlabel=Iteration $n$,
    legend pos=north east,
    grid=both,
    grid style={line width=.1pt, draw=gray!20},
    major grid style={line width=.2pt,draw=gray!50},
    xtick={0,100,200,300,400,500,600},
    minor xtick={50,150,250,350,450,550,650},
    legend style={at={(1.1,0.95)},anchor=south east},
  ]
\addplot [solid, thick, blue] table [x expr=\coordindex,y index=0] {ineq.dat};
\addplot [densely dashed, thick, red,] table [x expr=\coordindex,y index=1] {ineq.dat};
\addplot [densely dotted, thick, green,] table [x expr=\coordindex,y index=2] {ineq.dat};
\fill [red, opacity=0.2] (axis cs:420,1e-20) rectangle (1000, 100);
\node at (axis cs:430, 1e1) [red, anchor=north west, rotate=90] {numerical noise};
\legend{
  equation \eqref{eq:chungend},
  equation \eqref{eq:r1},
  equation \eqref{eq:r2},
}
\end{semilogyaxis}
\end{tikzpicture}
\caption[Sharpness of inequalities in \cref{algo:online_enrichment}]
{Sharpness of inequalities in \cref{algo:online_enrichment}:
For   equation \eqref{eq:chungend},
  $\left(\norm{\gls{rsol}_n - \gls{sol}}_a^2 - \norm{\gls{residual}(\gls{rsol}_n)}_{O_k'}^2\right)/\left(
  \norm{\gls{rsol}_{n+1} - \gls{sol}}_a^2\right)$ is plotted.
For   equation \eqref{eq:r1},
$\left(\norm{\gls{residual}(\gls{rsol}_n)}_{V_k'}^2\right) / \left(
\frac{1}{\gls{numspaces}} \sum_{i=1}^{\gls{numspaces}} \norm{\gls{residual}(\gls{rsol}_n)}_{V_k'}^2\right) $ is plotted.
For   equation \eqref{eq:r2},
$\left(\gls{decompositionboundvr}^2 \sum_{i=1}^{\gls{numspaces}} \norm{\gls{residual}(\gls{rsol}_n)}_{\gls{lfspace}'}^2\right)/\left(
\norm{\gls{residual}(\gls{rsol}_n)}_{V'}^2\right)$ is plotted.
(reproduction: \cref{repro:fig:exconv})
}
\label{fig:ineq}
\end{figure}

\clearpage
\section{Discussion}
\tikzexternalenable
In this chapter, exponential convergence
of two enrichment algorithms has been shown.
The second enrichment
algorithm, the
``globally coupled local enrichment''
for which a form of optimality can be shown,
is a new idea which was not published
before, to the author's knowledge.
The results shown here are a first
step.
To be integrated with the \gls{arbilomod},
several improvements are necessary.
First, the algorithms presented here only
use one space decomposition.
In the context of the \gls{arbilomod},
in the enrichment introduced in \cref{sec:enrichment},
we used two different space decompositions
for their favorable properties
in approximation and error estimation, respectively.
Furthermore, an extension to parameterized problems
is required, ideally
combining the convergence rate with the
Kolomogorov n-width of the
localized solution manifolds.
\clearpage
\noindent
\futurebox{
\section{Future Research in the Context of Localized Online Enrichment}
\myline
\begin{itemize}
\item
\textbfit{Numerical Experiment Exposing Training Breakdown}\\
We suspect that online enrichment is crucial
for examples with many channels of which only few
are used.
It would be interesting to see this confirmed in a 
numerical experiment.
A numerical example should be devised
where training as described in \cref{chap:training}
produces very large local bases and
enrichment produces only few basis vectors.
\item
\textbfit{Introduce Basis Cleanup Step}\\
Online enrichment increases the basis size,
possibly after a local geometry change.
It could also happen that local basis vectors
become superfluous. We think that localized
reduced basis methods should comprise a
``cleanup step'', where unnecessary basis functions
are removed. Implementing and analyzing
this would be an interesting project.
\item
\textbfit{Different Boundary Conditions}\\
The local problems in online enrichment
are always formulated with Dirichlet boundary
conditions in this thesis.
Probably better results can be obtained
using Robin-type boundary conditions
or \gls{dtn} operators.
\item
\textbfit{Parameterized Case}\\
The results in this chapter are only for the
non parametric case. The online
enrichment presented before in
\cref{sec:enrichment}
was designed for the parametric case.
It would be desirable if
convergence results in the parametric
case could be derived.
\end{itemize}
}

\chapter{Conclusion}
\label{chap:conclusion}
Localized model order reduction
is a reliable simulation tool for problems
with fine scale features but without
scale separation.
The favorable parallelization
opportunities match current
trends to cloud based computing.
Possibilities for data reuse after
localized geometry changes
promise fast interactive simulations.

We have shown that local bases of reasonable size can
be generated only local information
at acceptable computational cost
for the stationary heat conduction.
For the wave equation, the
same holds true as long as the
wavelength is large in comparison
to the local domain size.
Results from randomized numerical linear algebra
helped us building a solid theoretical
foundation for randomized training
algorithms.
Both localized a posteriori error estimators
as well as localized online enrichment
algorithms have been derived and analyzed.
However, for localized a posteriori error estimation
for inf-sup stable problems, the question
of how to quickly derive a lower bound
for the inf-sup constant is still open.

Localized model order reduction
is a field of active research, which seems
to start maturing now.
The main questions
(as outlined in 
\cref{sec:research questions})
and their answers are similar,
regardless of whether it is
\gls{gfem}, \gls{prscrbe}, \gls{gmsfem}, \gls{lrbms}, or \gls{arbilomod}.
We hope that the community will find a common formulation
and join forces to proceed faster in the future.
Maybe the abstract localizing space decomposition
and the abstract localized training configuration in this
thesis serve as an inspiration.

The relation between domain decomposition methods
and localized model order reduction still seems
to be not completely understood.
The training step in localized model order reduction
seems to be related to the generation of a coarse space
in domain decomposition methods and the
online enrichment seems related to the iteration
step in domain decomposition.
Knowledge transfer could probably benefit both communities.

The largest future challenge
on the way to a quick and reliable solver
for electromagnetic fields inside
of printed circuit boards, which was our initial
motivating example, seems to be
a stable localized reduction scheme for
inf-sup stable problems,
which is not available to date.
\clearpage
\section*{Contributions}
New contributions of the thesis are the following.
\begin{itemize}
\item
Application: The use of localized model order reduction for handling
of arbitrary changes is proposed.
\item
Application: The singular value decay of a transfer operator defined
in a \gls{pcb} board was computed, suggesting good applicability
of training based localized model order reduction.
\item
Formulation: An abstract definition of a localizing space decomposition
is proposed, providing a common formulation for different
localization strategies.
\item
Formulation: Ab abstract definition of a localized training configuration
is proposed, providing a common framework for different training
strategies. An a priori error bound is provided, formulated
completely in the abstract setting.
\item
Space Decomposition: A definition of the wirebasket space decomposition is given
which is valid in 2 and 3 dimensions for both Lagrange and Nédélec ansatz functions.
\item
Training:
A randomized training algorithm with provable convergence rates is proposed.
\item
A posteriori error estimator:
A localized, offline/online decomposable a posteriori error estimator with
computable constants was proposed.
\item
A posteriori error estimator: 
A new strategy for offline/online decomposition of residual based
error estimators in \gls{rb} contexts was proposed, drastically
reducing numerical noise.
\item
Online Enrichment: For the simplest case of online enrichment, exponential convergence was proven.
\item
Online Enrichment: The globally coupled online enrichment was proposed,
which leads to optimal convergence of online enrichment for coercive problems.
\end{itemize}

\appendix
\chapter{Reproduction of Results}
For all results presented in this thesis,
source code for reproduction is provided,
which is published in \cite{diss_zenodo}.
This chapter contains instructions
to reproduce each figure in this thesis.
The provided source code generates
data files, which are used to 
generate the plots in this document.
The plots are generated using pgfplots
\cite{feuersanger2011manual}.
The sources to generate the plots from
the data files can be found in the 
\LaTeX ~source of this thesis,
which is provided with this thesis.
The format of the data files is unspecified,
the meaning of the columns can be
inferred from the generating scripts.

Most examples require Python 2.7
and \gls{pymor} 0.4.
Some visualizations are generated using \gls{mathematica}.
The code requires some libraries to be installed.
It is advised to disable parallelization by setting
the environment variable \texttt{OMP\_NUM\_THREADS} to ``1'',
as the automatic parallelization inside numpy
and scipy often leads to increased execution
time in the small localized problems.

The following sequence of commands leads
to a working installation on Ubuntu 18.04 LTS:

\begin{verbatim}
$> sudo apt-get install python-pip python-virtualenv python-numpy \
   python-scipy python-pyside cython python-matplotlib python-dev \
   python git python-pil
$> export PATH_TO_VIRTUALENV=~/virtualenv
$> virtualenv --system-site-packages $PATH_TO_VIRTUALENV
$> source $PATH_TO_VIRTUALENV/bin/activate
$> pip install --upgrade numpy scipy progress pyopengl ipython
$> pip install --upgrade git+https://github.com/pymor/pymor.git@0.4.x
$> pip install --upgrade \
   git+https://github.com/sdrave/simdb.git@143d6b9f9f5b7bf55ab2bf\
   56d3cb19166cc1abcb
$> export SIMDB_PATH=~/simdb
\end{verbatim}

\section{Reproduction of \cref{fig:nedelec_ansatz}}
\label{repro:fig:nedelec_ansatz}
In the directory \path{code/nedelec_ansatz_function} execute the
\gls{mathematica} notebook
\path{simplex_2d.nb}. The visualizations appear in the notebook.

\section{Reproduction of \cref{fig:thermal block solution}}
\label{repro:fig:thermal block solution}
In the directory \path{code/thermalblock} run the file
\path{full_solution.py}. 
A window will open showing the solution. 
\section{Reproduction of \cref{fig:structures}}
\label{repro:fig:structures}
In the directory \path{code/arbilomod} run the file \path{create_full_solutions.py}.
The solution plots are created in ``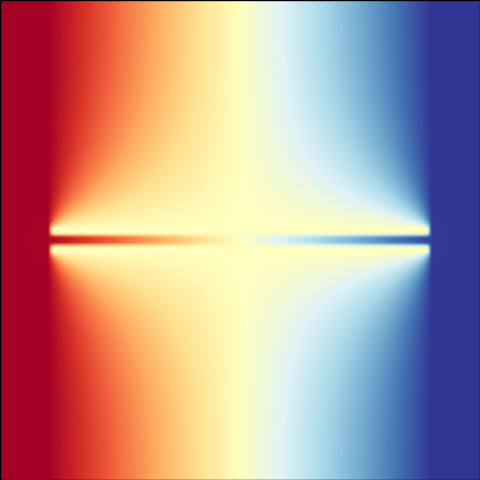'' to ``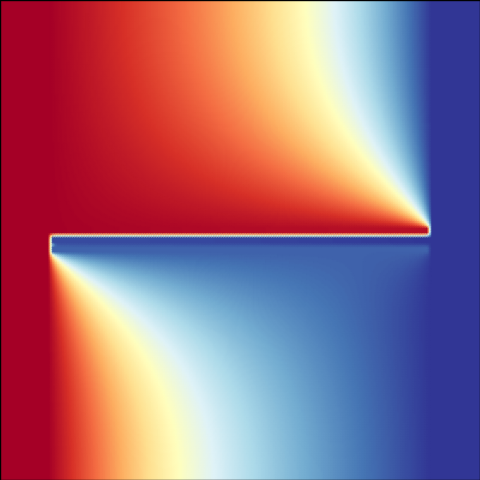''.

\section{Reproduction of \cref{fig:maxwell solutions}}
\label{repro:fig:maxwell solutions}
In the directory \path{code/arbilomod_maxwell} run the file \path{maxwell_create_solutions.py}.
The solution plots are created in 
\path{msol_0_1.png} to \path{msol_0_3.png} and \path{msol_1_1.png} to \path{msol_1_3.png}.

\section{Reproduction of \cref{fig:vertexextension}}
\label{repro:fig:vertexextension}
In the directory
\path{code/vertex_extension_plot}
run the \gls{mathematica} notebook
\path{generate_vertex_extension_plots.nb}.
The images will be exported as ``png''-images
in the directory of the notebook.

\section{Reproduction of \cref{fig:tolerances}}
\label{repro:fig:tolerances}
In the directory \path{code/arbilomod}
run the file \path{experiment_tolerances.py}
and afterwards\\
\path{postprocessing_tolerances.py}.
The results are written to
\path{errors_tolerances.txt}.
The 
\gls{mathematica} file 
\path{create_tolerances_plot.nb}
generates the image from the txt file.

\section{Reproduction of
\cref{tab:basisreusecompare},
\cref{fig:basisreusewithtraining}, and
\cref{fig:basisreuse}
}
\label{repro:basisreuse}
In the directory \path{code/arbilomod}
run the file \path{experiment_basisreuse.py}
and the file 
\path{experiment_basisreuse_with_training.py}.
Then run \path{postprocessing_basisreuse.py}
and \path{postprocessing_basisreuse_with_training.py}.
The data for \cref{tab:basisreusecompare} is printed to stdout.
The data for 
\cref{fig:basisreusewithtraining}
is written to
\path{basisreuse_training_with_reuse_0.dat} to
\path{basisreuse_training_with_reuse_4.dat}
and
\path{basisreuse_training_without_reuse_0.dat} to
\path{basisreuse_training_without_reuse_4.dat}.
The data for 
\cref{fig:basisreuse}
is written to
\path{basisreuse_with_reuse_0.dat} to
\path{basisreuse_with_reuse_4.dat}
and
\path{basisreuse_without_reuse_0.dat} to
\path{basisreuse_without_reuse_4.dat}.

\section{Reproduction of \cref{fig:estimatorperformance}}
\label{repro:fig:estimatorperformance}
In the directory \path{code/arbilomod}
run the file \path{experiment_estimatorperformance.py}.
Then run \path{postprocessing_estimatorperformance.py}.
The data 
is written to
\path{estimatorperformance_0.dat} to
\path{estimatorperformance_4.dat}.

\section{Reproduction of \cref{tab:runtimes}}
\label{repro:tab:runtimes}
Set the environment variable OMP\_NUM\_THREADS to 1
by executing 
\texttt{export OMP\_NUM\_THREADS=1}
for single-thread performance measurement.
It is not used by the provided scripts directly,
but inside of numpy and/or scipy.
Then, in the directory \path{code/arbilomod}
run the file \path{experiment_create_timings.py}.
Then run 
\path{postprocessing_create_timings.py}.
The timings are written to stdout.

\section{Reproduction of \cref{tab:H_study}}
\label{repro:tab:H_study}
In directory \path{code/arbilomod}
run the file \path{experiment_H_study.py}
and afterwards

\path{postprocessing_H_study.py}.
Results are written to stdout.

\section{Reproduction of \cref{fig:basis_sizes}}
\label{repro:fig:basis_sizes}
In directory \path{code/arbilomod}
run the file \path{experiment_draw_basis_sizes.py}
and afterwards\\
\path{postprocessing_draw_basis_sizes.py}.
The picture will be written to \path{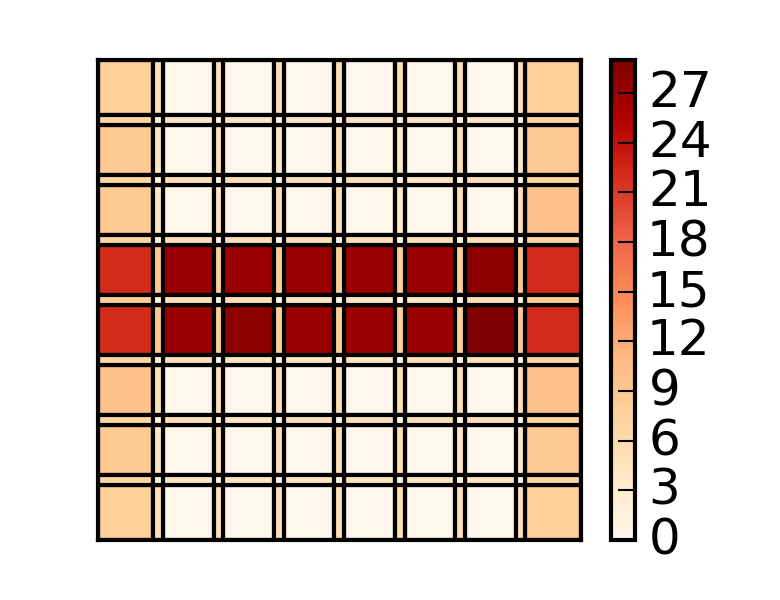}.

\section{Reproduction of \cref{fig:infsupcont}}
\label{repro:fig:infsupcont}
In directory \path{code/arbilomod_maxwell}
run the file \path{maxwell_calculate_infsup.py}.
The inf-sup constants are written to 
\path{infsups0.txt} for the first geometry
and to \path{infsups1.txt} for the second geometry.
The continuity constants are written to \path{continuities0.txt}
for the first geometry and to \path{continuities1.txt}
for the second geometry.

\section{Reproduction of \cref{fig:global_n_width}}
\label{repro:fig:global_n_width}
In directory \path{code/arbilomod_maxwell}
run the file \path{maxwell_global_n_width.py}.
The data for geometry 1 will be in
\path{n_width0.txt} and the data for
geometry 2 will be in \path{n_width1.txt}.

\section{Reproduction of \cref{fig:localized_globalsolve}}
\label{repro:fig:localized_globalsolve}
In directory \path{code/arbilomod_maxwell}
run the file \path{maxwell_local_n_width.py}.
The data for geometry 1 will be in
\path{localized0.txt} and the data for
geometry 2 will be in \path{localized1.txt}.

\section{Reproduction of \cref{fig:infsup_drop}}
\label{repro:fig:infsup_drop}
In directory \path{code/arbilomod_maxwell}
run the file \path{maxwell_infsup_during_reduction.py}.
The inf-sup constants are written to
\path{infsup_constant_0.txt}
and
\path{infsup_constant_1.txt}.
The convergence graph on the left is in 
\path{localized0.txt}, generated
by \path{maxwell_local_n_width.py},
see above.

\section{Reproduction of \cref{fig:training_error}}
\label{repro:fig:training_error}
In directory \path{code/arbilomod_maxwell}
run the file \path{maxwell_training_benchmark.py}.
The errors will be in 
\path{maxwell_training_benchmark0.txt}
and
\path{maxwell_training_benchmark1.txt}.
The data for global solves
is in \path{localized0.txt}
and \path{localized1.txt}, 
 generated
by 
\path{maxwell_local_n_width.py},
see above.
\section{Reproduction of \cref{fig:maxwell_basis_sizes}}
\label{repro:fig:maxwell_basis_sizes}
In directory \path{code/arbilomod_maxwell}
run the file 
\path{experiment_maxwell_geochange.py}
and afterwards

\path{postprocessing_draw_basis_sizes_maxwell_geochange.py}.
The images are written to 

\path{basis_sizes_maxwell_0.png}
and
\path{basis_sizes_maxwell_1.png}.

\section{Reproduction of \cref{fig:staberrest}}
\label{repro:fig:staberrest}
In directory \path{code/thermalblock}
run the file \path{run_thermalblock.sh}.
The results will be in\\ \path{traditional.txt}
for bases generated with the \oldsplit{}
and in 
\path{residual_basis.txt} for bases generated
with the \newsplit{}.

\section{Reproduction of \cref{img:modes}}
\label{repro:img:modes}
In directory \path{code/rangefindertest/example1}
run \path{experiment_twosquares_images.py}.
Results will be in 
\path{modes.dat} and 
\path{omodes.dat}.
If you encounter errors related to umfpack, uninstall
scikit-umfpack (pip uninstall scikit-umfpack)
as its python bindings are buggy at the time of writing.

\section{Reproduction of \cref{fig:interface_a_priori}}
\label{repro:fig:interface_a_priori}
In directory \path{code/rangefindertest/example1},
run the four files
\path{experiment_twosquares_svddecay.py},
\path{experiment_twosquares_algo_convergence.py},
\path{postprocessing_twosquares_operatornorm_decay.py}, and
\path{postprocessing_twosquares_a_priori_comparison.py}
(in that order).

Results for Figure (a) will be in \path{twosquares_percentiles.dat}.
Results for Figure (b) will be in \path{twosquares_means.dat} and \path{twosquares_expectation.dat}.

\section{Reproduction of \cref{tab:cpu_times}}
\label{repro:tab:cpu_times}
Set the environment variable OMP\_NUM\_THREADS to 1
by executing 
\path{export OMP_NUM_THREADS=1}
for single-thread performance measurement.
In directory
 \path{code/rangefindertest/cpu_times}
run file \path{measure_cpu_times.py}.
Results will be on stdout.

\section{Reproduction of \cref{fig:interface_adaptive_quartiles}}
\label{repro:fig:interface_adaptive_quartiles}
In directory \path{code/rangefindertest/example1},
run files \path{experiment_twosquares_svddecay.py}
and
\path{experiment_twosquares_algo_convergence.py}.

To reproduce (a), run
\path{postprocessing_twosquares_adaptive_convergence.py},
results will be in \path{twosquares_adaptive_convergence.dat}.

To reproduce (b), run
\path{postprocessing_twosquares_adaptive_num_testvecs.py},
results will be in \path{twosquares_adaptive_num_testvecs.dat}.

\section{Reproduction of \cref{fig:h_dependency}}
\label{repro:fig:h_dependency}
In directory \path{code/rangefindertest/example1},
run files \path{experiment_h_svddecay.py},

\path{experiment_h_algo_convergence.py},
and \path{postprocessing_h.py}.
Results will be in 
\path{experiment_h_deviation.dat}
 and 
\path{experiment_h_testvecs.dat}.

\section{Reproduction of \cref{fig:helmholtz_svd}}
\label{repro:fig:helmholtz_svd}
In directory \path{code/rangefindertest/example2},
run \path{helmholtz.py}.
Results will be in 
\path{helmholtz_selected.dat} (for left figure)
   and \path{helmholtz_3d.dat} (for right figure).

\section{Reproduction of \cref{fig:helmholtz_a_priori}}
\label{repro:fig:helmholtz_a_priori}
In directory \path{code/rangefindertest/example2},
run \path{helmholtz.py}.
Then run 
\path{postprocessing_helmholtz_operatornorm_decay.py}.
Result for left figure will be in 
\path{helmholtz_percentiles.dat}.
Then run 
\path{postprocessing_helmholtz_a_priori_comparison.py}
and\\
\path{postprocessing_helmholtz_operatornorm_decay.py}.
Results for right figure will be in \path{helmholtz_expectation.dat} and \path{helmholtz_means.dat}.

\section{Reproduction of \cref{fig:helmholtz_interface_adaptive_quartiles}}
\label{repro:fig:helmholtz_interface_adaptive_quartiles}
In directory \path{code/rangefindertest/example2},
run \path{helmholtz.py}.
Then run 
\path{postprocessing_helmholtz_adaptive_convergence.py}.
Result for left figure will be in 
\path{helmholtz_adaptive_convergence.dat}.
Then run 
\path{postprocessing_helmholtz_adaptive_num_testvecs.py}.
Result for right figure will be in
 \path{helmholtz_adaptive_num_testvecs.dat}.

\section{Reproduction of \cref{fig:olimexdecay,fig:olimex rangefinder runtimes}}
\label{repro:fig:olimexdecay}
The Olimex A64 board design is a KiCad project file.
We provide the project file used here at
\cite{Olimex_zenodo}.
The code to generate the transfer operators 
is developed in a fork of KiCad. 
KiCad's unit test mechanism was used to set this up.
The complete KiCad source code with the changes
to compute the transfer operators can be found
in \path{code/kicad}.

To build it, you need a \Cpp development
environment. On a Ubuntu 18.04 LTS
the following command leads to a working
environment:

\begin{verbatim}
$> sudo apt-get install build-essential cmake libwxgtk3.0-gtk3-dev \
   libglew-dev libglm-dev libcairo2-dev libboost1.65-all-dev swig \
   python-wxgtk3.0 libssl-dev libsuitesparse-dev libcurl4-openssl-dev
\end{verbatim}
To build it, create a build directory.
Inside of the build directory, execute
\\
\texttt{cmake -DCMAKE\_BUILD\_TYPE=Release -DKICAD\_SPICE=OFF -DBUILD\_GITHUB\_PLUGIN=OFF\textbackslash \\  -DKICAD\_USE\_OCE=OFF path/to/code/kicad}.\\
Enter the subdirectory \texttt{qa/polygon\_generator}
and run \texttt{make}.
After successful build, 
save the file \path{A64-OlinuXino_Rev_D.kicad_pcb}
from \cite{Olimex_zenodo} 
and the file \path{kicad.ini}
from the 
\path{code/kicad/qa/polygon_generator}
in the current directory and
execute 
\path{test_polygon_generator}.

The script will compute the transfer operators
for 40 domains and run the adaptive range finder
algorithm on all of these.
The timings for \cref{fig:olimex rangefinder runtimes}
are written to \path{timings.dat}.

Additionaly, the script will output matrices in 
\path{sysmat_816.mtx} and \path{rsmat_816.mtx}
from which the transfer operator can be computed.
Install umfpack for python in a setup
as specified above by using the command
\path{pip install --upgrade scikit-umfpack}.
Copy the file \path{apply_arpack.py}
from \path{code/kicad/qa/polygon_generator}
to the local directory and execute it.
The singular values for the transfer operator
visible in \cref{fig:olimexdecay}
will be written to \path{svals_816.dat}
and the maximum testvector values
will be written to \path{maxnorms_2norm_816.dat}.

\section{Reproduction of \cref{fig:enrichment geometry}}
\label{repro:fig:enrichment geometry}
In directory \path{code/enrichment},
run \path{create_full_solution.py}.
The result is written to 
\path{solution.png}.

\section{Reproduction of \cref{fig:error decay zoom,fig:error decay,fig:convergence rate,fig:ineq}}
\label{repro:fig:exconv}
In directory \path{code/enrichment},
run the following four files:
\path{experiment_residual_based.py},
\path{experiment_globally_coupled.py},

\path{postprocessing_residual_based.py},
\path{postprocessing_globally_coupled.py}.
The data is written to data files:
\path{errors.dat} will contain relative energy-, h1- and l2-errors for residual based enrichment,
\path{convergence.dat} will contain data for \cref{fig:convergence rate} 
for residual based enrichment,
\path{ineq.dat} will contain data for \cref{fig:ineq},
\path{errors_g_c.dat} will contain relative energy errors for globally coupled enrichment,
\path{convergence_g_c.dat} will contain data for \cref{fig:convergence rate}  for globally coupled enrichment.

\section{Reproduction of \cref{fig:gs orthogonality error}, \cref{fig:gs reproduction error}, and \cref{fig:gs num iterations}}
\label{repro:fig:gs stuff}
In the directory \path{code/gram_schmidt} run the file \path{experiment.py}.
For each algorithm, a file is created:
\path{Schmidt-GS.dat},
\path{Gram-GS.dat},
\path{ReIt-GS_2.dat},
\path{ReIt-GS_3.dat},
\path{AReIt-GS.dat}.
Each contains the orthogonality error in the first column
and the reproduction error in the second column.
In addition, the file \path{iterations.dat} is created,
containing the median number of iterations
for each vector.

\section{Reproduction of \cref{tab:assemblyperformance}}
\label{repro:tab:assemblyperformance}
To regenerate the timings for the proposed
code, run the tests in 
\path{code/high_performance_assembly}
using Valgrind/Callgrind. One output file is written
for each evaluation of the functions.

To regenerate the timings for the dune code,
add the files in
\path{code/high_performance_assembly/dune}
in a dune module, compile them and run them with
Valgrind/Callgrind. One output file is written for each
evaluation. Note that the first two evaluations
show higher instruction counts because of
initializations.
The tests were performed with dune 2.6 and dune-pdelab 2.0.

\chapter{A Posteriori Error Estimation for Relative Errors}
\label{chap:relative error estimators}
\section{Introduction}
In the context of model order reduction, 
it is often the case that an a posteriori error estimator
for the absolute error is available, but 
an estimator for the relative error is required.
It is possible to derive an upper bound for the relative
error based on the error estimator for the
absolute error and the norm of the reduced solution.
Most introductory texts to
reduced basis methods state one, which we will
call the classic relative error estimator
in the following. This approach is widely used
in the model order reduction community.
In our manuscript introducing \gls{arbilomod},
we used a different approach \cite[Proposition 5.10]{Buhr2017}. 
This approach can also be found in \cite[Lemma 3.2.2]{hessphdthesis}.
In this section, we compare these two approaches
and conclude that the new approach is better than
the classic approach in almost every aspect.
This is not a new result, it is known to experts 
in the field. However, to our knowledge,
this is the first comprehensive comparison of the
two approaches.
\section{Two relative error estimators}
\subsection{Setting}
We assume a full model with solution $\gls{sol}$
and a reduced model with solution $\gls{rsol}$ are given
and furthermore, an efficient
a posteriori error estimator $\Delta(\gls{rsol})$ satisfying
\begin{equation}
\norm{\gls{sol} - \gls{rsol}} \leq \Delta(\gls{rsol}) \leq \eta \norm{\gls{sol} - \gls{rsol}}
\end{equation}
with efficiency constant $\eta$ is available.
Based on that, an efficient relative a posteriori error estimator
$\Delta_\mathrm{rel}(\gls{rsol})$ should be derived satisfying
\begin{equation}
\frac{\norm{\gls{sol} - \gls{rsol}}}{\norm{\gls{sol}}} \leq \Delta_\mathrm{rel}(\gls{rsol}) \leq \eta_\mathrm{rel} \frac{\norm{\gls{sol} - \gls{rsol}}}{\norm{\gls{sol}}}
\end{equation}
with efficiency constant $\eta_\mathrm{rel}$.
It should not require the norm of the solution $\norm{\gls{sol}}$, as
this is usually not available in the context
of a posteriori error estimation.
\section{Definition of Relative a Posteriori Error Estimators}
In this subsection, two 
relative a posteriori error estimators are defined.
The first one is the classic one, which can be found in many
introductory texts on reduced basis methods, for
example in \cite[Proposition 4D]{Patera2007}, 
in \cite[Proposition 2.27]{Haasdonk2017a} or
\cite[Proposition 4.4]{Hesthaven2016}.
\begin{definition}[Classic relative a posteriori error estimator]
Given a solution $\gls{sol}$ and a reduced solution $\gls{rsol}$
and an a posteriori error estimator $\Delta(\gls{rsol})$,
the classic relative a posteriori error estimator is defined
by
\begin{equation}
\Delta^\mathrm{c}_\mathrm{rel}(\gls{rsol}) := 2 \frac{\Delta(\gls{rsol})}{\norm{\gls{rsol}}} .
\end{equation}
\end{definition}
The second approach is the ``new'' relative a posteriori error estimator,
which was used by the authors in \cite[Proposition 5.10]{Buhr2017} but
can also be found in other publications, for example \cite[Lemma 3.2.2]{hessphdthesis}.
\begin{definition}[New relative a posteriori error estimator]
Given a solution $\gls{sol}$ and a reduced solution $\gls{rsol}$
and an a posteriori error estimator $\Delta(\gls{rsol})$,
the new relative a posteriori error estimator is defined
by
\begin{equation}
\Delta^\mathrm{new}_\mathrm{rel}(\gls{rsol}) := \frac{\Delta(\gls{rsol})}{\norm{\gls{rsol}} - \Delta(\gls{rsol})} .
\end{equation}
\end{definition}
These two approaches will be analyzed in the following sections.
\begin{remark}[Dividing by reduced error norm]
Sometimes, the quantity $\frac{\Delta(\gls{rsol})}{\norm{\gls{rsol}}}$
is used to analyze the relative error, for example in \cite[Section 9.1]{Quarteroni2016}.
While this is a very good approximation
of the relative error when $\Delta(\gls{rsol}) \ll \norm{\gls{rsol}}$,
it is not an upper bound to the relative error $\norm{\gls{sol} - \gls{rsol}} / \norm{\gls{sol}}$.
\end{remark}
\section{Rigorousity and Effectivity of Relative a Posteriori Error Estimators}
Assuming that the underlying absolute a posteriori error estimator is
rigorous and efficient with efficiency constant $\eta$, i.e.
assuming that
\begin{equation}
\norm{\gls{sol} - \gls{rsol}} \leq \Delta(\gls{rsol}) \leq \eta \norm{\gls{sol} - \gls{rsol}},
\end{equation}
we show that the two relative a posteriori error estimators 
are in turn rigorous and efficient for the relative error.
These results can also be found in the references cited above.
The following proposition is for the classic approach.
\begin{proposition}[Classic relative a posteriori error estimator]
If $\Delta(\gls{rsol}) \leq \frac 1 2 \norm{\gls{rsol}}$,
it holds that
\begin{equation}
\frac{\norm{\gls{sol} - \gls{rsol}}}{\norm{\gls{sol}}}
\leq
\Delta^\mathrm{c}_\mathrm{rel}(\gls{rsol})
\leq 
\eta^\mathrm{c}_\mathrm{rel}
\frac{\norm{\gls{sol} - \gls{rsol}}}{\norm{\gls{sol}}}
\end{equation}
with $\eta^\mathrm{c}_\mathrm{rel} := 3 \eta$.
\end{proposition}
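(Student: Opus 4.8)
The plan is to prove the two inequalities separately, in both cases reducing everything to a comparison between $\norm{\gls{sol}}$ and $\norm{\gls{rsol}}$. The only tool needed beyond the hypotheses is the triangle inequality applied to $\gls{sol} = \gls{rsol} + (\gls{sol} - \gls{rsol})$, together with the rigorous absolute bound $\norm{\gls{sol} - \gls{rsol}} \leq \Delta(\gls{rsol})$, its efficiency $\Delta(\gls{rsol}) \leq \eta \norm{\gls{sol} - \gls{rsol}}$, and the standing assumption $\Delta(\gls{rsol}) \leq \tfrac{1}{2}\norm{\gls{rsol}}$. This assumption is precisely what turns the crude norm comparisons into the explicit constants $2$ and $\tfrac{3}{2}$ that eventually produce the factor $3$ in the efficiency constant.

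First I would establish rigorousity, i.e.\ the left inequality. The reverse triangle inequality combined with $\norm{\gls{sol} - \gls{rsol}} \leq \Delta(\gls{rsol})$ and the hypothesis yields $\norm{\gls{sol}} \geq \norm{\gls{rsol}} - \Delta(\gls{rsol}) \geq \tfrac{1}{2}\norm{\gls{rsol}}$, hence $1/\norm{\gls{sol}} \leq 2/\norm{\gls{rsol}}$. Dividing the absolute bound $\norm{\gls{sol} - \gls{rsol}} \leq \Delta(\gls{rsol})$ by $\norm{\gls{sol}}$ and inserting this estimate gives $\norm{\gls{sol} - \gls{rsol}}/\norm{\gls{sol}} \leq \Delta(\gls{rsol})/\norm{\gls{sol}} \leq 2\,\Delta(\gls{rsol})/\norm{\gls{rsol}} = \Delta^\mathrm{c}_\mathrm{rel}(\gls{rsol})$.

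For efficiency (the right inequality) I would use the triangle inequality in the other direction: $\norm{\gls{sol}} \leq \norm{\gls{rsol}} + \Delta(\gls{rsol}) \leq \tfrac{3}{2}\norm{\gls{rsol}}$, so that $1/\norm{\gls{rsol}} \leq \tfrac{3}{2}\,/\norm{\gls{sol}}$. Substituting the efficiency of the absolute estimator into the definition then yields $\Delta^\mathrm{c}_\mathrm{rel}(\gls{rsol}) = 2\,\Delta(\gls{rsol})/\norm{\gls{rsol}} \leq 2\eta\,\norm{\gls{sol}-\gls{rsol}}/\norm{\gls{rsol}} \leq 3\eta\,\norm{\gls{sol}-\gls{rsol}}/\norm{\gls{sol}}$, which is the claim with $\eta^\mathrm{c}_\mathrm{rel} = 3\eta$.

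There is no genuine analytic obstacle here; the argument is elementary. The only thing to be careful about is the bookkeeping of the two triangle inequalities — the reverse direction for the lower bound and the forward direction for the upper bound — and verifying that the hypothesis $\Delta(\gls{rsol}) \leq \tfrac{1}{2}\norm{\gls{rsol}}$ is truly essential: without it one cannot control the ratio $\norm{\gls{sol}}/\norm{\gls{rsol}}$ from either side, the denominator $\norm{\gls{sol}}$ could be arbitrarily small relative to $\norm{\gls{rsol}}$, and the norm comparisons break down. I would also remark in passing that the constant $3$ is merely what this particular pair of comparisons delivers and is not claimed to be optimal.
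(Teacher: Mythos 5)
Your proof is correct and follows essentially the same route as the paper's own proof: the reverse triangle inequality together with the hypothesis gives $\norm{\gls{sol}} \geq \tfrac{1}{2}\norm{\gls{rsol}}$ for rigorousity, and the forward triangle inequality gives $\norm{\gls{sol}} \leq \tfrac{3}{2}\norm{\gls{rsol}}$ for efficiency, after which the absolute estimator's bounds are substituted exactly as in the paper. No gaps; the bookkeeping and constants match.
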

\begin{proof}
First, we realize that $\norm{\gls{rsol}} \leq 2 \norm{\gls{sol}}$ because
\begin{equation}
\norm{\gls{sol}}
= \norm{\gls{sol} - \gls{rsol} + \gls{rsol}}
\geq \norm{\gls{rsol}} - \norm{\gls{sol} - \gls{rsol}}
\geq \norm{\gls{rsol}} - \Delta(\gls{rsol})
\geq \norm{\gls{rsol}} - \tfrac 1 2 \norm{\gls{rsol}}
= \tfrac 1 2 \norm{\gls{rsol}} .
\end{equation}
Using this, the first inequality of the proposition follows by
\begin{equation}
\frac{\norm{\gls{sol} - \gls{rsol}}}{\norm{\gls{sol}}}
\leq \frac{\Delta(\gls{rsol})}{\norm{\gls{sol}}}
= \frac{\Delta(\gls{rsol})}{\norm{\gls{rsol}}} \frac{\norm{\gls{rsol}}}{\norm{\gls{sol}}}
\leq 2 \frac{\Delta(\gls{rsol})}{\norm{\gls{rsol}}}
= \Delta_\mathrm{rel}^\mathrm{c}(\gls{rsol}) .
\end{equation}

For the second inequality, we first realize that $\norm{\gls{sol}} \leq \frac 3 2 \norm{\gls{rsol}}$, because
\begin{equation}
\norm{\gls{sol}} = \norm{\gls{sol} - \gls{rsol} + \gls{rsol}} 
\leq \norm{\gls{sol} - \gls{rsol}} + \norm{\gls{rsol}}
\leq \Delta(\gls{rsol}) + \norm{\gls{rsol}}
\leq \tfrac 1 2 \norm{\gls{rsol}} + \norm{\gls{rsol}}
= \tfrac 3 2 \norm{\gls{rsol}}
\end{equation}
and use this to show
\begin{equation}
\Delta_\mathrm{rel}^\mathrm{c}(\gls{rsol})
= \frac{2 \Delta(\gls{rsol})}{\norm{\gls{rsol}}}
\leq \frac{2 \eta \norm{\gls{sol} - \gls{rsol}}}{\norm{\gls{rsol}}}
\leq \frac{2 \eta \norm{\gls{sol} - \gls{rsol}}}{\frac 2 3 \norm{\gls{sol}}}
= 3 \eta \frac{\norm{\gls{sol} - \gls{rsol}}}{\norm{\gls{sol}}} .
\end{equation}
\end{proof}
For the new relative a posteriori error estimator, 
a very similar result can be obtained, which is given in the following proposition.
\begin{proposition}[New relative a posteriori error estimator]
If $\Delta(\gls{rsol}) < \norm{\gls{rsol}}$,
it holds that
\begin{equation}
\frac{\norm{\gls{sol} - \gls{rsol}}}{\norm{\gls{sol}}}
\leq
\Delta^\mathrm{new}_\mathrm{rel}(\gls{rsol})
\leq 
\eta^\mathrm{new}_\mathrm{rel}
\frac{\norm{\gls{sol} - \gls{rsol}}}{\norm{\gls{sol}}}
\end{equation}
with $\eta^\mathrm{new}_\mathrm{rel} := (1 + 2 \Delta_\mathrm{rel}^\mathrm{new}(\gls{rsol}) ) \eta$.
\end{proposition}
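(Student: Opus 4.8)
The plan is to prove the two inequalities separately, exactly paralleling the structure of the proof just given for the classic estimator. Throughout I would abbreviate $\Delta := \Delta(\gls{rsol})$ and $e := \norm{\gls{sol} - \gls{rsol}}$, so the hypotheses read $e \leq \Delta \leq \eta e$ together with $\Delta < \norm{\gls{rsol}}$; the last assumption guarantees that the denominator $\norm{\gls{rsol}} - \Delta$ of $\Delta^\mathrm{new}_\mathrm{rel}(\gls{rsol})$ is strictly positive, so the estimator is well defined and nonnegative.

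For the reliability bound (the first inequality) I would first use the reverse triangle inequality to bound the unknown solution norm from below,
\begin{equation}
\norm{\gls{sol}} \geq \norm{\gls{rsol}} - \norm{\gls{sol} - \gls{rsol}} \geq \norm{\gls{rsol}} - \Delta > 0 .
\end{equation}
Combining the numerator estimate $e \leq \Delta$ with this denominator estimate then gives directly
\begin{equation}
\frac{\norm{\gls{sol} - \gls{rsol}}}{\norm{\gls{sol}}} \leq \frac{\Delta}{\norm{\gls{rsol}} - \Delta} = \Delta^\mathrm{new}_\mathrm{rel}(\gls{rsol}),
\end{equation}
which is the claimed lower bound. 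This step is entirely routine.

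For the efficiency bound (the second inequality) the key algebraic observation is that multiplying the definition of $\Delta^\mathrm{new}_\mathrm{rel}$ through by $\norm{\gls{rsol}} - \Delta$ yields the identity
\begin{equation}
\Delta^\mathrm{new}_\mathrm{rel}(\gls{rsol}) \, \norm{\gls{rsol}} = \Delta \left( 1 + \Delta^\mathrm{new}_\mathrm{rel}(\gls{rsol}) \right).
\end{equation}
I would then bound $\norm{\gls{rsol}} \leq \norm{\gls{sol}} + e$ by the triangle inequality and chain these facts with the efficiency estimate $\Delta \leq \eta e$ to obtain
\begin{equation}
\Delta^\mathrm{new}_\mathrm{rel}(\gls{rsol}) \, \norm{\gls{sol}} \leq \eta e \left( 1 + \Delta^\mathrm{new}_\mathrm{rel}(\gls{rsol}) \right) + \Delta^\mathrm{new}_\mathrm{rel}(\gls{rsol}) \, e .
\end{equation}
Collecting terms, the target inequality $\Delta^\mathrm{new}_\mathrm{rel}(\gls{rsol})\,\norm{\gls{sol}} \leq (1 + 2\Delta^\mathrm{new}_\mathrm{rel}(\gls{rsol}))\,\eta\, e$ reduces, after cancelling the positive factor $\Delta^\mathrm{new}_\mathrm{rel}(\gls{rsol})\, e$, to the elementary requirement $\eta + 1 \leq 2\eta$, i.e. $\eta \geq 1$.

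The only nontrivial point, and the one I would flag as the main obstacle, is supplying $\eta \geq 1$: this is not an extra assumption but follows from the reliability half of the underlying absolute estimator, since $e \leq \Delta \leq \eta e$ forces $\eta \geq 1$ whenever $e > 0$, while the case $e = 0$ is trivial. A second point requiring mild care is the self-referential appearance of $\Delta^\mathrm{new}_\mathrm{rel}(\gls{rsol})$ inside the efficiency constant $\eta^\mathrm{new}_\mathrm{rel} = (1 + 2\Delta^\mathrm{new}_\mathrm{rel}(\gls{rsol}))\,\eta$; since this is a fixed, computed number rather than an unknown, the argument involves no circularity, and the cancellation above is legitimate.
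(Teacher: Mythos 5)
Your proof is correct and takes essentially the same approach as the paper: the reliability half is identical, and the efficiency half pivots on the same algebraic rearrangement of the definition of $\Delta_\mathrm{rel}^\mathrm{new}(\gls{rsol})$, combined with the triangle inequality and the efficiency of the absolute estimator. The only difference is bookkeeping --- the paper bounds $\norm{\gls{sol}} \leq \norm{\gls{rsol}} + \Delta(\gls{rsol})$ (using reliability of $\Delta$) so that the factor $1 + 2\Delta_\mathrm{rel}^\mathrm{new}(\gls{rsol})$ emerges exactly and $\eta \geq 1$ is never invoked, whereas you bound $\norm{\gls{sol}} \leq \norm{\gls{rsol}} + \norm{\gls{sol} - \gls{rsol}}$ (your prose states this triangle inequality in the reverse direction, but your displayed chain uses the correct one) and then absorb the leftover term via $\eta \geq 1$, which is the same underlying reliability fact deployed at a different point.
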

\begin{proof}
Realizing that $\left(\norm{\gls{rsol}} - \Delta(\gls{rsol}) \right)\leq \norm{\gls{sol}}$,
it is easy to see that
\begin{equation}
\frac{\norm{\gls{sol} - \gls{rsol}}}{\norm{\gls{sol}}}
\leq
\frac{\Delta(\gls{rsol})}{\norm{\gls{sol}}}
\leq
\frac{\Delta(\gls{rsol})}{\norm{\gls{rsol}} - \Delta(\gls{rsol})},
\end{equation}
which is the first inequality. 
Using 
\begin{equation}
\frac{
\norm{\gls{rsol}} + \Delta(\gls{rsol})
}{
\norm{\gls{rsol}} - \Delta(\gls{rsol})
}
 = 1 + 2 \Delta_\mathrm{rel}^\mathrm{new}(\gls{rsol})
\end{equation}
the second inequality can be shown:
\begin{eqnarray}
\Delta_\mathrm{rel}^\mathrm{new}(\gls{rsol}) = 
\frac{\Delta(\gls{rsol})}{\norm{\gls{rsol}} - \Delta(\gls{rsol})}
& \leq &\eta \frac{\norm{\gls{sol} - \gls{rsol}}}{\norm{\gls{rsol}} - \Delta(\gls{rsol})}\\
&=&  \eta \frac{\norm{\gls{sol} - \gls{rsol}}}{\norm{\gls{rsol}} + \Delta(\gls{rsol})} \left( 1 + 2 \Delta_\mathrm{rel}^\mathrm{new}(\gls{rsol}) \right)\\
&\leq& \eta \frac{\norm{\gls{sol} - \gls{rsol}}}{\norm{\gls{sol}}} \left( 1 + 2 \Delta_\mathrm{rel}^\mathrm{new}(\gls{rsol}) \right).
\end{eqnarray}
\end{proof}
\section{Comparison of Relative A Posteriori Error Estimators}
In the following, we argue that the new relative
a posteriori error estimator is better than the classic one in almost every regard.
\subsection{Comparison of Range of Validity}
The classic relative a posteriori error estimator is valid
if $\Delta(\gls{rsol}) \leq \frac 1 2 \norm{\gls{rsol}}$.
The new approach is valid if $\Delta(\gls{rsol}) < \norm{\gls{rsol}}$.
So the range of validity for the new estimator is larger than
and completely contains the range of validity of the classic
relative a posteriori error estimator.
\subsection{Comparison of Sharpness}
\begin{proposition}[Sharpness of relative a posteriori error estimators]
Whenever both relative a posteriori error estimators are defined,
i.e.~when $\Delta(\gls{rsol}) \leq \frac 1 2 \norm{\gls{rsol}}$,
the new relative a posteriori error estimator is not worse
than the classic one, in mathematical terms:
\begin{equation}
\Delta(\gls{rsol}) \leq \frac 1 2 \norm{\gls{rsol}}
\qquad
\Rightarrow
\qquad
\Delta_\mathrm{rel}^\mathrm{new}(\gls{rsol}) \leq \Delta_\mathrm{rel}^\mathrm{c}(\gls{rsol}) .
\end{equation}
\end{proposition}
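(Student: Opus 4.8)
The plan is to reduce the claimed inequality to an elementary statement about two nonnegative real numbers and verify it by direct algebraic manipulation. Writing $d := \Delta(\gls{rsol})$ and $r := \norm{\gls{rsol}}$, both relative estimators are well defined precisely when $r > 0$, and under the hypothesis $d \le \tfrac{1}{2} r$ we have $r - d \ge \tfrac{1}{2} r > 0$, so no denominator vanishes. Inserting the two definitions, the assertion $\Delta_\mathrm{rel}^\mathrm{new}(\gls{rsol}) \le \Delta_\mathrm{rel}^\mathrm{c}(\gls{rsol})$ becomes
\[
\frac{d}{r-d} \le \frac{2d}{r}.
\]

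First I would dispose of the trivial case $d = 0$, in which both sides equal zero. For $d > 0$ I would divide through by $d$, reducing the target to $\tfrac{1}{r-d} \le \tfrac{2}{r}$. Since both $r$ and $r-d$ are strictly positive, cross-multiplication is order-preserving and turns this into $r \le 2(r-d)$, equivalently $2d \le r$. This last inequality is exactly the hypothesis $d \le \tfrac{1}{2} r$, so the chain of equivalences closes and the claimed bound follows. I would write the argument as a chain of equivalences rather than one-directional implications, which makes transparent that the threshold $d = \tfrac{1}{2} r$ is precisely the value at which the two estimators coincide (both equal $1$ there), while for $d < \tfrac{1}{2} r$ the new estimator is strictly smaller.

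There is no real obstacle here: once the estimators are written out, the statement is a one-line consequence of the hypothesis. The only points requiring care are bookkeeping — confirming strict positivity of $r$ and $r - d$ before cross-multiplying, and recalling that both estimators presuppose $r > 0$ for their very definition. This also gives the conceptual takeaway of the comparison: the new estimator is never worse than the classic one, and the gap comes entirely from the conservative factor $2$ appearing in the classic bound, which for small relative errors is unnecessary since both expressions behave like $d/r$.
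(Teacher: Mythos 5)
Your proof is correct and follows essentially the same route as the paper's: both reduce the claim to the elementary inequality obtained by cross-multiplying with the positive quantities $\norm{\gls{rsol}}$ and $\norm{\gls{rsol}} - \Delta(\gls{rsol})$, which the hypothesis $\Delta(\gls{rsol}) \leq \tfrac{1}{2}\norm{\gls{rsol}}$ makes immediate. The only cosmetic difference is that you argue backward via a chain of equivalences (with a harmless case split at $\Delta(\gls{rsol}) = 0$), whereas the paper runs the same computation forward as a chain of implications, multiplying by the nonnegative $\Delta(\gls{rsol})$ instead of dividing by it.
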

\begin{proof}
This can be shown by the following simple computation.
\begin{align}
&&\Delta(\gls{rsol}) &\leq \frac 1 2 \norm{\gls{rsol}} \\
&\Rightarrow &\frac 1 2 \norm{\gls{rsol}} &\leq \norm{\gls{rsol}} - \Delta(\gls{rsol}) \label{eq:toinvert} \\
&\Rightarrow &\frac{2}{\norm{\gls{rsol}}} &\geq \frac{1}{\norm{\gls{rsol}} - \Delta(\gls{rsol})}\\
&\Rightarrow &\frac{2 \Delta(\gls{rsol})}{\norm{\gls{rsol}}} &\geq \frac{\Delta(\gls{rsol})}{\norm{\gls{rsol}} - \Delta(\gls{rsol})}\\
&\Rightarrow &\Delta_\mathrm{rel}^\mathrm{c}(\gls{rsol}) &\geq \Delta_\mathrm{rel}^\mathrm{new}(\gls{rsol})
\end{align}
We can always invert in \cref{eq:toinvert} as $\norm{\gls{rsol}} - \Delta(\gls{rsol})$ is positive.
\end{proof}
\subsection{Comparison of Efficiency}
\begin{proposition}[Efficiency constant of relative a posteriori error estimators]
Whenever both relative a posteriori error estimators are defined,
i.e.~when $\Delta(\gls{rsol}) \leq \frac 1 2 \norm{\gls{rsol}}$,
the new relative a posteriori error estimator has a better
efficiency constant
than the classic one, in mathematical terms:
\begin{equation}
\Delta(\gls{rsol}) \leq \frac 1 2 \norm{\gls{rsol}}
\qquad
\Rightarrow
\qquad
\eta_\mathrm{rel}^\mathrm{new} \leq \eta_\mathrm{rel}^\mathrm{c} .
\end{equation}
\end{proposition}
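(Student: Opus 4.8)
The plan is to reduce the comparison of efficiency constants to a single elementary scalar inequality. Recalling the definitions established in the two preceding propositions, $\eta_\mathrm{rel}^\mathrm{c} := 3\eta$ and $\eta_\mathrm{rel}^\mathrm{new} := (1 + 2\Delta_\mathrm{rel}^\mathrm{new}(\gls{rsol}))\,\eta$, where $\eta \geq 1 > 0$ is the efficiency constant of the underlying absolute estimator. Since the common factor $\eta$ is strictly positive, the claimed inequality $\eta_\mathrm{rel}^\mathrm{new} \leq \eta_\mathrm{rel}^\mathrm{c}$ is equivalent to $1 + 2\Delta_\mathrm{rel}^\mathrm{new}(\gls{rsol}) \leq 3$, i.e.\ to the bound $\Delta_\mathrm{rel}^\mathrm{new}(\gls{rsol}) \leq 1$. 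So the entire statement collapses to verifying that the new relative estimator is at most one whenever the hypothesis $\Delta(\gls{rsol}) \leq \tfrac12 \norm{\gls{rsol}}$ holds.

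First I would record that the hypothesis guarantees the denominator $\norm{\gls{rsol}} - \Delta(\gls{rsol})$ appearing in $\Delta_\mathrm{rel}^\mathrm{new}(\gls{rsol}) = \Delta(\gls{rsol}) / (\norm{\gls{rsol}} - \Delta(\gls{rsol}))$ is strictly positive (indeed $\Delta(\gls{rsol}) \leq \tfrac12\norm{\gls{rsol}} < \norm{\gls{rsol}}$ unless the degenerate case $\norm{\gls{rsol}} = 0$), so the quotient is well defined and the estimator is in its range of validity. Then I would rearrange the target bound: because the denominator is positive, $\Delta_\mathrm{rel}^\mathrm{new}(\gls{rsol}) \leq 1$ is equivalent to $\Delta(\gls{rsol}) \leq \norm{\gls{rsol}} - \Delta(\gls{rsol})$, which in turn is equivalent to $2\,\Delta(\gls{rsol}) \leq \norm{\gls{rsol}}$ --- and this is precisely the hypothesis. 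Chaining these equivalences back through the factor $\eta$ yields the claim.

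The only point requiring a word of care, rather than a genuine obstacle, is the cancellation of $\eta$: one must note explicitly that $\eta > 0$ so that multiplying the scalar inequality $1 + 2\Delta_\mathrm{rel}^\mathrm{new}(\gls{rsol}) \leq 3$ by $\eta$ preserves its direction. Everything else is a direct rearrangement that reuses exactly the same manipulation as in the rigorosity proof of the classic estimator, where $\norm{\gls{rsol}} - \Delta(\gls{rsol}) \geq \tfrac12\norm{\gls{rsol}}$ was already exploited. I expect the complete proof to be two or three lines of algebra with no analytic content beyond positivity of the quantities involved.
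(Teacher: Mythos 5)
Your proposal is correct and follows essentially the same argument as the paper's proof: both reduce the claim to the scalar inequality $\Delta_\mathrm{rel}^\mathrm{new}(\gls{rsol}) \leq 1$, which is equivalent to the hypothesis $\Delta(\gls{rsol}) \leq \tfrac12\norm{\gls{rsol}}$ via positivity of the denominator $\norm{\gls{rsol}} - \Delta(\gls{rsol})$, and then multiply by $\eta > 0$. The only difference is presentational — you work backwards from the claim by equivalences while the paper chains implications forward from the hypothesis — which changes nothing of substance.
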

\begin{proof}
This can be shown by the following simple computation.
\begin{align}
&&\Delta(\gls{rsol}) &\leq \frac 1 2 \norm{\gls{rsol}} \\
&\Rightarrow& \Delta(\gls{rsol}) &\leq \norm{\gls{rsol}} - \Delta(\gls{rsol}) \label{eq:refme}\\
&\Rightarrow& \frac{ \Delta(\gls{rsol}) }{ \norm{\gls{rsol}} - \Delta(\gls{rsol})} &\leq  1\\
&\Rightarrow& \Delta_\mathrm{rel}^\mathrm{new} &\leq 1 \\
&\Rightarrow& \left(1 + 2\Delta_\mathrm{rel}^\mathrm{new} \right)\eta &\leq 3\eta \\
&\Rightarrow& \eta_\mathrm{rel}^\mathrm{new} &\leq  \eta_\mathrm{rel}^\mathrm{c}
\end{align}
We can always divide by $\norm{\gls{rsol}} - \Delta(\gls{rsol})$ in \cref{eq:refme} as this
term is positive.
\end{proof}
\section{Conclusion on Relative A Posteriori Error Estimators}
The approach to construct an a posteriori error estimator
for the relative error presented above and named
the ``new'' approach is in almost every regard
better than the ``classic'' approach. It has a wider
range of validity, gives tighter error bounds and
has a better efficiency constant.
However, it might be more convenient to use
the classic approach in some cases, for example
when analyzing rates of convergence.

\chapter{Numerically Stable Gram-Schmidt Algorithm}
\label{chap:stable gram schmidt}
\section{Introduction}
The Gram-Schmidt algorithm
is a widely used orthonormalization procedure.
It exists in different variants,
four of which we present in the following.
Some of them, which are widely used 
in numerical codes today, suffer from
numerical instabilities when the input vectors
are almost linear dependent.
In our publication on numerically stable a posteriori
estimation, we also published a numerically more stable
variant of the Gram-Schmidt algorithm \cite[Algorithm 1]{Buhr2014}.
In this chapter, we compare the different variants
of the Gram-Schmidt algorithm and compare their
performance in numerical experiments.
\section{Algorithms}
\begin{algorithm2e}[t]
\KwIn{vectors $\varphi_i$, $i \in 1, \dots, N$}
\KwOut{orthonormal vectors $\psi_i$}
\For{$i \leftarrow 1, \dots, N$}{
  $\psi_i \leftarrow \varphi_i - \sum_{j=1}^{i-1} (\psi_i, \varphi_j) ~ \psi_j$ \;
  $\psi_i \leftarrow \psi_i / \norm{\psi_i}$ \;
}
\caption{Erhard Schmidt's variant of Gram-Schmidt algorithm.}
\label{alg:schmidt gram-schmidt}
\end{algorithm2e}

\begin{algorithm2e}[t]
\KwIn{vectors $v_i$, $i \in 1, \dots, N$}
\KwOut{orthonormal vectors $v_i$}
\For{$i \leftarrow 1, \dots, N$}{
  \For{$j \leftarrow 1, \dots, (i-1)$}{
    $v_i \leftarrow v_i - (v_j,v_i) ~ v_j$\;
  }
  $v_i \leftarrow v_i / \norm{v_i}$\;
}
\caption{Jørgen Pedersen Gram's variant of Gram-Schmidt algorithm.}
\label{alg:gram gram-schmidt}
\end{algorithm2e}

\begin{algorithm2e}[t]
\KwIn{vectors $v_i$, $i \in 1, \dots, N$\\$n_r$: number of iterations}
\KwOut{orthonormal vectors $v_i$}
\For{$i \leftarrow 1, \dots, N$}{
  \For{$r \leftarrow 1, \dots, n_r$}{
    \For{$j \leftarrow 1, \dots, (i-1)$}{
      $v_i \leftarrow v_i - (v_j,v_i) ~ v_j$\;
    }
  }
  $v_i \leftarrow v_i / \norm{v_i}$\;
}
\caption{Gram-Schmidt with re-iteration.}
\label{alg:gram-schmidt reiteration}
\end{algorithm2e}

\begin{algorithm2e}[t]
\KwIn{vectors $v_i$, $i \in 1, \dots, N$}
\KwOut{orthonormal vectors $v_i$}
\For{$i \leftarrow 1, \dots, N$}{
  $v_i \leftarrow v_i / \norm{v_i} $\;
  \Repeat{$\mathrm{newnorm}  > 0.1$}{
    \For{$j \leftarrow 1, \dots, (i-1)$}{
      $v_i \leftarrow v_i - (v_j,v_i) ~ v_j$\;
    }
    newnorm $\leftarrow \norm{v_i}$\;
    $v_i \leftarrow v_i / \mathrm{newnorm} $\;
  }
}
\caption{Gram-Schmidt with adaptive re-iteration.}
\label{alg:gram-schmidt adaptive}
\end{algorithm2e}

In this section, we present four variants of
the Gram-Schmidt algorithm. All of them
are mathematically equivalent and would
yield the same results if executed in exact
arithmetic.
Erhard Schmidt published his algorithm in 
1907 \cite[p.442]{PPN235181684_0063}.
It is a one pass algorithm. For each vector, 
first, all inner products with the already orthonormal 
vectors are 
computed. Then, the projections on the already orthonormal
vectors is subtracted.
We present Schmidt's variant in \cref{alg:schmidt gram-schmidt}
on page \pageref{alg:schmidt gram-schmidt}.
Jørgen Pedersen Gram published his algorithm already 1883 
\cite{PPN243919689_0094}.
Gram's algorithm is usually called 
``Modified Gram-Schmidt'' (MGS) in today's literature.
It is also a one pass algorithm.
In contrast to Schmidt's variant,
the calculation of the inner products with the
already orthonormal vectors and the subtraction
of the projections are interleaved. This can lead
to smaller numerical errors.
However, there is still the problem that
later orthogonalization steps can 
break the orthogonality achieved in earlier steps.
We present Gram's variant in \cref{alg:gram gram-schmidt}
on page \pageref{alg:gram gram-schmidt}.
These numerical instabilities are often
addressed by executing the Gram-variant multiple
times, usually two times. We give 
the (Modified) Gram-Schmidt algorithm
with re-iteration in \cref{alg:gram-schmidt reiteration}
on page \pageref{alg:gram-schmidt reiteration}.
However, we observed in numerical experiments
that two iterations are usually sufficient, but not always.
Always executing three or more iterations would be
a waste of computational resources. 
We further observed that numerical accuracy is
lost when the norm of the vector under consideration
shrinks.
This led us to devise the Gram-Schmidt algorithm
with adaptive re-iteration, which
iterates until the norm of the vector under consideration
does not shrink any more.
This new algorithm is given in \cref{alg:gram-schmidt adaptive}
on page \pageref{alg:gram-schmidt adaptive}.
We reiterate, if the norm of the vector under
consideration has decreased by more than a factor of
0.1. This factor can be tuned:
Setting it close to one (e.g.~0.99) leads to high numerical accuracy
with more iterations.
A smaller factor (e.g.~0.01) leads to less numerical accuracy
and less iterations.
\section{Numerical Comparison}
We perform a numerical comparison of the four algorithms
in $\gls{C}^{10000}$ with the inner product defined by
$(u,v) := u^H v$.
To numerically test the four algorithms, we
follow a four step procedure.
In a first step, we randomly generate an orthonormal
set of $N$ vectors which we denote by $\varphi_i,
i \in 1, \dots, N$
with $(\varphi_i, \varphi_j) = \delta_{ij}$.
In a second step, we form almost linear
dependent test vectors
using the definition
\begin{equation}
t_i := \sum_{j=1}^i \left( \frac 1 2 \right) ^j \varphi_j .
\end{equation}
These vectors are linear independent in exact arithmetic,
but numerical effects render them linear dependent.
In a third step, we hand these vectors to one of the
Gram-Schmidt algorithms and denote its result
by $\psi_i$.
In exact arithmetic, the vectors $\psi_i$ 
are identical to $\varphi_i$.
In a fourth step, we quantify the errors.
To do so, we calculate two
quantities. The first is the 
orthonormality error, judging whether
the resulting vectors are orthonormal. We
define it as
\begin{equation}
e_{1,i} := \max_{j \in 1, \dots, (i-1)} | (\psi_i, \psi_j)| .
\end{equation}
The second is the reproduction error.
It quantifies whether the original vectors are
recovered by the algorithm. We define it as
\begin{equation}
e_{2,i} := \norm{\varphi_i - \psi_i} .
\end{equation}
Because we are interested in worst-case behavior,
we run each test 200 times and report the worst
result.
\begin{figure}
\centering
\begin{tikzpicture}
\begin{semilogyaxis}[
    width=9cm,
    height=6cm,
    ymin=1e-18,
    ymax=1e3,
    xlabel=$i$,
    ylabel=$e_{1,i}$,
    legend pos=outer north east,
    grid=both,
    grid style={line width=.1pt, draw=gray!20},
    major grid style={line width=.2pt,draw=gray!50},
    ytick={1e-15,1e-12, 1e-9, 1e-6, 1e-3, 1},
    minor ytick={1e-18,1e-17,1e-16,1e-15, 1e-14, 1e-13, 1e-12, 1e-11, 1e-10, 1e-9, 1e-8, 1e-7, 1e-6, 1e-5, 1e-4, 1e-3, 1e-2, 1e-1, 1e0, 1e1, 1e2, 1e3},
  ]
  \addplot [solid, thick, mark=o, blue, mark repeat=10] table [x expr=\coordindex,y index=0] {code/gram_schmidt/Schmidt-GS.dat};
  \addplot [solid, thick, mark=none, red] table [x expr=\coordindex,y index=0] {code/gram_schmidt/Gram-GS.dat};
  \addplot +[solid, thick, mark=square, oran, mark repeat=6] table [x expr=\coordindex,y index=0] {code/gram_schmidt/ReIt-GS_2.dat};
  \addplot +[solid, thick, mark=diamond, dunkelgruen, mark repeat=11] table [x expr=\coordindex,y index=0] {code/gram_schmidt/ReIt-GS_3.dat};
  \addplot +[densely dashed, thick, mark=none] table [x expr=\coordindex,y index=0] {code/gram_schmidt/AReIt-GS.dat};
\legend{
  Schmidt GS,
  Gram GS (MGS),
  Re-Iterated GS (2 Iterations),
  Re-Iterated GS (3 Iterations),
  Adaptive Re-Iterated GS
};
\end{semilogyaxis}
\end{tikzpicture}
\caption[Orthogonality Error of Gram-Schmidt algorithms.]
{Orthogonality Error of Gram-Schmidt algorithms.
(reproduction: \cref{repro:fig:gs stuff})
}
\label{fig:gs orthogonality error}
\end{figure}
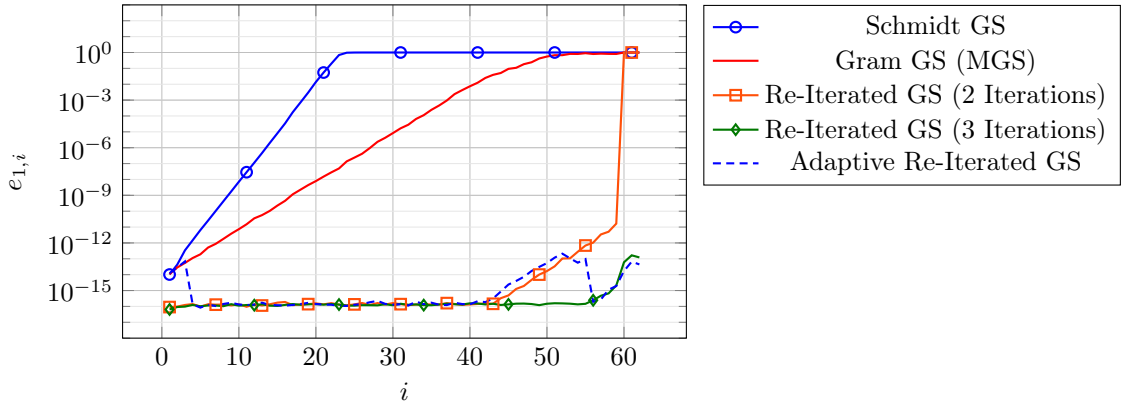
\begin{figure}
\centering
\begin{tikzpicture}
\begin{semilogyaxis}[
    width=9cm,
    height=6cm,
    ymin=1e-18,
    ymax=1e3,
    xlabel=$i$,
    ylabel=$e_{2,i}$,
    legend pos=outer north east,
    grid=both,
    grid style={line width=.1pt, draw=gray!20},
    major grid style={line width=.2pt,draw=gray!50},
    ytick={1e-15,1e-12, 1e-9, 1e-6, 1e-3, 1},
    minor ytick={1e-18,1e-17,1e-16,1e-15, 1e-14, 1e-13, 1e-12, 1e-11, 1e-10, 1e-9, 1e-8, 1e-7, 1e-6, 1e-5, 1e-4, 1e-3, 1e-2, 1e-1, 1e0, 1e1, 1e2, 1e3},
  ]
  \addplot [solid, thick, mark=o, blue, mark repeat=10] table [x expr=\coordindex,y index=1] {code/gram_schmidt/Schmidt-GS.dat};
  \addplot [solid, thick, mark=none, red] table [x expr=\coordindex,y index=1] {code/gram_schmidt/Gram-GS.dat};
  \addplot +[solid, thick, mark=square, oran, mark repeat=6] table [x expr=\coordindex,y index=1] {code/gram_schmidt/ReIt-GS_2.dat};
  \addplot +[solid, thick, mark=diamond, dunkelgruen, mark repeat=11] table [x expr=\coordindex,y index=1] {code/gram_schmidt/ReIt-GS_3.dat};
  \addplot +[densely dashed, thick, mark=none] table [x expr=\coordindex,y index=1] {code/gram_schmidt/AReIt-GS.dat};
\legend{
  Schmidt GS,
  Gram GS (MGS),
  Re-Iterated GS (2 Iterations),
  Re-Iterated GS (3 Iterations),
  Adaptive Re-Iterated GS
};
\end{semilogyaxis}
\end{tikzpicture}
\caption[Reproduction Error of Gram-Schmidt algorithms.]
{Reproduction Error of Gram-Schmidt algorithms.
(reproduction: \cref{repro:fig:gs stuff})
}
\label{fig:gs reproduction error}
\end{figure}
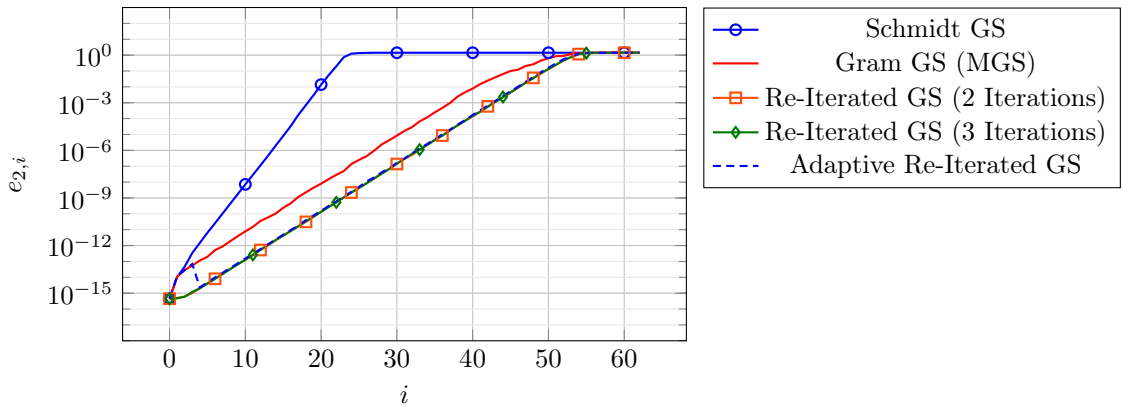
\begin{figure}
\centering
\begin{tikzpicture}
\begin{axis}[
    width=9cm,
    height=6cm,
    ymin=0,
    ymax=5,
    xlabel=$i$,
    ylabel=iterations (median),
    legend pos=outer north east,
    grid=both,
    grid style={line width=.1pt, draw=gray!20},
    major grid style={line width=.2pt,draw=gray!50},
  ]
  \addplot [densely dashed, thick, mark=none, blue, mark repeat=10] table [x expr=\coordindex,y index=0] {code/gram_schmidt/iterations.dat};
\legend{
  Adaptive Re-Iterated
};
\end{axis}
\end{tikzpicture}
\caption[Median of Iterations in Adaptive Re-Iterated Gram-Schmidt.]
{Median of Iterations in Adaptive Re-Iterated Gram-Schmidt
(reproduction: \cref{repro:fig:gs stuff})
}
\label{fig:gs num iterations}
\end{figure}
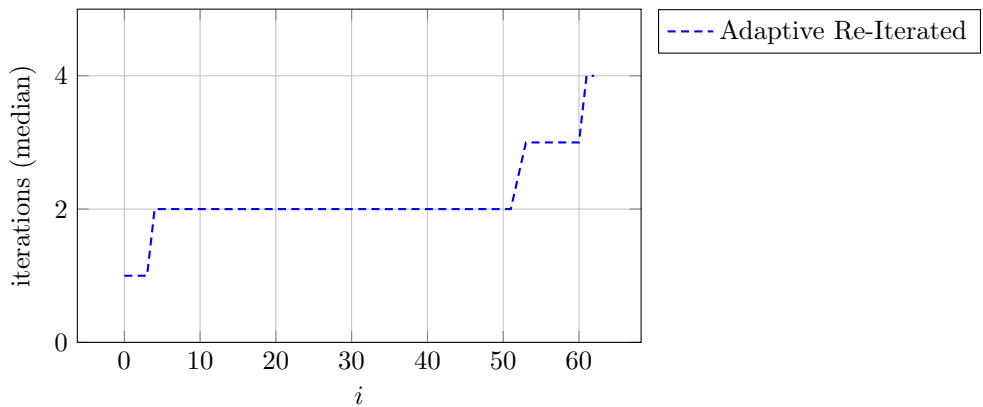
We can see in the results that
re-iteration is crucial for good numerical
performance.
The orthogonality error, plotted in
\cref{fig:gs orthogonality error}
stays at very low levels for all
re-iterated versions, while
it grows exponentially for
the two traditional variants.
Gram's version of the algorithm (the ``MGS'')
performs slightly better than
Schmidt's version.
The reproduction error grows
exponentially for all variants.
That is due to the construction of
the testvectors. The way they are
constructed looses information
about the later vectors. This
lost information can not be recovered
by the Gram-Schmidt algorithm.
All re-iterated variants have
a reproduction error which
is indistinguishable from
the optimal error, see 
\cref{fig:gs reproduction error}.
The two traditional variants
perform worse and again, 
Gram's version performs better
than Schmidt's version.
The adaptive variant
yields results as good as the re-iterated
algorithm with fixed number of iterations.
The median of the number of iterations 
performed by the adaptive variant is shown in
\cref{fig:gs num iterations}.

\section{Summary}
The Gram-Schmidt algorithm in the form
it was published by Gram and Schmidt
suffers from numerical instability
when applied to nearly linear dependent
vectors.
This problem can be mitigated by re-iteration,
effectively executing the algorithm several times.
In most cases, two iterations are sufficient.
Adaptive re-iteration has two advantages.
In comparison to a fixed number of two iterations,
it saves computational resources
when re-iteration is not necessary.
This might happen for example
when the input vectors are already nearly
orthogonal.
Second, it assures a low numerical error
on nearly linear dependent input vectors
by re-iterating as necessary.

\chapter{High Performance Nédélec Assembly}
At the core of each \gls{fe} software
is an assembly routine computing
local integrals on mesh elements.
Usually these integrals are computed
by a quadrature summation procedure.
We followed a different path
and implemented functions
to compute the local integrals based
on automatic symbolic computations
and code generation for 
lowest order Nédélec ansatz functions.
The results are presented in the following.
We list these results here for completeness.
There have been multiple publications
considering fast \gls{fe} assembly recently,
see e.g.~\cite{Melenk20014339,OlgaardWells2010b,hardware-exadune}.
\section{Problem Setting}
\label{sec:high performance assembly}
In \gls{fe} assembly, the core routine
has to compute the terms occurring in the bilinear
form on each mesh element $K$, for all combinations
of ansatz functions $\psi$ which have support on that element.
For lowest order Nédélec ansatz functions
(\cref{sec:nedelec ansatz})
and the bilinear form defined in 
\cref{eq:maxwell bilinear linear form}, it is the two
integrals
\begin{eqnarray}
I_{1,ij} &:=& \int_K \psi_i \psi_j \dx \label{eq:idid integral}\\
I_{2,ij} &:=& \int_K (\nabla \times \psi_i) \cdot (\nabla \times \psi_j) \dx \label{eq:curlcurl integral}
.
\end{eqnarray}
In order to implement a fast evaluation of these integrals,
we interpret the evaluation of these integrals as functions.
In two dimensions, a triangle is defined by the coordinates
of its corners, which can be interpreted as an element of $\gls{R}^6$.
The resulting matrices are $3 \times 3$ matrices, as three
basis functions have support on each triangle.
We define the two assembly functions 
\begin{align}
\text{ASSEMBLY2DIDID} : & \qquad \gls{R}^6 \rightarrow \gls{R}^{3 \times 3} \\
\text{ASSEMBLY2DCURLCURL} : & \qquad \gls{R}^6 \rightarrow \gls{R}^{3 \times 3}
\end{align}
mapping from the coordinates of the corners of the triangle to the
matrices containing the evaluated integrals, where
ASSEMBLY2DIDID corresponds to Equation \eqref{eq:idid integral} and\\
ASSEMBLY2DCURLCURL corresponds to Equation \eqref{eq:curlcurl integral}.
In three dimensions,
a tetrahedron is defined by the three coordinates of its four corners,
and there are six ansatz functions having support on each tetrahedron.
We define accordingly
\begin{align}
\text{ASSEMBLY3DIDID} : & \qquad \gls{R}^{12} \rightarrow \gls{R}^{6 \times 6} \\
\text{ASSEMBLY3DCURLCURL} : & \qquad \gls{R}^{12} \rightarrow \gls{R}^{6 \times 6}
.
\end{align}
The usual procedure to evaluate these functions is to use
a mapping to a reference element and 
a quadrature formula on the reference element.
The integrand is then evaluated at each quadrature point,
the evaluated values are added using quadrature weights.
We accelerated this procedure using automatic symbolic computation
and code generation.
The software package \gls{mathematica} can evaluate 
the integrals with the corner coordinates as symbolic
parameters. The resulting formulas can easily be transformed to
\Cpp code. Also a completely automatic code generation
would be possible, but yielded slightly inferior results.
\vspace{-5pt}
\section{Source Code}
The \gls{mathematica} code to generate
the assembly routines can be found in the directory\\
\texttt{code/high\_performance\_assembly/}.
In the file \texttt{simplex\_2d.nb},
the analytic expressions for triangles for both
integrals in question are computed.
In the file \texttt{simplex\_3d.nb},
the analytic expressions for tetrahedra for
both integrals are computed.
Note that only one entry of the element matrix
is computed. The entry can be chosen in the \gls{mathematica} file.
From \gls{mathematica}'s output,
the assembly routines
in the files
\texttt{simplex\_2d\_curlcurl.hh},
\texttt{simplex\_2d\_idid.hh},\\
\texttt{simplex\_3d\_curlcurl.hh}, and
\texttt{simplex\_3d\_idid.hh}
are created, containing the local assembly.
The code is templated, making it suitable
for both assembly using the \mintinline{cpp}{double}
data type or a high precision data type.
As an example, the two dimensional
curlcurl assembly is shown 
in 
\cref{listing:2dcurlcurl}.
\begin{listing}
\cppfile{code/high_performance_assembly/simplex_2d_curlcurl.hh}
\caption{C++ curlcurl assembly for a triangle.}
\label{listing:2dcurlcurl}
\end{listing}
\vspace{-5pt}
\section{Quality Control}
\begin{table}
\begin{center}
\begin{tabular}{r|c|c|c}
Operator & dune-pdelab & proposed & factor \\
\hline
    IDID 3D & 11 003 & 770 & 14.3\\
CURLCURL 3D & 11 260 & 454 & 24.8\\
    IDID 2D &  4 336 & 139 & 31.2\\
CURLCURL 2D &  1 704 &  41 & 41.6
\end{tabular}
\end{center}
\vspace{-5pt}
\caption[Number of instructions for one local integral evaluation]
{Number of instructions for one local matrix evaluation (Valgrind measurement).
(reproduction: \cref{repro:tab:assemblyperformance})
}
\label{tab:assemblyperformance}
\end{table}
To check the correctness of 
the assembly functions, a \gls{mathematica}
file was written which computes the local integrals
on a set of random elements and generates \Cpp
code with the reference results.
All numbers are written as strings in 
predefined accuracy. 
The four files\\
 \texttt{simplex\_(2|3)d\_(idid|curlcurl)\_testcases.nb}
generate the four header files\\
 \texttt{simplex\_(2|3)d\_(idid|curlcurl)\_testcases.hh},
containing the test cases.
The \gls{mathematica} notebooks require 
the notebooks \texttt{simplex\_2d.nb} or
\texttt{simplex\_3d.nb} to be executed
before.
Based on these files, the results
are compared with
reference results generated by \gls{mathematica}.
\section{Performance Measurement}
We evaluated the performance of the generated routines
by executing them in Valgrind \cite{valgrind} / Callgrind.
Valgrind simulates a CPU and outputs 
the number of instructions used
in this simulated CPU.
In comparison to timing based performance measurements,
Valgrind measurements are exactly reproducible.
The measured numbers are given in \cref{tab:assemblyperformance}.
The proposed code is between 14.3 and 41.6 times faster
than the dune-pdelab implementation.

\chapter[Handling of Complex Numbers in pyMOR]
{Handling of Complex Numbers in \gls{pymor}}
\label{chap:complex handling in pymor}
As part of this research project, the software package \gls{pymor}
\cite{Milk2016} was adapted to handle complex numbers.
A question is to decide whether the inner product
should be antilinear in its {\color{blue}first} or in its {\color{red}second} argument.
Both are possible choices, but it is beneficial to be as
consistent as possible with the rest of the world.
Because this question arises very often in discussions,
we document our findings here.
A short survey yields the following:
\begin{itemize}
\item
Software packages:
\begin{itemize}
\item Numpy \cite{numpy} does no complex conjugation in its ``numpy.inner'' function.
It does, however, conjugate the {\color{blue}first} argument in its ``numpy.vdot'' function.
\item DUNE \cite{Bastian2008} does complex conjugation on the {\color{blue}first} argument in
the ``dot'' function, see the file ``dune/common/dotproduct.hh'', line 16
in git hash\\
3091dfd1d65f9c50b9cb5e56c0cde4c9614ba816.
\item Fenics \cite{AlnaesBlechta2015a} does complex conjugation on the {\color{blue}first}
argument, see for example the file ``dolfin/la/EigenVector.cpp'', line 252, 
in\\git hash b99d156dfcbf97d958f539172bbf74addeef1b4e, 
where the arguments are forwarded in the same order to the ``dot'' function of Eigen, which does complex conjugation on the
{\color{blue}first} argument.
\item Eigen \cite{eigenweb} does complex conjugation on the {\color{blue}first} argument,
see the file ``Eigen/src/Core/Dot.h'', line 61 
in\\Mercurial hash 1b5aaf8108e8594083d45d43c08c2c113748cbda.
\item Deal.ii \cite{dealII90} 
is not consistent. In 
git hash\\
884c11cbce4595fea7337aa72e6745f928879247, one finds the following.
\begin{itemize}
\item
It does complex conjugation in the {\color{red}second} argument,
see file\\
``dealii/include/deal.II/lac/vector.h'', line 438.
\item
It does complex conjugation in the {\color{blue}first} argument,
see file \\``dealii/include/deal.II/lac/petsc\_vector\_base.h'', line 518.
\item
It does complex conjugation in the {\color{red}second} argument, 
see file \\ ``dealii/include/deal.II/lac/vector\_operations\_internal.h'', line 744.
\end{itemize}
\item Matlab does complex conjugation in the {\color{blue}first} argument, tested with version R2017a.
\item PetSc \cite{petsc-web-page}:
The function ``VecDot'' does complex conjugation of the {\color{red}second} argument.
see the file ``petsc/src/vec/vec/interface/rvector.c'' line 93 in git commit
57f8f5fd4131aa625156f794bdbd51339abe2d6c.
\item In BLAS there are the functions
``zdotu'' doing no complex conjugation and ``zdotc'' doing complex
conjugation of the {\color{blue}first} argument.
\end{itemize}
\item
Publications
\begin{itemize}
\item Barbara Verfürth
defines the sesquilinear form to conjugate the test function, see for example
\cite[equation (2.2)]{Ohlberger2018a}.
\item Jan Heasthaven defines the inner product to be antilinear in the
second argument and the sesquilinear form to be antilinear in the test function
in \cite[page 973]{Chen2010}.
\item Peter Monk defines the sesquilinear form to be antilinear in the test function
in his book ``Finite Element Methods for Maxwell's Equations'' \cite[equation (4.1)]{monk2003finite}.
\item Dolean et.al.~define the sesquilinear form to conjugate the test function in \cite{DOLEAN2008435}.
\item C{\'e}a, Jean defines the sesquilinear form to conjugate the test function in \cite{cea1964approximation}.
\end{itemize}
\end{itemize}
We observe that there is no consensus
whether the inner product should be antilinear in the first or
second argument.
Since most software have implemented it antilinear in the
first argument, \gls{pymor} follows this route and
has the inner product antilinear in the first argument.

In the formulation of the sesquilinear form it is
common to do a complex conjugation of the test function.
\gls{pymor} follows this pattern.

This implies that the functionals used as right hand side
are antilinear.

\backmatter
\pagestyle{gloss}
\printglossary[type=main,title=List of Symbols]             %
\printglossary[type=acronym]                                %
\bibliographystyle{bibgen}
\clearpage
\pagestyle{bib}
\bibliography{refs}

\begin{thebibliography}{161}
\providecommand{\natexlab}[1]{#1}
\providecommand{\url}[1]{\texttt{#1}}
\providecommand{\urlprefix}{URL }
\expandafter\ifx\csname urlstyle\endcsname\relax
  \providecommand{\doi}[1]{doi:\discretionary{}{}{}#1}\else
  \providecommand{\doi}{doi:\discretionary{}{}{}\begingroup
  \urlstyle{rm}\Url}\fi
\providecommand{\eprint}[2][]{\url{#2}}

\bibitem[Aarnes and Hou(2002)]{Aarnes2002}
\textsc{J.~Aarnes, T.~Y. Hou}.
\newblock \emph{Multiscale domain decomposition methods for elliptic problems
  with high aspect ratios}.
\newblock Acta Mathematicae Applicatae Sinica, English Series, 18 (2002)~(1),
  pp. 63--76.
\newblock \doi{10.1007/s102550200004}.

\bibitem[Abdulle(2005)]{Abdulle2005}
\textsc{A.~Abdulle}.
\newblock \emph{On a priori error analysis of fully discrete heterogeneous
  multiscale {FEM}}.
\newblock Multiscale Modeling and Simulation, 4 (2005)~(2), pp. 447--459.
\newblock \doi{10.1137/040607137}.

\bibitem[Albrecht and Ohlberger(2013)]{Albrecht2013}
\textsc{F.~Albrecht, M.~Ohlberger}.
\newblock \emph{The localized reduced basis multi-scale method with online
  enrichment}.
\newblock Oberwolfach Reports, 7 (2013), pp. 406--409.
\newblock \doi{10.4171/OWR/2013/07}.

\bibitem[Albrecht et~al.(2012)]{Albrecht2012}
\textsc{F.~Albrecht, B.~Haasdonk, M.~Ohlberger, S.~Kaulmann}.
\newblock \emph{The Localized Reduced Basis Multiscale Method}.
\newblock Proceedings of Algoritmy 2012, Conference on Scientific Computing,
  Vysoke Tatry, Podbanske, September 9-14, 2012,  (2012), pp. 393--403.
\newblock \doi{n/a}.

\bibitem[Aln{\ae}s et~al.(2015)]{AlnaesBlechta2015a}
\textsc{M.~S. Aln{\ae}s, J.~Blechta, J.~Hake, A.~Johansson, B.~Kehlet, A.~Logg,
  C.~Richardson, J.~Ring, M.~E. Rognes, G.~N. Wells}.
\newblock \emph{The FEniCS Project Version 1.5}.
\newblock Archive of Numerical Software, 3 (2015)~(100).
\newblock \doi{10.11588/ans.2015.100.20553}.

\bibitem[Alzetta et~al.(2018)]{dealII90}
\textsc{G.~Alzetta, D.~Arndt, W.~Bangerth, V.~Boddu, B.~Brands, D.~Davydov,
  R.~Gassmoeller, T.~Heister, L.~Heltai, K.~Kormann, M.~Kronbichler, M.~Maier,
  J.-P. Pelteret, B.~Turcksin, D.~Wells}.
\newblock \emph{The \texttt{deal.II} Library, Version 9.0}.
\newblock Journal of Numerical Mathematics, 26 (2018)~(4), pp. 173--183.
\newblock \doi{10.1515/jnma-2018-0054}.

\bibitem[Antonietti et~al.(2016)]{Antonietti2016}
\textsc{P.~F. Antonietti, P.~Pacciarini, A.~Quarteroni}.
\newblock \emph{A discontinuous {G}alerkin Reduced Basis Element method for
  elliptic problems}.
\newblock ESAIM: Mathematical Modelling and Numerical Analysis, 50 (2016)~(2),
  pp. 337--360.
\newblock \doi{10.1051/m2an/2015045}.

\bibitem[Babuska and Lipton(2011)]{bablip11}
\textsc{I.~Babuska, R.~Lipton}.
\newblock \emph{Optimal Local Approximation Spaces for Generalized Finite
  Element Methods with Application to Multiscale Problems}.
\newblock Multiscale Modeling \& Simulation, 9 (2011)~(1), pp. 373--406.
\newblock \doi{10.1137/100791051}.

\bibitem[Balay et~al.(2018)]{petsc-web-page}
\textsc{S.~Balay, S.~Abhyankar, M.~F. Adams, J.~Brown, P.~Brune, K.~Buschelman,
  L.~Dalcin, V.~Eijkhout, W.~D. Gropp, D.~Kaushik, M.~G. Knepley, D.~A. May,
  L.~C. McInnes, R.~T. Mills, T.~Munson, K.~Rupp, P.~Sanan, B.~F. Smith,
  S.~Zampini, H.~Zhang, H.~Zhang}.
\newblock \emph{{PETS}c {W}eb page}.
\newblock \url{http://www.mcs.anl.gov/petsc}, 2018.

\bibitem[{Bampton} and {Craig}(1968)]{Bampton1968}
\textsc{M.~C.~C. {Bampton}, R.~R. {Craig}, Jr.}
\newblock \emph{{Coupling of substructures for dynamic analyses.}}
\newblock AIAA Journal, 6 (1968)~(7), pp. 1313--1319.
\newblock \doi{10.2514/3.4741}.

\bibitem[Bastian et~al.(2008{\natexlab{a}})]{Bastian2008a}
\textsc{P.~Bastian, M.~Blatt, A.~Dedner, C.~Engwer, R.~Klöfkorn, M.~Ohlberger,
  O.~Sander}.
\newblock \emph{A generic grid interface for parallel and adaptive scientific
  computing. Part I: abstract framework}.
\newblock Computing, 82 (2008{\natexlab{a}})~(2-3), pp. 103--119.
\newblock \doi{10.1007/s00607-008-0003-x}.

\bibitem[Bastian et~al.(2008{\natexlab{b}})]{Bastian2008}
\textsc{P.~Bastian, M.~Blatt, A.~Dedner, C.~Engwer, R.~Kl{\"o}fkorn,
  R.~Kornhuber, M.~Ohlberger, O.~Sander}.
\newblock \emph{A generic grid interface for parallel and adaptive scientific
  computing. Part II: implementation and tests in DUNE}.
\newblock Computing, 82 (2008{\natexlab{b}})~(2), pp. 121--138.
\newblock \doi{10.1007/s00607-008-0004-9}.

\bibitem[Bastian et~al.(2014)]{exadune}
\textsc{P.~Bastian, C.~Engwer, D.~G{\"o}ddeke, O.~Iliev, O.~Ippisch,
  M.~Ohlberger, S.~Turek, J.~Fahlke, S.~Kaulmann, S.~M{\"u}thing, D.~Ribbrock}.
\newblock \emph{EXA-DUNE: Flexible PDE Solvers, Numerical Methods and
  Applications}.
\newblock In \textsc{L.~Lopes, J.~{\v{Z}}ilinskas, A.~Costan, R.~G. Cascella,
  G.~Kecskemeti, E.~Jeannot, M.~Cannataro, L.~Ricci, S.~Benkner, S.~Petit,
  V.~Scarano, J.~Gracia, S.~Hunold, S.~L. Scott, S.~Lankes, C.~Lengauer,
  J.~Carretero, J.~Breitbart, M.~Alexander}, eds., \emph{Euro-Par 2014:
  Parallel Processing Workshops}. Springer International Publishing, Cham.
\newblock ISBN 978-3-319-14313-2, 2014 pp. 530--541.

\bibitem[Bastian et~al.(2016)]{hardware-exadune}
\textsc{P.~Bastian, C.~Engwer, J.~Fahlke, M.~Geveler, D.~Göddeke, O.~Iliev,
  O.~Ippisch, R.~Milk, J.~Mohring, S.~Müthing, M.~Ohlberger, D.~Ribbrock,
  S.~Turek}.
\newblock \emph{Hardware-Based Efficiency Advances in the {EXA}-{DUNE}
  Project}.
\newblock In \emph{Lecture Notes in Computational Science and Engineering}, pp.
  3--23. Springer International Publishing, 2016.
\newblock \doi{10.1007/978-3-319-40528-5_1}.

\bibitem[Benner and Hess(2012)]{Benner2012}
\textsc{P.~Benner, M.~Hess}.
\newblock \emph{The Reduced Basis Method for Time-Harmonic Maxwell's
  Equations}.
\newblock Proceedings in Applied Mathematics and Mechanics, 12 (2012)~(1), pp.
  661--662.
\newblock \doi{10.1002/pamm.201210319}.

\bibitem[Benner et~al.(2017)]{Benner2017b}
\textsc{P.~Benner, M.~Ohlberger, A.~Cohen, K.~E. Wilcox}.
\newblock \emph{Model Reduction and Approximation: Theory and Algorithms}.
\newblock SIAM-Society for Industrial \& Applied Mathematics, 2017.
\newblock ISBN 9781611974829.

\bibitem[Bourquin(1992)]{Bourquin1992}
\textsc{F.~Bourquin}.
\newblock \emph{Component mode synthesis and eigenvalues of second order
  operators: discretization and algorithm}.
\newblock {ESAIM}: Mathematical Modelling and Numerical Analysis, 26
  (1992)~(3), pp. 385--423.
\newblock \doi{10.1051/m2an/1992260303851}.

\bibitem[Buhr(2009)]{Buhr2009}
\textsc{A.~Buhr}.
\newblock \emph{Finite Elements in {PCB} Structures}.
\newblock Master's thesis, TU Darmstadt, Darmstadt, Germany, 2009.

\bibitem[Buhr(2017)]{Buhr2017e}
\textsc{A.~Buhr}.
\newblock \emph{Exponential Convergence of Online Enrichment in Localized
  Reduced Basis Methods}.
\newblock IFAC-PapersOnLine, 51 (2017)~(2), pp. 302--306.
\newblock \doi{n/a}.
\newblock MATHMOD 2018.

\bibitem[Buhr(2019)]{diss_zenodo}
\textsc{A.~Buhr}.
\newblock \emph{Source Code for PhD Thesis by Andreas Buhr}, 2019.
\newblock \doi{10.5281/zenodo.2555623}.

\bibitem[Buhr and Smetana(2018)]{Buhr2018}
\textsc{A.~Buhr, K.~Smetana}.
\newblock \emph{Randomized Local Model Order Reduction}.
\newblock SIAM Journal on Scientific Computing, 40 (2018)~(4), pp.
  A2120--A2151.
\newblock \doi{10.1137/17M1138480}.

\bibitem[Buhr et~al.(2014)]{Buhr2014}
\textsc{A.~Buhr, C.~Engwer, M.~Ohlberger, S.~Rave}.
\newblock \emph{A Numerically Stable a Posteriori Error Estimator for Reduced
  Basis Approximations of Elliptic Equations}.
\newblock In \textsc{X.~O. E.~Onate, A.~Huerta}, eds., \emph{Proceedings of the
  11th World Congress on Computational Mechanics}. CIMNE, Barcelona, 2014 pp.
  4094--4102.
\newblock \doi{n/a}.

\bibitem[Buhr et~al.(2017{\natexlab{a}})]{Buhr2017}
\textsc{A.~Buhr, C.~Engwer, M.~Ohlberger, S.~Rave}.
\newblock \emph{{ArbiLoMod, a Simulation Technique Designed for Arbitrary Local
  Modifications}}.
\newblock SIAM Journal on Scientific Computing, 39 (2017{\natexlab{a}})~(4),
  pp. A1435--A1465.
\newblock \doi{10.1137/15M1054213}.

\bibitem[Buhr et~al.(2017{\natexlab{b}})]{Buhr2017c}
\textsc{A.~Buhr, C.~Engwer, M.~Ohlberger, S.~Rave}.
\newblock \emph{{ArbiLoMod}: Local Solution Spaces by Random Training in
  Electrodynamics}, pp. 137--148.
\newblock Springer International Publishing, Cham.
\newblock ISBN 978-3-319-58786-8, 2017{\natexlab{b}}.
\newblock \doi{10.1007/978-3-319-58786-8_9}.

\bibitem[Buhr et~al.(2019)]{morhandbook}
\textsc{A.~Buhr, L.~Iapichino, M.~Ohlberger, S.~Rave, F.~Schindler,
  K.~Smetana}.
\newblock \emph{Handbook on Model Order Reduction}, chap. Localized model
  reduction for parameterized problems.
\newblock Walter De Gruyter GmbH, 2019+.
\newblock Submitted for publication.

\bibitem[Carlberg et~al.(2010)]{carlberg2011efficient}
\textsc{K.~Carlberg, C.~Bou-Mosleh, C.~Farhat}.
\newblock \emph{Efficient non-linear model reduction via a least-squares
  Petrov-Galerkin projection and compressive tensor approximations}.
\newblock International Journal for Numerical Methods in Engineering, 86
  (2010)~(2), pp. 155--181.
\newblock \doi{10.1002/nme.3050}.

\bibitem[Casenave(2012)]{Casenave2012}
\textsc{F.~Casenave}.
\newblock \emph{Accurate a posteriori error evaluation in the reduced basis
  method}.
\newblock Comptes Rendus Mathematique, 350 (2012)~(9-10), pp. 539--542.
\newblock \doi{10.1016/j.crma.2012.05.012}.

\bibitem[Casenave et~al.(2014)]{Casenave2014}
\textsc{F.~Casenave, A.~Ern, T.~Leli{\`{e}}vre}.
\newblock \emph{Accurate and online-efficient evaluation of the a posteriori
  error bound in the reduced basis method}.
\newblock {ESAIM}: Mathematical Modelling and Numerical Analysis, 48
  (2014)~(1), pp. 207--229.
\newblock \doi{10.1051/m2an/2013097}.

\bibitem[C{\'e}a(1964)]{cea1964approximation}
\textsc{J.~C{\'e}a}.
\newblock \emph{Approximation variationnelle des probl{\`e}mes aux limites}.
\newblock Université de Grenoble, 14 (1964)~(fasc. 2), pp. 345--444.

\bibitem[Chen et~al.(2008)]{Chen2008}
\textsc{Y.~Chen, J.~S. Hesthaven, Y.~Maday, J.~Rodríguez}.
\newblock \emph{A monotonic evaluation of lower bounds for inf-sup stability
  constants in the frame of reduced basis approximations}.
\newblock Comptes Rendus Mathematique, 346 (2008)~(23), pp. 1295--1300.
\newblock \doi{10.1016/j.crma.2008.10.012}.

\bibitem[Chen et~al.(2009)]{Chen2009}
\textsc{Y.~Chen, J.~S. Hesthaven, Y.~Maday, J.~Rodríguez}.
\newblock \emph{Improved successive constraint method based a posteriori error
  estimate for reduced basis approximation of {2D} {Maxwell}'s problem}.
\newblock ESAIM: Mathematical Modelling and Numerical Analysis, 43 (2009)~(6),
  pp. 1099--1116.
\newblock \doi{10.1051/m2an/2009037}.

\bibitem[Chen et~al.(2010)]{Chen2010}
\textsc{Y.~Chen, J.~S. Hesthaven, Y.~Maday, J.~Rodríguez}.
\newblock \emph{Certified Reduced Basis Methods and Output Bounds for the
  Harmonic Maxwell's Equations}.
\newblock SIAM Journal on Scientific Computing, 32 (2010)~(2), pp. 970--996.
\newblock \doi{10.1137/09075250X}.

\bibitem[Chen et~al.(2011)]{Chen2011}
\textsc{Y.~Chen, J.~S. Hesthaven, Y.~Maday}.
\newblock \emph{A seamless reduced basis element method for 2{D} {M}axwell's
  problem: an introduction}.
\newblock In \emph{Spectral and high order methods for partial differential
  equations}, vol.~76 of \emph{Lect. Notes Comput. Sci. Eng.}, pp. 141--152.
  Springer, Heidelberg, 2011.
\newblock \doi{10.1007/978-3-642-15337-2_11}.

\bibitem[Chen et~al.(2017)]{Chen2017}
\textsc{Y.~Chen, J.~Jiang, A.~Narayan}.
\newblock \emph{Robust residual-based and residual-free greedy algorithms for
  reduced basis methods}.
\newblock arXiv e-prints,  (2017).
\newblock \eprint{http://arxiv.org/abs/1710.08999v1}.

\bibitem[Chen et~al.(2018)]{Chen2018a}
\textsc{Y.~Chen, J.~Jiang, A.~Narayan}.
\newblock \emph{A robust error estimator and a residual-free error indicator
  for reduced basis methods}.
\newblock Computers {\&} Mathematics with Applications,  (2018).
\newblock \doi{10.1016/j.camwa.2018.11.032}.

\bibitem[Chen and Dongarra(2005)]{Chen2005}
\textsc{Z.~Chen, J.~J. Dongarra}.
\newblock \emph{Condition numbers of {G}aussian random matrices}.
\newblock SIAM Journal on Matrix Analysis and Applications, 27 (2005)~(3), pp.
  603--620.
\newblock \doi{10.1137/040616413}.

\bibitem[Cheng et~al.(2005)]{CGMR05}
\textsc{H.~Cheng, Z.~Gimbutas, P.~G. Martinsson, V.~Rokhlin}.
\newblock \emph{On the compression of low rank matrices}.
\newblock SIAM Journal on Scientific Computing, 26 (2005)~(4), pp. 1389--1404.
\newblock \doi{10.1137/030602678}.

\bibitem[Chung et~al.(2014)]{Chung2014b}
\textsc{E.~T. Chung, Y.~Efendiev, W.~T. Leung}.
\newblock \emph{Generalized multiscale finite element methods for wave
  propagation in heterogeneous media}.
\newblock Multiscale Modeling and Simulation, 12 (2014)~(4), pp. 1691--1721.
\newblock \doi{10.1137/130926675}.

\bibitem[Chung et~al.(2015)]{Chung2015a}
\textsc{E.~T. Chung, Y.~Efendiev, C.~S. Lee}.
\newblock \emph{Mixed generalized multiscale finite element methods and
  applications}.
\newblock Multiscale Modeling and Simulation, 13 (2015)~(1), pp. 338--366.
\newblock \doi{10.1137/140970574}.

\bibitem[Chung et~al.(2018{\natexlab{a}})]{Chung2018}
\textsc{E.~T. Chung, Y.~Efendiev, W.~T. Leung}.
\newblock \emph{An adaptive generalized multiscale discontinuous {G}alerkin
  method for high-contrast flow problems}.
\newblock Multiscale Modeling and Simulation, 16 (2018{\natexlab{a}})~(3), pp.
  1227--1257.
\newblock \doi{10.1137/140986189}.
\newblock \eprint{1409.3474}.

\bibitem[Chung et~al.(2018{\natexlab{b}})]{Chung2017a}
\textsc{E.~T. Chung, Y.~Efendiev, W.~T. Leung}.
\newblock \emph{Constraint Energy Minimizing Generalized Multiscale Finite
  Element Method}.
\newblock Computer Methods in Applied Mechanics and Engineering, 339
  (2018{\natexlab{b}}), pp. 298--319.
\newblock \doi{10.1016/j.cma.2018.04.010}.

\bibitem[Chung et~al.(2018{\natexlab{c}})]{Chung2017c}
\textsc{E.~T. Chung, Y.~Efendiev, W.~T. Leung}.
\newblock \emph{Fast Online Generalized Multiscale Finite Element Method using
  Constraint Energy Minimization}.
\newblock Journal of Computational Physics, 355 (2018{\natexlab{c}}), pp.
  450--463.
\newblock \doi{10.1016/j.jcp.2017.11.022}.

\bibitem[Cohen et~al.(2012)]{Cohen2012}
\textsc{A.~Cohen, R.~DeVore, R.~H. Nochetto}.
\newblock \emph{Convergence rates of {AFEM} with {$H^{-1}$} data}.
\newblock Foundations of Computational Mathematics, 12 (2012)~(5), pp.
  671--718.
\newblock \doi{10.1007/s10208-012-9120-1}.

\bibitem[Dahmen et~al.(2014)]{Dahmen2014}
\textsc{W.~Dahmen, C.~Plesken, G.~Welper}.
\newblock \emph{Double greedy algorithms: Reduced basis methods for transport
  dominated problems}.
\newblock {ESAIM}: Mathematical Modelling and Numerical Analysis, 48
  (2014)~(3), pp. 623--663.
\newblock \doi{10.1051/m2an/2013103}.

\bibitem[Demmel et~al.(1999)]{superlu99}
\textsc{J.~W. Demmel, S.~C. Eisenstat, J.~R. Gilbert, X.~S. Li, J.~W.~H. Liu}.
\newblock \emph{A supernodal approach to sparse partial pivoting}.
\newblock SIAM J. Matrix Analysis and Applications, 20 (1999)~(3), pp.
  720--755.
\newblock \doi{10.1137/S0895479895291765}.

\bibitem[Dolean et~al.(2008)]{DOLEAN2008435}
\textsc{V.~Dolean, H.~Fol, S.~Lanteri, R.~Perrussel}.
\newblock \emph{Solution of the time-harmonic Maxwell equations using
  discontinuous Galerkin methods}.
\newblock Journal of Computational and Applied Mathematics, 218 (2008)~(2), pp.
  435--445.
\newblock \doi{10.1016/j.cam.2007.05.026}.
\newblock The Proceedings of the Twelfth International Congress on
  Computational and Applied Mathematics.

\bibitem[Dolean et~al.(2012)]{Dolean2012}
\textsc{V.~Dolean, F.~Nataf, R.~Scheichl, N.~Spillane}.
\newblock \emph{Analysis of a Two-level Schwarz Method with Coarse Spaces Based
  on Local Dirichlet-to-Neumann Maps}.
\newblock Computational Methods in Applied Mathematics, 12 (2012)~(4).
\newblock \doi{10.2478/cmam-2012-0027}.

\bibitem[D{\"o}rfler(1996)]{Doerfler1996}
\textsc{W.~D{\"o}rfler}.
\newblock \emph{A convergent adaptive algorithm for Poisson's equation}.
\newblock SIAM Journal on Numerical Analysis, 33 (1996)~(3), pp. 1106--1124.
\newblock \doi{10.1137/0733054}.

\bibitem[Drineas et~al.(2006{\natexlab{a}})]{randmcla1}
\textsc{P.~Drineas, R.~Kannan, M.~Mahoney}.
\newblock \emph{Fast Monte Carlo Algorithms for Matrices I: Approximating
  Matrix Multiplication}.
\newblock SIAM Journal on Computing, 36 (2006{\natexlab{a}})~(1), pp. 132--157.
\newblock \doi{10.1137/S0097539704442684}.

\bibitem[Drineas et~al.(2006{\natexlab{b}})]{randmcla2}
\textsc{P.~Drineas, R.~Kannan, M.~Mahoney}.
\newblock \emph{Fast Monte Carlo Algorithms for Matrices II: Computing a
  Low-Rank Approximation to a Matrix}.
\newblock SIAM Journal on Computing, 36 (2006{\natexlab{b}})~(1), pp. 158--183.
\newblock \doi{10.1137/S0097539704442696}.

\bibitem[Drineas et~al.(2006{\natexlab{c}})]{DrKaMa06b}
\textsc{P.~Drineas, R.~Kannan, M.~W. Mahoney}.
\newblock \emph{Fast {M}onte {C}arlo Algorithms for Matrices. {III}.
  {C}omputing a compressed approximate matrix decomposition}.
\newblock SIAM Journal on Computing, 36 (2006{\natexlab{c}})~(1), pp. 184--206.
\newblock \doi{10.1137/S0097539704442702}.

\bibitem[E and Engquist(2003)]{Weinan2003}
\textsc{W.~E, B.~Engquist}.
\newblock \emph{The Heterognous Multiscale Methods}.
\newblock Communications in Mathematical Sciences, 1 (2003)~(1), pp. 87--132.
\newblock \doi{10.4310/cms.2003.v1.n1.a8}.

\bibitem[E and Engquist(2005)]{weinan2005}
\textsc{W.~E, B.~Engquist}.
\newblock \emph{The Heterogeneous Multi-Scale Method for Homogenization
  Problems}.
\newblock In \emph{Lecture Notes in Computational Science and Engineering},
  vol.~44 of \emph{Lect. Notes Comput. Sci. Eng.}, pp. 89--110.
  Springer-Verlag, Berlin, 2005.
\newblock \doi{10.1007/3-540-26444-2_4}.

\bibitem[Efendiev and Hou(2009)]{Efendiev2009}
\textsc{Y.~Efendiev, T.~Y. Hou}.
\newblock \emph{Multiscale Finite Element Methods: Theory and Applications
  (Surveys and Tutorials in the Applied Mathematical Sciences, Vol. 4)}, vol.~4
  of \emph{Surveys and Tutorials in the Applied Mathematical Sciences}.
\newblock Springer, New York, 2009.
\newblock ISBN 0387094954.
\newblock Theory and applications.

\bibitem[Efendiev et~al.(2011)]{efendiev2011}
\textsc{Y.~Efendiev, J.~Galvis, X.-H. Wu}.
\newblock \emph{Multiscale finite element methods for high-contrast problems
  using local spectral basis functions}.
\newblock Journal of Computational Physics, 230 (2011)~(4), pp. 937--955.
\newblock \doi{10.1016/j.jcp.2010.09.026}.

\bibitem[Efendiev et~al.(2013)]{Efendiev2013}
\textsc{Y.~Efendiev, J.~Galvis, T.~Y. Hou}.
\newblock \emph{Generalized Multiscale Finite Element Methods ({GMsFEM})}.
\newblock Journal of Computational Physics, 251 (2013), pp. 116--135.
\newblock \doi{10.1016/j.jcp.2013.04.045}.

\bibitem[Eftang and Patera(2013)]{Eftang2013}
\textsc{J.~L. Eftang, A.~T. Patera}.
\newblock \emph{Port reduction in parametrized component static condensation:
  approximation and a posteriori error estimation}.
\newblock International Journal for Numerical Methods in Engineering, 96
  (2013)~(5), pp. 269--302.
\newblock \doi{10.1002/nme.4543}.

\bibitem[Eftang and Patera(2014)]{Eftang2014}
\textsc{J.~L. Eftang, A.~T. Patera}.
\newblock \emph{A port-reduced static condensation reduced basis element method
  for large component-synthesized structures: approximation and \emph{A
  posteriori} error estimation}.
\newblock Advanced Modeling and Simulation in Engineering Sciences, 1
  (2014)~(3), p.~3.
\newblock \doi{10.1186/2213-7467-1-3}.

\bibitem[Eickhorn(2019)]{EickhornThesis}
\textsc{D.~Eickhorn}.
\newblock \emph{Randomisierte lokalisierte Modellreduktion mit
  Robin-Transferoperator}.
\newblock Master's thesis, Westfälische Wilhelms Universität Münster, 2019.
\newblock
  \eprint{https://www.uni-muenster.de/AMM/includes/ohlberger/team/eickhorn/MA_Eickhorn.pdf}.

\bibitem[Engwer et~al.(2019)]{Engwer2019}
\textsc{C.~Engwer, P.~Henning, A.~Målqvist, D.~Peterseim}.
\newblock \emph{Efficient implementation of the localized orthogonal
  decomposition method}.
\newblock Computer Methods in Applied Mechanics and Engineering, 350 (2019),
  pp. 123--153.
\newblock \doi{10.1016/j.cma.2019.02.040}.

\bibitem[Feuers{\"a}nger(2011)]{feuersanger2011manual}
\textsc{C.~Feuers{\"a}nger}.
\newblock \emph{Manual for Package pgfplots}.
\newblock online, 17 (2011).

\bibitem[Fink and Rheinboldt(1983)]{Fink1983}
\textsc{J.~P. Fink, W.~C. Rheinboldt}.
\newblock \emph{On the Error Behavior of the Reduced Basis Technique for
  Nonlinear Finite Element Approximations}.
\newblock ZAMM-Journal of Applied Mathematics and Mechanics/Zeitschrift f{\"u}r
  Angewandte Mathematik und Mechanik, 63 (1983)~(1), pp. 21--28.
\newblock \doi{10.1002/zamm.19830630105}.

\bibitem[Fu et~al.(2018)]{Fu2018}
\textsc{S.~Fu, E.~Chung, G.~Li}.
\newblock \emph{Edge Multiscale Methods for elliptic problems with
  heterogeneous coefficients}.
\newblock arXiv,  (2018).
\newblock \eprint{http://arxiv.org/abs/1810.10398v1}.

\bibitem[Galvis and Efendiev(2010)]{Galvis2010}
\textsc{J.~Galvis, Y.~Efendiev}.
\newblock \emph{Domain decomposition preconditioners for multiscale flows in
  high-contrast media}.
\newblock Multiscale Modeling \& Simulation, 8 (2010)~(4), pp. 1461--1483.
\newblock \doi{10.1137/090751190}.

\bibitem[Gander and Loneland(2017)]{Gander2017}
\textsc{M.~J. Gander, A.~Loneland}.
\newblock \emph{S{HEM}: an optimal coarse space for {RAS} and its multiscale
  approximation}.
\newblock In \emph{Domain decomposition methods in science and engineering
  {XXIII}}, vol. 116 of \emph{Lecture Notes in Computational Science and
  Engineering}, pp. 313--321. Springer, Cham, 2017.
\newblock \doi{10.1007/978-3-319-52389-7_32}.

\bibitem[Graham and Scheichl(2007)]{Graham2007a}
\textsc{I.~Graham, R.~Scheichl}.
\newblock \emph{Robust domain decomposition algorithms for multiscale {PDEs}}.
\newblock Numerical Methods for Partial Differential Equations, 23 (2007)~(4),
  pp. 859--878.
\newblock \doi{10.1002/num.20254}.

\bibitem[Graham et~al.(2007)]{Graham2007}
\textsc{I.~G. Graham, P.~O. Lechner, R.~Scheichl}.
\newblock \emph{Domain decomposition for multiscale {PDE}s}.
\newblock Numerische Mathematik, 106 (2007)~(4), pp. 589--626.
\newblock \doi{10.1007/s00211-007-0074-1}.

\bibitem[Gram(1883)]{PPN243919689_0094}
\textsc{J.~P. Gram}.
\newblock \emph{Ueber die Entwickelung reeller Functionen in Reihen mittelst
  der Methode der kleinsten Quadrate}.
\newblock Journal für die reine und angewandte Mathematik, 94 (1883), pp.
  41--73.

\bibitem[Guennebaud et~al.(2010)]{eigenweb}
\textsc{G.~Guennebaud, B.~Jacob, et~al.}
\newblock \emph{Eigen v3}.
\newblock http://eigen.tuxfamily.org, 2010.

\bibitem[Haasdonk(2017)]{Haasdonk2017a}
\textsc{B.~Haasdonk}.
\newblock \emph{Chapter 2: Reduced Basis Methods for Parametrized
  {PDEs}{\textemdash}A Tutorial Introduction for Stationary and Instationary
  Problems}.
\newblock In \emph{Model Reduction and Approximation}, pp. 65--136. Society for
  Industrial and Applied Mathematics, 2017.
\newblock \doi{10.1137/1.9781611974829.ch2}.

\bibitem[Halko et~al.(2011)]{Halko2011}
\textsc{N.~Halko, P.-G. Martinsson, J.~A. Tropp}.
\newblock \emph{Finding structure with randomness: Probabilistic algorithms for
  constructing approximate matrix decompositions}.
\newblock SIAM Review, 53 (2011)~(2), pp. 217--288.
\newblock \doi{10.1137/090771806}.

\bibitem[Heaviside(1892)]{heavisideelectricalpapers}
\textsc{O.~Heaviside}.
\newblock \emph{Electrical Papers}.
\newblock Macmilland And Company, 1892.

\bibitem[Heinlein et~al.(2018)]{Heinlein2018}
\textsc{A.~Heinlein, A.~Klawonn, J.~Knepper, O.~Rheinbach}.
\newblock \emph{Multiscale coarse spaces for overlapping Schwarz methods based
  on the ACMS space in 2D}.
\newblock Electronic Transactions on Numerical Analysis, 48 (2018), pp.
  156--182.
\newblock \doi{10.1553/etna_vol48s156}.

\bibitem[Henning and Peterseim(2013)]{henning_oversampling}
\textsc{P.~Henning, D.~Peterseim}.
\newblock \emph{Oversampling for the Multiscale Finite Element Method}.
\newblock Multiscale Modeling \& Simulation, 11 (2013)~(4), pp. 1149--1175.
\newblock \doi{10.1137/120900332}.

\bibitem[Henning et~al.(2014{\natexlab{a}})]{Henning2014a}
\textsc{P.~Henning, A.~M{\aa}lqvist, D.~Peterseim}.
\newblock \emph{A localized orthogonal decomposition method for semi-linear
  elliptic problems}.
\newblock {ESAIM}: Mathematical Modelling and Numerical Analysis, 48
  (2014{\natexlab{a}})~(5), pp. 1331--1349.
\newblock \doi{10.1051/m2an/2013141}.

\bibitem[Henning et~al.(2014{\natexlab{b}})]{Henning2014}
\textsc{P.~Henning, M.~Ohlberger, B.~Schweizer}.
\newblock \emph{An Adaptive Multiscale Finite Element Method}.
\newblock Multiscale Modeling {\&} Simulation, 12 (2014{\natexlab{b}})~(3), pp.
  1078--1107.
\newblock \doi{10.1137/120886856}.

\bibitem[Hess(2016)]{hessphdthesis}
\textsc{M.~W. Hess}.
\newblock \emph{Reduced Basis Approximations for Electromagnetic Applications}.
\newblock Ph.D. thesis, Otto-von-Guericke Universit{\"a}t Magdeburg, 2016.

\bibitem[Hesthaven et~al.(2016)]{Hesthaven2016}
\textsc{J.~S. Hesthaven, G.~Rozza, B.~Stamm}.
\newblock \emph{Certified Reduced Basis Methods for Parametrized Partial
  Differential Equations}.
\newblock SpringerBriefs in Mathematics, Springer International Publishing, New
  York, 2016.
\newblock ISBN 978-3-319-22469-5; 978-3-319-22470-1.
\newblock \doi{10.1007/978-3-319-22470-1}.
\newblock BCAM SpringerBriefs.

\bibitem[Hetmaniuk and Klawonn(2014)]{Hetmaniuk2014}
\textsc{U.~Hetmaniuk, A.~Klawonn}.
\newblock \emph{Error estimates for a two-dimensional special finite element
  method based on component mode synthesis}.
\newblock Electronic Transactions on Numerical Analysis, 41 (2014), pp.
  109--132.
\newblock \doi{n/a}.

\bibitem[Hetmaniuk and Lehoucq(2010)]{Hetmaniuk2010}
\textsc{U.~L. Hetmaniuk, R.~B. Lehoucq}.
\newblock \emph{A special finite element method based on component mode
  synthesis}.
\newblock ESAIM: Mathematical Modelling and Numerical Analysis, 44 (2010)~(3),
  pp. 401--420.
\newblock \doi{10.1051/m2an/2010007}.

\bibitem[Horn and Johnson(2012)]{Horn2012}
\textsc{R.~A. Horn, C.~R. Johnson}.
\newblock \emph{Matrix Analysis}.
\newblock Cambridge University Press, 2012.
\newblock ISBN 978-0521548236.

\bibitem[Hou and Wu(1997)]{Hou1997}
\textsc{T.~Y. Hou, X.-H. Wu}.
\newblock \emph{A Multiscale Finite Element Method For Elliptic Problems In
  Composite Materials And Porous Media}.
\newblock Journal of Computational Physics, 134 (1997)~(1), pp. 169--189.
\newblock \doi{10.1006/jcph.1997.5682}.

\bibitem[Hughes(1995)]{Hughes1995}
\textsc{T.~J. Hughes}.
\newblock \emph{Multiscale phenomena: Green's functions, the
  Dirichlet-to-Neumann formulation, subgrid scale models, bubbles and the
  origins of stabilized methods}.
\newblock Computer Methods in Applied Mechanics and Engineering, 127
  (1995)~(1--4), pp. 387--401.
\newblock \doi{10.1016/0045-7825(95)00844-9}.

\bibitem[Hughes et~al.(1998)]{Hughes1998a}
\textsc{T.~J. Hughes, G.~R. Feijóo, L.~Mazzei, J.-B. Quincy}.
\newblock \emph{The variational multiscale method --- a paradigm for
  computational mechanics}.
\newblock Computer Methods in Applied Mechanics and Engineering, 166
  (1998)~(1), pp. 3--24.
\newblock \doi{10.1016/S0045-7825(98)00079-6}.

\bibitem[Hurty(1965)]{Hurty1965}
\textsc{W.~C. Hurty}.
\newblock \emph{Dynamic analysis of structural systems using component modes}.
\newblock AIAA journal, 3 (1965)~(4), pp. 678--685.
\newblock \doi{10.2514/3.2947}.

\bibitem[Huynh et~al.(2007)]{Huynh2007}
\textsc{D.~Huynh, G.~Rozza, S.~Sen, A.~Patera}.
\newblock \emph{A successive constraint linear optimization method for lower
  bounds of parametric coercivity and inf–sup stability constants}.
\newblock Comptes Rendus Mathematique, 345 (2007)~(8), pp. 473--478.
\newblock \doi{10.1016/j.crma.2007.09.019}.

\bibitem[Iapichino(2012)]{Iapichino2012b}
\textsc{L.~Iapichino}.
\newblock \emph{Reduced basis methods for the solution of parametrized PDEs in
  repetitive and complex networks with application to CFD}.
\newblock Ph.D. thesis, École polytechnique fédérale de Lausanne, 2012.
\newblock \doi{10.5075/epfl-thesis-5529}.

\bibitem[Iapichino et~al.(2012)]{Iapichino2012}
\textsc{L.~Iapichino, A.~Quarteroni, G.~Rozza}.
\newblock \emph{A reduced basis hybrid method for the coupling of parametrized
  domains represented by fluidic networks}.
\newblock Computer Methods in Applied Mechanics and Engineering, 221-222
  (2012), pp. 63--82.
\newblock \doi{10.1016/j.cma.2012.02.005}.

\bibitem[Iapichino et~al.(2016)]{Iapichino2016}
\textsc{L.~Iapichino, A.~Quarteroni, G.~Rozza}.
\newblock \emph{Reduced basis method and domain decomposition for elliptic
  problems in networks and complex parametrized geometries}.
\newblock Computers \& Mathematics with Applications, 71 (2016)~(1), pp.
  408--430.
\newblock \doi{10.1016/j.camwa.2015.12.001}.

\bibitem[Ihlenburg(2006)]{ihlenburg2006finite}
\textsc{F.~Ihlenburg}.
\newblock \emph{Finite element analysis of acoustic scattering}, vol. 132.
\newblock Springer Science \& Business Media, 2006.
\newblock \doi{10.1007/b98828}.

\bibitem[Jakobsson et~al.(2011)]{Jakobsson2011}
\textsc{H.~Jakobsson, F.~Bengzon, M.~G. Larson}.
\newblock \emph{Adaptive component mode synthesis in linear elasticity}.
\newblock International Journal for Numerical Methods in Engineering, 86
  (2011)~(7), pp. 829--844.

\bibitem[Kaulmann et~al.(2011)]{KAULMANN2011}
\textsc{S.~Kaulmann, M.~Ohlberger, B.~Haasdonk}.
\newblock \emph{A new local reduced basis discontinuous Galerkin approach for
  heterogeneous multiscale problems}.
\newblock Comptes Rendus Mathematique, 349 (2011)~(23), pp. 1233--1238.
\newblock \doi{10.1016/j.crma.2011.10.024}.

\bibitem[Kerfriden et~al.(2013)]{Kerfriden2013}
\textsc{P.~Kerfriden, O.~Goury, T.~Rabczuk, S.~Bordas}.
\newblock \emph{A partitioned model order reduction approach to rationalise
  computational expenses in nonlinear fracture mechanics}.
\newblock Computer Methods in Applied Mechanics and Engineering, 256 (2013),
  pp. 169--188.
\newblock \doi{10.1016/j.cma.2012.12.004}.

\bibitem[Klawonn and Widlund(2006)]{Klawonn2006}
\textsc{A.~Klawonn, O.~B. Widlund}.
\newblock \emph{Dual-primal {FETI} methods for linear elasticity}.
\newblock Communications on Pure and Applied Mathematics, 59 (2006)~(11), pp.
  1523--1572.
\newblock \doi{10.1002/cpa.20156}.

\bibitem[Klawonn et~al.(2016)]{Klawonn2016}
\textsc{A.~Klawonn, P.~Radtke, O.~Rheinbach}.
\newblock \emph{A comparison of adaptive coarse spaces for iterative
  substructuring in two dimensions}.
\newblock Electronic Transactions on Numerical Analysis, 45 (2016), pp.
  75--106.

\bibitem[Kornhuber and Yserentant(2016)]{Kornhuber2016a}
\textsc{R.~Kornhuber, H.~Yserentant}.
\newblock \emph{Numerical homogenization of elliptic multiscale problems by
  subspace decomposition}.
\newblock Multiscale Modeling and Simulation, 14 (2016)~(3), pp. 1017--1036.
\newblock \doi{10.1137/15M1028510}.

\bibitem[Kornhuber et~al.(2017)]{Kornhuber2017}
\textsc{R.~Kornhuber, J.~Podlesny, H.~Yserentant}.
\newblock \emph{Direct and iterative methods for numerical homogenization}.
\newblock In \emph{Domain decomposition methods in science and engineering
  {XXIII}}, vol. 116 of \emph{Lecture Notes in Computational Science and
  Engineering}, pp. 217--225. Springer International Publishing, 2017.
\newblock \doi{10.1007/978-3-319-52389-7_21}.

\bibitem[Kornhuber et~al.(2018)]{Kornhuber2018}
\textsc{R.~Kornhuber, D.~Peterseim, H.~Yserentant}.
\newblock \emph{An analysis of a class of variational multiscale methods based
  on subspace decomposition}.
\newblock Mathematics of Computation, 87 (2018)~(314), pp. 2765--2774.
\newblock \doi{10.1090/mcom/3302}.

\bibitem[Larson and M{\aa}lqvist(2005)]{Larson2005}
\textsc{M.~G. Larson, A.~M{\aa}lqvist}.
\newblock \emph{Adaptive Variational Multiscale Methods Based on A Posteriori
  Error Estimation: Duality Techniques for Elliptic Problems}.
\newblock In \emph{Lecture Notes in Computational Science and Engineering},
  vol.~44 of \emph{Lect. Notes Comput. Sci. Eng.}, pp. 181--193. Springer,
  Berlin, 2005.
\newblock \doi{10.1007/3-540-26444-2_9}.

\bibitem[Larson and M{\aa}lqvist(2007)]{Larson2007}
\textsc{M.~G. Larson, A.~M{\aa}lqvist}.
\newblock \emph{Adaptive variational multiscale methods based on a posteriori
  error estimation: energy norm estimates for elliptic problems}.
\newblock Computer Methods in Applied Mechanics and Engineering, 196
  (2007)~(21-24), pp. 2313--2324.
\newblock \doi{10.1016/j.cma.2006.08.019}.

\bibitem[Larson and M{\aa}lqvist(2009{\natexlab{a}})]{Larson2009}
\textsc{M.~G. Larson, A.~M{\aa}lqvist}.
\newblock \emph{An adaptive variational multiscale method for
  convection-diffusion problems}.
\newblock Communications in Numerical Methods in Engineering, 25
  (2009{\natexlab{a}})~(1), pp. 65--79.
\newblock \doi{10.1002/cnm.1106}.

\bibitem[Larson and M{\aa}lqvist(2009{\natexlab{b}})]{LARSON2009b}
\textsc{M.~G. Larson, A.~M{\aa}lqvist}.
\newblock \emph{A mixed adaptive variational multiscale method with
  applications in oild reservoid simulation}.
\newblock Mathematical Models and Methods in Applied Sciences, 19
  (2009{\natexlab{b}})~(07), pp. 1017--1042.
\newblock \doi{10.1142/s021820250900370x}.

\bibitem[Lax and Milgram(1954)]{laxmilgram}
\textsc{P.~Lax, A.~Milgram}.
\newblock \emph{Contributions to the Theory of Partial Differential Equations},
  vol.~33, chap. Ix. parabolic equations, pp. 167--190.
\newblock Princeton University Press, 1954.

\bibitem[Lee(2018)]{marioalee2018}
\textsc{M.~A. Lee}.
\newblock \emph{An approach for efficient, conceptual-level aerospace
  structural design using the static condensation reduced basis element
  method}.
\newblock Ph.D. thesis, Georgia Institute of Technology, 2018.

\bibitem[Lehoucq et~al.(1998)]{Lehoucq1998}
\textsc{R.~B. Lehoucq, D.~C. Sorensen, C.~Yang}.
\newblock \emph{{ARPACK Users' Guide }}.
\newblock online, 6 (1998), pp. xvi+142.
\newblock \doi{10.1137/1.9780898719628}.

\bibitem[Li et~al.(1999)]{superlu_ug99}
\textsc{X.~Li, J.~Demmel, J.~Gilbert, iL. Grigori, M.~Shao, I.~Yamazaki}.
\newblock \emph{{SuperLU Users' Guide}}.
\newblock Tech. Rep. LBNL-44289, Lawrence Berkeley National Laboratory, 1999.
\newblock \url{http://crd.lbl.gov/~xiaoye/SuperLU/}. Last update: August 2011.

\bibitem[Liberty et~al.(2007)]{liberty2007randomized}
\textsc{E.~Liberty, F.~Woolfe, P.-G. Martinsson, V.~Rokhlin, M.~Tygert}.
\newblock \emph{Randomized algorithms for the low-rank approximation of
  matrices}.
\newblock Proceedings of the National Academy of Sciences, 104 (2007)~(51), pp.
  20167--20172.
\newblock \doi{10.1073/pnas.0709640104}.

\bibitem[L{\o}vgren et~al.(2006)]{Lovgren2006}
\textsc{A.~E. L{\o}vgren, Y.~Maday, E.~M. R{\o}nquist}.
\newblock \emph{A reduced basis element method for the steady Stokes problem}.
\newblock {ESAIM}: Mathematical Modelling and Numerical Analysis, 40
  (2006)~(3), pp. 529--552.
\newblock \doi{10.1051/m2an:2006021}.

\bibitem[Maday and R{\o}nquist(2002)]{Maday2002}
\textsc{Y.~Maday, E.~M. R{\o}nquist}.
\newblock \emph{A reduced-basis element method}.
\newblock Journal of scientific computing, 17 (2002)~(1-4), pp. 447--459.
\newblock \doi{10.1023/a:1015197908587}.

\bibitem[Maday and R{\o}nquist(2004)]{Maday2004}
\textsc{Y.~Maday, E.~M. R{\o}nquist}.
\newblock \emph{The Reduced Basis Element Method: Application to a Thermal Fin
  Problem}.
\newblock SIAM Journal on Scientific Computing, 26 (2004)~(1), pp. 240--258
  (electronic).
\newblock \doi{10.1137/s1064827502419932}.

\bibitem[Maier and Haasdonk(2014)]{Maier2014}
\textsc{I.~Maier, B.~Haasdonk}.
\newblock \emph{A {D}irichlet-{N}eumann reduced basis method for homogeneous
  domain decomposition problems}.
\newblock Applied Numerical Mathematics, 78 (2014), pp. 31--48.
\newblock \doi{10.1016/j.apnum.2013.12.001}.

\bibitem[M{\aa}lqvist and Peterseim(2014)]{Malqvist2014}
\textsc{A.~M{\aa}lqvist, D.~Peterseim}.
\newblock \emph{Localization of elliptic multiscale problems}.
\newblock Mathematics of Computation, 83 (2014)~(290), pp. 2583--2603.
\newblock \doi{10.1090/s0025-5718-2014-02868-8}.

\bibitem[Mandel and Soused\'{i}k(2007)]{Mandel2007}
\textsc{J.~Mandel, B.~Soused\'{i}k}.
\newblock \emph{Adaptive selection of face coarse degrees of freedom in the
  {BDDC} and the {FETI}-{DP} iterative substructuring methods}.
\newblock Computer Methods in Applied Mechanics and Engineering, 196
  (2007)~(8), pp. 1389--1399.
\newblock \doi{10.1016/j.cma.2006.03.010}.

\bibitem[Martini et~al.(2014)]{Martini2014}
\textsc{I.~Martini, G.~Rozza, B.~Haasdonk}.
\newblock \emph{Reduced basis approximation and a-posteriori error estimation
  for the coupled {S}tokes-{D}arcy system}.
\newblock Advances in Computational Mathematics, 41 (2014)~(5), pp. 1131--1157.
\newblock \doi{10.1007/s10444-014-9396-6}.

\bibitem[Martini et~al.(2017)]{martini2017certified}
\textsc{I.~Martini, G.~Rozza, B.~Haasdonk}.
\newblock \emph{Certified Reduced Basis Approximation for the Coupling of
  Viscous and Inviscid Parametrized Flow Models}.
\newblock Journal of Scientific Computing,  (2017).
\newblock \doi{10.1007/s10915-017-0430-y}.

\bibitem[Martinsson et~al.(2011)]{MaRoTy11}
\textsc{P.-G. Martinsson, V.~Rokhlin, M.~Tygert}.
\newblock \emph{A randomized algorithm for the decomposition of matrices}.
\newblock Applied and Computational Harmonic Analysis, 30 (2011)~(1), pp.
  47--68.
\newblock \doi{10.1016/j.acha.2010.02.003}.

\bibitem[Maxwell(1861)]{Maxwell1861}
\textsc{J.~C. Maxwell}.
\newblock \emph{On physical lines of force}.
\newblock The London, Edinburgh, and Dublin Philosophical Magazine and Journal
  of Science, 21 (1861)~(139), pp. 161--175.
\newblock \doi{10.1080/14786431003659180}.

\bibitem[Maxwell(1865)]{maxwell1864dynamical}
\textsc{J.~C. Maxwell}.
\newblock \emph{A dynamical theory of the electromagnetic field}.
\newblock Proceedings of the Royal Society of London, 13 (1865), pp. 531--536.
\newblock \doi{10.1098/rspl.1863.0098}.

\bibitem[Melenk and Babu{\v{s}}ka(1996)]{Melenk1996}
\textsc{J.~M. Melenk, I.~Babu{\v{s}}ka}.
\newblock \emph{The partition of unity finite element method: basic theory and
  applications}.
\newblock Computer Methods in Applied Mechanics and Engineering, 139
  (1996)~(1), pp. 289--314.
\newblock \doi{10.1016/s0045-7825(96)01087-0}.

\bibitem[Melenk et~al.(2001)]{Melenk20014339}
\textsc{J.~M. Melenk, K.~Gerdes, C.~Schwab}.
\newblock \emph{Fully discrete hp-finite elements: fast quadrature}.
\newblock Computer Methods in Applied Mechanics and Engineering, 190
  (2001)~(32--33), pp. 4339--4364.
\newblock \doi{10.1016/S0045-7825(00)00322-4}.

\bibitem[Milk et~al.(2016)]{Milk2016}
\textsc{R.~Milk, S.~Rave, F.~Schindler}.
\newblock \emph{py{MOR} -- {G}eneric {A}lgorithms and {I}nterfaces for {M}odel
  {O}rder {R}eduction}.
\newblock SIAM Journal on Scientific Computing, 38 (2016)~(5), pp. S194--S216.
\newblock \doi{10.1137/15M1026614}.

\bibitem[Milk et~al.(2017)]{Milk2017}
\textsc{R.~Milk, F.~T. Schindler, T.~Leibner}.
\newblock \emph{Extending DUNE: The dune-xt modules}.
\newblock Archive of Numerical Software, Vol 5, No 1 (2017),  (2017).
\newblock \doi{10.11588/ans.2017.1.27720}.

\bibitem[Monk(2003)]{monk2003finite}
\textsc{P.~Monk}.
\newblock \emph{Finite element methods for Maxwell's equations}.
\newblock Oxford University Press, 2003.
\newblock ISBN 9780191545221.

\bibitem[Nataf et~al.(2010)]{Dolean2010}
\textsc{F.~Nataf, H.~Xiang, V.~Dolean}.
\newblock \emph{A two level domain decomposition preconditioner based on local
  Dirichlet-to-Neumann maps}.
\newblock Comptes Rendus Mathematique, 348 (2010)~(21-22), pp. 1163--1167.
\newblock \doi{10.1016/j.crma.2010.10.007}.

\bibitem[Nedelec(1980)]{Nedelec1980}
\textsc{J.~C. Nedelec}.
\newblock \emph{Mixed finite elements in $R^3$}.
\newblock Numerische Mathematik, 35 (1980)~(3), pp. 315--341.
\newblock \doi{10.1007/bf01396415}.

\bibitem[Nedelec(1986)]{Nedelec1986}
\textsc{J.~C. Nedelec}.
\newblock \emph{A new family of mixed finite elements in $R^3$}.
\newblock Numerische Mathematik, 50 (1986)~(1), pp. 57--81.
\newblock \doi{10.1007/bf01389668}.

\bibitem[Nethercote and Seward(2007)]{valgrind}
\textsc{N.~Nethercote, J.~Seward}.
\newblock \emph{Valgrind}.
\newblock {ACM} {SIGPLAN} Notices, 42 (2007)~(6), p.~89.
\newblock \doi{10.1145/1273442.1250746}.

\bibitem[Nečas(1962)]{Necas1962}
\textsc{J.~Nečas}.
\newblock \emph{Sur une méthode pour résoudre les équations aux dérivées
  partielles du type elliptique, voisine de la variationnelle}.
\newblock Annali della Scuola Normale Superiore di Pisa - Classe di Scienze, 16
  (1962)~(4), pp. 305--326.

\bibitem[Noor and Peters(1980)]{noor1980reduced}
\textsc{A.~K. Noor, J.~M. Peters}.
\newblock \emph{Reduced basis technique for nonlinear analysis of structures}.
\newblock {AIAA} Journal, 18 (1980)~(4), pp. 455--462.

\bibitem[Ohlberger(2005)]{Ohlberger2005}
\textsc{M.~Ohlberger}.
\newblock \emph{A Posteriori Error Estimates for the Heterogeneous Multiscale
  Finite Element Method for Elliptic Homogenization Problems}.
\newblock Multiscale Modeling {\&} Simulation, 4 (2005)~(1), pp. 88--114.
\newblock \doi{10.1137/040605229}.

\bibitem[Ohlberger and Rave(2017)]{Ohlberger2017}
\textsc{M.~Ohlberger, S.~Rave}.
\newblock \emph{Localized Reduced Basis Approximation of a Nonlinear Finite
  Volume Battery Model with Resolved Electrode Geometry}.
\newblock In \emph{Model Reduction of Parametrized Systems}, vol.~17 of
  \emph{MS\&A. Modeling, Simulation and Applications}, pp. 201--212. Springer
  International Publishing, 2017.
\newblock \doi{10.1007/978-3-319-58786-8_13}.

\bibitem[Ohlberger and Schindler(2014)]{Ohlberger2014}
\textsc{M.~Ohlberger, F.~Schindler}.
\newblock \emph{A-Posteriori Error Estimates for the Localized Reduced Basis
  Multi-Scale Method}, vol.~77 of \emph{Springer Proc. Math. Stat.}, pp.
  421--429.
\newblock Springer International Publishing, Cham.
\newblock ISBN 978-3-319-05684-5, 2014.
\newblock \doi{10.1007/978-3-319-05684-5_41}.

\bibitem[Ohlberger and Schindler(2015)]{Ohlberger2015}
\textsc{M.~Ohlberger, F.~Schindler}.
\newblock \emph{Error control for the localized reduced basis multi-scale
  method with adaptive on-line enrichment}.
\newblock SIAM Journal on Scientific Computing, 37 (2015)~(6), pp.
  A2865--A2895.
\newblock \doi{10.1137/151003660}.

\bibitem[Ohlberger and Schindler(2017)]{Ohlberger2017a}
\textsc{M.~Ohlberger, F.~Schindler}.
\newblock \emph{Non-conforming Localized Model Reduction with Online
  Enrichment: Towards Optimal Complexity in {PDE} Constrained Optimization}.
\newblock In \emph{Springer Proceedings in Mathematics {\&} Statistics}, vol.
  200 of \emph{Springer Proc. Math. Stat.}, pp. 357--365. Springer
  International Publishing, 2017.
\newblock \doi{10.1007/978-3-319-57394-6_38}.

\bibitem[Ohlberger and Verfürth(2018)]{Ohlberger2018a}
\textsc{M.~Ohlberger, B.~Verfürth}.
\newblock \emph{A New Heterogeneous Multiscale Method for the Helmholtz
  Equation with High Contrast}.
\newblock Multiscale Modeling and Simulation, 16 (2018)~(1), pp. 385--411.
\newblock \doi{10.1137/16m1108820}.

\bibitem[Ohlberger et~al.(2018)]{Ohlberger2018}
\textsc{M.~Ohlberger, M.~Schaefer, F.~Schindler}.
\newblock \emph{Localized Model Reduction in {PDE} Constrained Optimization}.
\newblock In \textsc{V.~Schulz, D.~Seck}, eds., \emph{Shape Optimization,
  Homogenization and Optimal Control}, pp. 143--163. Springer International
  Publishing, Cham.
\newblock ISBN 978-3-319-90469-6, 2018.
\newblock \doi{10.1007/978-3-319-90469-6_8}.

\bibitem[\O{}lgaard and Wells(2010)]{OlgaardWells2010b}
\textsc{K.~B. \O{}lgaard, G.~N. Wells}.
\newblock \emph{Optimisations for Quadrature Representations of Finite Element
  Tensors Through Automated Code Generation}.
\newblock ACM Transactions on Mathematical Software, 37 (2010).
\newblock \doi{10.1145/1644001.1644009}.

\bibitem[Olimex(2019)]{Olimex_zenodo}
\textsc{Olimex}.
\newblock \emph{Project files for Olimex A64 PCB}, 2019.
\newblock \doi{10.5281/zenodo.2547973}.

\bibitem[Pacciarini(2016)]{Pacciarini2016}
\textsc{P.~Pacciarini}.
\newblock \emph{Discontinuous Galerkin Reduced Basis Element Methods for
  Parametrized Partial Differential Equations in Partitioned Domains}.
\newblock Ph.D. thesis, Politecnico Di Milano, Milan, Italy, 2016.

\bibitem[Pacciarini et~al.(2016)]{Pacciarini2016a}
\textsc{P.~Pacciarini, P.~Gervasio, A.~Quarteroni}.
\newblock \emph{Spectral based discontinuous {G}alerkin reduced basis element
  method for parametrized {S}tokes problems}.
\newblock Computers \& Mathematics with Applications, 72 (2016)~(8), pp.
  1977--1987.
\newblock \doi{10.1016/j.camwa.2016.01.030}.

\bibitem[Patera and Rozza(2007)]{Patera2007}
\textsc{A.~Patera, G.~Rozza}.
\newblock \emph{Reduced Basis Approximation and A Posteriori Error Estimation
  for Parametrized Partial Differential Equations}.
\newblock Version 1.0, Copyright MIT (2006-2007), to appear in (tentative
  rubric) MIT Pappalardo Graduate Monographs in Mechanical Engineering., 2007.

\bibitem[Pechstein and Scheichl(2008)]{Pechstein2008}
\textsc{C.~Pechstein, R.~Scheichl}.
\newblock \emph{Analysis of {FETI} methods for multiscale {PDEs}}.
\newblock Numerische Mathematik, 111 (2008)~(2), pp. 293--333.
\newblock \doi{10.1007/s00211-008-0186-2}.

\bibitem[Phuong~Huynh et~al.(2013)]{PhuongHuynh2012}
\textsc{D.~B. Phuong~Huynh, D.~J. Knezevic, A.~T. Patera}.
\newblock \emph{A Static condensation Reduced Basis Element method:
  approximation and a posteriori error estimation}.
\newblock ESAIM: Mathematical Modelling and Numerical Analysis, 47 (2013)~(1),
  pp. 213--251.
\newblock \doi{10.1051/m2an/2012022}.

\bibitem[Quarteroni et~al.(2016)]{Quarteroni2016}
\textsc{A.~Quarteroni, A.~Manzoni, F.~Negri}.
\newblock \emph{Reduced Basis Methods for Partial Differential Equations},
  vol.~92 of \emph{La Matematica per il 3+2}.
\newblock Springer, 2016.
\newblock ISBN 978-3-319-15430-5; 978-3-319-15431-2.
\newblock \doi{10.1007/978-3-319-15431-2}.
\newblock An introduction, La Matematica per il 3+2.

\bibitem[Sangalli(2003)]{Sangalli2003}
\textsc{G.~Sangalli}.
\newblock \emph{Capturing Small Scales in Elliptic Problems Using a
  Residual-Free Bubbles Finite Element Method}.
\newblock Multiscale Modeling {\&} Simulation, 1 (2003)~(3), pp. 485--503.
\newblock \doi{10.1137/s1540345902411402}.

\bibitem[Schmidt(1907)]{PPN235181684_0063}
\textsc{E.~Schmidt}.
\newblock \emph{Zur Theorie der linearen und nichtlinearen
  Integralgleichungen}.
\newblock Mathematische Annalen, 63 (1907), pp. 433--476.

\bibitem[Smetana(2015)]{Smetana2015}
\textsc{K.~Smetana}.
\newblock \emph{A new certification framework for the port reduced static
  condensation reduced basis element method}.
\newblock Computer Methods in Applied Mechanics and Engineering, 283 (2015),
  pp. 352--383.
\newblock \doi{10.1016/j.cma.2014.09.020}.

\bibitem[Smetana and Patera(2016)]{Smetana2016}
\textsc{K.~Smetana, A.~T. Patera}.
\newblock \emph{Optimal Local Approximation Spaces for Component-Based Static
  Condensation Procedures}.
\newblock SIAM Journal on Scientific Computing, 38 (2016)~(5), pp.
  A3318--A3356.
\newblock \doi{10.1137/15M1009603}.

\bibitem[Smetana et~al.(2019)]{Smetana2019}
\textsc{K.~Smetana, O.~Zahm, A.~T. Patera}.
\newblock \emph{Randomized Residual-Based Error Estimators for Parametrized
  Equations}.
\newblock {SIAM} Journal on Scientific Computing, 41 (2019)~(2), pp.
  A900--A926.
\newblock \doi{10.1137/18m120364x}.

\bibitem[Sommer et~al.(2015)]{Sommer2015}
\textsc{A.~Sommer, O.~Farle, R.~Dyczij-Edlinger}.
\newblock \emph{A New Method for Accurate and Efficient Residual Computation in
  Adaptive Model-Order Reduction}.
\newblock {IEEE} Transactions on Magnetics, 51 (2015)~(3), pp. 1--4.
\newblock \doi{10.1109/tmag.2014.2352812}.

\bibitem[Spillane et~al.(2014)]{Spillane2013}
\textsc{N.~Spillane, V.~Dolean, P.~Hauret, F.~Nataf, C.~Pechstein,
  R.~Scheichl}.
\newblock \emph{Abstract robust coarse spaces for systems of {PDEs} via
  generalized eigenproblems in the overlaps}.
\newblock Numerische Mathematik, 126 (2014)~(4), pp. 741--770.
\newblock \doi{10.1007/s00211-013-0576-y}.

\bibitem[Strouboulis et~al.(2001)]{Strouboulis2001}
\textsc{T.~Strouboulis, K.~Copps, I.~Babu{\v{s}}ka}.
\newblock \emph{The generalized finite element method}.
\newblock Computer Methods in Applied Mechanics and Engineering, 190
  (2001)~(32), pp. 4081--4193.
\newblock \doi{10.1016/S0045-7825(01)00188-8}.

\bibitem[Taddei and Patera(2018)]{Taddei2018}
\textsc{T.~Taddei, A.~T. Patera}.
\newblock \emph{A Localization Strategy for Data Assimilation; Application to
  State Estimation and Parameter Estimation}.
\newblock {SIAM} Journal on Scientific Computing, 40 (2018)~(2), pp.
  B611--B636.
\newblock \doi{10.1137/17m1116830}.

\bibitem[Toselli and Widlund(2005)]{Toselli2005}
\textsc{A.~Toselli, O.~Widlund}.
\newblock \emph{Domain decomposition methods---algorithms and theory}, vol.~34
  of \emph{Springer Series in Computational Mathematics}.
\newblock Springer-Verlag, Berlin, 2005.
\newblock ISBN 3-540-20696-5.
\newblock \doi{10.1007/b137868}.

\bibitem[Travis~E(2006)]{numpy}
\textsc{O.~Travis~E}.
\newblock \emph{A guide to NumPy}.
\newblock Trelgol Publishing, 2006.

\bibitem[Voronin and Martinsson(2015)]{VorMar15}
\textsc{S.~Voronin, P.-G. Martinsson}.
\newblock \emph{{RSVDPACK: An implementation of randomized algorithms for
  computing the singular value, interpolative, and CUR decompositions of
  matrices on multi-core and GPU architectures}}.
\newblock Tech. rep., arXiv:1502.05366, 2015.
\newblock \doi{n/a}.

\bibitem[Weinan and Engquist(2003)]{weinan2003multiscale}
\textsc{E.~Weinan, B.~Engquist}.
\newblock \emph{Multiscale modeling and computation}.
\newblock Notices of the AMS, 50 (2003)~(9), pp. 1062--1070.
\newblock \doi{n/a}.

\bibitem[Whitney(1957)]{whitney1957}
\textsc{H.~Whitney}.
\newblock \emph{Geometric integration theory}.
\newblock Princeton Univ. Press, 1957.

\bibitem[Xu and Zikatanov(2003)]{Xu2003}
\textsc{J.~Xu, L.~Zikatanov}.
\newblock \emph{Some observations on Babu{\v{s}}ka and Brezzi theories}.
\newblock Numerische Mathematik, 94 (2003)~(1), pp. 195--202.
\newblock \doi{10.1007/s002110100308}.

\bibitem[Yano(2014)]{Yano2014b}
\textsc{M.~Yano}.
\newblock \emph{A Space-Time Petrov--Galerkin Certified Reduced Basis Method:
  Application to the Boussinesq Equations}.
\newblock SIAM Journal on Scientific Computing, 36 (2014)~(1), pp. A232--A266.
\newblock \doi{10.1137/120903300}.

\bibitem[Zaglmayr(2006)]{Zaglmayr2006}
\textsc{S.~Zaglmayr}.
\newblock \emph{High Order Finite Elements for Electromagnetic Field
  Computation}.
\newblock Ph.D. thesis, Universität Linz, 2006.

\end{thebibliography}
\cleardoublepage
\vfill
\hfill
\cleardoublepage

\end{document}